\newcommand{\dpy}{\displaystyle}
\newtheorem{thm}{Theorem}[section]
\newtheorem{pro}[thm]{Proposition}
\newtheorem{lem}[thm]{Lemma}
\newtheorem{lemma}[thm]{Lemma}
\newtheorem{cor}[thm]{Corollary}
\newtheorem{defi}[thm]{Definition}
\newtheorem{preremark}[thm]{Remark}
\newenvironment{remark}{\begin{preremark}\rm}{\end{preremark}}
\def\NN{{\mathbb N}}
\def\nat{{\mathbb N}}
\def\ZZ{{\mathbb Z}}
\def\Z{{\mathbb Z}}
\def\integer{{\mathbb Z}}
\def\RR{{\mathbb R}}
\def\R{{\mathbb R}}
\def\real{{\mathbb R}}
\def\T{{\mathbb T}}
\def\TT{{\mathbb T}}
\def\torus{{\mathbb T}}
\def\C{{\mathbb C}}
\def\CC{{\mathbb C}}
\def\M{{\mathcal M}}
\def\L{{\mathcal L}}
\def\E{{\mathcal E}}
\def\A{{\mathcal A}}
\def\U{{\mathcal U}}
\def\B{{\mathcal B}}
\def\J{{\mathcal J}}
\def\K{{\mathcal K}}
\def\F{{\mathcal F}}
\def\S{{\mathcal S}}
\def\D{{\mathcal D}}
\def\Tau{{\mathcal T}}
\def\ep{\varepsilon}
\def\th{\theta}
\def\hth{\hat{\theta}}
\def\Id{{\rm Id}}
\def\avg{{\rm avg\,}}
\def\Im{{\rm Im\,}}
\def\dist{ {\rm dist}}
\def\meas{ {\rm meas} }
\title[Quasi-periodic breathers]{Construction of invariant whiskered tori by a
  parameterization method. Part II: Quasi-periodic and almost periodic breathers in coupled map lattices. }
\author[E. Fontich]{Ernest Fontich}
\address{Departarmnent de Matem\`atica Aplicada i An\`alisi, 
Via 585. 08007 Barcelona. Spain}
\email{fontich@maia.ub.es}
\author[R. de la Llave]{Rafael de la Llave}
\address{Shool of Mathematics, Georgia Institute of Technology, 686 Cherry St. Atlanta, GA 30332-0160, USA} 
\email{rafael.delallave@math.gatech.edu}
\thanks{Supported by NSF grants.}
\author[Y. Sire]{Yannick Sire}
\address{
Universit\'e Paul C\'ezanne, Laboratoire LATP UMR 6632, Marseille, France
}
\email{sire@cmi.univ-mrs.fr}
\begin{document}
\maketitle
\begin{abstract}
We construct quasi-periodic and almost periodic solutions
for coupled Hamiltonian systems on an infinite lattice     which is translation 
invariant.   The couplings can be 
long range, provided that they decay moderately fast with 
respect to
the distance.

For  the solutions  we construct, most of the sites are  moving 
in a neighborhood of
a hyperbolic fixed  point, but there are oscillating sites clustered
around a sequence of  nodes. The amplitude of these oscillations
does not need to tend to zero. In particular, the almost periodic solutions 
do not decay at infinity. 

The main result is an \emph{a-posteriori} theorem. 
We formulate an invariance equation. Solutions of
this equation are embeddings of an invariant torus
on which the motion is conjugate to a rotation.
We show that, if there is an approximate solution of 
the invariance equation that satisfies some non-degeneracy 
conditions, there is a true solution close by. 

This 
does not require that the system 
is close to integrable, hence it can be used to 
validate numerical 
calculations or formal expansions.

The proof  of this \emph{a-posteriori} theorem is based on a
Nash-Moser iteration, which does not use transformation theory.
Simpler versions of the scheme were developed in
E. Fontich, R.  de la Llave,Y.  Sire \emph{J. Differential. Equations.} {\bf 246}, 
3136 (2009). 

One  technical tool, important for our purposes, is the use of 
weighted spaces that capture the idea that the maps under consideration
are local interactions. Using these weighted spaces, 
the estimates of 
iterative steps are similar  to those in finite dimensional
spaces. In particular, the estimates are independent of the number of
nodes that get excited.  
Using these techniques, given  two breathers, we can place them 
apart and obtain an approximate solution, which leads to 
a true solution nearby. By repeating the process infinitely often, 
we can get solutions with infinitely many frequencies which 
do not tend to zero at infinity.

\end{abstract}
\tableofcontents
\section{Introduction}
The goal of this paper is to prove theorems on persistence of
invariant tori in some lattice systems. These
models describe copies of identical systems placed on the nodes of
a lattice and interacting with all the other systems in the lattice. The
interaction can be of infinite range, but it has to 
decay sufficiently fast with the distance. 
We will assume that the dynamics is Hamiltonian
and, for simplicity, we will also assume that the dynamics is
analytic. We will consider \emph{``whiskered tori''}. These
are invariant tori such that the motion on them is a rotation
and which are as hyperbolic as possible, compatible with the 
fact that the motion is an irrational rotation (it is well 
known that the directions symplectically conjugate to the tangent of 
the tori have to be neutral). See Definition \ref{NDloc-hyp}.

The main technical tool we will develop is a theorem of persistence 
of finite dimensional whiskered 
 tori, namely Theorem~\ref{existenceembedloc} below, 
which has sufficiently good properties to allow us to use it recursively 
to construct tori with infinite frequencies.

The tori we consider in Theorem~\ref{existenceembedloc} 
are finite dimensional whiskered tori with
some local character. The motion on the torus is a rigid rotation
with a Diophantine frequency. The preservation of the
symplectic structure and the rotational motion on the tori force
that there are some neutral directions in the normal directions.
We will assume that, except for these directions, the normal
directions are hyperbolic (they expand at exponential rates either
in the future or in the past). In particular, the hyperbolic 
spaces are infinite dimensional. 

The main technique to prove Theorem \ref{existenceembedloc} is to 
derive an equation that implies invariance of the torus
and that the motion on it is a rotation and to develop a theory 
for solutions of the equation. 

Given  a map $F$ on a phase space $\M$  and 
a frequency $\omega \in \RR^l$,  it is easy to see 
that $K: \torus^l \rightarrow \M$ is a parameterization of 
a torus with a rotation $\omega$, if and only if 
\begin{equation}\label{invarmap1} 
F \circ K = K \circ T_\omega\, ,
\end{equation}
where $T_\omega$ denotes the rotation on the torus by 
$\omega$. Similarly, for a vector field $X$, we seek 
parameterizations $K$ satisfying 
\begin{equation}\label{invarflow1}
X \circ K = \partial_\omega K,
\end{equation}
where $\partial_\omega$ is the derivative along the direction $\omega$.

Our main result will be
Theorem~\ref{existenceembedloc}, which shows that if we have an
approximate solution of the invariance equation which is also not
too degenerate,  there is a true solution which is close to the
approximate one.  Theorems of this form,
that validate an approximate solution, will be 
called \emph{a posteriori}, following the language in numerical analysis. 

We emphasize that  Theorem~\ref{existenceembedloc} 
does not assume that the system is
close to integrable, so that the approximate solution
could be produced in any way.  Of course, when the system is close to
integrable, we can take as approximate solutions the solutions of
the integrable system, so that we recover the standard
formulations of KAM theorems for 
quasi-integrable systems.  The approximate solutions can be
produced by a variety of methods, including Lindstedt series or
numerical computations. In finite dimensions, some whiskered
tori are generated by resonant averaging \cite{LlaveW04, Treschev94a}
or by homoclinic tangencies \cite{Duarte08}.
In such cases, Theorem~\ref{existenceembedloc} 
leads to justifications of the expansions or the numerical computations. 
We also note that  Theorem~\ref{existenceembedloc} does 
not assume that the system is translation invariant (it assumes 
only the existence of some uniform bounds). 

The {\sl a posteriori} approach to KAM theorem was emphasized in
\cite{Moser66a,Moser66b,Zehnder75a,Zehnder75b,Zehnder76b}. 
There, it was pointed
out that this {\sl a posteriori} approach automatically allows
to deduce results for finitely differentiable systems as well
as to prove smooth dependence on parameters or analyticity of 
perturbative series.
We refer the reader to \cite{Llave01c} for
a comparison of different KAM methods.

In this paper, we use the {\sl a posteriori} format to construct more 
complicated quasi-periodic solutions by juxtaposing two simpler 
solutions separated by a sufficiently long distance. 
The {\sl a posteriori} format of Theorem~\ref{existenceembedloc}, 
allows us to control the limit of the solutions, which will 
be an almost periodic solution. The ability to superimpose solutions far 
apart is greatly facilitated by assuming translation invariance, which will be an assumption in the second part of this work.  One could assume 
significantly less (e.g. some amount of uniformity). 
Nevertheless, this seems a natural assumption.

One important technical tool in this paper is the use of 
spaces of decay functions following \cite{JL00,FLM}. 
These are spaces of functions whose norms quantify the effect that 
the motion of one particle does not affect much the motion of 
particles far apart. Besides that, they also enjoy certain Banach 
algebra properties so that the hyperbolic directions can be 
dealt with in the same manner than in the finite dimensional 
spaces. 

Using  spaces of decay functions,  we can make quantitative the 
observation that, since
the oscillations at one site almost do not affect those further apart,  
superimposing oscillations centered around sites far apart produces 
a very approximate solution. 
We will call the localized oscillating solutions \emph{``breathers''}.
The error in the invariance equation
(measured in the sense of an appropriate space of 
decay functions) is arbitrarily small if the centers are placed 
far enough. A rather simple calculation shows that the 
non-degeneracy conditions deteriorate also by an arbitrarily small 
amount. In summary, if the frequencies of the oscillations are 
jointly Diophantine (even if the constant is bad), we can 
satisfy all the requirements of the theorem by displacing the breathers far
apart.  If one makes appropriate choices -- placing the subsequent 
centers of oscillation far enough apart -- we will show that the process can be repeated 
infinitely often 
and that it converges
 in a sense which is strong enough to justify that  the limit is a 
solution of the system.  This solution contains infinitely many frequencies.

The process of coupling the breathers does not require 
any smallness conditions in the coupling (it suffices 
to place the breathers far enough apart). On the other 
hand, establishing the 
existence of breathers by perturbing from those of the uncoupled system, 
does require some smallness conditions. We also require some mild
smallness conditions on the perturbations to ensure that the 
system remains non-degenerate.

In the solutions that we construct most of the sites are 
near a hyperbolic equilibrium. These solutions, therefore, have 
an average energy close to that of the equilibrium solutions and  are at the border of chaos (in particular, 
they are dynamically unstable). There are indications that these
solutions play an important role in instability. 

The results of the paper were summarized in 
\cite{FontichLS09}, which perhaps can be used as a reading 
guide to the present paper. 

We also note that, after this paper was finished, the work 
\cite{BlazevskiL14}, used the results of this paper to construct the 
whiskers of the whiskered tori constructed in this paper in 
a very similar functional formulation, so that the whiskers also 
have decay properties.

To provide some motivation, we now mention several models 
found in the literature for which our method
applies. These models can be 
described by the following formal Hamiltonian
\begin{equation}\label{generalmodel}
H(p,q) = \sum_{i \in \ZZ^N} \Big(\frac{1}{2} |p_i|^2 +
W(q_i)\Big) + \sum_{k\in \Z^N}\ \sum_{i \in \ZZ^N} V_k (
q_i-q_{i + k})
\end{equation}
under some assumptions on the potentials $W$ and $V_k$. Here the formal Hamiltonian structure is
\begin{equation*}
\Omega_\infty=\sum_{i \in \ZZ^N} d p_i  \wedge d q_i .
\end{equation*}
Note that, even if the sum defining the Hamiltonian 
and the symplectic form are  formal 
and not meant to converge, Hamilton's equations are a well 
behaved system of differential equations
(if the $V_k$ decay fast enough, e.g. if they are finite range). In fact the equations of motion are 
\begin{equation*}
\begin{split} 
\dot q_i &= p_i \\
\dot p_i &= - \nabla W( q_i) - \sum_{k \in \ZZ^N} \Big \{ \nabla V_k(q_i - q_{i +k} ) 
	- \nabla V_k(q_{i-k} - q_i) \Big \}.  
\end{split} 
\end{equation*}

The model \eqref{generalmodel} involves a local potential $W$ for 
each particle and interaction potentials among pairs of 
particles. Of course, the interaction potentials are assumed 
to decay with $|k|$ fast enough. The method of proof 
also accommodates many body interactions. 
One important feature of the method is that, in some 
appropriate weighted spaces, the estimates we obtain 
are independent of the number and the position of the centers of 
oscillation.

If we take the lattice to be with one degree of freedom,
 the potential $V$ to be just nearest neighbor
(i.e. $V_k = 0$  for $|k| > 1$) and set
$V_{1}(s) = \frac{\gamma}{2} s^2$, 
we obtain the so called 1-D Klein-Gordon system
described by the formal  Hamiltonian 
\begin{equation*}
H(q, p)=\sum_{n=-\infty}^{+\infty} \Big(\frac{1}{2}
p_n^2+W(q_n)+\frac{\gamma}{2}(q_{n+1}-q_n)^2\Big),
\end{equation*}
and whose equations of motion are 

\begin{equation}\label{KG}
\ddot{q}_n+W'(q_n)=\gamma (q_{n+1}+q_{n-1}-2q_n),\,\,\,\,\,n\in
\ZZ . 
\end{equation}

We note that the method we present applies to higher dimensional 
lattices and higher dimensional systems. We also do not need 
to assume that the symplectic form is the standard one.
This is convenient when the symplectic form is degenerate. 
Changes in the symplectic form correspond to magnetic fields
\cite{Thirring97}. Note that the systems with magnetic
fields are not reversible.

For a review of the physical relevance of these models
we refer the reader to \cite{FW98}. Concerning the existence proof
of periodic breathers, we refer to
\cite{MKA94,AGT96,AG96,AKK01}. In the latter papers, the technique
is based on a variational argument whereas in
\cite{MKA94}, the authors use an implicit function theorem. For
quasi-periodic breathers in finite -- but arbitrarily large 
systems,  we mention
\cite{BV02,GengY07,GengVY08,ChungY07}. The paper  \cite{Yuan02} proves the 
existence of quasi-periodic breathers in the Fermi-Pasta-Ulam
lattice.
In all the cases above, 
 the breathers are normally  elliptic or dissipative. 
Quasi-periodic and almost periodic 
breathers for lattices of reversible systems with 
dissipation are considered in \cite{ChenY07}. 

\begin{remark}
There is a variety of results showing that for 
hyperbolic PDE's there are no quasi-periodic solutions 
of finite energy \cite{Pyke96,SofferW99,KomechK09, KomechK10}. 
Since some of the models we consider are obtained as 
discretizations of nonlinear wave equations, it is 
interesting to understand why the results above 
do not apply to the discretized model, even if 
they apply to the PDE. 

The reason is that the mechanism behind the proofs 
in the above papers is 
that quasi-periodic solutions of non-linear
PDE's have to  radiate and send energy to infinity. 

In the models we consider,  
there is no radiation because most of the media is 
near the hyperbolic regime. 

We can understand the lack of radiation in 
the model but representative problem
\begin{equation} \label{linearKG}
\ddot{q}_n+ A q_n =\gamma (q_{n+1}+q_{n-1}-2q_n),\,\,\,\,\,n\in
\ZZ, \, \gamma > 0.
\end{equation}
where $A <   0$. 
The equation \eqref{linearKG} is a linearization of 
\eqref{KG} near the point $q = 0$, which is  
a maximum of the potential (or a mimimum of $W$ 
in the notation of \eqref{KG}). 

We see that if we substitute solutions of the form 
$q_n = \exp( i (\omega t + k n))$  in 
\eqref{linearKG}, we are
lead to the  dispersion relation
\[
-\omega^2 + A = \gamma(2 \cos( k)  - 2).
\]

If $|\gamma|$ is small enough, this dispersion relation does not have 
any real solutions  for $\omega$ and the only  square roots are imaginary.

In this model, near the hyperbolic fixed points the equations do 
not propagate waves, so that there is no radiation and the arguments
excluding quasi-periodic solutions in the above papers 
do not apply. 

However, for PDE's, the dispersion relation would be 
$-\omega^2 + A = -\gamma k^2$. The unboudedness of the $k^2$ factor
makes it possible to have propagating waves no matter how small $|\gamma|$ is.

Note also that in the model in \cite{FrolichSW86}, there is no
propagation either because of the random nature of the media. 
\end{remark}

\section{Basic setup and preliminaries}
\label{sec:preliminaries}

The main goal of this  paper
is to extend the method 
introduced in \cite{FontichLS09}  for the study of
whiskered tori
to  some systems on 
infinite dimensional manifolds. 
The systems we will consider  consist of  infinitely many 
finite dimensional Hamiltonian  systems, each of them corresponding to 
a site on a lattice,  subject to some 
coupling.
We will  assume that the  coupling
decays fast enough with respect to the distance among the 
sites. These are standard models in many applied fields and 
there is a large mathematical theory, which we cannot survey 
systematically (but we will make some indication of the results 
we use or the one closer to our goals).

 An important tool for us will be 
appropriate function spaces for these interactions. There 
are many other methods to establish the existence of
whiskered tori \cite{Graff74, Zehnder75b,You99}. The present method 
has the advantage that it depends much less on the subtle geometric 
properties, so that it applies easily to infinite dimensional contexts.

Since  we are interested in translation invariant 
problems and want to produce solutions that do not go to 
zero at infinity, it is natural to model the functional 
analysis in $\ell^\infty$, which has some subtle points 
that require attention. 

The goal of this section is to set up the functional analysis 
spaces modelled after $\ell^\infty$ and capturing the idea that 
changes in one site have very small effect in sites that are
far away. We anticipate that we need two types of spaces. One family of 
spaces  for 
the mappings from the infinite dimensional space to itself 
and another kind of spaces for the mappings from a finite dimensional 
torus to the infinite dimensional phase space.  
This corresponds to the $F$, $K$ in \eqref{invarmap1}
or the $X$, $K$ in \eqref{invarflow1}. 
The spaces we choose are patterned after the choices in 
\cite{JL00,FLM}. Other  Banach spaces of functions in 
lattice systems are in \cite{Rugh02,BricmontK95}.  Indeed, the 
choice of topologies in these infinite dimensional 
systems is rather subtle and arguments  in ergodic theory 
which rely more on measure theory  than on geometry find useful 
toplogies in which the phase space is compact.

\subsection{Phase spaces} 
In this paper 
we will assume that the phase space at each node 
is given by an Euclidean exact symplectic manifold $(M=\T^l \times
\RR^{2d-l}, \Omega=d\alpha)$, 
where $\T= \RR / \ZZ$. We will not assume 
that the symplectic form is given in the standard form 
of action-angle variables. For some calculations we consider 
$\tilde M = \real^l \times \real^{2d -l}$, the universal
covering of $M$ or the complex extensions of the above. 
This is natural since the KAM method requires to consider 
Fourier series.

It is possible to adapt
our method to the case of a non-Euclidean manifold $M$ using 
connectors and exponential mappings. This just requires some 
typographical effort. See the discussion in \cite{FontichLS09}. 

Then,  the phase space of the lattice system will be a subset of 
\begin{equation}
M^{\ZZ^N}=\displaystyle{\prod_{j \in \ZZ^N}} M.
\end{equation}

Since $M$ is unbounded we will take as phase space
\begin{equation}\label{mani}
\M=\ell^\infty(\Z^N,M) = \Big\{ x\in M^{\ZZ^N}\mid\,
\sup_{i\in \Z^N} |x_i| <\infty \Big\}
\end{equation}
which is a strict subset of $M^{\ZZ^N}$. We will endow $\M$
with the distance
\[
d(x, y) = \sup_{i \in \ZZ^N} d(x_i, y_i),
\]
where $d(x_i, y_i)$ is the distance on the finite dimensional 
manifold $M$.

When $M$ is $\mathbb{R}^{2d}$, $\M$ is a Banach space
with the norm
\begin{equation*}
\|x\|_\infty=\sup_{i \in \ZZ^N} |x_i|.
\end{equation*}
When
$M=\T^l\times \mathbb{R}^{2d-l}$, $\M$ is a Banach manifold
modelled on $\ell^\infty(\ZZ^N)$. 

Notice that because $M$ is an Euclidean space, the tangent space of $\mathcal M$
is trivial and can be identified with $\ell^\infty$. Given 
$x \in \M$ and $\xi \in \ell^\infty$, we can define 
$x + \xi$ by just adding the components. 
We see that if $\| \xi \|_\infty < 1/2$, 
the mapping $\xi \to x + \xi$ is injective, so that this 
defines a chart in $\M$. 

\begin{remark}
The fact that we assume that the manifold has the structure
$M=\T^l \times \RR^{2d-l}$ is important here since it implies
that $H^1(M) \sim H^1(\T^l)$ is non-trivial. This allows us to
perform the construction in Appendix \ref{maps}. If the manifold
was such that its first de-Rham cohomology group were trivial, all
the symplectic maps would be exact symplectic and the construction
would not work without changes.
To deal with manifolds such that $H^1(M)$ is trivial
($M=\RR^{2d}$ for instance), one can use the method developed by
the authors in the finite dimensional case (see \cite{FontichLS09,FontichLS09b}). 
This consists in perturbing the invariance equation for the tori
by a translation term and prove, at the end of the convergence
scheme, that the geometry implies that this term is zero.
The method of \cite{FontichLS09} allows to deal with secondary tori 
(i.e. tori which are contractible to tori of lower dimension) 
directly. The present method would require to make some preliminary 
changes of variables. 
\end{remark}

The choice of $\ell^\infty$ is dictated by the fact that we want 
to deal with solutions that neither grow nor decrease at $\infty$. 
This, however,
will lead to some complications,
the functional analysis in $\ell^\infty$ 
being rather delicate.  On the other hand, 
obtaining estimates in  $\ell^\infty$ 
for several of our objects will be relatively easy. 

Since we are going to deal with analytic functions,
one has to define what is the complex extension of the manifold
$\M$. By assumption, the manifold $M$ 
is  an Euclidean manifold, hence, it admits a complex extension
$M^{\mathbb{C}}$. We define the complex extension of $\M$
as a subspace of the product of the complex extensions of $M$,
i.e.
\begin{equation*}
\M^{\mathbb{C}} =\Big\{z\in \prod_{j \in \ZZ^N}
M^\C \ \Big\vert  \ \sup_{i\in\ZZ^N} \vert z_i\vert <\infty\Big\}.
\end{equation*}
In the following,  we will be considering mostly 
$\M^{\CC}$ but to simplify the
notation we will not write the superscript $\CC$, if it does 
not lead to confusion. 

\subsection{Some functional analysis in $\ell^\infty(\ZZ^N)$ and the spaces of decay functions}

As emphasized in \cite{FLM}, $\ell^\infty(\ZZ^N)$ 
has a very complicated dual space
which cannot be identified with a space of 
sequences since there is no Riesz-representation theorem. As
a consequence, we have that the matrix elements of 
an operator do not characterize the operator and, relatedly, 
the differential of a map is not represented by its partial
derivatives.
The
physical meaning is that one has to take into account ``boundary
conditions  at infinity''.

For example, consider the functional
$\Tau$ defined on the  closed subspace 
of $\ell^\infty(\ZZ)$ consisting of convergent sequences
by the formula
\begin{equation*}
\Tau(u)=\lim_{n \rightarrow +\infty} u_n.
\end{equation*}
By the Hahn-Banach theorem, $\Tau$  extends to
$\ell^\infty(\ZZ)$. The extended functional $\Tau$ is
non-trivial but we have
\begin{equation*}
\partial_{u_i} \Tau(u)=0
\end{equation*}
since the limit does not depend on $u_i$.
Of course, the functional $\Tau$ is linear but it is 
not represented by a matrix. 
Similar phenomena have been known in statistical 
mechanics for a while under the name \emph{observables at 
infinity}.

This phenomenon can be eliminated by restricting our 
attention 
to functions whose derivative is a linear functional 
which  is given  by the
matrix of  partial 
derivatives. We will develop some technology that allows 
to verify this assumption rather comfortably in the cases of interest. 
A much more thorough treatment can be found in 
\cite{FLM}. 

\subsubsection{Weighted norms to formulate decay properties}
To formulate quantitatively the approximate locality of the maps
we will consider 
Banach spaces whose norm 
makes precise that changing one coordinate affects 
little the outcome of other coordinates far away.

We will make use of the so-called decay
functions introduced in \cite{JL00}.
\begin{defi}\label{defDecay}
We say that a function $\Gamma: \ZZ^N \rightarrow
\RR_+$,  is a decay function when it satisfies
\begin{enumerate}
\item $\displaystyle{\sum_{j \in \ZZ^N}} \Gamma(j) \leq 1,$
\item $\displaystyle{\sum_{j \in \ZZ^N}} \Gamma(i-j)\Gamma(j-k)
  \leq \Gamma(i-k),\quad\,\,\,i,k \in \ZZ^N$.
\end{enumerate}
\end{defi}

The algebraic property $(2)$ in definition \ref{defDecay} is
important since it is the one that allows us to construct Banach
algebras.

The following elementary  proposition 
is proved in detail  in \cite{JL00} and provides an
example of a  decay function.
\begin{pro}\label{exDecay}
Given $\alpha>N$, $\th \geq 0$, there exists $a>0$, depending on
$\alpha,\th$, $N$ such that  the function defined by
\begin{equation*}
\Gamma(i)=\left \{
\begin{array}[c]{ll}
a |i|^{-\alpha} e^{-\th |i|}\qquad &\mbox{if $\;i \neq 0$,} \\
a & \mbox{if $\;i = 0$}
\end{array}\right .
\end{equation*}
is a decay function on $\ZZ^N$.
\end{pro}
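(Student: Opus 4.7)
The plan is to reduce both conditions in Definition \ref{defDecay} to a choice of $a$ small enough, with the main work being the convolution inequality (2). The normalization condition (1) is routine: since $\alpha > N$, the sum $\sum_{j \neq 0} |j|^{-\alpha} e^{-\theta |j|}$ is bounded by a constant $C_1 = C_1(N,\alpha,\theta)$ (compare with the integral $\int_{|x| \geq 1} |x|^{-\alpha} e^{-\theta|x|}\,dx$ via a standard dyadic shell estimate). Hence $\sum_j \Gamma(j) \leq a(1+C_1)$, and (1) holds once $a \leq (1+C_1)^{-1}$.

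For property (2), fix $i,k \in \ZZ^N$, set $m = i - k$, and change summation variable $u = i - j$ so that the inequality reads
\begin{equation*}
\sum_{u \in \ZZ^N} \Gamma(u)\Gamma(m-u) \leq \Gamma(m).
\end{equation*}
The exponential part factors out using the triangle inequality $e^{-\theta |u|} e^{-\theta |m-u|} \leq e^{-\theta |m|}$. The real obstacle is to bound the polynomial convolution $\sum_{u} |u|^{-\alpha} |m-u|^{-\alpha}$ by a constant multiple of $|m|^{-\alpha}$, uniformly in $m$.

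I would treat the case $m \neq 0$ by separating the two singular terms $u = 0$ and $u = m$, which each contribute exactly $a\,\Gamma(m)$. For the remaining sum I would split the index set into $\{u : |u| \leq |m|/2\}$ and $\{u : |u| > |m|/2\}$. On the first set $|m-u| \geq |m|/2$, so $|m-u|^{-\alpha} \leq 2^{\alpha}|m|^{-\alpha}$ can be pulled out, leaving a tail $\sum_{u \neq 0} |u|^{-\alpha}$ that converges because $\alpha > N$; the second set is symmetric. This yields
\begin{equation*}
\sum_{u \neq 0, u \neq m} |u|^{-\alpha}|m-u|^{-\alpha} \leq C_2\, |m|^{-\alpha}
\end{equation*}
for a constant $C_2 = C_2(N,\alpha)$. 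Combining with the exponential estimate gives $\sum_u \Gamma(u)\Gamma(m-u) \leq \Gamma(m)\bigl(2a + a\, C_2\bigr)$, which is $\leq \Gamma(m)$ provided $a(2 + C_2) \leq 1$. The case $m = 0$ reduces to $\sum_u \Gamma(u)^2 \leq a$, which is handled analogously and requires only $a \leq (1+C_3)^{-1}$ with $C_3 = \sum_{u \neq 0} |u|^{-2\alpha}$ (finite since $2\alpha > N$).

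Choosing $a$ smaller than all three thresholds simultaneously yields the decay function. The only delicate step is the dyadic/split estimate of the polynomial convolution; everything else is the triangle inequality and absolute convergence based on $\alpha > N$.
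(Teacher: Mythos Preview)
Your argument is correct. The paper does not actually prove this proposition; it cites \cite{JL00} for the detailed proof. However, later in Section~\ref{sec:thmthgl} the paper quotes the explicit threshold obtained in \cite{JL00}, namely $a_0(\alpha) < \big(2^{\alpha+1} K_{N,\alpha} + 2\big)^{-1}$ with $K_{N,\alpha}= \sum_{j\in \ZZ^N\setminus\{0\}} |j|^{-\alpha}$, and this is precisely the constant your split produces: each half of your dyadic decomposition contributes $2^{\alpha}K_{N,\alpha}|m|^{-\alpha}$, giving $C_2 = 2^{\alpha+1}K_{N,\alpha}$, and together with the two singular terms $u=0$, $u=m$ you arrive at the condition $a(2 + 2^{\alpha+1}K_{N,\alpha}) \le 1$. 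So your proof is essentially the same as the one in \cite{JL00} referenced by the paper.
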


We note, as it is easily verified in \cite{JL00}, 
that $\Gamma(i) = C \exp( - \beta |i| )$ is not a decay function for 
any $\beta, C > 0$.

If one considers other sets $\Lambda$ in place of $\ZZ^N$,
such as the Bethe lattice which also admit decay functions
(see \cite{JL00}),
many of the results of the present paper can be adapted 
with little change. 

\begin{defi} \label{dominated} 
Given two decay functions $\Gamma, \Gamma'$ we say that 
$\Gamma$ dominates $\Gamma'$ and write 
$\Gamma' \ll  \Gamma$ when 
\[
\lim_{k \to \infty} \Gamma'(k)/\Gamma(k) = 0 .  
\]

We say that a family of decay functions 
$\Gamma_\beta$,  $\beta \in [0,1]$, is an ordered family 
when $\tilde \beta < \beta$ implies 
$\Gamma_{\tilde \beta} \ll \Gamma_\beta$. 
\end{defi}
Of course the examples in Proposition~\ref{exDecay}  
constitute an ordered family. For some of the arguments later, 
in the proof of Theorem~\ref{thgl},
when we are increasing the scales increasing the number of 
breathers, it will be useful to have a full scale so 
that the longer scales have a weaker decay. 
This is the reason why
Theorem~\ref{thgl} is only stated for these functions.

Of course, the examples in Proposition~\ref{exDecay} enjoy several
other nice properties, for example that $\Gamma(i)$ is a decreasing
function of $|i|$.   We refer the 
reader to Appendix \ref{decayfunctions} where  
a deeper study of spaces 
of decay functions is performed. In the following, we just give the
definitions needed to state our main results.

\begin{remark}
Prof. L. Sadun pointed out that there is a very natural 
physical interpretation of the definition of 
decay functions. We note that a site $i$ can affect 
another site $j$ either directly or by affecting another 
site  $k$ which in turn affects the site $j$. Of course, 
more complicated effects involving longer chains of 
intermediate sites are also possible. If the direct interaction between two sites is bounded 
by a decay function, it follows that the effect mediated 
through intermediate sites is bounded by the same function. 
This makes it possible to comfortably carry out
perturbation calculations. 
\end{remark} 

\subsubsection{Banach spaces of functions with good localization 
properties} 

We now introduce the functional spaces needed for our purposes.  We introduce:

\begin{itemize}
\item The Banach space of decay linear operators 
\begin{equation}
\mathcal{L}_\Gamma (\ell^\infty(\ZZ^N))= \left \{ 
\begin{array}{ccc}
A \in \mathcal{L}(\ell^\infty(\ZZ^N)) \,\,|\,\,\exists \,\,\left \{ A_{ij} \right  \}_{i,j \in \ZZ^N}, A_{i,j} \in \mathcal{L}(M)\\
\,\,(Au)_i =\sum_{j \in \ZZ^N} A_{ij}u_j, \,\,\,\,i \in \ZZ^N, \\
 \sup_{i,j \in \ZZ^N} \Gamma(i-j)^{-1}|A_{ij}| < \infty \end{array}\right \},
 \end{equation}
 where $\mathcal L(\ell^\infty(\ZZ^N))$ denotes the space of continuous linear maps from $\ell^\infty(\ZZ^N)$ into itself.  We endow $ \mathcal L_\Gamma (\ell^\infty(\ZZ^N))$ with the norm 
 \begin{equation}
 \|A\|_\Gamma= \sup_{i,j \in \ZZ^N} \Gamma(i-j)^{-1}|A_{ij}| 
 \end{equation}
 \item The space of $C^1$ functions on an open set $\mathcal B \subset \mathcal M$
 \begin{equation*}
C^1_\Gamma(\B)=\left \{
\begin{array}[c]{cc}
F: \B \to \M\mid\, F \in
  C^1(\B),\,\,DF(x) \in
  \L_\Gamma(\ell^\infty(\ZZ^N)), \, \, \forall x \in \B\\
\sup_{x \in \B} \|F(x)\| < \infty  ,\ \sup_{x \in \B}
\|DF(x)\|_\Gamma < \infty
\end{array} \right \}
\end{equation*}
with the norm 
\[
\|F\|_{C^1_\Gamma}=\max
\big(\sup_{x\in\B} \|F(x)\|\, ,\break \sup_{x\in\B} \|DF(x) \|_\Gamma\big).
\]

For $r \in \NN$, we define
\begin{equation*}
C^r_\Gamma(\B)=\left \{
\begin{array}[c]{cc}
F: \B \to \M\mid\, F \in
  C^r(\B),\, D D^{j-1}  F\in
  C^{1}_\Gamma(\B),\\
  0 \le j \le r=1
\end{array} \right \}.
\end{equation*}
Of course, we can give an equivalent recursive definition of the $C^r$ as 
the set of fucntions whose derivative is given by a matrix
valued function which is in $C^{r-1}$. 

We define a notion of analyticity for maps on 
lattices. 
\begin{defi}\label{analyt}
Let $\B$ be an open set of $\M$.
We say that $F: \B \rightarrow \M$ is analytic if
it is in $C^1_\Gamma (\mathcal B)$ with the derivatives understood in the 
complex sense. 
\end{defi}

\item The space of analytic embeddings on a strip 
\begin{equation*}
D_{\rho}=\left \{ z\in \mathbb{C}^l/\ZZ^l| \,\,|\mbox{Im}\,z_i|
  < \rho,\,\,i=1,\dots ,l\right \}.
\end{equation*}
Let $R \geq 1$ be an integer and consider $\underline{c} \in (\ZZ^N)^R$, i.e.
\begin{equation*}
\underline{c}=(c_1,\dots ,c_R).
\end{equation*}
We introduce the following quantity
\begin{equation*}
\|f\|_{\rho,\underline{c},\Gamma}=\displaystyle{\sup_{i \in \ZZ^N} \min_{j=1,\dots ,R}}
\Gamma^{-1}(i-c_j) \|f_i\|_{\rho},
\end{equation*}
where
\begin{equation*}
\|f_i\|_{\rho}=\sup_{ \th \in D_\rho} |f_i(\th)|.
\end{equation*}
We denote

\begin{equation}\label{espace}
\mathcal{A}_{\rho,\underline{c},\Gamma}= \left \{
\begin{array}[c]{cc}
f: D_\rho \to \M\mid f \in
C^0(\overline{D}_\rho),\,f\mbox{ analytic in} \,D_\rho,\\
\|f\|_{\rho,\underline{c}, \Gamma} < \infty
\end{array}\right \}.
\end{equation}

This space, with the norm $\|\cdot \|_{\rho,\underline{c},
\Gamma}$, is a Banach space. If we consider a map  $A$ from $D_{\rho}$ into the set of
linear maps $\L_\Gamma(\ell^\infty(\ZZ^N))$, the associated norm is
\begin{equation*}
\|A\|_{\rho,\Gamma}=\displaystyle{\sup_{i,j\in
  \ZZ^N}}\ \displaystyle{ \sup_{\th \in D_\rho}}\Gamma^{-1}(i-j)|A_{ij}(\th)| = \sup_{\th \in D_\rho} \|A(\th)\|_\Gamma
\end{equation*}

\end{itemize}

\subsection{Symplectic geometry on lattices}

In this section, we introduce the little geometry we need on the
manifold $\M$ to be able to perform the iteration. We refer the reader
to Appendix \ref{sec:symplectic} where a more systematic description
and properties of the objects is performed. We basically need
symplectic geometry for the KAM step on the center manilfolds -- which
will be finite dimensional --- and we will need the exactness
properties for the vanishing lemma \ref{vanishingloc} in Section
\ref{secvanishing}.  These uses can be accomplished by just saying that 
the pullback of the symplectic form by decay embeddings 
from a finite dimensional torus make sense. 
It is also very useful that the proof presented does 
not require transformation theory and, hence, we do not need 
to discuss a systematic theory of symplectic mappings.

Consider our finite dimensional exact symplectic manifold $(M,\Omega=d\alpha)$ and the associated lattice
\begin{equation*}
\M=\ell^\infty(\ZZ^N,M).
\end{equation*}

Define $\alpha_\infty$ and $\Omega_\infty$ to be the formal sums
(later, we will give them some precise meaning)
\begin{equation*}
\alpha_\infty=\sum_{j \in \ZZ^N} \pi^*_j\alpha, \qquad
\Omega_\infty=\sum_{j \in \ZZ^N} \pi^*_j\Omega,
\end{equation*}
where $\pi_j$ are the standard projections from $\mathcal M$ to $M$ at the node $ j \in \ZZ^N$. Let $J$ be the symplectic matrix associated to the symplectic
two-form $\Omega$ on $M$. We denote $ J_\infty$ the operator
defined on $T \mathcal M$ by
$$J_\infty (z)= {\rm diag} \big(\dots, J(\pi_iz),\dots\big), \qquad\ z\in\M\,.$$

We introduce the following definitions. 

\begin{defi}\label{Symploc}
We say that a $C^1_\Gamma$ function $F:\M \rightarrow
\M$ is symplectic if the following identity holds for any $z \in \mathcal M$
$$DF^\top(z) J_\infty (F(z)) DF(z)=J_\infty(z),$$
where the product of two operators $A$ and $B$ in $C^1_\Gamma$ is given component-wise by 
$(AB)_{i,j}= \sum_{k \in \Z^N}A_{ik}B_{kj},\quad i,j \in \ZZ^N.$
\end{defi}
Note that, due to the decay, properties, the products involved in the 
definition of a symplectic matrix are absolutely convergent sums.

Similarly, we have the following definition. Let $\hat A$ be the linear operator associated to the Liouville form $\alpha$ on $M$. We denote $ \hat A_\infty$ the operator
defined on $T \mathcal M$ by
$$\hat A_\infty (z)= {\rm diag} \big(\dots, \hat A( \pi_i z),\dots\big), \qquad\ z\in\M\,.$$

\begin{defi}\label{exSymploc}
We  say that a $C^1_\Gamma$ function $F:\M \rightarrow \M$ is
exact symplectic on $\M$ if  there exists a one-form $\tilde \alpha$ defined on $T \mathcal M$  with  matrix  $ \tilde A$ such that
\begin{itemize}
\item For every $j \in \ZZ^N$, there exists a smooth function $W_j$ on $M$ such that 
$$\tilde \alpha_j = dW_j,  $$ 
where $d$ is the exterior differentiation on $M$. 
\item The following formula holds component-wise on the lattice 
\begin{equation*}
DF(z)^\top \hat A_\infty(F(z)) =\hat A_\infty(z)+ \tilde A(z).
\end{equation*}

\end{itemize}
\end{defi}
The previous definitions are completely equivalent to the standard
definitions of symplectic and exact symplectic 
maps in the finite dimensional 
case, but they are among the mildest ones that 
we can imagine in infinite dimensions. 

We anticipate that the symplectic structure, will only 
enter in this paper in two places: 1) The automatic reducibility 
in the center directions, 2) The vanishing lemma to show 
that for exact symplectic mappings several averages vanish. 
These applications are very finite dimensional. 

The following lemma will be usefull for us (see Appendix 11). 
\begin{lemma}\label{sympDecay}
Consider a function $\psi$ defined on $\T^l$ (or a subset of it) with values in $\M$ and belonging to $\mathcal A_{\rho,\underline c, \Gamma} $ for some $\rho >0$. Then the bilinear form 
$$\psi^*\Omega_\infty= \sum_{j \in Z^N}\psi^* \Omega(\pi_j)$$
is a two-form on the torus $\TT^l$. 

\end{lemma}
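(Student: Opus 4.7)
The plan is to show that the formal sum converges absolutely and uniformly in $\theta$ when applied to fixed tangent vectors, with the bound following from the decay of the embedding components. Writing $\psi=(\psi_j)_{j\in\ZZ^N}$ with $\psi_j:\T^l\to M$, and fixing tangent vectors $\xi,\eta\in T_\theta\T^l\cong\RR^l$, the candidate two-form is
\begin{equation*}
(\psi^*\Omega_\infty)(\theta)(\xi,\eta)\;=\;\sum_{j\in\ZZ^N}\Omega_{\psi_j(\theta)}\bigl(D\psi_j(\theta)\xi,\,D\psi_j(\theta)\eta\bigr),
\end{equation*}
and the task is to control the tail of this sum termwise.

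First I would apply the Cauchy estimate to each analytic component: for any $0<\rho'<\rho$,
\begin{equation*}
\sup_{\theta\in D_{\rho'}}|D\psi_j(\theta)|\;\le\;\frac{C}{\rho-\rho'}\,\|\psi_j\|_\rho,
\end{equation*}
so that, using the hypothesis $\psi\in\A_{\rho,\underline c,\Gamma}$,
\begin{equation*}
|D\psi_j(\theta)|\;\le\;\frac{C}{\rho-\rho'}\,\|\psi\|_{\rho,\underline c,\Gamma}\,\min_{k=1,\dots,R}\Gamma(j-c_k).
\end{equation*}
Next, since $M=\T^l\times\RR^{2d-l}$ carries a global chart and the values $\psi_j(\theta)$ lie in a bounded subset of $M^{\CC}$ (in fact, with diameter shrinking with $|j|$), the matrix of $\Omega$ at $\psi_j(\theta)$ is uniformly bounded by some $C_\Omega$. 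Combining these estimates term by term,
\begin{equation*}
\bigl|\Omega_{\psi_j(\theta)}(D\psi_j\xi,D\psi_j\eta)\bigr|\;\le\;C'\,|\xi|\,|\eta|\,\Bigl(\min_{k}\Gamma(j-c_k)\Bigr)^{2}.
\end{equation*}

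For the summability, I would use that property (1) of Definition~\ref{defDecay} forces $\Gamma(i)\le 1$ for every $i\in\ZZ^N$, hence $\Gamma^2\le\Gamma$; then
\begin{equation*}
\sum_{j\in\ZZ^N}\Bigl(\min_{k}\Gamma(j-c_k)\Bigr)^{2}\;\le\;\sum_{k=1}^R\sum_{j\in\ZZ^N}\Gamma(j-c_k)^2\;\le\;\sum_{k=1}^R\sum_{j\in\ZZ^N}\Gamma(j-c_k)\;\le\;R.
\end{equation*}
This shows the series defining $\psi^*\Omega_\infty$ converges absolutely and uniformly in $\theta\in D_{\rho'}$, with a bound independent of the truncation. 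Bilinearity and antisymmetry in $(\xi,\eta)$ are preserved termwise, and analyticity of the limit in $\theta$ follows from the uniform convergence together with the analyticity of every partial sum (Weierstrass). This produces the desired two-form on $\T^l$.

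The only mildly delicate point is the interpretation of $|\psi_j(\theta)|$ and of $\Omega_{\psi_j(\theta)}$ on the non-simply-connected factor $\T^l$ of $M$. This is handled by the same device used throughout the paper: one works in the universal cover $\tilde M=\RR^l\times\RR^{2d-l}$, where the symplectic form lifts to a smooth, bounded, and $\ZZ^l$-periodic matrix-valued function, so that the pointwise bound on $\Omega$ at $\psi_j(\theta)$ is uniform regardless of the coordinate representative chosen. With this setup, the estimates above go through verbatim and the lemma follows.
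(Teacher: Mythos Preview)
Your approach is essentially the same as the paper's (this is the special case $k=2$ of Lemma~\ref{lemtrueform} in Appendix~\ref{sec:symplectic}): bound each component $D\psi_j$ by a decay factor, use a uniform bound on the symplectic matrix, and then sum using $\sum_j\Gamma(j)\le 1$.

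There is, however, one concrete slip. From the definition
\[
\|\psi\|_{\rho,\underline c,\Gamma}=\sup_{j}\min_{k}\Gamma^{-1}(j-c_k)\,\|\psi_j\|_\rho,
\]
one deduces $\|\psi_j\|_\rho\le\|\psi\|_{\rho,\underline c,\Gamma}\,\max_k\Gamma(j-c_k)$, not $\min_k\Gamma(j-c_k)$ as you wrote. Your inequality is too strong and is false whenever the centers are distinct. Fortunately the rest of the argument is unaffected: with the correct factor $\max_k\Gamma(j-c_k)$, the summation step still goes through via
\[
\sum_{j\in\ZZ^N}\Bigl(\max_k\Gamma(j-c_k)\Bigr)^2\;\le\;\sum_{j\in\ZZ^N}\max_k\Gamma(j-c_k)\;\le\;\sum_{k=1}^R\sum_{j\in\ZZ^N}\Gamma(j-c_k)\;\le\;R,
\]
using $\Gamma\le 1$ exactly as you did. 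This is precisely the estimate the paper uses.
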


\subsection{Diophantine properties} 
\label{sec:diophantine} 
KAM relies on approximation properties of the frequencies
by rational numbers.
In this section, we recall some well known  notions. For diffeomorphisms, the relevant notion of 
Diophantine properties 
is given by the following

\begin{defi}\label{rotVect}
Given $\kappa>0$ and $\nu \geq l$, we define $D(\kappa,\nu)$ as
the set of frequency vectors $\omega \in \real^l$ satisfying the
Diophantine condition:

\begin{equation*}
|\omega \, \cdot \,k-n|^{-1} \leq \kappa
|k|^{\nu},\,\,\,\,\,\,\mbox{for all $k\in\integer^l-\left \{
0\right \}$ and $n\in \integer$}
\end{equation*}
with $|k|=|k_1|+\dots +|k_l|$, where $k_i$ are the coordinates of
$k$.
\end{defi}

For vector fields, one uses the following
\begin{defi}\label{rotVF}
Given $\kappa>0$ and $\nu \geq l-1$, we define $D_h(\kappa,\nu)$
as the set of frequency vectors $\omega \in \real^l$ satisfying
the Diophantine  condition:
\begin{equation*}
|\omega \, \cdot \,k|^{-1} \leq \kappa
|k|^{\nu},\,\,\,\,\,\,\mbox{for all $k\in\integer^l-\left \{
0\right \}$},
\end{equation*}
where $|k|=|k_1|+\dots +|k_l|$.
\end{defi}

Given $f \in L^1(\torus^l)$, we denote
\begin{equation*}
\mbox{$\avg(f)$}=\int_{\torus^l}f(\th)d\th.
\end{equation*}

We also denote  by $T_\omega$ the  rotation on  
$\torus^l$  by  $\omega$ :
\begin{equation*}
T_\omega(\th)=\th+\omega.
\end{equation*}

In Section~\ref{sec:diophantineinfinite} we will discuss extensions of 
these definitions to infinite dimensional vectors which are well 
adapted to our applications.

\section{Formulation of the results}

We will
first obtain a translated tori result, i.e. a KAM theorem for
parameteri\-zed families of maps $F_\lambda$ which are symplectic
for all $\lambda$ and such that $F_0$ is {\sl exact} symplectic.
This will allow us to avoid the considerations of vanishing of 
averages at each stage of the iteration. Then, we will prove a 
simple vanishing lemma (see Section~\ref{secvanishing} )
that shows that the added extra parameter vanishes. 
This yields to the desired invariant tori theorem. Going through translated curve theorems has become quite standard 
in KAM theory (see \cite{Moser67, Russmann76})
especially since \cite{Sevryuk99} pointed out that it 
deals with very degenerate situations. In our case, it is 
particularly advantageous since the parameters we need are 
finite dimensional and it avoids many infinite dimensional considerations.

The problem is the following:  given an exact symplectic map $F$
and  a vector of frequencies $\omega \in D(\kappa,\nu)$ we wish to
construct an invariant torus for $F$ such that the dynamics of $F$
restricted on it is conjugated to  the translation $T_\omega$. To
this end, we search for an embedding $K: D_\rho \supset \torus^l
\rightarrow \M$ in
$\mathcal{A}_{\rho,\underline{c},\Gamma}$ such that for all
$\th \in D_\rho$, $K$ satisfies the functional equation
\eqref{invarmap1}.

Notice that if \eqref{invarmap1} is satisfied,
the image under $F$ of a point in the range of
$K$ will also be in the range of $K$.
If the range of $DK(\theta)$ is $l$-dimensional 
for all $\theta$, then $K(\T^l)$  is an $l$-dimensional 
invariant torus.
(Similarly, the geometric interpretation of 
\eqref{invarflow1} is that the vector field $X$ at a 
point in the range of $K$ is tangent to the range of $K$.)

The assumptions are that we are given a mapping $K$ that satisfies
\eqref{invarmap1} up to a very small error and that fullfills some
non-degeneracy assumptions. We prove that the embedding $K$ exists
and also that the solution is unique up to composition on the
right with translations.

Actually we are going to prove a more general result which works
for parameterized families of symplectic maps $F_\lambda$, such
that $F_0$ is exact symplectic, but only provides translated (and
not invariant) tori. That is, given $\omega\in D (\kappa,\nu)$
and an approximate solution $K$ of $F_{\lambda_0}\circ K-K\circ
T_\omega=0$ satisfying a set of non-degeneracy conditions, we
search for an embedding $K: D_\rho \supset \torus^l \rightarrow
\M$ in $\mathcal{A}_{\rho,\underline{c},\Gamma}$ such
that
\begin{equation}\label{translatedloc}
F_{\lambda} \circ K =K\circ T_{\omega}
\end{equation}
for some $\lambda$ close to $\lambda_0$. The geometric interpretation of the 
invariance equations is illustrated in 
Figure~1. 
\begin{figure}
\begin{center} 
\includegraphics[height = 2.0 in]{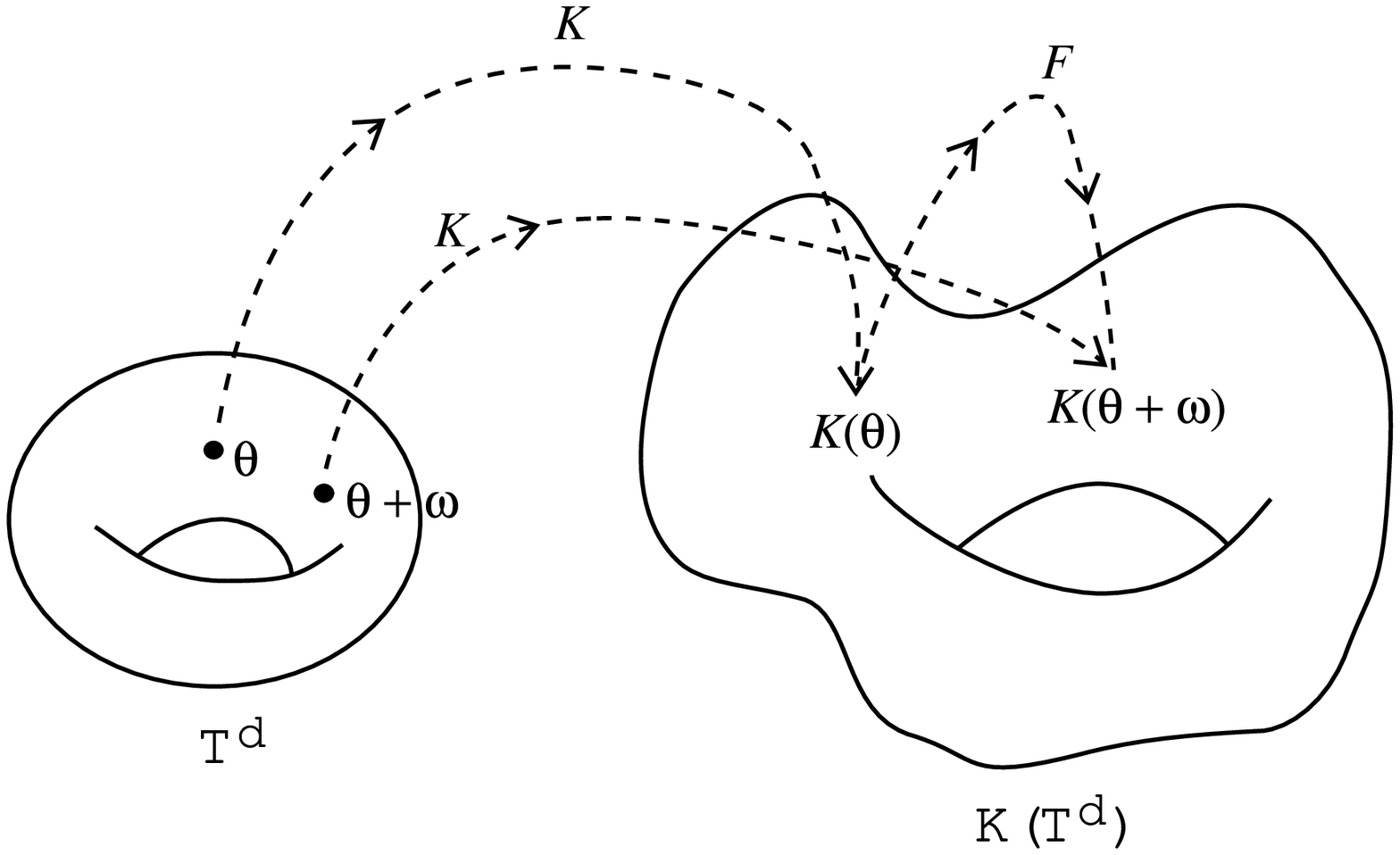}

\includegraphics[height = 2.0 in]{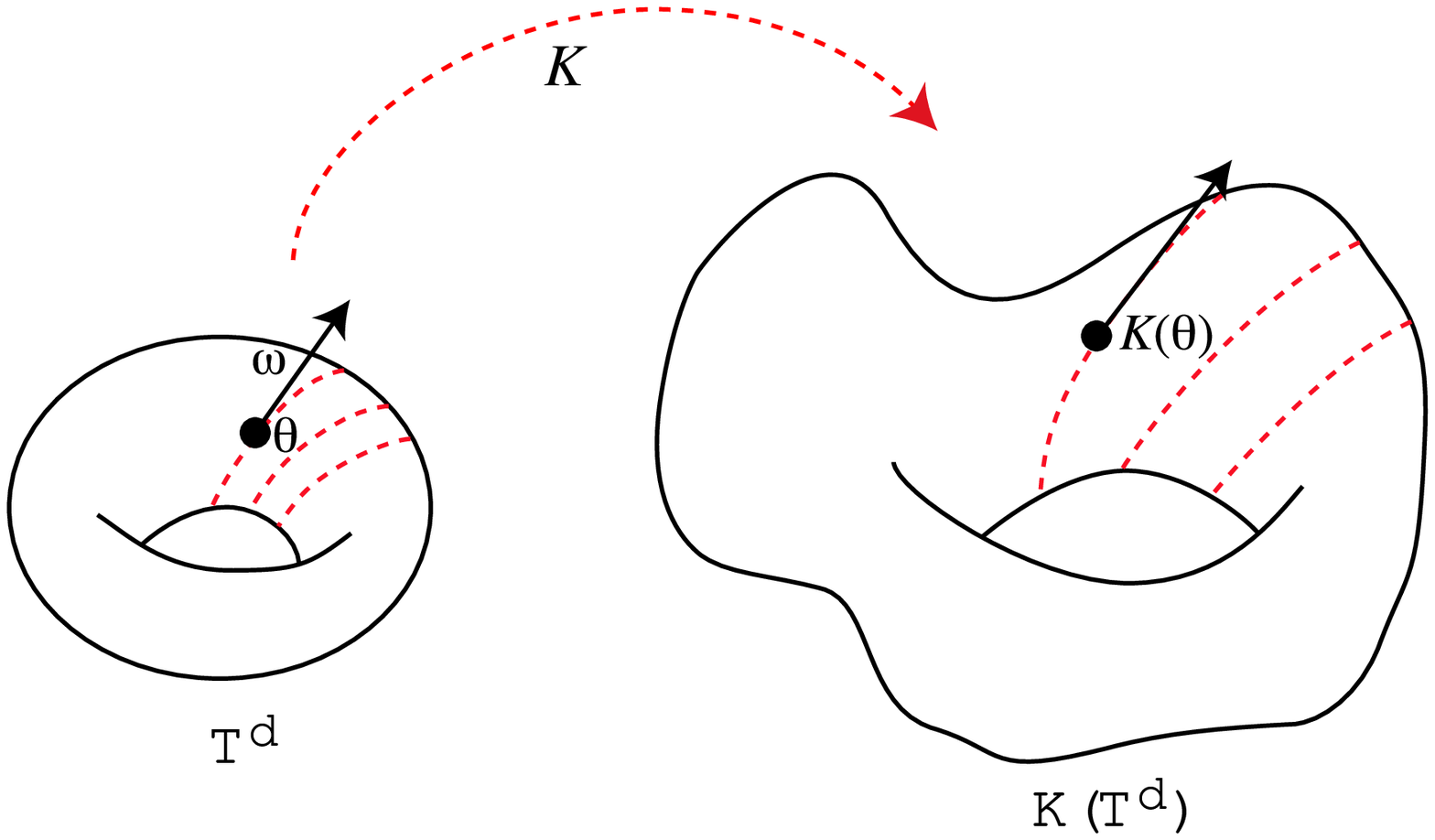}

\caption{Illustration of the invariance 
equations \eqref{invarmap1}, \eqref{invarflow1}. }
\end{center}
\end{figure} 

We go through a Newton scheme to prove the existence of such a
pair $(\lambda,K)$. To this end, we introduce the operator
$\mathcal{F}_{\omega}$
\begin{equation*}
\mathcal{F}_{\omega}(\lambda,K)=F_\lambda\circ K -K\circ
T_{\omega}.
\end{equation*}

In the paper \cite{FontichLS09}, the authors constructed 
invariant tori using {\sl a posteriori} KAM theorems
in finite dimensional systems. The general principles of this
method remain valid in some infinite dimensional systems such as
lattices. We first introduce some notations and several non-degeneracy
conditions.

\begin{defi}\label{NDloc-hyp} 
Consider $\rho>0,\ \omega\in\R^l,\ \underline c=(c_1,\dots, c_R)\in
(\Z^N)^R,\ R\ge 1,$ a decay function $\Gamma,\ \lambda \in\R^l$ and $F_\lambda :\M\to \M$ be a $C^2_\Gamma$ map. 

We say that $K:D_\rho\to \M \in \mathcal A_ {\rho,\underline{c}, \Gamma}$  is a whiskered embedding 
for $F_\lambda$ when 
we have:

The tangent space 
$T_{K(\th)}\M$ has an invariant analytic splitting for all $\th \in D_\rho$
\begin{equation}\label{splittingReseau}
T_{K(\th)}\M=\mathcal{E}^s_{{K(\th)}}\oplus
\mathcal{E}^c_{{K(\th)}}\oplus \mathcal{E}^u_{{K(\th)}}\,,
\end{equation}
where $\mathcal{E}^s_{{K(\th)}}$, $\mathcal{E}^c_{{K(\th)}}$ and
$\mathcal{E}^u_{{K(\th)}}$ are the stable, center and unstable
invariant spaces respectively, 
which satisfy: 
\begin{itemize} 
\item 
The projections $\Pi^s_{K(\th)}$, $\Pi^c_{K(\th)}$ and
$\Pi^u_{K(\th)}$ associated to this splitting are analytic with
respect to $\th$ considered as operators in $\L_\Gamma(\ell^\infty(\Z^N))$.

\item
The splitting \eqref{splittingReseau} is characterized by
asymptotic growth conditions (co-cycles over $T_{\omega}$): there
exist $0<\mu_1,\mu_2 <1$, $\mu_3 >1$ such that
$\mu_1 \mu_3 <1$, $\mu_2 \mu_3 <1$ and $C_h>0$
such that for all $n \geq 1$, $\th \in D_\rho$ and $\lambda \in \mathbb{R}^l$
\begin{equation}\label{ndeg1loc}
\begin{split}
\|DF_{\lambda}&\circ K\circ T^{n-1}_{\omega}\times\dots \times DF_{\lambda}\circ K
v\|_{\rho,\underline{c}, \Gamma} \leq C_h\mu_1^n \|v\|_{\rho,\underline{c},\Gamma}\\
&\iff v \in \mathcal{E}^s_{{K(\th)}}
\end{split}
\end{equation}
and
\begin{equation}\label{ndeg2loc}
\begin{split}
\|DF_{\lambda}^{-1}&\circ K\circ T^{-(n-1)}_{\omega}\times\dots \times
DF_{\lambda}^{-1}\circ K v\|_{\rho,\underline{c},\Gamma} \leq C_h\mu_2^n
\|v\|_{\rho,\underline{c},\Gamma}\\
&\iff v \in \mathcal{E}^u_{{K(\th)}}.
\end{split}
\end{equation}

\item
The center subspace
$\mathcal{E}_{K(\th)}^c$ is {\sl finite dimensional,} has
dimension $2l$ and it is characterized by: 
\begin{equation}\label{ndeg3loc}
\begin{split}
\|DF_{\lambda}&\circ K\circ T^{n-1}_{\omega}(\th)\times\dots \times
DF_{\lambda}\circ K(\th) v\|_{\rho,\underline{c},\Gamma} \leq C_h\mu_3^n \|v\|_{\rho,\underline{c},\Gamma}\\
\|DF_{\lambda}^{-1}&\circ K\circ T^{-(n-1)}_{\omega}(\th)\times\dots \times
DF_{\lambda}^{-1}\circ K(\th) v\|_{\rho,\underline{c},\Gamma} \leq C_h\mu_3^n \|v\|_{\rho,\underline{c},\Gamma}\\
&\iff v \in \mathcal{E}^c_{{K(\th)}}.
\end{split}
\end{equation}
\end{itemize} 
\end{defi}

It is important for applications that 
the spectral condition in Definition~\ref{NDloc-hyp} 
is implied by a condition 
that can be verified by 
a finite calculation (see Definition~\ref{NDloc2} below).
Approximate invariance of the splitting is sufficient (see Proposition~\ref{NDmoveloc} below) to ensure there is a truly invariant splitting. 
So,  the final version of our results will have 
as a hypothesis the existence of approximately invariant tori 
with approximately invariant splitting (Definition~\ref{NDloc2}). 
The final version of the results will 
have as a  conclusion the existence of exactly invariant tori 
with exactly invariant splittings (Definition~\ref{NDloc-hyp}).

\begin{defi}\label{NDloc2}
Consider $\rho>0,\ \omega\in\R^l,\ \underline c=(c_1,\dots, c_R)\in
(\Z^N)^R,\ R\ge 1,$ a decay function $\Gamma,\ \lambda \in\R^l$ and $F_\lambda :\M\to \M$ be a $C^2_\Gamma$ map. 

We say that  $\tilde K:D_\rho\to \M \in \mathcal A_ {\rho,\underline{c}, \Gamma}$  satisfies the $\eta$-hyperbolic condition (or has an $\eta-$invariant splitting), 
if there exists an analytic  splitting
of $T_{\tilde{K}(\torus^l)}\mathcal{M}$,
\begin{equation}\label{spscu}
T_{\tilde{K}(\th)}\mathcal{M}=\mathcal{E}^s_{{\tilde{K}(\th)}}\oplus
\mathcal{E}^c_{{\tilde{K}(\th)}}\oplus
\mathcal{E}^u_{{\tilde{K}(\th)}}
\end{equation}
such that, denoting
$\Pi^{s,c,u}_{\tilde K(\th)}$ be the corresponding projections, we have 
\begin{enumerate}
\item The splitting is approximately invariant under the co-cycle 
$DF \circ{ \tilde K}$ over $T_\omega$ in the sense that
$$
\dist \Big(DF_\lambda(\tilde K(\th)) \mathcal{E}^{s,c,u}_{\tilde
K(\th)}, \mathcal{E}^{s, c, u}_{\tilde K(\th + \omega )}\Big) <
\eta.$$
\item There exists $N\in \NN, 0<\tilde\mu_1, \tilde\mu_2< 1$ and $\tilde\mu_3 >
1$  such that $\tilde\mu_1, \tilde\mu_3 < 1,\ \tilde\mu_2
\tilde\mu_3 < 1$ and
\begin{equation}\label{ndeg1Iterloc}
\begin{split}
\|DF_{\lambda}&\circ \tilde{K}\circ
T^{N-1}_{\omega}(\th)\times\dots \times
DF_{\lambda}\circ \tilde{K}(\th) v\|_{\rho,\underline{c},\Gamma}
\leq \tilde{\mu}_1^N \|v\|_{\rho,\underline{c},\Gamma},\\
&\forall v \in \mathcal{E}^s_{{\tilde{K}(\th)}},
\end{split}
\end{equation}
\begin{equation}\label{ndeg2Iterloc}
\begin{split}
\|DF_{\lambda}^{-1}&\circ \tilde{K}\circ
T^{-(N-1)}_{\omega}(\th)\times\dots \times DF_{\lambda}^{-1}\circ
\tilde{K}(\th) v\|_{\rho,\underline{c},\Gamma}
 \leq \tilde{\mu}_2^N \|v\|_{\rho,\underline{c},\Gamma},\\
&\forall v \in \mathcal{E}^u_{{\tilde{K}(\th)}}
\end{split}
\end{equation}
and
\begin{equation}\label{ndeg3Iterloc}
\begin{split}
\|DF_{\lambda}&\circ \tilde{K}\circ
T^{N-1}_{\omega}(\th)\times\dots \times
DF_{\lambda}\circ \tilde{K}(\th) v\|_{\rho,\underline{c},\Gamma}
 \leq \tilde{\mu}_3^N \|v\|_{\rho,\underline{c},\Gamma}\\
\|DF_{\lambda}^{-1}&\circ \tilde{K}\circ
T^{-(N-1)}_{\omega}(\th)\times\dots \times
DF_{\lambda}^{-1}\circ \tilde{K}(\th) v\|_{\rho,\underline{c},\Gamma}
 \leq \tilde{\mu}_3^N \|v\|_{\rho,\underline{c},\Gamma}\\
&\forall v \in \mathcal{E}^c_{{\tilde{K}(\th)}}.
\end{split}
\end{equation}
\end{enumerate}
\end{defi}

\begin{remark} 
Note that in Definition~\ref{NDloc2} we are using that the 
phase space is Euclidean. On a general manifold, 
the products used in \eqref{ndeg1Iterloc}, 
\eqref{ndeg2Iterloc}, 
\eqref{ndeg3Iterloc} cannot be defined because, in general, 
$DF(x): T_x \M \rightarrow T_{F(x)}\M$. Hence, in a general manifold,
if 
$F\circ K(\th) \ne  K(\th + \omega)$, 
we cannot define $DF \circ K(\th + \omega) DF\circ K(\th)$. 
In \cite{FontichLS09}  one can find 
a definition of approximately invariant cocycles
for general manifolds. In this paper, we will not consider 
such generality. 
\end{remark}

We will define 
\[ 
\Omega^c_{K(\th)} = \Omega|_{\E^c_{K(\th)}} \quad \forall \theta \in \torus^\ell
\]
and we introduce the symplectic linear map $J^c(K(\th)): 
\E^c_{K(\th)} \rightarrow \E^c_{K(\th)}$ by 
\[
\Omega^c_{K(\th)}(u,v) = 
\langle u, J^c_{K(\th)} v \rangle \quad \forall u,v \in \E^c_{K(\th)}.
\]
Obviously, we have $J^c(K(\th))^\top = -J^c(K(\th))$. 
We also have (See Lemma~\ref{lem:restriction}) that 
$\Omega^c$ is non-degenerate, hence $J^c$ is invertible.

\begin{defi}\label{NDloc}
Given $\rho>0,\ \omega\in\R^l,\ \underline c=(c_1,\dots, c_R)\in
(\Z^N)^R,\ R\ge 1,$ a decay function $\Gamma,\ \lambda \in\R^l$ and
an embedding $K:D_\rho\rightarrow\M \in \mathcal A_{\rho,\underline{c},\Gamma}$, a pair $(\lambda,K)$ is said
to be non-degenerate (and we denote $(\lambda,K) \in
ND_{loc}(\rho,\Gamma)$) if it satisfies the following conditions
\begin{itemize}
\item {\sl Non degeneracy of the embedding: } 
We have that the $l \times l$ matrix
$
DK^{\top}(\th) DK(\th) 
$ 
is invertible for all $\th$ in $D_{\rho}$. 
We denote $N(\th) = \big( DK^\top(\th) DK(\th)\big)^{-1}$ 
and we assume that 
\[
\| N\|_{\rho,\Gamma} < \infty
\]
\item {\sl Twist condition:} let $P(\th)= DK(\th)N(\th)$. 

The average
  on $\torus^l$ of the $l \times l-$matrix
\begin{equation}\label{Adefined}
A_\lambda(\th)=P(\th+\omega)^\top\Big
([DF_{\lambda}(K) (J^c\circ K)^{-1}P](\th) - [(J^c\circ K)^{-1}P](\th+\omega)\Big
)
\end{equation}
is non-singular.

\item{\sl Parameter cohomological non-degeneracy:} 
The average
  on $\torus^l$ of the $l \times l-$matrix
\begin{equation}
\label{Qdefined}
Q_{\lambda}(\th)=
\Big((DK^\top(\omega+\th) J^c(K(\omega+\th)) \frac{\partial F_\lambda (K(\th))}{\partial \lambda}\Big)
\end{equation}
is non-singular.

\end{itemize}
\end{defi}

It is clear that  the meaning of $\| N\|_{\rho,\Gamma}$ is a measure of 
the quality of the embedding. It grows if the embedding comes close 
to having a singularity. 
During the proof  it will become clear that the meaning of 
$A_\lambda$ is the change of the rotation when we move in the 
direction transversal to the torus. 
As we will see in calculations, the meaning of 
the invertibility of the average of $Q_\lambda$ is that, 
by changing $\lambda$, we can adjust the obstructions to the 
cohomology equations.

For applications, it is important to note that the non-degeneracy
hypothesis only depend on the approximate solution considered and 
that they are readily computable algebraic expressions. 
They are quite analogous to the condition numbers in numerical 
analysis.
\medskip

First we state our main theorem, which provides the existence of a
solution $(\lambda,K)$ to the functional equation
\eqref{translatedloc}. This is the translated tori KAM theorem.

\begin{thm}\label{existencetranslatedloc}
Let $F_{\lambda}:\M \rightarrow \M$ be a family
of symplectic maps parameterized by $\lambda \in \mathbb{R}^l$,
$\omega \in D(\kappa,\nu)$ for some $\kappa>0, \nu\ge l$,
$\rho_0>0,\ \Gamma$ a decay function and
$\underline{c}=(c_1,\dots , c_R) \in (\ZZ^N)^R$. Assume we have
$\lambda_0\in\R^l$ and $K_0:D_\rho \supset \T^l\rightarrow\M$
satisfying the following hypotheses
\begin{itemize}
\item
For all $\lambda \in
\RR^l$, the maps $F_\lambda$ belong to $C^2_\Gamma$ and satisfy
$\sup_{i \in \ZZ^N}\Gamma^{-1}(i)(F_\lambda(0))_i <\infty$. 

\item The map $F_{\lambda}$ is real analytic and it
can be extended holomorphically to some complex neighborhood of
the image under $K_0$ of $D_{\rho_0}$:
\begin{equation*}
B_r=\left \{ z \in \M |\; \exists \th \quad s.t. \quad
|\rm{Im }\,\, \th| < \rho_0 \ , \ |z-K_0(\th)| <r \right \},
\end{equation*}
for some $r>0$ and
such that $\|DF_{\lambda}\|_{C^2_\Gamma(B_r)}$ is finite.

\item $(\lambda_0,K_0) \in ND_{loc}(\rho_0,\Gamma)$ i.e ,  the
embedding $K_0$ is  non-\-dege\-nerate in the 
sense of Definition~\ref{NDloc}. 

\item
The embedding $K_0$ is $\eta_0$-hyperbolic 
in the sense of Definition~\ref{NDloc2} with $\eta_0$ sufficiently small 
(depending on $\| \Pi^{s,c,u}\|_{\rho_0, \underline{c}, \Gamma}$, 
$\mu_{1,2,3}$, $N$, $\| F\|_{C^2_\Gamma(B_r)}$). 
\end{itemize}

Define the error $E_0$ by
\begin{equation*}
E_0=F_{\lambda_0}\circ K_0-K_0\circ T_{\omega}.
\end{equation*}
Denote also 
\def\tep{{\tilde \ep}}
\[
\tep = \max( \| E_0 \|_{\rho_0, \underline c, \Gamma}, \eta_0). 
\]

There exists a constant $C>0$ depending on $l$, $\kappa$, $\nu$,
$\rho_0$, $\|DF_{\lambda}\|_{C_\Gamma^2(B_r)}$,
$\|DK_0\|_{\rho_0,\underline{c},\Gamma}$, $\|N_0\|_{\rho_0}$,
$\|\frac{\partial F_{\lambda}(K_0)}{\partial
  \lambda}\|_{\rho_0,\underline{c},\Gamma}$,
$\|A_{\lambda_0}^0\|_{\rho_0}$, $|\avg (A_{\lambda_0}^0)|^{-1}$,
$|\avg (Q_{\lambda_0})|^{-1}$ (where $A_{\lambda_0}^0$,
$Q_{\lambda_0}^0$ and
$N_0$ are as in Definition \ref{NDloc},
replacing $K$ with $K_0$)
and on $\|\Pi^{c,s,u}_{K_0(\th)}\|_{\rho_0,\Gamma}$ such that, if
for some $\delta$, 
 $0 < \delta <\min(1,\rho_0/12)$, we
have the following conditions satisfied  
\begin{equation*}
C\kappa^4 \delta^{-4\nu} 
\tep < 1
\end{equation*}
and
\begin{equation*}
C\kappa^2 \delta^{-2\nu} \tep <r
\end{equation*}

then, there exist
an embedding $K_{\infty} \in
ND_{loc}(\rho_{\infty}=\rho_0-6\delta,\Gamma)$ and a vector
$\lambda_{\infty} \in \mathbb{R}^l$ such that
\begin{equation}\label{transSolloc}
F_{\lambda_{\infty}}\circ K_{\infty}=K_{\infty}\circ T_{\omega}.
\end{equation}
Furthermore, we have the following estimates
\begin{equation}\label{changeestimates}
\begin{split}
&\|K_{\infty}-K_0\|_{\rho_{\infty},\underline{c},\Gamma} \leq C
\kappa^{2} \delta^{-2\nu} \tep,\\
&|\lambda_0-\lambda_{\infty}| < C \kappa^{2} \delta^{-2\nu}
\tep . 
\end{split}
\end{equation}

Additionnally, we have that the invariant embedding $K_\infty$ 
admits invariant splittings, satisfying Definition \eqref{NDloc}. 

Denoting the non-degeneracy constants corresponding to 
$K_\infty$ by index $\infty$, we have: 
\begin{equation} \label{hyperbolicitychange} 
\begin{split}
& \| \Pi^{s,c,u}_\infty \circ K_\infty 
-  \Pi^{s,c,u} \circ K_0  \|_{\rho_\infty,  \Gamma} 
\le
C \kappa^{2} \delta^{-2\nu}\tep, \\
& |\mu_{1,2,3}^\infty - \mu_{s,c,u} | \le
C \kappa^{2} \delta^{-2\nu}\tep
\end{split} 
\end{equation} 
and
\begin{equation} \label{degeneracychange} 
\begin{split}
&|\| N_0\|_{\rho_0} - \| N_\infty\|_{\rho_\infty} | \le 
C \kappa^{2} \delta^{-2\nu}\tep, \\
&|\| A_{\lambda_0}^0\|_{\rho_0} - \| A_{\lambda_\infty}^\infty\|_{\rho_\infty} | \le 
C \kappa^{2} \delta^{-2\nu}\tep ,\\
&|\| Q_{\lambda}^0\|_{\rho_0, \Gamma} - \| Q_{\lambda_\infty}^\infty\|_{\rho_\infty, \Gamma} | \le 
C \kappa^{2} \delta^{-2\nu}\tep .\\
\end{split}
\end{equation} 
\end{thm}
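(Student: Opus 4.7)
The plan is to prove Theorem~\ref{existencetranslatedloc} by a quadratically convergent Nash--Moser iteration in the scale of Banach spaces $\mathcal A_{\rho,\underline c,\Gamma}$, with $\rho$ decreasing along the iteration. At each step I would start from a pair $(\lambda_n,K_n)$ with error $E_n = F_{\lambda_n}\circ K_n - K_n\circ T_\omega$, produce corrections $\Delta_n$ and $\sigma_n$ by solving a linearized version of the invariance equation, and verify that the new error is bounded by a constant times $\|E_n\|^2$ with a controlled loss in the analyticity width $\delta$. A geometric schedule $\delta_n = \delta/2^n$ then yields convergence to $(\lambda_\infty,K_\infty)$ in $\mathcal A_{\rho_\infty,\underline c,\Gamma}$ with $\rho_\infty = \rho_0 - 6\delta$.

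The heart of the argument is the construction and inversion of the linearized operator
\begin{equation*}
\mathcal L_n(\Delta,\sigma) = DF_{\lambda_n}\circ K_n\,\Delta \;-\; \Delta\circ T_\omega \;+\; \frac{\partial F_{\lambda_n}}{\partial \lambda}\circ K_n\,\sigma.
\end{equation*}
Using the approximately invariant splitting from Definition~\ref{NDloc2}, which I would first upgrade to a truly invariant splitting via an application of Proposition~\ref{NDmoveloc}, I project the equation $\mathcal L_n(\Delta,\sigma) = -E_n$ onto the stable, unstable and center subbundles. On $\mathcal E^{s,u}$ the cocycle bounds \eqref{ndeg1Iterloc}--\eqref{ndeg2Iterloc} make $\mathcal L_n$ invertible by summing the series dictated by the hyperbolic dichotomy, and thanks to the Banach-algebra property $\Gamma\ast\Gamma\le \Gamma$ from Definition~\ref{defDecay}(2) the resulting inverse is bounded in $\|\cdot\|_{\rho,\underline c,\Gamma}$ with constants independent of $R$ and of the positions of the centers $c_1,\dots,c_R$. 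The center subbundle is $2l$-dimensional, and here I would invoke the \emph{automatic reducibility} strategy of \cite{FontichLS09}: $DK$ itself furnishes an $l$-dimensional approximately invariant subspace, and completing with $(J^c\circ K)^{-1}P$ for $P=DK\,N$ yields a symplectic frame of $\mathcal E^c$ in which the restricted cocycle is block upper triangular up to an error of order $\|E_n\|$. In this frame the center equation reduces to two small-divisor equations of the form $\varphi\circ T_\omega - \varphi = \eta$, solvable by standard R\"ussmann estimates provided $\avg(\eta)=0$.

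The obstruction arising from the zero-average condition in the second cohomology equation is killed by choosing $\sigma_n$ via the invertibility of $\avg(Q_{\lambda_n})$ in \eqref{Qdefined}; the obstruction in the first is absorbed by exploiting the twist condition, namely the invertibility of $\avg(A_{\lambda_n})$ in \eqref{Adefined}. Combining the hyperbolic inversion, the two cohomology solves and the $\sigma$-adjustment I obtain the tame estimate $\|\Delta_n\|_{\rho-\delta,\underline c,\Gamma} + |\sigma_n| \le C\kappa^2\delta^{-2\nu}\,\tep_n$. A second-order Taylor expansion of $F_{\lambda_n+\sigma_n}\circ(K_n+\Delta_n)$, together with the bound on $DF$ in $C^2_\Gamma(B_r)$, then gives $\tep_{n+1} \le C\kappa^4\delta^{-4\nu}\tep_n^2$. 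Standard KAM bookkeeping with the schedule $\delta_n=\delta/2^n$ shows that the smallness hypothesis is preserved under iteration and produces a Cauchy sequence whose limit satisfies \eqref{transSolloc}, with \eqref{changeestimates} following by summing the geometric series. The bounds \eqref{hyperbolicitychange}--\eqref{degeneracychange} are then obtained by propagating the same inductive estimates through the algebraic expressions defining $N$, $A$, $Q$, and by using once more Proposition~\ref{NDmoveloc} to track the projections $\Pi^{s,c,u}_{K_n(\th)}$ and the exponents $\mu_{1,2,3}$.

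I expect the main obstacle to be not the iteration itself but verifying, at every step, that all operator-valued objects stay in $\mathcal L_\Gamma(\ell^\infty(\Z^N))$ with $\Gamma$-norms that are genuinely independent of the number and location of the excited sites. In particular the projections $\Pi^{s,c,u}_{K_n(\th)}$, the inverse hyperbolic operators built from the cocycle, and the matrix elements arising in the reduction of $\mathcal E^c$ must all be controlled in the decay norm; this is delicate because, as pointed out in Section~\ref{sec:preliminaries}, the dual of $\ell^\infty$ is pathological and operators need not be reconstructible from their partial derivatives. The precise purpose of the framework of decay functions from Definition~\ref{defDecay} and the Banach-algebra inequality \emph{(2)} is precisely to make the estimates behave as in finite dimensions, and the technical content of the proof is to verify in each step that the decay property is indeed transmitted by the constructions performed.
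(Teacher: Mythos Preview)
Your proposal is correct and follows essentially the same approach as the paper: a Nash--Moser iteration in the scale $\mathcal A_{\rho,\underline c,\Gamma}$, with Proposition~\ref{NDmoveloc} used to pass from the approximately invariant splitting to a genuinely invariant one, exact inversion on the hyperbolic bundles via the cocycle series controlled by the Banach-algebra property of $\Gamma$, automatic reducibility on $\mathcal E^c$ through the frame $\tilde M = [DK,\,(J^c\circ K)^{-1}DK\,N]$ yielding two small-divisor equations whose obstructions are absorbed by $\avg(Q_\lambda)$ and $\avg(A_\lambda)$, and then standard KAM bookkeeping with $\delta_n=\delta/2^n$. Your identification of the main technical issue---propagating the $\Gamma$-decay and $\underline c$-independence through every operator in the iteration---is exactly what the paper labors over in Sections~4 and~5.
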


The previous theorem will allow us to construct increasingly complicated 
solutions, the solutions of one stage being an approximate 
solution for the next stage. We will however be able to maintain 
enough control of the non-degeneracy conditions. 

Of course, \eqref{degeneracychange} is an easy consequence of 
\eqref{changeestimates} since the objects that enter 
in the degeneracy estimates are  algebraic expressions of 
$K$.

We now come to the result on the existence of invariant tori. They
correspond to localized quasi-periodic orbits on the manifold
$\M$. These orbits are known as ``breathers".

\begin{thm}\label{existenceembedloc}
Let $F_{\lambda}:\M \rightarrow \M$ be a family
of symplectic maps parameterized by $\lambda \in \mathbb{R}^l$,
$\omega \in D(\kappa,\nu)$ for some $\kappa>0, \nu\ge l$,
$\rho_0>0,\ \Gamma$ a decay function and
$\underline{c}=(c_1,\dots , c_R) \in (\ZZ^N)^R$. Assume we have
$\lambda_0\in\R^l$ and $K_0:D_\rho \supset \T^l\rightarrow\M$
satisfying the following hypotheses
\begin{itemize}
\item The map $F_{\lambda_0}$ is exact symplectic and $F_{\lambda_0}(0)=0$. 
\item
For all $\lambda \in
\RR^l$, the maps $F_\lambda$ belong to $C^2_\Gamma$ and satisfy
$\sup_{i \in \ZZ^N}\Gamma^{-1}(i)(F_\lambda(0))_i <\infty$. 
\item The map $F_{\lambda}$ is real analytic and it
can be extended holomorphically to some complex neighborhood of
the image under $K_0$ of $D_{\rho_0}$:
\begin{equation*}
B_r=\left \{ z \in \M |\; \exists \th \quad s.t.  \quad
|\rm{Im }\,\, \th| < \rho_0 \ , \ |z-K_0(\th)| <r \right \},
\end{equation*}
for some $r>0$ and
such that $\|DF_{\lambda}\|_{C^2_\Gamma(B_r)}$ is finite.

\item $(0,K_0) \in ND_{loc}(\rho_0,\Gamma)$ i.e ,  the
embedding $K_0$ is  non-\-dege\-nerate in the 
sense of Definition\ref{NDloc}. 

\item
The embedding $K_0$ is $\eta_0$-hyperbolic 
in the sense of Definition~\ref{NDloc2} with $\eta_0$ sufficiently small 
(depending on $\| \Pi^{s,c,u}\|_{\rho_0,  \Gamma}$, 
$\mu_{1,2,3}$, $N$, $\| F\|_{C^2_\Gamma(B_r)}$). 
\end{itemize}
Define the error $E_0$ by
\begin{equation*}
E_0=F_{\lambda_0}\circ K_0-K_0\circ T_{\omega}.
\end{equation*}
Denote also 
\def\tep{{\tilde \ep}}
\[
\tep = \max( \| E_0 \|_{\rho_0, \underline c, \Gamma}, \eta_0). 
\]

There exists a constant $C>0$ depending on $l$, $\kappa$, $\nu$,
$\rho_0$, $\|DF_{\lambda}\|_{C_\Gamma^2(B_r)}$,
$\|DK_0\|_{\rho_0,\underline{c},\Gamma}$, $\|N_0\|_{\rho_0}$,
$\|\frac{\partial F_{\lambda}(K_0)}{\partial
  \lambda}\|_{\rho_0,\underline{c},\Gamma}$,
$\|A_{0}^0\|_{\rho_0}$, $|\avg (A_{0}^0)|^{-1}$,
$|\avg (Q_{0})|^{-1}$ (where $A_{0}^0$,
$Q_{0}^0$ and
$N_0$ are as in Definition \ref{NDloc},
replacing $K$ with $K_0$)
and on $\|\Pi^{c,s,u}_{K_0(\th)}\|_{\rho_0,\underline{c},\Gamma}$ such that, if
for some $\delta$, 
 $0 < \delta <\min(1,\rho_0/12)$, we
have the following conditions satisfied  
\begin{equation*}
C\kappa^4 \delta^{-4\nu} 
\tep < 1
\end{equation*}
and
\begin{equation*}
C\kappa^2 \delta^{-2\nu} \tep <r
\end{equation*}

Then, we have in \eqref{transSolloc}
$$\lambda_\infty = 0,$$ i.e. the torus $K_\infty$ is actually an invariant 
torus for $F_{\lambda_0}$ and we have 
$$F_{\lambda_0} \circ K_\infty = K_\infty \circ T_\omega .$$
\end{thm}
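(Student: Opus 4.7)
The strategy splits cleanly into two steps. First I would apply Theorem~\ref{existencetranslatedloc} verbatim: the hypotheses of Theorem~\ref{existenceembedloc} contain all the hypotheses of the translated-tori theorem (with $\lambda_0=0$), plus the two extra assumptions that $F_{\lambda_0}=F_0$ is \emph{exact} symplectic and $F_0(0)=0$. The translated-tori theorem then produces a pair $(\lambda_\infty,K_\infty)$ with $K_\infty\in\mathcal{A}_{\rho_\infty,\underline c,\Gamma}$ solving $F_{\lambda_\infty}\circ K_\infty=K_\infty\circ T_\omega$, together with the quantitative bound $|\lambda_\infty|\le C\kappa^{2}\delta^{-2\nu}\tep$. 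All the geometric content of Theorem~\ref{existenceembedloc} is therefore concentrated in showing that this $\lambda_\infty$ is in fact zero.

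The second step is to apply the vanishing lemma from Section~\ref{secvanishing} (Lemma~\ref{vanishingloc}). The philosophy is the standard one for translated-curve theorems (in the spirit of \cite{Moser67, Russmann76, Sevryuk99}) adapted to the lattice setting: for an \emph{exact} symplectic $F_0$, the translation parameter $\lambda$ is obstructed by a cohomological (flux) quantity that must vanish whenever an invariant torus with Diophantine rotation exists. Concretely, one writes $F_0^{*}\alpha_\infty-\alpha_\infty=dW$ for some primitive $W$, and then pulls this identity back by $K_\infty$. Thanks to Lemma~\ref{sympDecay}, the formal sum defining $K_\infty^{*}\alpha_\infty$ is a genuine 1-form on $\TT^l$ because $K_\infty\in\mathcal{A}_{\rho_\infty,\underline c,\Gamma}$. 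Expanding $F_{\lambda_\infty}=F_0+\lambda_\infty\cdot\partial_\lambda F_\lambda\big|_{0}+O(|\lambda_\infty|^{2})$, substituting the translated-torus equation $F_{\lambda_\infty}\circ K_\infty=K_\infty\circ T_\omega$, and integrating over $\TT^l$ (so the exact and $T_\omega$-invariant pieces drop out), one arrives at an equation of the schematic form
\begin{equation*}
\lambda_\infty\cdot\avg(Q_0)+O(|\lambda_\infty|^{2})=0,
\end{equation*}
where $\avg(Q_0)$ is precisely the non-degeneracy matrix from Definition~\ref{NDloc}. The parameter non-degeneracy hypothesis $|\avg(Q_0)|^{-1}<\infty$ together with the smallness bound $|\lambda_\infty|\le C\kappa^{2}\delta^{-2\nu}\tep\ll 1$ then forces $\lambda_\infty=0$, which is the desired conclusion.

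The main obstacle, and the reason a dedicated vanishing lemma is needed, is that all the geometric objects involved ($\alpha_\infty$, $\Omega_\infty$, and the primitive $W$) are defined by \emph{formal} sums over $\ZZ^N$. One must verify that on embeddings $K_\infty$ living in the decay space $\mathcal{A}_{\rho,\underline c,\Gamma}$ these sums converge absolutely and that all rearrangements, Stokes-type integrations, and the manipulation of the $\lambda$-expansion can be justified termwise. The decay structure of $\Gamma$ together with the Banach-algebra property (property~(2) of Definition~\ref{defDecay}) makes these sums telescope properly, and the hypothesis $F_0(0)=0$ guarantees that $F_0$ preserves $\M=\ell^\infty(\ZZ^N,M)$ so that $W$ can be taken to be well-defined on a neighbourhood of $K_\infty(\TT^l)$. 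Once this functional-analytic bookkeeping is in place, the algebraic part of the vanishing argument is identical to the finite-dimensional one developed in \cite{FontichLS09}.
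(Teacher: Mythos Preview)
Your two-step skeleton — apply Theorem~\ref{existencetranslatedloc} with $\lambda_0=0$, then invoke the vanishing lemma (Lemma~\ref{vanishingloc}) to kill $\lambda_\infty$ — is exactly the paper's route, and the first step is fine as you describe it.

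Where your sketch departs from the paper, and in a way that would not work as written, is in the mechanism of the vanishing step. You propose to Taylor-expand $F_{\lambda_\infty}$ in $\lambda$, ``integrate over $\TT^l$'', and read off an equation with leading matrix $\avg(Q_0)$. That is not what Lemma~\ref{vanishingloc} does, and $\avg(Q_0)$ is not the relevant non-degeneracy here: $\avg(Q_\lambda)$ is used only inside the Newton iteration (Proposition~\ref{approximateSol}) to pick $\Lambda$ at each step. The vanishing argument is a genuine $H^1$ (flux) computation. One integrates the $1$-form $K_\infty^*\alpha_\infty$ over the basic $1$-cycles $\sigma_{i,\hat\theta_i}$ of $\TT^l$ (not over the full torus as a volume, which makes no sense for a $1$-form) and compares with $(R_\lambda\circ K_\infty)^*\alpha_\infty$ where $R_\lambda=F_0\circ F_\lambda^{-1}$. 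Exactness of $F_0$ kills the exact pieces, and the specific construction of the family $F_\lambda$ in Appendix~\ref{maps} — which the vanishing lemma explicitly requires as a hypothesis — identifies the non-exact remainder with $\sum_{j,k}\lambda_k^j\,K_\infty^*\delta_k^j$. The coefficient matrix that must be inverted is therefore the period matrix $\big(\int_{\TT^{l-1}}\int_{\sigma_{i,\hat\theta_i}} K_\infty^*\delta_k^j\big)$, invertible because $K_\infty$ is an embedding, and then smallness of $|\lambda_\infty|$ forces $\lambda_\infty=0$.

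So: keep your Step~1 verbatim; for Step~2, replace the $\avg(Q_0)$ heuristic by the cycle-integral/cohomology argument of Lemma~\ref{vanishingloc}, and note that it relies on the family $F_\lambda$ being the one built in Appendix~\ref{maps}.
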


\begin{remark} 
It is important to mention that we do not assume that the 
symplectic forms are the standard ones. This allows 
to consider the existence of external magnetic fields 
and magnetic interactions among the sites since the effect of a magnetic field is just a change of 
the symplectic form \cite{Thirring97}. Alternatively, if $(p,q)$ are the conjugated coordinates, 
one can change $p \to p - A$ where $A$ is the vector 
potential. Note that the introduction of a magnetic field 
destroys the reversibility under the usual involution
$S(p,q) = (-p,q)$. 
\end{remark}

\begin{remark} 
The whiskered tori that satisfy the
spectral hypothesis have invariant manifolds
that make them important in problems of stability.  However, the proof of the stable manifold is not completely 
straightforward since the space $\ell^\infty$ does
not have smooth cut-off functions. It is possible 
to show that these invariant manifolds have also some decay 
properties. This has been established in 
\cite{FLM2}. 
\end{remark}

We have also the following result which provides local uniqueness.

\begin{thm}\label{uniquenessLattice}
Let $\omega \in D(\kappa,\nu)$ for some $\kappa>0, \nu>l$ and
$K_1\in ND(\rho)$ and $K_2 \in ND(\rho)$ be two solutions of
equation \eqref{invarmap1} such that $K_1(D_{\rho}) \subset
B_r,\,K_2(D_{\rho}) \subset B_r$. There exists a constant $C>0$
depending on $l$, $\kappa$, $\nu$, $\rho$, $\rho^{-1}$,
$\|F \|_{C^2_\Gamma}$, $\|K_1\|_{\rho,\underline{c},\Gamma}$,
$\|N_1\|_{\rho,\Gamma}$, $\|A_1\|_{\rho,\Gamma}$, $|\avg(A_1)|^{-1}$ such
that if $\|K_1\circ T_\tau-K_2\|_{\rho,\underline{c},\Gamma}$
satisfies for some $\tau\in \mathbb{R}^l$
\begin{equation*}
C \kappa^2 \delta^{-2\nu} \|K_1\circ
T_\tau-K_2\|_{\rho_0,\underline{c},\Gamma} \leq 1 ,
\end{equation*}
where $\delta=\rho/4$, then there exists a phase $\tilde{\tau} \in
\mathbb{R}^l$ such that $K_1 \circ T_{\tilde\tau}=K_2$ in
$D_{\rho}$. Moreover, 
$$
|\tau-\tilde{\tau}| \le C\kappa^2\rho^{-2\nu}
\| K_1 \circ T_{\tau} - K_2\|_{\rho_0, {\underline{c}} \Gamma} .
$$
\end{thm}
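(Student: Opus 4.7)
The plan is to interpret $K_1\circ T_\tau$ as a highly accurate approximate solution of the invariance equation whose ``error'' is controlled by $\|\Delta\|_{\rho,\underline c,\Gamma}$, where $\Delta := K_2 - K_1\circ T_\tau$, and then recycle the linearized Newton solver built for Theorem~\ref{existencetranslatedloc} to construct a translation correction. Concretely, subtracting the two instances of \eqref{invarmap1} and applying Taylor's theorem in the Banach algebra $C^1_\Gamma$ yields
\begin{equation*}
\Delta\circ T_\omega \;=\; \bigl[DF\circ(K_1\circ T_\tau)\bigr]\,\Delta + R(\Delta),\qquad \|R(\Delta)\|_{\rho,\underline c,\Gamma}\le C\,\|\Delta\|_{\rho,\underline c,\Gamma}^2 .
\end{equation*}
This is exactly the linearized cohomological equation that is solved in the Newton step, with $\Delta$ playing the role of the increment.

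Next I would exploit the invariant splitting $\mathcal E^s\oplus \mathcal E^c\oplus \mathcal E^u$ attached to $K_1$ (which exists by $K_1\in ND(\rho)$ together with Definition~\ref{NDloc-hyp}) and project $\Delta$ onto the three summands. On $\mathcal E^{s,u}$ the cohomology equation is solvable uniquely without small divisors, and contraction of the co-cycles gives $\|\Delta^{s,u}\|_{\rho-\delta,\underline c,\Gamma}\lesssim \|\Delta\|_{\rho,\underline c,\Gamma}^2$. On the finite-dimensional center $\mathcal E^c$ I use the automatic reducibility provided by $P(\th)=DK_1(\th)N_1(\th)$ and its symplectic conjugate, which triangulates the co-cycle. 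The tangential equation is an equation of the form $\xi\circ T_\omega-\xi = \eta$ over the Diophantine rotation; by Rüssmann it is solvable modulo the average of $\eta$, and the twist condition $|\avg(A_1)|^{-1}<\infty$ provides a unique phase $\tilde\tau-\tau\in\R^l$ that kills this average. This is the only freedom left, precisely because in the uniqueness setting we do not carry the parameter $\lambda$ and therefore do not need the $\avg(Q)$ condition.

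With $\tilde\tau$ so chosen, define $\Delta^{(1)} := K_2-K_1\circ T_{\tilde\tau} = \Delta - (K_1\circ T_{\tilde\tau}-K_1\circ T_\tau)$. A direct estimate using the explicit solver and the Cauchy estimates on a slightly smaller strip $\rho-\delta$ gives $|\tilde\tau-\tau|\le C\kappa^2\delta^{-2\nu}\|\Delta\|_{\rho,\underline c,\Gamma}$ and the quadratic gain
\begin{equation*}
\|\Delta^{(1)}\|_{\rho-\delta,\underline c,\Gamma}\le C\kappa^2\delta^{-2\nu}\|\Delta\|_{\rho,\underline c,\Gamma}^2 .
\end{equation*}
Iterating this construction on a sequence of shrinking strips $\rho_n=\rho_{n-1}-\delta_n$ with $\sum\delta_n=\rho/4$ yields phases $\tau_n\to\tilde\tau_\infty$ with $|\tau_{n+1}-\tau_n|$ decaying super-exponentially (the standard Nash--Moser balance, because the losses $\delta_n^{-2\nu}$ are absorbed by the genuine quadratic gain). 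The limit satisfies $K_1\circ T_{\tilde\tau_\infty}=K_2$ on $D_{\rho/2}$, and hence on $D_\rho$ by real analyticity, and the telescoping sum gives the stated bound $|\tau-\tilde\tau_\infty|\le C\kappa^2\rho^{-2\nu}\|K_1\circ T_\tau-K_2\|_{\rho,\underline c,\Gamma}$.

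The main technical obstacle is, as in Theorem~\ref{existencetranslatedloc}, maintaining uniform control of the non-degeneracy constants $\|N\|$, $|\avg(A_1)|^{-1}$, and of the decay norms of the projectors $\Pi^{s,c,u}\circ K_1$, when pre-composing with the translations $T_{\tau_n}$ and passing to strips $\rho_n$. Fortunately, translations in the angle variable are isometries of $\mathcal A_{\rho,\underline c,\Gamma}$, so the only genuine loss is the Cauchy loss $\delta_n^{-\nu}$ in each small-divisor inversion, which the quadratic convergence swallows once the smallness condition $C\kappa^2\delta^{-2\nu}\|\Delta\|\le 1$ is imposed at the first step. Verifying this balance and checking that each $\Delta^{(n)}$ remains in the domain where the linearized solver from the existence proof applies is the only delicate point; everything else reduces mechanically to the estimates already established for Theorem~\ref{existencetranslatedloc}.
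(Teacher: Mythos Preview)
Your outline is essentially the paper's proof: Taylor-expand the difference of the two invariance identities to get the linearized equation with quadratic right-hand side $\mathcal R$, project onto the invariant splitting (the hyperbolic components are then automatically $O(\|\mathcal R\|)$ by uniqueness of bounded solutions of the co-cycle equation), handle the center via the change $\Delta^c=\tilde M W$, absorb the remaining tangential freedom in a phase shift, and iterate on the geometrically shrinking strips $\delta_m=\delta_1/2^{m-1}$ down to $D_{\rho/2}$, extending to $D_\rho$ by analyticity.

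One detail is misattributed. On the center the triangular system has \emph{two} obstructions: the twist condition $|\avg(A_1)|^{-1}<\infty$ is used to fix $\avg(W_2)$ so that the $W_1$-equation acquires zero-average right-hand side; it does \emph{not} determine the phase. The phase $\tau_1$ is fixed by the separate requirement $\avg\bigl(N\,DK^\top\Pi^c(K_1\circ T_{\tau_1}-K_2)\bigr)=0$, which corresponds to forcing $\avg(W_1)=0$ for the corrected difference and is obtained by a Banach fixed-point argument in $\R^l$ (the paper's Lemma~\ref{lem:tau}) using only the embedding non-degeneracy $N$, not the twist. With that swap your iteration and convergence argument coincide with the paper's.
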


\begin{remark}
It is important to remark that ALL  constants in the previous theorems are independent of $\underline{c}$. This fact is crucial for the next result, which provides an existence theorem for almost-periodic functions.
\end{remark}

The idea of the construction follows the one in the finite dimensional
case (see \cite{FontichLS09}). Notice here that the decay properties are on
the hyperbolic subspace, the center one being finite dimensional.
Some small differences between the scheme of the 
present paper and \cite{FontichLS09}  are detailed in 
Remark~\ref{changes}. 

We also have the analogous result to Theorem
\ref{existenceembedloc} for vector-fields, Theorem
\ref{existenceHamillattice}. We will postpone the statement of 
Theorem~\ref{existenceHamillattice} till 
Section \ref{secflows}  where we also present a proof.

As an application of Theorem~\ref{existenceHamillattice}, 
we will present a result on existence of 
solutions with  infinitely many frequencies
(also called \emph{almost periodic} solutions). 
We note that, as indicated before, we will establish the 
theorem  in two 
stages. In a first stage, we will 
 continue the breathers from the uncoupled system 
to the whole system. In the second stage, we will 
couple infinitely many of these breathers so that 
we obtain solutions with 
infinitely many frequencies. We note that the smallness conditions 
and the elimination of a positive measure set of frequencies only occurs in 
the first stage. In the second stage, we only need to eliminate 
a zero measure set of frequencies (in many different measures) 
and we do not need any smallness condition. The reason is 
that in the second stage, we adjust all the smallness conditions 
by placing the individual breathers far enough. Of course, if 
we wanted to let the breathers not to be so far appart, it could be 
done with other assumptions. 

The models we consider \eqref{coupledlattice} 
have been considered in the Physics and Mathematics 
literature. They are models of many microscopic processes. 
See
\cite{Braun, BraunK98,DauxoisRAW02,FloriaBG05,ChazottesF05,
GallavottiFPU, BilkiENW07} and references there among many others.

A large variety  of solutions for equations of this type have been
constructed: space-localized periodic in time solutions, known as
breathers (see \cite{MKA94,James01}), solitary waves
(\cite{Iooss00,IoossK00, FW94}, \cite{pego1,pego2,pego3,pego4}),
pulsating traveling waves (see \cite{JS05,Sire05}). The relevance
of these solutions in biological phenomena has also been discussed
(see \cite{DauxoisPW92,PS04,Peyrard04}).

There are already several other papers that have produced solutions
with infinitely many frequencies. The paper \cite{FrolichSW86}
produced such solutions by introducing some random terms and making
the excitation of each oscillator goes to zero, so that its effect on the others was small. Fr\"olich, Spencer and Wayne also assume that
the coupling is high order in terms of the amplitude. The paper
\cite{ChierchiaP95,Perfetti03} considered oscillators but made the
natural frequencies increase very fast so that there were no
resonances in each of them. The paper \cite{Poschel90} proved a very
abstract theorem that applies to perturbations of integrable systems
and managed to recover several results as applications of this
theorem. The paper \cite{GengY07} also considers coupled systems in
one dimension, but produces tori with finitely many frequencies.

The solutions we construct are based on a different
principle. We use that the solutions which are far apart interact very
weakly even if they are large. Therefore,
by placing solutions far apart, we will be able to
make them interact weakly and we can satisfy 
the smallness conditions assumed by the general theorem.
Notice that we are assuming that most of the sites
are close to a  hyperbolic orbit. Hence, the system 
will be very hyperbolic. This will allow us to deal 
with most of the normal directions using the methods of 
hyperbolic splittings and we will not need to consider 
the resonances that appear in the normally elliptic modes, which require more delicate estimates. 
We emphasize that we do not assume
 that the system is close to integrable.

\begin{thm}\label{thgl}
Consider a lattice  $\M = M^{\ZZ^N}$ with the symplectic form
 given by
$\Omega_\infty=\sum_{n \in \ZZ^N} dq_n \wedge dp_n$. Consider the
following Hamiltonian with respect to $\Omega_\infty$ given by
\begin{equation}\label{coupledlattice}
H(q,p)=\sum_{n \in \ZZ^N}\Big( \frac{1}{2}\,
|p_n|^2+W(q_n)\Big)+\ep \sum_{j\in \Z^N}\ \sum_{n \in \ZZ^N}
V_j(q_n-q_{n+j}).
\end{equation}

Let $\Gamma, \Gamma'$ be decay functions 
as in Proposition~\ref{exDecay}. 

Denote by $X$ the vector field associated to the 
Hamiltonian~\eqref{coupledlattice}. 

Assume:
\begin{enumerate}
\item[{\bf H1}] The system $\ddot{q}+W'(q)=0$ admits a hyperbolic fixed point,
which we will set without loss of generality at $q = 0$. 
\item[{\bf H2}] There exists a set $\Xi_0 \subset \real^l$ 
of positive Lebesgue measure such
that for all $\omega \in \Xi_0$, there exists a KAM torus 
invariant under the flow of $\ddot{q}+W'(q)=0$ and non-degenerate in the
sense of the standard KAM theory (twist condition).
\item[{\bf H3}] The potentials $V_j$ and $W$ are 
real analytic.  Moreover, we assume that there exists a
 constant $C_V$ such that 
\[
\| V_k\|_{C^2,\rho} \leq C_V \Gamma(k).
\]
and also that $\nabla V_k(0)=0$ for every $k$. 
\end{enumerate}

Fix $\rho' < \rho$. 
Then,
\begin{enumerate} 
\item{A)}For all $\ep^*$ sufficiently small, we can find 
a set $\Xi_1 (\ep^*) \subset \Xi_0$, 
such that if $\omega \in \Xi_1(\ep^*)$ and $|\ep| < \ep^*$, 
then the system \eqref{coupledlattice} has a localized breather
of frequency $\omega$. There exists 
$K: \torus^l \rightarrow \M$, $K  \in A_{\rho', \underline c, \Gamma}$ such that 
\[
X\circ K = \partial_\omega K 
\]
The embedding satisfies Definition~\ref{NDloc-hyp}
and we can choose the hyperbolicity  and non-degeneracy 
constants uniformly.

Furthermore we have 
$$\meas({ \Xi_0 \setminus \Xi_1(\ep^*)}) \to 0$$ as 
$\ep^* \to 0$. 

\item{B)}
Consider now $\Xi_\infty = \Xi_1(\ep^*)^\nat$ endowed with
the probability
measure $  \Big ( \frac{\meas(\cdot)}{\meas({\Xi_1(\ep^*)}} \Big )^\nat$. 
Then, there exists a set $\Xi_\infty^* \subset \Xi_\infty$,
$\meas( \Xi_\infty^*) = 1$, 
such that if $\underline{\omega} \in \Xi^*_\infty$, there exist a sequence of centers $\underline{c}$ 
and a $K$ analytic in some strip of $(\TT^l)^{\mathbb N}$ 
so that 
\[
J_\infty \nabla H \circ K = \partial_{ \underline{\omega}}K.
\]
\end{enumerate}
\end{thm}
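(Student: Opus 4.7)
The plan is to establish Part~(A) by a perturbation-from-integrable application of Theorem~\ref{existenceHamillattice}, and Part~(B) by an inductive procedure that appends one breather at a time, keeping the error controlled via the decay of $\Gamma$.

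For Part~(A), at $\ep = 0$ the lattice Hamiltonian decouples into copies of the single-site system $\ddot q + W'(q) = 0$. By \textbf{H2}, for each $\omega \in \Xi_0$ there is a non-degenerate KAM torus $K_\omega : \T^l \to M$ of the single-site flow. I fix a center $c \in \Z^N$ and define $K_0^\ep(\th) \in \M$ by placing $K_\omega(\th)$ at site $c$ and the hyperbolic fixed point $q = 0$ at all other sites. At $\ep = 0$ this is an exact solution; \textbf{H1} supplies the uniform hyperbolic splitting on sites $i \neq c$, and at site $c$ the tangent plane to $K_\omega$ gives the $2l$-dimensional center subspace. The non-degeneracy conditions of Definition~\ref{NDloc} reduce to the standard twist condition for $K_\omega$, which holds by \textbf{H2}. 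For $|\ep| \leq \ep^*$, plugging $K_0^\ep$ into the invariance equation yields an error equal to $\ep$ times coupling terms; because $\nabla V_j(0) = 0$ and $\|V_k\|_{C^2,\rho} \leq C_V \Gamma(k)$, this error has $\|\cdot\|_{\rho',\{c\},\Gamma}$-norm of order $\ep$. Theorem~\ref{existenceHamillattice} then produces a true solution provided $\omega$ is Diophantine with constants compatible with the smallness hypotheses. The set $\Xi_1(\ep^*)$ is obtained by intersecting $\Xi_0$ with a set $D_h(\kappa,\nu)$ whose Diophantine constant is allowed to worsen with $\ep^*$; the standard measure estimate gives $\meas(\Xi_0 \setminus \Xi_1(\ep^*)) \to 0$ as $\ep^* \to 0$.

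For Part~(B), I proceed inductively on the number $n$ of breathers. Suppose I have a true solution $K^{(n)} \in \A_{\rho_n, \underline{c}^{(n)}, \Gamma_{\beta_n}}$ carrying frequencies $\omega_1,\dots,\omega_n$ concentrated at centers $c_1,\dots,c_n$, with controlled non-degeneracy and hyperbolicity constants. To append an $(n+1)$-th breather of frequency $\omega_{n+1}$, I choose $c_{n+1}$ very far from all previous centers and form the superposition $\widetilde{K}^{(n+1)}$ whose value at each lattice site coincides, up to the hyperbolic fixed point background, with whichever of the $n+1$ breathers sits nearest. Since each breather tends to the fixed point away from its center and the $V_k$ decay, the residual in the invariance equation and the change of the non-degeneracy constants are both bounded by $\Gamma_{\beta_{n+1}}$ evaluated at $\dist(c_{n+1}, \underline{c}^{(n)})$. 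By taking $c_{n+1}$ far enough I arrange all smallness hypotheses of Theorem~\ref{existenceHamillattice} with any prescribed margin, and invoking the theorem produces $K^{(n+1)}$ together with the estimate \eqref{changeestimates}. The joint Diophantine condition on $(\omega_1,\dots,\omega_{n+1})$ holds for almost every $\underline{\omega} \in \Xi_1(\ep^*)^\nat$ by a Fubini argument combined with the fact that the complement of $D_h(\kappa_n,\nu_n)$ has measure tending to zero as $\kappa_n \to \infty$.

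The main obstacle is passing to the limit $n \to \infty$. The limit $K$ must be an analytic embedding of the infinite torus $(\T^l)^\nat$ into $\M$ satisfying the invariance equation $J_\infty \nabla H \circ K = \partial_{\underline{\omega}} K$, where the right-hand side now involves a derivative in infinitely many directions. The key tool is the ordered family $\Gamma_{\beta_{n+1}} \ll \Gamma_{\beta_n}$ supplied by Proposition~\ref{exDecay} and Definition~\ref{dominated}: at each inductive step the correction $K^{(n+1)} - K^{(n)}$ is permitted to live in a slightly weaker decay space, leaving room for the non-degeneracy and hyperbolicity constants to remain uniformly bounded along the induction while still allowing convergence in a fixed limit space $\A_{\rho_\infty, \underline{c}^{(\infty)}, \Gamma_{\beta_\infty}}$. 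Taking $c_{n+1}$ sufficiently far at each step makes $\sum_n \|K^{(n+1)} - K^{(n)}\|$ geometrically summable in the limit norm. The invariance equation survives the limit because the decay-norm topology is strong enough for uniform convergence of $K$ and $DK$ and because the vector field $J_\infty \nabla H$ is decay-continuous; the rotation character of the limiting motion on $(\T^l)^\nat$ follows from continuity of the operator $\partial_{\underline{\omega}}$.
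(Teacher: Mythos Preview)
Your argument for Part~(A) and the inductive scheme for Part~(B) follow the paper's strategy closely. The genuine gap is in the passage to the limit.

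You claim that the sequence $K^{(n)}$ is Cauchy in a fixed space $\A_{\rho_\infty,\underline{c}^{(\infty)},\Gamma_{\beta_\infty}}$, with $\sum_n\|K^{(n+1)}-K^{(n)}\|$ geometrically summable. This fails. The increment $K^{(n+1)}-K^{(n)}$ contains the newly placed breather $\tau^{-c_{n+1}}K_{\omega_{n+1}}$, which has size $O(1)$ at site $c_{n+1}$. In the norm $\|\cdot\|_{\rho_\infty,\underline{c}^{(\infty)},\Gamma_{\beta_\infty}}$ the weight at $i=c_{n+1}$ is $\Gamma_{\beta_\infty}(0)^{-1}$, a fixed constant, so $\|K^{(n+1)}-K^{(n)}\|_{\rho_\infty,\underline{c}^{(\infty)},\Gamma_{\beta_\infty}}$ does \emph{not} tend to zero, no matter how far apart you place the centers. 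The paper states this explicitly in Section~\ref{inductivesites}: ``since each step changes the number of centers, the convergence of the embeddings cannot be uniform (even in a space of decay functions).''

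The correct mode of convergence is \emph{componentwise}: for each fixed lattice site $i\in\ZZ^N$, the sequence $\big(K^{(n)}(\underline\th)\big)_i$ converges uniformly on compact subsets of $(\T^l)^\NN$. The estimate (see Lemma~\ref{convergenceinfinite}) splits the increment at site $i$ into the KAM correction, bounded by $e\,2^{-(n-1)}\max_k\Gamma_{\beta_{n+1}}(i-c_k)$, and the new breather contribution, bounded by $\|K_{\omega_{n+1}}\|_{\rho_1,0,\Gamma}\,\Gamma(i-c_{n+1})$. For fixed $i$ both pieces are summable in $n$, since $|c_{n+1}|\to\infty$ forces $\Gamma(i-c_{n+1})\to 0$. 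Analyticity of the limit is then in the sense of Remark~\ref{rmk-analit} (a uniformly convergent series of finite-dimensional analytic functions), and the invariance equation is verified componentwise by passing to the limit in $F\circ K^{(n)} = K^{(n)}\circ T_{\underline\omega^{(n)}}$ using the decay of $DF$ to control $F_i(K^{(n)})-F_i(K_{\underline\omega})$. Your appeal to ``continuity of the operator $\partial_{\underline\omega}$'' is not available in this topology; the paper sidesteps it by working with the time-one map throughout.
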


Note that we have stated Theorem~\ref{thgl} only for decay functions of 
the form given in Proposition~\ref{exDecay}. It is clear that the proof only uses a few properties of 
the function (e.g. monotonicity in the modulus of the argument). We have 
refrained from reformulating the theorem in more abstract terms. 

We note that the only smallness conditions in $\ep$ enter 
just in the first stage of creating individual breathers
around each site and in the preservation of 
the hyperbolic structure and other non-degeneracy 
conditions. The second stage, on the other hand does 
not require any other smallness conditions. 

In the construction of infinite dimensional breathers out of
single breathers we 
just need to exclude a
few  sequences of frequencies 
which are very resonant (they have measure zero in the 
probability  measure indicated above). See Section~\ref{sec:coupling}. 
The smallness assumptions that we need to couple the sequences 
can be adjusted just by placing the different breathers far apart
and we do not need any further smallness conditions in $\ep$. 

Note also that the only hypothesis on the one site system is 
the existence of positive measure of KAM tori (and the existence of 
a hyperbolic fixed point). This is implied if 
the system is close to a non-degenerate integrable system. 
Nevertheless, there are other arguments to show existence of 
KAM tori in systems very far from integrable \cite{Duarte94, Duarte08}. 
Any of these systems could be taken as the basis for 
Theorem~\ref{thgl}.

\section{The Newton step}
The sketch of the proof of Theorem \ref{existencetranslatedloc}  is roughly  the same as for finite dimensional systems, with some minor changes 
detailed in Remark~\ref{changes}.  Of course, even if 
the strategy is similar to that in finite dimensions, 
all the details need to be different since the situation is 
very different and we need to pay attention to the decay 
properties. With a view to applications to almost periodic solutions of Theorem~\ref{thgl}, we also need to pay attention 
to the change in the non-degeneracy conditions and in the 
hyperbolicity properties and establish that many of 
the smallness assumptions are independent of the number 
and the geometry of the centers of oscillation. 

The proof of 
Theorem~\ref{existencetranslatedloc} 
is based on a Newton iteration of  Nash-Moser 
type. The estimates of the Newton step -- including uniqueness --
are summarized in 
Section~\ref{sec:inductive} (See, Lemma~\ref{main}). 
The fact that the inductive step  can be iterated is 
more or less standard in KAM theory and it is done in 
Section~\ref{convergence}. 

The proof of the estimates of the Newton step are 
obtained in different stages
\begin{enumerate}
\item
We show that the approximate invariant hyperbolic 
splitting can be transformed in an invariant splitting. 
See Section~\ref{sec:hyperbolic}.  
\item
The equations for the Newton step can be divided 
into equations along the  
hyperbolic spaces (studied in Section~\ref{sec:hyperbolicequation})
and the center space (studied in Section~\ref{sec:centerequation}).

As usual in the study of cohomology equations, the equations 
in the center direction are much more subtle. In particular, 
the Diophantine properties and the geometric properties are
only used in the equations in the center space.

\item
Once we have the estimates for the approximate solutions of 
the linearized equation, we show that, using the 
linearized equation, they improve the solutions of 
the translated equation. Furthermore, we estimate the 
changes in the hyperbolicity constants and the non-degeneracy
estimates. 

\item 
The passage from Theorem~\ref{existencetranslatedloc}
to Theorem~\ref{existenceembedloc} is  a geometric 
argument (a vanishing lemma) undertaken in 
Section~\ref{secvanishing}.
\end{enumerate}

\subsection{Estimates for the inductive step}
\label{sec:inductive} 
In this section we describe the inductive step of the proof
of Theorem~\ref{existencetranslatedloc}.

By Taylor's theorem we can write
$$
\mathcal{F}_{\omega}(\lambda+\Lambda,K+\Delta) =
\mathcal{F}_{\omega}(\lambda,K)  +
D_{\lambda,K}\mathcal{F}_{\omega}(\lambda,K)(\Lambda,\Delta)+
O(|(\Lambda,\Delta)|^2).
$$
Assuming that $(\lambda,K)$ is a pair that satisfies
$\F_\omega(\lambda,K)=0$ approximately with an error
$E(\th)=\F_\omega (\lambda,K)(\th)$ we look for
$(\Lambda,\Delta)$ such that
$\F_\omega(\lambda+\Lambda,\,K+\Delta)$ is as small as possible.
Then we are lead to consider the following Newton equation
\begin{equation}\label{linear}
D_{\lambda,K}\mathcal{F}_{\omega}(\lambda,K)(\Lambda,\Delta)=-E,
\end{equation}
where
\begin{equation*}
\begin{split}
D_{\lambda,K}&\mathcal{F}_{\omega}(\lambda,K)(\Lambda,\Delta)(\th)
=\frac{\partial F_\lambda(K(\th))}{\partial \lambda}\Lambda +
DF_{\lambda}(K(\th))\Delta(\th)-\Delta(\th+\omega).
\end{split}
\end{equation*}
To solve \eqref{linear} we project the equation on both the center
and the hyperbolic subspaces, taking advantage of the invariant
splitting. Then we try to solve the projected equations. The one
on the center subspace is reduced to two small divisors equations,
essentially one on the tangent of the torus and the other on its
conjugated directions. Taking advantage of 
the extra variable $\lambda$,  we can
solve these equations up to a quadratic error. Using the conditions on the co-cycles over $T_\omega$, we
solve the projection on the stable and unstable subspaces.

The next result gives an approximate solution of \eqref{linear}
with precise estimates. 
\begin{lemma}\label{main}
Under the hypotheses of Theorem \ref{existencetranslatedloc} the
equation
\begin{equation*}
D_{\lambda,K}\mathcal{F}_\omega(\lambda,K)(\Lambda,\Delta)=-E
\end{equation*}
has an approximate solution $(\Lambda,\Delta)$ in the following sense: let 
$$\tilde E=D_{\lambda, \kappa}\,\F_\omega(\lambda, K)\,(\Lambda, \Delta)+E\,.$$
For $0<\delta <\rho$ we have the following estimates
\begin{equation*}
\|\Delta\|_{\rho-\delta,\underline{c},\Gamma} \leq C \kappa ^2
\delta ^{-2\nu} \|E\|_{\rho,\underline{c},\Gamma},
\end{equation*}
\begin{equation*}
|\Lambda| \leq C \|E\|_{\rho,\underline{c},\Gamma},
\end{equation*}
\begin{equation*}
\|\tilde{E}\|_{\rho-\delta,\underline{c},\Gamma} \leq C \kappa ^2
\delta^{-(2\nu+1)} \|E\|_{\rho,\underline{c},\Gamma}
\|\mathcal{F}_\omega(\lambda,K)\|_{\rho,\underline{c}, \Gamma}.
\end{equation*}
Moreover, if $\Delta$ and $\tilde\Delta$ are solutions of
\eqref{linear} as above, i.e. solutions with quadratic error
bounded by $C\kappa^2\delta^{-(2\nu+1)} \|E\|_{\rho,\underline{c},\Gamma}\ \|\F_\omega\|_{\rho,\underline{c},\Gamma}$,
\begin{equation*}
\|\Delta-\tilde{\Delta}-DK(\th)\alpha\|_{\rho-\delta,\underline{c},\Gamma} \leq C \kappa
^2 \delta ^{-(2\nu+1)} \|E\|_{\rho,\underline{c},\Gamma}
\|\mathcal{F}_\omega(\lambda,K)\|_{\rho,\underline{c},\Gamma}.
\end{equation*}
In the previous estimates, the constant $C$ depends on $\rho, l,
\|DK\|_{\rho,\underline{c},\Gamma},$
$\|\Pi^{s,c,u}_{K(\th)}\|_{\rho,\Gamma},$ $ \|\frac{ \partial
  F_\lambda}{\partial \lambda}\|_{\rho,\underline{c},\Gamma}$, the hyperbolicity
constants and the decay function $\Gamma$ but it does not depend on $\underline c$. 
\end{lemma}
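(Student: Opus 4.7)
The plan is to solve the Newton equation
\[
\frac{\partial F_\lambda(K(\theta))}{\partial \lambda}\Lambda + DF_\lambda(K(\theta))\Delta(\theta) - \Delta(\theta+\omega) = -E(\theta)
\]
by decomposing $\Delta = \Delta^s + \Delta^c + \Delta^u$ using the hyperbolic splitting, and treating the three components by rather different mechanisms. First I would upgrade the $\eta_0$-approximately invariant splitting furnished by Definition~\ref{NDloc2} into a truly invariant splitting with projections $\Pi^{s,c,u}_{K(\theta)}$ in $\L_\Gamma$ at a cost of $O(\eta_0)$ in the relevant decay norms; this uses the spectral gaps $\mu_1\mu_3<1$, $\mu_2\mu_3<1$ together with a contraction argument in the space of invariant-graph cocycles. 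After projecting the Newton equation onto each summand (using that $\Delta^\sigma(\theta+\omega) = \Pi^\sigma_{K(\theta+\omega)}\Delta(\theta+\omega)$ and $DF_\lambda(K)\Pi^\sigma = \Pi^\sigma DF_\lambda(K)$ after conjugation by $T_\omega$), I get three coupled equations that, at leading order, decouple.

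For the hyperbolic components the cohomological equation
\[
DF_\lambda(K(\theta))|_{\E^s} \Delta^s(\theta) - \Delta^s(\theta+\omega) = -E^s(\theta) - \Pi^s \tfrac{\partial F_\lambda(K)}{\partial\lambda}\Lambda
\]
is solved by the forward Neumann series $\Delta^s(\theta) = \sum_{n\geq 0} [\prod DF_\lambda\circ K\circ T^{-k}_\omega]|_{\E^s}\, (E^s + \dots)\circ T^{-(n+1)}_\omega(\theta)$, whose convergence and $\|\cdot\|_{\rho,\underline c,\Gamma}$ bound is immediate from the cocycle estimate \eqref{ndeg1loc} with rate $\mu_1$, and analogously for $\Delta^u$ using the backward iteration. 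The crucial point is that, because the weighted norm uses $\min_j \Gamma^{-1}(i-c_j)$, the geometric series estimate is uniform in $R$ and in the positions $\underline c$; this is where the Banach-algebra structure of decay functions is used, and it yields the $C\|E\|_{\rho,\underline c,\Gamma}$ bounds with no small-divisor loss.

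The center component is where the main work and the only obstacle lies. Since $\E^c_{K(\theta)}$ is $2l$-dimensional, I build an approximately symplectic frame $\{DK(\theta),\,(J^c\circ K)^{-1}P(\theta)\}$ with $P=DK\,N$; differentiating $F_\lambda\circ K = K\circ T_\omega + E$ shows that $DF_\lambda(K)DK(\theta) = DK(\theta+\omega) + DE(\theta)$, and the symplectic character of $F_\lambda$ together with exactness of $\Omega|_{\E^c}$ makes the matrix representation of $DF_\lambda|_{\E^c}$ in this frame upper triangular modulo terms controlled by $\|\F_\omega\|_{\rho,\underline c,\Gamma}$, with off-diagonal block precisely $A_\lambda(\theta)$ from \eqref{Adefined} (this is the automatic reducibility in the center). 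Writing $\Delta^c(\theta) = DK(\theta)a(\theta) + (J^c\circ K)^{-1}P(\theta)b(\theta)$ reduces the center equation to two small-divisor equations
\[
b(\theta+\omega) - b(\theta) = \eta_b(\theta) + \text{(linear in }\Lambda\text{)},\qquad a(\theta+\omega) - a(\theta) = \eta_a(\theta) + A_\lambda(\theta)b(\theta),
\]
each solvable on Fourier series up to the zero-mode. The zero-mode obstruction in the $b$-equation is killed by choosing $\Lambda$ via invertibility of $\avg(Q_\lambda)$, and the resulting obstruction in the $a$-equation is killed by the free constant in $b$ via invertibility of $\avg(A_\lambda)$ (twist). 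Each small-divisor inversion costs a factor $C\kappa\delta^{-\nu}$ by the standard estimate under $\omega\in D(\kappa,\nu)$, and the two stacked inversions produce the $C\kappa^2\delta^{-2\nu}$ bound for $\Delta$.

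Finally, the quadratic residual $\tilde E = D\F_\omega(\Lambda,\Delta) + E$ collects three kinds of harmless errors: the failure of the splitting to be exactly invariant (bounded by $\eta_0 \|\Delta\|$, hence absorbed into $\tep$), the failure of the frame to be exactly symplectic (bounded by $\|\F_\omega\|_{\rho,\underline c,\Gamma}\cdot\|\Delta\|$), and one extra Cauchy loss $\delta^{-1}$ from estimating $DE$. Multiplying out yields the stated bound $C\kappa^2\delta^{-(2\nu+1)}\|E\|\,\|\F_\omega\|$. The uniqueness-up-to-translations statement is immediate since $DK(\theta)\alpha$ is annihilated at leading order by $D\F_\omega$ (differentiate $F_\lambda\circ K\circ T_\alpha = K\circ T_\alpha\circ T_\omega$ in $\alpha$); any two quadratic-error solutions differ by such a kernel element plus an error of the size permitted. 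The real obstacle throughout is maintaining that every constant is independent of $\underline c$ and $R$, which forces systematic use of the $\min_j\Gamma^{-1}(i-c_j)$ norm and of the composition rule for decay functions — exactly the reason the spaces $\mathcal A_{\rho,\underline c,\Gamma}$ were introduced.
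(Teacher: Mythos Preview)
Your proposal is correct and follows essentially the same architecture as the paper: you upgrade the approximate splitting to an exact one, solve the hyperbolic projections by geometric series in the cocycle (the paper's Proposition~\ref{hyperbloc}), and on the center use the frame $\tilde M(\theta)=[\Pi^c_{K(\theta)}DK(\theta),\,(J^c\circ K)^{-1}DK(\theta)N(\theta)]$ to reduce to two stacked cohomology equations whose obstructions are killed by $\avg(Q_\lambda)$ and $\avg(A_\lambda)$ (the paper's Lemma~\ref{repres} and Proposition~\ref{approximateSol}). The only detail you leave implicit is the paper's Proposition~\ref{prop:distance}, which quantifies the gap $\hat e = \Pi^c_{K(\theta)}-\Pi^\Gamma_{K(\theta)}$ between the true center space and the range of $\tilde M$; this term must be carried explicitly in the change of variables $\Delta^c=\tilde M W+\hat e W$ so that the residual it generates is seen to be quadratic, but you correctly account for it under ``failure of the frame to be exactly symplectic.''
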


\subsection{Construction of invariant splittings out of approximately 
invariant ones}
\label{sec:hyperbolic} 

The main result of this section will be 
Proposition \ref{NDmoveloc},  which establishes that
given an approximately invariant splitting satisfying 
Definition~\ref{NDloc-hyp}, there is a truly invariant 
splitting nearby. Furthermore, we can estimate the distance 
between the true invariant splitting and the 
approximately invariant one. This, of course, 
implies the usual formulation of persistence of 
splittings under small perturbations. 

The way that this result fits into the Newton scheme 
is that this will allow us to split the equation into 
different components. Compared to other estimates 
in the Newton step, the construction of invariant 
splittings  requires much less sophisticated analysis
(it suffices to use contractions) and it does 
not require inductive assumptions
nor making choices (e.g. the domain loss).  The subtlety of 
the results comes because we have to choose 
appropriate spaces so that the estimates are 
uniform in the domains, the arrangement of the centers, 
etc. This uniformity of the results will be used 
when we consider the limit of infinitely many frequencies. 

The method of proof we use is very similar 
to the standard proof using graph transforms
\cite{HirschP68, HirschPS77}, 
which adapts very well to infinite dimensions
\cite{PlissS99}. Of course, there 
are several subtleties due to the infinite dimensional 
nature of the problem. In particular, we make 
essential use of the Banach algebra properties 
of the decay functions to make sure that we 
obtain estimates in the same spaces of functions 
(it is interesting to compare this with previous 
results in lattice dynamical systems). 
We emphasize that, in particular, the smallness 
conditions  are independent of the centers of the 
embedding. This will be crucial when we consider 
the limit of a large number of centers.

\begin{pro}\label{NDmoveloc}

Assume that the embedding $\tilde K$ has a $\delta$-invariant
hyperbolic splitting $\tilde \E^s, \tilde \E^c, \tilde \E^u$
 with respect to a map $F$ (See Definition~\ref{NDloc2}).  
Denote by $\tilde \Pi^{\sigma}$, $\sigma = s,c,u$
 the projections corresponding to 
this splitting.

There exists $\delta_0 > 0$ depending on $\|N\|_{\rho, \Gamma}, \| DF\circ \tilde
K\|_{\rho, \underline{c}, \Gamma}, \| DF^{-1}\circ \tilde
K\|_{\rho,\underline{c},\Gamma}$ and
 $\| \Pi^{s,c,u}\|_{\rho ,\underline{c}, \Gamma}$,
$\mu_{s,c,u}$ such that if $0 < \delta <\delta_0$ there
is an analytic splitting
\begin{equation}\label{splittingexact}
T_{\tilde K(\th)} \M = \mathcal{E}^s_{\tilde K(\th)}
\oplus \mathcal{E}^c_{\tilde K(\th)} \oplus
\mathcal{E}^u_{\tilde K(\th)}
\end{equation}
which is invariant under the co-cycle $DF \circ\tilde K$ over
$T_{\omega}$.

Let $\tilde \Pi^{s, c, u}_{\tilde K (\th)}$ be the projections
corresponding to the splitting \eqref{splittingexact}. We
furthermore have that there exist $C_h, 0 < \mu_1, \mu_2 < 1,\,
\mu_3 > 1$ such that $\mu_1 \mu_3 < 1,\, \mu_2\mu_3 < 1$ and the
characterizations \eqref{ndeg1loc}, \eqref{ndeg2loc},
\eqref{ndeg3loc} of the splitting hold.

Moreover, there exists $C > 0$, depending on the same quantities as
$\delta_0$ does, such that for $0 < \delta < \delta_0$
\begin{align*}
&\|\Pi^{s, c, u}_{\tilde K(\th)} - \tilde\Pi^{s,c,u}_{\tilde K(\th)}\|_{\rho , \Gamma}\leq
 C \delta,\\
&\vert\mu_{1, 2, 3} - \tilde \mu_{1, 2, 3}\vert < C \delta.
\end{align*}
\end{pro}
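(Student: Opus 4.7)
The plan is to realize each invariant subspace $\E^{s,c,u}$ as a graph over the corresponding approximate subspace $\tilde\E^{s,c,u}$ and to produce that graph via a Perron/graph--transform fixed point. I would first construct the strongly hyperbolic pieces $\E^s$ and $\E^u$, which are characterized by clean one-sided growth conditions, and then recover the $2l$-dimensional center as $\E^c=\E^{sc}\cap\E^{cu}$, where $\E^{sc}$ and $\E^{cu}$ are themselves obtained by the same scheme (as graphs over $\tilde\E^{sc}$ into $\tilde\E^u$, respectively over $\tilde\E^{cu}$ into $\tilde\E^s$).

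Abbreviate $\Psi(\th):=DF_\lambda(\tilde K(\th))$ and write $\Psi$ in the approximate basis as a $3\times 3$ block operator with blocks $\Psi_{\sigma\tau}(\th):=\tilde\Pi^\sigma_{\tilde K(\th+\omega)}\,\Psi(\th)\,\tilde\Pi^\tau_{\tilde K(\th)}$. The $\delta$-approximate invariance of Definition~\ref{NDloc2} gives $\|\Psi_{\sigma\tau}\|_{\rho,\Gamma}\le C\delta$ whenever $\sigma\neq \tau$, while after $N$ iterations the diagonal blocks retain the rates \eqref{ndeg1Iterloc}--\eqref{ndeg3Iterloc} up to an $O(\delta)$ error. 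Writing $\E^s_{\tilde K(\th)}=\{\,v+A^s(\th)v\mid v\in \tilde\E^s_{\tilde K(\th)}\,\}$ with $A^s(\th)\colon \tilde\E^s_{\tilde K(\th)}\to \tilde\E^{cu}_{\tilde K(\th)}$, invariance of $\E^s$ under $\Psi$ becomes the functional equation
\begin{equation*}
A^s(\th+\omega)\,\Psi_{ss}(\th) - \Psi_{cu,cu}(\th)\,A^s(\th) = \Psi_{cu,s}(\th) - A^s(\th+\omega)\,\Psi_{s,cu}(\th)\,A^s(\th).
\end{equation*}
Recast as $A^s=\mathcal T A^s$, with $\mathcal T$ built by left multiplication by $\Psi_{cu,cu}^{-N}$, right multiplication by $\Psi_{ss}^{N}$, and a shift by $N\omega$, the linear part has operator norm bounded by $(\mu_1\mu_3)^N<1$, since the slowest direction on $\tilde\E^{cu}$ is the center whose inverse cocycle grows no faster than $\mu_3^N$. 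Thus $\mathcal T$ is a strict contraction on a small ball in the Banach space of analytic $\tilde\E^s\to\tilde\E^{cu}$-valued maps equipped with $\|\cdot\|_{\rho,\Gamma}$; the unique fixed point is of size $O(\delta)$, and the Banach algebra property (2) of Definition~\ref{defDecay} keeps every iterate inside $\L_\Gamma$ with constants independent of $\underline c$.

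The $\E^u$-graph is obtained by running the same argument with time reversed, exploiting $\mu_2\mu_3<1$. For $\E^{sc}$, parametrized as a graph over $\tilde\E^{sc}$ into $\tilde\E^u$, the relevant gap is $\mu_3\cdot\mu_2<1$ (slowest forward expansion on the unstable side dominates fastest forward growth on the stable-plus-center side), and symmetrically for $\E^{cu}$. Then $\E^c:=\E^{sc}\cap\E^{cu}$ is automatically invariant, of dimension $2l$, and is itself the graph of an $O(\delta)$-small map over $\tilde\E^c$. Assembling the three graphs produces projections $\Pi^{s,c,u}_{\tilde K(\th)}$; reading the formulas off directly gives $\|\Pi^\sigma-\tilde\Pi^\sigma\|_{\rho,\Gamma}\le C\delta$. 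The one-step characterization \eqref{ndeg1loc}--\eqref{ndeg3loc} with constants $\mu_{1,2,3}$ then follows by bounding $\Psi^n$ on each true $\E^\sigma$ via the corresponding $\tilde\mu$ plus the $O(\delta)$ correction coming from the graph, yielding $|\mu_{1,2,3}-\tilde\mu_{1,2,3}|\le C\delta$.

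The main technical difficulty is keeping every manipulation inside $\L_\Gamma$ with constants uniform in $\underline c$. The crucial point is the invertibility of $\Psi_{cu,cu}^N$ as an element of $\L_\Gamma$: this follows because $\Psi$ itself is a diffeomorphism (the symplectic $F_\lambda$ has an inverse in $\L_\Gamma$ via the same block decomposition) and the approximate invariance guarantees that $\Psi_{cu,cu}$ is invertible fibre-wise, with a Neumann-type argument supplying the $\Gamma$-decay of the inverse via property (2) of Definition~\ref{defDecay}. All subsequent compositions, restrictions and shifts preserve $\Gamma$-decay by the same property, so no constant ever depends on the positions or number of the centers in $\underline c$. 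This uniformity is precisely what will be needed in Theorem~\ref{thgl} when an increasing number of breathers is coupled.
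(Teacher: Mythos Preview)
Your proposal is correct and follows essentially the same route as the paper: the paper also represents each invariant bundle as the graph of a small linear map over the corresponding approximate bundle, obtains the graph as the fixed point of the graph-transform operator by passing to its $N$-th iterate (whose linear part contracts like $(\tilde\mu_1\tilde\mu_3)^N$ thanks to the Banach-algebra property of $\L_\Gamma$), builds $\E^u$ and $\E^{cs}$ by running the same scheme for $F^{-1}$, sets $\E^c=\E^{cs}\cap\E^{cu}$, and reads off the projection and rate estimates from the $O(\delta)$-smallness of the graphs. The only cosmetic difference is that the paper writes the fixed-point equation in the form $\tilde u = a_{22}^{-1}\bigl[\tilde u\circ T_\omega(a_{11}+a_{12}\tilde u)-a_{21}\bigr]$ and compares $\mathcal T^N$ directly to its linearization $\mathcal T_0^N$, rather than isolating the linear part in the way you phrase it.
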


\begin{proof} The ideas in this proof follow the ones in \cite{FontichLS09}.
 They have been taken from
\cite{HirschPPS69}. We make sure that the estimates are uniform with
 respect to $\delta$ and $\underline c$.
 We divide the proof into several steps.

{\bf Step 1: Construction of the invariant spaces}. The existence of
the invariant splitting will be done through the Banach fixed point
principle applied to a graph transform operator.

We begin with the case of the stable bundle $\E^s_{\tilde{K}(\th)}$. 
We describe the stable space
$\E^s_{\tilde{K}(\th)}$ as the graph of a linear
map, i.e. $\E^s_{\tilde{K}(\th)}=\mbox{graph}\,(u\circ \tilde{K})$, where $u
\circ \tilde{K}$ maps $\mathcal{\tilde E}^s_{{\tilde K}(\th)}$ linearly
into $\mathcal{\tilde E}^c_{{\tilde K}(\th)} \oplus 
\mathcal{\tilde E}^u_{{\tilde K}(\th)}$.

Since the splitting \eqref{spscu} is approximately invariant we
can write the matrix $DF\big(\tilde K(\th)\big)$ with
respect to this decomposition as
$$DF\big(\tilde K(\th)\big)=
\begin{pmatrix} a_{11}(\th) & a_{12}(\th) \\ a_{21}(\th)& a_{22}(\th) \end{pmatrix}
$$
with $\|a_{12}\|_{\rho, \underline{c} ,\Gamma}< C\delta,\ \|a_{21}\|_{\rho,
\underline{c} ,\Gamma}< C\delta.
$
 We also write
$$DF\big(\tilde
K(\th+(N-1)\,\omega)\big)\times\dots\times
DF\big(\tilde K(\th)\big)=
\begin{pmatrix} a_{11}^N(\th) & a_{12}^N(\th) \\ a_{21}^N(\th)& a_{22}^N(\th)
\end{pmatrix}.
$$
Note that by \eqref{ndeg1Iterloc}, \eqref{ndeg2Iterloc} and
\eqref{ndeg3Iterloc} we have
\begin{equation}\label{cotaaN1}
\|a_{11}^N\|_{\rho, \underline{c} ,\Gamma}\le (1 + C \delta)\mu^N_1\,,\qquad \|a_{22}^{-N}\|_{\rho,\underline{c} ,
 \Gamma}\le(1 + C \delta)\mu^N_3\,,
\end{equation}
and
\begin{equation}\label{cotaaN2}
\|a_{12}^N\|_{\rho, \underline{c} ,\Gamma}\le C\delta\,,\qquad
\|a_{21}^{N}\|_{\rho, \underline{c} ,\Gamma}\le C\delta\,.
\end{equation}
The graph condition over the co-cycle is
$$DF\circ \tilde K(\th)\begin{pmatrix}\Id\\ u\circ\tilde K(\th)
\end{pmatrix}\in \mbox{graph} \Big(u\circ \tilde{K}\big(T_{\omega}(\th)\big)\Big).$$
This gives the functional equation for the map $u$
\begin{equation}\label{func}
u\circ \tilde K \big(T_\omega(\th)\big)(a_{11}+a_{12}\
u\circ\tilde K) (\th)=(a_{21}+a_{22}\ u\circ
\tilde{K})\,(\th)\,.
\end{equation}
Denoting $\tilde u=u\circ\tilde K\,$, \eqref{func} can be
rewritten as
\begin{equation}\label{ptfixe}
\tilde u=a_{22}^{-1}\big[\tilde u\circ T_\omega
(a_{11}+a_{12}\,\tilde u)-a_{21}\big].
\end{equation}
Let $\L_\eta$ be the ball of radius $\eta$ in the space of linear
operators from $\mathcal{\tilde  E}^s_{\tilde K(\th)}$ into 
$\mathcal{\tilde E}^c_{\tilde K(\th)}\oplus \mathcal{\tilde  E}^u_{\tilde K(\th)}$ with
the norm $\|\cdot\|_\Gamma$.

Let $\S_\eta$ be the space of analytic sections from $D_\rho$ to
$\L_\eta$, i.e. the space of $u:D_\rho\to\L_\eta$ such that
$u(\th):\mathcal{\tilde  E}^s_{\tilde K(\th)}\to \mathcal{\tilde  E}^c_{\tilde
K(\th)}\oplus \mathcal{\tilde  E}^u_{\tilde K(\th)}$ with the norm
$\|.\|_{\rho,\Gamma}$.

We take the operator $\Tau : \S_\eta\to \S_\eta$ defined as
the right-hand side of \eqref{ptfixe}. $\Tau$ is
approximated by $\Tau_0 : \S_\eta \to \S_\eta$ defined by
$$\Tau_0\,\tilde u=a^{-1}_{22}\ \tilde
u\circ T_\omega\ a_{11}.$$
We now consider $\Tau^N$ and
$\Tau_0^N$. An elementary computation gives
$$\Tau^N_0\,\tilde u= a^{-1}_{22}\dots a^{-1}_{22}\circ T^{N-1}_\omega\,\tilde u\circ T^N_ \omega \ a_{11}\circ T^{N-1}_\omega
\dots \,a_{11\,.}$$
Moreover, taking into account that
$\Tau$ is a degree two polynomial operator, we obtain by
simple algebraic manipulations
\begin{equation}\label{TN1}
\|\Tau^N-\Tau^N_0\|< C\delta
\end{equation}
and
\begin{equation}\label{TN2}
\mbox{Lip}(\Tau^N-\Tau^N_0)< C\delta\,.
\end{equation}
Using the Banach algebra properties of the decay norms, 
we have that
\begin{equation}\label{compaN1}
\|a^N_{11}(\th)-a_{11}\big(T^{N-1}_\omega(\th)\big)\dots
a_{11}(\th)\|_{\rho,\underline{c} ,\Gamma} < C\delta\,,
\end{equation}
\begin{equation}\label{compaN2}
\|a^{-N}_{22}(\th)-a_{22}^{-1}(\th)\dots a^{-1}_{22}
\big(T^{N-1}_\omega(\th)\big)\|_{\rho,\underline{c} ,\Gamma}< C\delta\,.
\end{equation}
By \eqref{cotaaN1}, \eqref{compaN1} and \eqref{compaN2}, if
$\delta$ is small, $\Tau^N_0$ sends $\S_\eta$ into $\S_\eta$
for all $\eta\in (0,1]$ and is a contraction in this domain. By
\eqref{TN1} and \eqref{TN2}, if $\delta$ is small $\Tau^N$
sends $\S_\eta$ into $\S_\eta$ for $\eta\in (C\delta,1]$ and it is
also a contraction.

Therefore $\Tau^N$ has a unique fixed point $u^*$ in $\S_1$
which belongs to $\S_{C\delta}$. It is clear that $\Tau  u^*$
is also a fixed point of $\Tau^N$, which belongs to
$\S_{C'\delta}\ \subset\S_1$ for some $C'\ge C$. By uniqueness
$\Tau u^*=u^*$.

A similar method can be applied for the center-unstable subspace. In this case the graph condition reads
$$DF_{\lambda}\circ \tilde{K}(\th)
\begin{pmatrix} v \circ \tilde{K}(\th) \\ \Id\end{pmatrix}\in
\mbox{graph} \Big(v\circ \tilde{K}\big(T_{\omega}(\th)\big)\Big),$$
and the resulting operator $\Tau:\mathcal{S}_\eta \rightarrow \mathcal{S}_\eta$ is
\begin{equation}\label{ptfixe2}
\Tau \tilde{v}=\big[(\tilde{a}_{11}\tilde{v}+\tilde{a}_{12})(\tilde{a}_{21}\tilde{v}+\tilde{a}_{22})^{-1}\big]\circ
T^{-1}_{\omega},
\end{equation}
where $\tilde v = v \circ \tilde K$.
Repeating the same procedure as above, we construct the center-unstable space and obtain similar bounds.

Now let us consider $G=F^{-1}$. We observe that
the stable space associated to the map $G$ is the unstable
space associated to $F$. Hence, applying the above
procedure to $G$, we construct the unstable space
$\E^u_{\tilde{K}(\th)}$ and center-stable space
$\E^{c,s}_{\tilde{K}(\th)}$ with similar
bounds. Finally we note that 
$\E^{c}_{\tilde{K}(\th)}=\E^{c,s}_{\tilde{K}(\th)} \bigcap \E^{c,u}_{\tilde{K}(\th)}$.

{\bf Step 2: Estimates on the projections}. 
We want to estimate the norm of the projection
$\Pi^s_{\tilde{K}(\th)}$ compared to the one of
$\tilde \Pi^s_{\tilde K(\th)}$.

Let $\xi\in T_{\tilde K(\th)}\M$. Using the decomposition
$\xi=(\xi^s,\xi^{cu}) \in \mathcal E^s_{\tilde K(\th)} \oplus
(\mathcal E ^c_{\tilde K(\th)} \oplus \mathcal E^u_{\tilde K(\th)})$ we have
the following representations
\begin{align*}
{\tilde \Pi}^s_{\tilde K(\th)} \xi = (\xi^s,0), &
\qquad \Pi^s_{ \tilde K(\th)}\xi =  (\tilde \xi^s,\tilde u(\th) \tilde \xi^s), \\
\tilde \Pi^{cu}_{\tilde K(\th)} \xi = (0,\xi^{cu} ), & \qquad 
\Pi^{cu}_{ \tilde K(\th)}\xi =  (\tilde v(\th) \tilde\xi^{cu}, \tilde \xi^{cu}) .
\end{align*}
Then
\begin{align*}
\xi^s &=\tilde \xi^s +
\tilde v(\th) \tilde \xi^{cu}, \\
\xi^{cu} &= \tilde u(\th) \tilde \xi^s +  \tilde \xi^{cu}
\end{align*}
or equivalently
$$
\left( \begin{array}{c}
\tilde \xi^s \\
\tilde \xi^{cu}
\end{array}\right)
=
\left( \begin{array}{cc}
\Id & \tilde v(\th) \\
 \tilde u(\th) & \Id
\end{array}\right) ^{-1}
\left( \begin{array}{c}
\xi^s \\
\xi^{cu}
\end{array}\right)
$$
since the matrix $B=\left( \begin{array}{cc}
\Id & \tilde v(\th) \\
 \tilde u(\th) & \Id
\end{array}\right)$
is invertible because is $O(\delta) $-close to the identity and
moreover, by the Neumann series theorem, we can write
$$B^{-1}=\begin{pmatrix}\Id+w_{11}(\th) &w_{12}(\th)& \\ w_{21}(\th) & \Id+w_{22}(\th)\end{pmatrix}$$
with $\|w_{ij}\|_{\rho,\Gamma}<C\delta$.
 Therefore using 
\begin{align*}
\left(\tilde\Pi^s_{\tilde K(\th)}-\Pi^s_{\tilde
K(\th)}\right)\begin{pmatrix}\xi^s\\ \xi^{cu}\end{pmatrix}=
\begin{pmatrix}\tilde\xi^s-\xi^s\\ \tilde u(\th)\
\tilde\xi^s\end{pmatrix} = 
\begin{pmatrix}  &\tilde v(\th) \tilde \xi^{c,u} \\
\tilde u(\th) \tilde \xi^s
\end{pmatrix} 
\end{align*}
this gives 
$$\big\|\tilde\Pi^s_{\tilde K(\th)}-\Pi^s_{\tilde K(\th)}\big\|_{\rho,\underline{c},\Gamma}
\le C\delta\,.$$
Analogously one has $\big\|\tilde\Pi^{cu}_{\tilde
K(\th)}-\Pi^{cu}_{\tilde K(\th)}\big\|_{\rho,\underline{c},\Gamma} <
C\delta\,.$

The estimates for the projections $\Pi^u$,  $\Pi^{sc}$ are
obtained in a similar way. {From} those we deduce readily the ones
for $\Pi^c$ by noting  that $\Pi^c = \Pi^{cu} - \Pi^u$.

{\bf Step 3: Existence of $\mu_1$, $\mu_2$, $\mu_3$
$C_h$ for the new splitting and estimates.} 

Since the distance between the spaces
$\mathcal E^{s,c,u}_{\tilde K(\th)}$ and $\mathcal{\tilde
E}^{s,c,u}_{\tilde K(\th)}$ is bounded by $C\delta$, the
restriction of $DF\circ \tilde K\circ
T^{N-1}_\omega\times\dots\times DF\circ \tilde K$ to them is
separated by a distance less than $C\delta$. Therefore there exist
$\mu_1,\,\mu_2,\,\mu_3$ with $\mu_1,\,\mu_2<1,\ \mu_3>1,\ \mu_1
\mu_3<1,\ \mu_2\mu_3<1$ such that $|\mu_{1,2,3}-\tilde
\mu_{1,2,3}|<C\delta$ and \eqref{ndeg1Iterloc} holds for
$\tilde\mu_1=\mu_1$ and $v\in\mathcal{\tilde E}^{s}_{\tilde
K(\th)}$. Similarly for \eqref{ndeg2Iterloc} and
\eqref{ndeg3Iterloc}.

Once we have these last properties we deduce that there exists
$C_h$ such that \eqref{ndeg1loc}, \eqref{ndeg2loc} and
\eqref{ndeg3loc} hold for all $n\ge 1$.
\end{proof}

As a consequence of Proposition \ref{NDmoveloc} we have the
following result, which shows that the hyperbolicity constants 
do not deteriorate much if we change very little the embeddings 
$K$. This will be used in the iterative process. We will use it to 
show that, during the iterative process, the hyperbolicity constants 
remain uniformly bounded. We will also deduce that when the 
embeddings converge, the splittings converge. 

\begin{pro}\label{tempmove}
Under the hypotheses of Proposition \ref{improvementloc}, assume
that $\| K-\tilde K\|_{\rho,\underline{c},\Gamma}$ is small
enough. Then there exists an analytic splitting
$$T_{\tilde K(\th)}\, \M = \mathcal E^s_{\tilde K(\th)}\oplus
\mathcal E^c_{\tilde K(\th)} \oplus \mathcal E^u_{\tilde
K(\th)}$$
 invariant under the co-cycle $DF_\lambda \circ \tilde K$ over
 $T_\omega$.

 Furthermore, there exists $C>0$ such that
 \begin{equation*}
 \begin{split}
 &\big\|\Pi^{s,c,u}_{K(\th)}-\Pi^{s,c,u}_{\tilde
 K(\th)}\big\|_{\rho, \Gamma}\le C \| K- \tilde K\|_{\rho,
 \underline{c},\Gamma},\\
 &\vert\mu_i -\tilde \mu_i\vert \le C\| K - \tilde K\|_{\rho, \underline{c}, \Gamma},\qquad i=1,2,3\\
& |C_h - \tilde C_h| \le C \delta
 \end{split}
 \end{equation*}
\end{pro}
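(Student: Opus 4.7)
The plan is to reduce the statement to Proposition \ref{NDmoveloc} by transporting the genuine invariant splitting at $K$ to a candidate splitting at $\tilde K$, and showing that this candidate is $\delta$-invariant in the sense of Definition \ref{NDloc2}, with $\delta$ of the order of $\|K-\tilde K\|_{\rho,\underline c,\Gamma}$.

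First, since we are in a Euclidean lattice model, $T_{K(\theta)}\M$ and $T_{\tilde K(\theta)}\M$ are both canonically identified with $\ell^\infty(\mathbb Z^N)$. Under this identification, I would define
\[
\tilde{\mathcal E}^{\sigma}_{\tilde K(\theta)}:=\mathcal E^{\sigma}_{K(\theta)},\qquad \sigma\in\{s,c,u\},
\]
and take the associated projections to be $\Pi^{\sigma}_{K(\theta)}$. Because $K$ already satisfies Definition \ref{NDloc-hyp} with constants $\mu_{1,2,3},C_h$, these transported spaces automatically satisfy the iterative hyperbolicity bounds \eqref{ndeg1Iterloc}--\eqref{ndeg3Iterloc} with respect to the cocycle $DF_\lambda\circ K$ (not yet $DF_\lambda\circ\tilde K$).

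The next step is to replace $K$ by $\tilde K$ in the cocycle and estimate the resulting defect. Since $F_\lambda\in C^2_\Gamma(B_r)$, the mean value theorem in $\L_\Gamma(\ell^\infty(\mathbb Z^N))$, combined with the Banach algebra property of the decay norm, gives
\[
\|DF_\lambda\circ \tilde K-DF_\lambda\circ K\|_{\rho,\underline c,\Gamma}\le \|D^2F_\lambda\|_{C^0_\Gamma(B_r)}\,\|\tilde K-K\|_{\rho,\underline c,\Gamma},
\]
and similarly for $DF_\lambda^{-1}$, provided $\|K-\tilde K\|_{\rho,\underline c,\Gamma}$ is small enough that $\tilde K(D_\rho)\subset B_r$. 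Consequently,
\[
\mathrm{dist}\big(DF_\lambda(\tilde K(\theta))\,\tilde{\mathcal E}^{\sigma}_{\tilde K(\theta)},\,\tilde{\mathcal E}^{\sigma}_{\tilde K(\theta+\omega)}\big)\le C\,\|K-\tilde K\|_{\rho,\underline c,\Gamma},
\]
where $C$ depends only on $\|DF_\lambda\|_{C^2_\Gamma(B_r)}$ and on $\|\Pi^{\sigma}_{K(\cdot)}\|_{\rho,\Gamma}$. Hence the transported splitting is $\delta$-invariant for the cocycle $DF_\lambda\circ\tilde K$, with $\delta=C\,\|K-\tilde K\|_{\rho,\underline c,\Gamma}$. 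The iterative rate estimates \eqref{ndeg1Iterloc}--\eqref{ndeg3Iterloc} continue to hold with the same $N$ and with rates $\tilde\mu_i$ differing from $\mu_i$ by at most $C\delta$, because the $N$-step product of the cocycle is perturbed in the decay norm by at most $C\delta$ (again by the Banach algebra property applied $N$ times).

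At this point Proposition \ref{NDmoveloc} applies directly to $(\tilde K,F_\lambda)$ with the $\delta$-invariant splitting just constructed, provided $\delta$ is below the threshold $\delta_0$ of that proposition; this is what the smallness hypothesis on $\|K-\tilde K\|_{\rho,\underline c,\Gamma}$ guarantees. It produces the genuine invariant splitting $\mathcal E^{\sigma}_{\tilde K(\theta)}$ with projections satisfying
\[
\|\Pi^{\sigma}_{\tilde K(\theta)}-\tilde\Pi^{\sigma}_{\tilde K(\theta)}\|_{\rho,\Gamma}\le C\delta,
\]
and rates $\mu_i$ such that $|\mu_i-\tilde\mu_i|\le C\delta$. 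Combining with $\tilde\Pi^{\sigma}_{\tilde K(\theta)}=\Pi^{\sigma}_{K(\theta)}$ and with $|\tilde\mu_i-\mu_i|\le C\delta$ from the previous step, the triangle inequality yields the announced estimates on $\|\Pi^{\sigma}_{K(\theta)}-\Pi^{\sigma}_{\tilde K(\theta)}\|_{\rho,\Gamma}$ and on $|\mu_i-\tilde\mu_i|$. The bound $|C_h-\tilde C_h|\le C\delta$ then follows from the standard passage from iterative rates to exponential growth bounds, as in the last step of the proof of Proposition \ref{NDmoveloc}.

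The main technical point to watch is uniformity: all constants $C$ must be controlled in terms of $\|DF_\lambda\|_{C^2_\Gamma(B_r)}$, $\|\Pi^{\sigma}\|_{\rho,\Gamma}$, $\mu_{1,2,3}$, $N$, and must be independent of $\underline c$, so that the proposition is usable both in the inductive Newton step and in the infinite coupling argument of Theorem \ref{thgl}. This is automatic here because both the mean value estimate for $DF_\lambda\circ\tilde K-DF_\lambda\circ K$ and Proposition \ref{NDmoveloc} are independent of $\underline c$.
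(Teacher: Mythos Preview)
Your proposal is correct and follows exactly the paper's approach: transport the genuine invariant splitting for $DF_\lambda\circ K$ to $\tilde K$ via the Euclidean identification of tangent spaces, observe that it becomes a $\delta$-invariant splitting for $DF_\lambda\circ\tilde K$ with $\delta=C\|K-\tilde K\|_{\rho,\underline c,\Gamma}$, and then invoke Proposition~\ref{NDmoveloc}. The paper's own proof is just a three-sentence sketch of this same reduction, so your write-up is in fact more detailed than the original.
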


\begin{proof}
The invariant splitting
$$T_{K(\th)}\, \M = \mathcal E^s_{K(\th)} \oplus \mathcal E^c_{K(\th)}\oplus \mathcal E^u_{K(\th)}$$
for $DF_\lambda \circ K$ is an approximate invariant splitting for
$DF_\lambda\circ \tilde K$. Here we identify $T_{\tilde K(\th)}\,\M$
with $T_{K(\th)}\,\M$ since $M$ is Euclidean. We then can take $\delta
= C\| K-\tilde K\|_{\rho,\underline{c},\Gamma}$.
\end{proof}

 \subsection{Solution of the linearized
equation on the center subspace}
\label{sec:centerequation}
In this section we solve approximately the projection of equation
\eqref{linear} on the invariant center subspace provided by Proposition \ref{tempmove} and establish estimates.
Projecting with $\Pi^c_{K(\th+\omega)}$ and using the notation
$$\Delta^c(\th)=\Pi^c_{K(\th)}\,\Delta (\th),\qquad E^c(\th)=\Pi^c_{K(\th+\omega)}\,E(\th)\,,$$
we obtain
\begin{equation}\label{eqCenter}
\Pi^c_{K(\th+\omega)}\frac{\partial F_\lambda(K(\th))}{\partial
\lambda}\Lambda+DF_{\lambda}(K
(\th))\Delta^c(\th)-\Delta^c(\th+\omega)=-E^c(\th).
\end{equation}
\subsubsection{Estimates on cohomology equations}

We recall the well-known small divisors lemma (see
\cite{Russmann76}, \cite{Russmann76a}, \cite{Russmann75},
\cite{Llave01c}).
\begin{pro}\label{sdrussmann}
Let $M$ be a finite dimensional Euclidean manifold and $\omega \in
D(\kappa,\nu)$. Assume the mapping $h:D_{\rho}\rightarrow M$ is
analytic on $D_{\rho}$ and has zero average. Then for any $0 <
\sigma <\rho$ the difference equation
\begin{equation*}
v(\th+\omega)-v(\th)=h(\th)
\end{equation*}
has a unique zero average solution $v:\torus^l \rightarrow M$,
real analytic on $D_{\rho-\sigma}$ for any $0 < \sigma <\rho$.
Moreover, we have the estimate
\begin{equation}\label{estimsd}
\|v\|_{\rho-\sigma} \leq C \kappa\sigma^{-\nu}\|h\|_{\rho},
\end{equation}
and where $C$ only depends on $\nu $ and the dimension of the torus $l$.
\end{pro}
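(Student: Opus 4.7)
The plan is to solve the difference equation by Fourier series. Since $h$ is analytic on $D_\rho$ with zero average, write
\[
h(\theta)=\sum_{k\in\mathbb{Z}^l\setminus\{0\}}\hat h_k\, e^{2\pi i k\cdot\theta},
\]
where the absence of the $k=0$ mode uses $\mathrm{avg}(h)=0$, and the standard Cauchy estimate on an analytic function gives $|\hat h_k|\le \|h\|_\rho\, e^{-2\pi|k|\rho}$. Seeking a zero-average solution of the form $v(\theta)=\sum_{k\ne 0}\hat v_k e^{2\pi i k\cdot\theta}$ and substituting into the equation $v\circ T_\omega - v = h$, one is forced to take
\[
\hat v_k=\frac{\hat h_k}{e^{2\pi i k\cdot\omega}-1},\qquad k\in\mathbb{Z}^l\setminus\{0\}.
\]
The Diophantine hypothesis, combined with the elementary inequality $|e^{2\pi i x}-1|\ge 4\,\mathrm{dist}(x,\mathbb{Z})$, yields the small-divisor bound $|e^{2\pi i k\cdot\omega}-1|^{-1}\le C\kappa |k|^\nu$. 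This fixes a unique candidate for $v$.

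The main task is to show that the resulting series converges in $D_{\rho-\sigma}$ with the sharp estimate claimed. A direct bound gives
\[
\|v\|_{\rho-\sigma}\le C\kappa\,\|h\|_\rho\sum_{k\in\mathbb{Z}^l\setminus\{0\}}|k|^\nu e^{-2\pi|k|\sigma},
\]
and the principal technical point is that the naive dimensional estimate of this sum by an integral only delivers the exponent $\sigma^{-(\nu+l)}$, which is worse than the $\sigma^{-\nu}$ asserted in the proposition. This is where I would invoke R\"ussmann's refined counting argument: one groups the modes $k$ by the size of the small divisor rather than by $|k|$, exploiting that for each level $\alpha$ the number of $k$ with $|k|\le N$ and $\mathrm{dist}(k\cdot\omega,\mathbb{Z})\le\alpha$ is controlled uniformly in $\omega$, and a careful summation then recovers only one factor of $\sigma^{-\nu}$. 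This step is the main obstacle and the reason the constant $C$ depends on $\nu$ and $l$ but is otherwise universal; the exponential decay $e^{-2\pi|k|\sigma}$ of the Fourier coefficients is used both to absorb the polynomial factor $|k|^\nu$ and to control the tail.

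Finally, uniqueness in the zero-average class is essentially immediate. If $v_1, v_2$ are two zero-average analytic solutions, then $w=v_1-v_2$ satisfies $w\circ T_\omega = w$, so expanding in Fourier series forces $\hat w_k(e^{2\pi i k\cdot\omega}-1)=0$ for every $k$. The Diophantine hypothesis (in particular, the irrationality it implies) gives $e^{2\pi i k\cdot\omega}\ne 1$ for $k\ne 0$, hence $\hat w_k=0$ for $k\ne 0$, while $\hat w_0=0$ by the zero-average normalization. Therefore $w\equiv 0$, which completes the plan.
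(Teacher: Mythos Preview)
Your proposal is correct and follows the standard Fourier-series approach, including the key observation that R\"ussmann's refined counting argument is needed to obtain the sharp exponent $\sigma^{-\nu}$ rather than the naive $\sigma^{-(\nu+l)}$. The paper itself does not give a proof of this proposition: it is stated as a recall of a well-known result, with citations to R\"ussmann's original papers and the survey \cite{Llave01c}, so there is nothing to compare against beyond noting that your sketch is precisely the argument those references contain.
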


We have the following corollary of R\"ussmann's result in our context. 
\begin{cor}
Let $\M=\ell^\infty(\Z^N)$ and $\omega \in D(\kappa,\nu)$ and
assume the mapping $h:\torus^l \rightarrow \M$ belongs to
$\A_{\rho,\underline c, \Gamma}$ and has zero average. Then for
any $0 < \sigma <\rho$ the difference equation
\begin{equation*}
v(\th+\omega)-v(\th)=h(\th)
\end{equation*}
has a unique zero average solution $v:\torus^l \rightarrow
\M$, belonging to $\A_{\rho-\sigma,\underline c, \Gamma}$
for any $0 < \sigma <\rho$. Moreover, we have the estimate
\begin{equation}\label{estimsdloc}
\|v\|_{\rho-\sigma,\underline{c},\Gamma} \leq C \kappa\sigma^{-\nu}\|h\|_{\rho,\underline{c},\Gamma},
\end{equation}
where $C$ only depends on $\nu $ and linearly on the dimension of the torus $l$.
\end{cor}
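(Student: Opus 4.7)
The natural strategy is to reduce the statement to R\"ussmann's scalar small divisors lemma (Proposition~\ref{sdrussmann}) applied site by site, and then show that the weighted decay norm passes through the solution operator without extra loss. Since $h \in \A_{\rho,\underline c,\Gamma}$ is a map $D_\rho \to \M = \ell^\infty(\Z^N)$, its components $h_i : D_\rho \to M$ (for $i \in \Z^N$) are each analytic on $D_\rho$, continuous on $\overline{D_\rho}$, and satisfy $\|h_i\|_\rho < \infty$. The zero average hypothesis on $h$ is interpreted componentwise (this is the only sensible interpretation in this Banach setting, since the average is an element of $\M$), so $\avg(h_i) = 0$ for every $i \in \Z^N$.

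First I would apply Proposition~\ref{sdrussmann} to each component $h_i$ to obtain a unique zero-average analytic solution $v_i : D_{\rho-\sigma} \to M$ of
\[
v_i(\th+\omega) - v_i(\th) = h_i(\th),
\]
with the uniform estimate $\|v_i\|_{\rho-\sigma} \le C\kappa\sigma^{-\nu}\|h_i\|_\rho$, the constant $C$ depending only on $\nu$ and $l$. Then I would assemble $v = (v_i)_{i \in \Z^N}$ and verify that it defines a map into $\M$ with the claimed norm. Multiplying both sides of the componentwise estimate by $\Gamma^{-1}(i-c_j)$, taking $\min$ over $j = 1,\dots,R$ and $\sup$ over $i \in \Z^N$, I get
\[
\|v\|_{\rho-\sigma,\underline c,\Gamma} \;\le\; C\kappa\sigma^{-\nu}\|h\|_{\rho,\underline c,\Gamma},
\]
which in particular shows $\sup_i \|v_i\|_{\rho-\sigma} < \infty$, so $v(\th) \in \M$ for every $\th \in D_{\rho-\sigma}$.

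Next I would check that $v$ is actually analytic in the sense of Definition~\ref{analyt}, not merely componentwise. The quickest route is via the Fourier representation: writing $\hat h_k \in \M$ for the Fourier coefficients of $h$, the explicit solution is
\[
v(\th) = \sum_{k\in \Z^l \setminus\{0\}} \frac{\hat h_k}{e^{2\pi i k\cdot \omega} - 1}\, e^{2\pi i k\cdot \th},
\]
with $|\hat h_k|_\infty \le e^{-2\pi|k|\rho}\|h\|_{\rho,\underline c,\Gamma}\,\Gamma(i-c_j)$ componentwise after choosing the best $j$ per $i$. The Diophantine bound $|e^{2\pi i k\cdot\omega}-1|^{-1} \le C\kappa|k|^\nu$ combined with absolute convergence in the $\|\cdot\|_{\rho-\sigma,\underline c,\Gamma}$ norm shows that the partial sums converge in $\A_{\rho-\sigma,\underline c,\Gamma}$, which is a Banach space. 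Continuity on the closed strip and the zero average of $v$ are immediate from the Fourier form.

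Uniqueness follows componentwise from the uniqueness clause in Proposition~\ref{sdrussmann}: any two zero-average solutions differ componentwise by a constant which must vanish. The only slightly delicate point — and what I would flag as the main thing to be careful about — is the interpretation of the zero-average and analyticity hypotheses for $\M$-valued maps, where the pathologies of the dual of $\ell^\infty$ noted earlier in the paper could in principle matter; the Fourier approach above sidesteps this by exhibiting $v$ as a norm-convergent series in the Banach space $\A_{\rho-\sigma,\underline c,\Gamma}$, bypassing any reliance on duality or on measure-theoretic definitions of $\avg$.
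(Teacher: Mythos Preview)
Your proof is correct and follows essentially the same route as the paper: solve the cohomological equation componentwise via Proposition~\ref{sdrussmann}, then multiply the resulting estimates by $\Gamma^{-1}(i-c_j)$, take $\min_j$ and $\sup_i$ to recover the weighted norm bound. The only minor difference is that the paper checks analyticity of $v$ by observing that each $\partial_{\th_k}v_i$ solves the same type of equation with right-hand side $\partial_{\th_k}h_i$ (which has zero average), whereas you verify membership in $\A_{\rho-\sigma,\underline c,\Gamma}$ via norm-convergence of the Fourier series; both are valid and neither adds a substantive new idea.
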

\begin{remark}
An important fact of the previous statement is that, since we
consider the supremum norm on $M$ and the equation is solved
component by component, the estimates are independent on the
dimension of $\mathcal M$.
\end{remark}

\begin{proof}
We write the equation in coordinates $ i \in \ZZ^N$ to get
\begin{equation*}
v_i(\th+\omega)-v_i(\th)=h_i(\th)
\end{equation*}
with $v_i$ and $h_i$ mapping $\T^l$ into $M$. We then apply the
previous finite dimensional result Proposition~\ref{sdrussmann}
to each of the components. 

We also observe that the partial derivatives of 
the functions also satisfy 
\[
\partial_{\th_k} v_i(\th + \omega) 
- \partial_{\th_k} v_i(\th )  = 
\partial_{\th_k} h_i(\th) 
\]
and that $\partial_{\th_k} h_i$ 
has zero average. Then,
$\partial_{\th_k} v_i(\th )$ will be the 
zero average solution obtained applying 
Proposition~\ref{sdrussmann}.
Multiplying the estimates afforded by
Proposition~\ref{sdrussmann} by 
$\Gamma^{-1}(i-c_j)$ and taking the  supremum in  $i$ and
 the infimum in $j$ we get the desired result.
\end{proof}

\subsubsection{Isotropic character of the torus}

One important issue is the approximate isotropic character of the
approximate torus $K(\T^l)$. In our context the two-form
$\Omega_\infty$ is formal but $K(\T^l)$ is finite dimensional,
therefore asking the forms to be isotropic amounts to ask that the
pull-back $K^*\Omega_\infty$ vanishes. By the decay properties of $K$ (see Lemma \ref{sympDecay}), this last object is a true form on $\T^l$ and we can write
$$K^*\Omega_\infty(\th)\,(\xi,\eta)=\langle \xi, L(\th)\eta\rangle,\qquad \xi,\eta\in\R^l\,. $$

The isotropic character of the torus is then equivalent to
$$L(\th)\equiv DK(\th)^\top J_\infty \big(K(\th)\big)\,DK(\th)=0$$
for all $\th\in\T^l$. Notice that $L$ is a $l \times l-$ matrix. 

We first consider the case when $K$ is a
solution of \eqref{translatedloc}.
\begin{lemma}\label{lagExact}
Let $(\M,\Omega_\infty=d\alpha_\infty)$ be the lattice manifold.
Assume that $F_{\lambda_0} \circ K=K\circ T_\omega$, $F_{\lambda_0}$ is 
 symplectic, $\omega$ is
rationally independent and $K\in\A_{\rho,\underline c,\Gamma}$. Then $L(\th)$ is identically zero.
\end{lemma}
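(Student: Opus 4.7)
The plan is to use the invariance equation together with the symplecticity of $F_{\lambda_0}$ to show that $K^*\Omega_\infty$ is invariant under the translation $T_\omega$ on $\T^l$, then Fourier--expand it and use rational independence of $\omega$ to conclude it has constant coefficients, and finally use exactness of $\Omega_\infty$ to rule out that constant.

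First I would note that Lemma~\ref{sympDecay} already tells us that $K^*\Omega_\infty$ makes sense as a genuine two-form on $\T^l$. By an entirely analogous argument, using the same decay properties of $K$ and the Banach algebra structure of $\Gamma$, the formal sum
\[
K^*\alpha_\infty = \sum_{j\in\Z^N} K^*(\pi_j^*\alpha)
\]
also converges to a well-defined analytic one-form on $\T^l$, and on $\T^l$ we have the true (non-formal) identity $K^*\Omega_\infty = d(K^*\alpha_\infty)$. Thus $K^*\Omega_\infty$ is exact on the torus.

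Next, pulling back the invariance equation $F_{\lambda_0}\circ K=K\circ T_\omega$ and using that $F_{\lambda_0}$ is symplectic ($F_{\lambda_0}^*\Omega_\infty=\Omega_\infty$), I get
\[
T_\omega^*(K^*\Omega_\infty)=(K\circ T_\omega)^*\Omega_\infty=(F_{\lambda_0}\circ K)^*\Omega_\infty=K^*(F_{\lambda_0}^*\Omega_\infty)=K^*\Omega_\infty.
\]
In terms of the matrix $L$, this reads $L(\th+\omega)=L(\th)$ for every $\th\in\T^l$. Expanding $L$ in Fourier series $L(\th)=\sum_{k\in\Z^l}\hat L_k\,e^{2\pi i k\cdot\th}$, the invariance yields $\hat L_k(e^{2\pi i k\cdot\omega}-1)=0$ for every $k$, and rational independence of $\omega$ forces $\hat L_k=0$ for $k\ne 0$. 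Hence $L(\th)\equiv\hat L_0$, a constant antisymmetric matrix.

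To kill the constant $\hat L_0$, I use exactness. Since $L$ is antisymmetric, the two-form $K^*\Omega_\infty$ can be written as $\sum_{i<j}(\hat L_0)_{ij}\,d\th_i\wedge d\th_j$, and each coefficient $(\hat L_0)_{ij}$ equals, up to sign, the integral of $K^*\Omega_\infty$ over the two-cycle $T_{ij}=\{\th:\th_m=0\text{ for }m\ne i,j\}\subset\T^l$. But $K^*\Omega_\infty=d(K^*\alpha_\infty)$ is exact, so Stokes' theorem on the closed surface $T_{ij}$ gives $\int_{T_{ij}}K^*\Omega_\infty=0$. Therefore $\hat L_0=0$ and $L\equiv 0$.

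The only delicate point is the first step: showing that $K^*\alpha_\infty$ truly converges to a one-form on $\T^l$ rather than remaining a formal sum. This is the infinite-dimensional content and rests precisely on the decay hypothesis $K\in\A_{\rho,\underline c,\Gamma}$ combined with the properties of decay functions in Definition~\ref{defDecay}; everything afterwards is the classical KAM argument for isotropy of invariant tori, carried out here on the finite-dimensional target $\T^l$.
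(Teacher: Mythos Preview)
Your proof is correct and follows essentially the same route as the paper: invariance of $K^*\Omega_\infty$ under $T_\omega$ from symplecticity, constancy of $L$ from rational independence, and vanishing of the constant from exactness of $\Omega_\infty$. The only cosmetic differences are that the paper phrases the second step as ``$T_\omega$ is ergodic'' rather than via Fourier coefficients, and for the last step writes $L=DL_1^\top-DL_1$ (coming from $K^*\alpha_\infty$) and observes that the average of a derivative vanishes, whereas you integrate over two-cycles and invoke Stokes; these are the same exactness argument in different clothing.
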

\begin{proof}
Since $K\in\A_{\rho,\underline c,\Gamma}$, using that $F_{\lambda_0}$ is 
symplectic (see Appendix \ref{sec:symplectic}).
we have
$$K^*\Omega_\infty=K^* F_{\lambda_0}^*\Omega_\infty=(K\circ T_\omega)^*\Omega_\infty\,.$$
By the condition on $\omega,\ T_\omega$ is ergodic and therefore
$K^*\Omega_\infty$ is constant. Hence $L$ is also constant. Moreover, the
fact that $\M$ is formally exact symplectic shows that
$K^*\Omega_\infty = d (K^*\alpha_\infty)$, where now, $d$ is the differential 
on the torus (see Appendix \ref{sec:symplectic}) . In coordinates this means that 
$L(\th)$ has the form $DL_1(\th)^\top-DL_1(\th)$ for some
finite dimensional vector $L_1(\th)$.
Since the average of
derivatives is zero we get that $L$ is zero on $\T^l$.
\end{proof}

\subsubsection{Geometric considerations on the center bundle $\mathcal E^c_{K(\th)}$ in the exact case}

In this section, we show how to construct geometrically a very natural basis of the center subspace $\mathcal E^c_{K(\th)}$ when $K$ satisfies \eqref{translatedloc}. Recall first that we are assuming that $\mathcal E^c_{K(\th)}$ is finite dimensional with dimension $2l$. 
\medskip 
We start with the following lemma.  
\begin{lem}\label{lem:restriction}
The restriction $\Omega^c_{K(\th)}$ of $\Omega_\infty$ to the finite-dimensional space $\mathcal E^c_{K(\th)}$ is a symplectic form on $\mathcal E^c_{K(\th)}$. 
\end{lem}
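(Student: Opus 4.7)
The restriction $\Omega^c_{K(\th)}$ is manifestly skew--symmetric, so the content of the lemma reduces to (i) checking that the formal sum defining $\Omega_\infty$ converges on pairs from the finite--dimensional space $\E^c_{K(\th)}$, and (ii) showing that the resulting bilinear form is non--degenerate. For (i), I would observe that $\E^c_{K(\th)}$ is at most $2l$--dimensional and is spanned by vectors with decay: the tangent vectors $DK(\th)e_i$ belong to $\mathcal A_{\rho,\underline c,\Gamma}$ by hypothesis, and a complementary family of $l$ vectors (e.g.\ obtained by applying $J_\infty^{-1}$ and the projection $\Pi^c$) inherits decay from $\Pi^c\in\L_\Gamma$. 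By Lemma~\ref{sympDecay} (or a direct componentwise bound), the lattice sum $\sum_{j\in\Z^N}\Omega(\pi_j\xi,\pi_j\eta)$ then converges absolutely for any such $\xi,\eta$.

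The proof of (ii) is the standard symplectic--hyperbolic argument. I would first show that $\E^s_{K(\th)}$ and $\E^u_{K(\th)}$ are $\Omega_\infty$--isotropic and that $\Omega_\infty(\E^s,\E^c)=\Omega_\infty(\E^u,\E^c)=0$. For the cross vanishing, fix $\xi\in \E^c_{K(\th)}$ and $\eta\in \E^s_{K(\th)}$, and write $\xi_n,\eta_n$ for the images under $n$ iterations of the cocycle $DF_\lambda\circ K$ over $T_\omega$. Symplecticity of $F_\lambda$ (Definition~\ref{Symploc}) yields, componentwise and via the decay of the factors, the invariance $\Omega_\infty(\xi,\eta)=\Omega_\infty(\xi_n,\eta_n)$; the characterizations \eqref{ndeg1loc} and \eqref{ndeg3loc} then give
\[
|\Omega_\infty(\xi_n,\eta_n)|\le C\,\mu_1^n\,\mu_3^n\,\|\xi\|_{\rho,\underline c,\Gamma}\,\|\eta\|_{\rho,\underline c,\Gamma}\longrightarrow 0
\]
since $\mu_1\mu_3<1$. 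The same estimate with $F_\lambda^{-1}$ and the condition $\mu_2\mu_3<1$ treats $\Omega_\infty(\E^u,\E^c)$, and taking both vectors in $\E^s$ (respectively $\E^u$) gives isotropy.

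Finally, suppose $\xi\in \E^c_{K(\th)}$ pairs trivially with every $\eta\in \E^c_{K(\th)}$. Combined with the cross vanishing, this gives $\Omega_\infty(\xi,\eta)=0$ for every $\eta\in T_{K(\th)}\M$ for which the formal sum is defined -- in particular for every $\eta$ concentrated at a single node $j$, where the pairing collapses to the finite--dimensional $\Omega(\pi_j\xi,\pi_j\eta)$. Non--degeneracy of $\Omega$ on $M$ then forces $\pi_j\xi=0$ for every $j\in\Z^N$, hence $\xi=0$. The main obstacle I anticipate is bookkeeping the absolute convergence required to iterate the symplectic invariance $\Omega_\infty(\xi,\eta)=\Omega_\infty(DF_\lambda\xi,DF_\lambda\eta)$ on vectors in the cocycle orbits; this amounts to combining Definition~\ref{Symploc} with the Banach algebra properties of the decay norm and the fact that the iterates $\xi_n,\eta_n$ stay in spaces with controlled decay so that the relevant lattice sums remain dominated.
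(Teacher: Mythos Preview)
Your proposal is correct and follows essentially the same route as the paper: you use symplecticity of $F$ together with the rate conditions $\mu_1\mu_3<1$, $\mu_2\mu_3<1$ to show that $\Omega_\infty$ vanishes on $\E^s\times\E^c$, $\E^u\times\E^c$, $\E^s\times\E^s$, $\E^u\times\E^u$, and then conclude non--degeneracy on $\E^c$ from non--degeneracy of $\Omega_\infty$ on the whole tangent space. The only cosmetic differences are that you spell out the convergence issue (i) and the final step via single--node test vectors, whereas the paper simply invokes ``$\Omega_\infty$ is non--degenerate'' after writing an arbitrary vector as $c+u+v\in\E^c\oplus\E^u\oplus\E^s$; both formulations amount to the same thing.
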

\begin{proof}
To prove the claim, it is enough to show that the form $\Omega^c_{K(\th)}$ is non degenerate. Assume that $u,v \in T_{K(\th)}\M$ . Then we have 

$$\Omega_\infty(u,v)=\Omega_\infty( DF^n(K(\th)u, DF^n(K(\th)v),\,\,\,\,\,\, n \in \Z.$$
Since the torus is invariant, we have that 
\begin{equation*}
DF \circ K\circ T^{n-1}_{\omega}(\th)\times\dots \times
DF \circ K(\th)=DF^n \circ K (\th). 
\end{equation*} 

We deduce, sending $n \rightarrow \pm\infty$ and using the hyperbolic conditions (expansion/contraction properties), that $\Omega_\infty(u,v)=0$ in the following cases
\begin{itemize}
\item $u,v\in \mathcal E^s_{K(\th)} $,
\item $u,v\in \mathcal E^u_{K(\th)} $,
\item $u\in \mathcal E^s_{K(\th)}\cup\mathcal E^u_{K(\th)}  $ and $v\in \mathcal E^c_{K(\th)} $,
\item $u\in \mathcal E^c_{K(\th)} $ and $v\in \mathcal E^s_{K(\th)}\cup\mathcal E^u_{K(\th)}.  $
\end{itemize}

Assume that: let $\tilde c \in \E^c_{K(\th)}$
$$\Omega^c_{K(\th)}(c,\tilde c)=0,\,\,\forall \,\, c \in \E^c_{K(\th)}.$$

By the previous argument, we have that for every $u \in \mathcal E^u_{K(\th)}$ and $v \in \mathcal E^s_{K(\th)}$
$$\Omega^c_{K(\th)}(\tilde c,u)=\Omega_\infty(c,\tilde c)=\Omega_\infty(c+u+v,\tilde c)=0$$ 
Since $\Omega_\infty$ is non-degenerate, this leads to the desired result. 
\end{proof}

Now define $\hat L= DK^\perp J^c(K) DK.$ For every $\nu_1,\nu_2 \in T_\th \TT^l$, 
$$
\nu_1^T DK(\th) J^c(K(\th))DK(\th)\nu_2=\Omega^c_{K(\th)}(DK(\th)\nu_1,DK(\th)\nu_2)=\Omega_\infty(DK(\th)\nu_1,DK(\th)\nu_2).
$$
Hence 
$$
\nu_1^T DK(\th) J^c(K(\th))DK(\th)\nu_2=\nu_1^\perp DK(\th)^\perp J_\infty(K(\th)) DK(\th)\nu_2=\nu_1^\perp L \nu_2=0,
$$
hence 
$$\hat L=0$$

Since range $DK(\th) $ is the tangent space of the torus 
$K(\TT^l)$ and the dynamics on the torus is conjugated to 
a rotation, $DK(\th)\RR^l$ is contained in $\E^c_{K(\th)}$.
Moreover we have  that 
$J^c(K(\th))^{-1}DK(\th)\RR^l$ also is contained in  $\E^c_{K(\th)}$.
Instead of $J^c(K(\th))^{-1} DK(\th)$ 
we will consider the matrix $J^c(K(\th))^{-1}DK(\th) N(\th)$
where $N(\th)$ is the normalization $l \times l$-matrix 
$N(\th)=[DK(\th)^\top DK(\th)]^{-1}$ previously introduced. Both have the same range 
because $N(\th)$ is non-singular. 
The  role of $N$ is to provide some normalization for the 
symplectic conjugate.

Now we check that the range of $[\Pi^c_{K(\th)} DK(\th), J^c(K(\th))^{-1} DK(\th)N(\th)]$
is $2l$-dimensional. Indeed, let $\left \{e_j \right \} $ be the canonical basis of $\RR^l$ and assume that there is a linear combination that vanishes on $\E^c_{K(\th)}$
$$
f= \sum_{j=1}^l\alpha_j \Pi^c_{K(\th)} DK(\th) e_j + 
\sum_{j=1}^l \beta_j J^c(K(\th))^{-1} DK(\th)N(\th) e_j  = 0.
$$
Then, for $1\le k\le l$, using the isotropic character of $T_{K(\th)} K(\TT^l)$
\begin{align*}
0 & = \sum_{j=1}^l \beta_j e_k^\top DK(\th)^\top J^c(K(\th))J^c(K(\th))^{-1}   DK(\th)N(\th) e_j  \\
& = \sum_{j=1}^l \beta_j \langle e_k, e_j \rangle = \beta_k.
\end{align*}
This calculation shows that $f$ reduces to
$ \sum_{j=1}^l\alpha_j  DK(\th) e_j $. Moreover, for  $1\le k\le l$
\begin{align*}
0 & = \sum_{j=1}^l \alpha_j e_k^\top N(\th)^\top DK(\th)^\top J^c(K(\th))^{-\top} 
\Pi^c_{K(\th)} J^c (K(\th))
DK(\th) e_j  \\
& = -\sum_{j=1}^l\alpha_j \langle e_k, e_j \rangle = -\alpha_k .
\end{align*}
Hence $\alpha_j=\beta_j=0$ for all $j=1,...,l$. 
We conclude that 
$$
range [\Pi^c_{K(\th)} DK(\th),  J^c(K(\th))^{-1}  (K(\th))^{-1} DK(\th)N(\th)] = \E^c_{K(\th)}.
$$

The treatment of equation \eqref{linear} on the center subspace is greatly facilitated by observing that the preservation of the symplectic structure  imposes that the system is close to a diagonal structure.

In our context, the dimension of the center subspace is $2l$ and
$\mathcal{E}^c_{K(\th)} \sim \mathbb{R}^{2l}$. In \cite{FontichLS09},
the authors studied the case when $\M$ is finite dimensional (with symplectic structure $J$). They used the set of vectors $$\left \{ \frac{\partial K(\th)}{\partial \th_j},
  J^{-1}(K(\th))\frac{\partial K(\th)}{\partial \th_j} \right
\}_{j=1,\dots ,l}$$
to perform a transformation which allows to approximately solve 
up to quadratic error the projected equation on the center subspace.

We consider the map $\tilde M(\th)$ given in matrix notation by  
\begin{equation}\label{definicioMtilde}
\tilde M (\th)= \Big [ \Pi^c_{K(\th)} DK(\th),\,\,\,J^c(K(\th))^{-1} DK(\th)N(\th) \Big ].
\end{equation}
We will see that this map is a very convenient change of coordinates in the linearized equations which makes them easily solvable.
\begin{remark}
It is worth noting that all the quantities defined above (such as
$N$, for instance) make perfect sense even if we are manipulating
``infinite dimensional'' matrices. This is due to the fact that we
are considering maps in $\L_\Gamma(\ell^\infty(\Z^N))$. For instance, since
$K$ is assumed to be in $\mathcal{A}_{\rho,\underline{c},\Gamma}$,
one has for $i,j=1,\dots ,l$
\begin{equation*}
N(\th)^{-1}_{ij}=\sum_{k \in\ZZ^N}\frac{\partial K_k}{\partial
\th_i}\frac{\partial K_k}{\partial \th_j}.
\end{equation*}
Therefore, for $\th\in D_\rho$ this leads to
\begin{align*}
|N(\th)^{-1}_{ij}|&\leq
\|DK\|^2_{\rho,\underline{c},\Gamma}\,\min_{1\le p,q\le
R}\,\sum_{k \in \ZZ^N}\Gamma(k-c_p)\Gamma(k-c_q)\\
&\le \|DK\|^2_{\rho,\underline c,\Gamma}\,\min_{1\le p,q\le
R}\,\Gamma (c_p-c_q)\le\|DK\|^2_{\rho,\underline{c},\Gamma}\,.
\end{align*}
This gives that $\|N\|_{\rho,\Gamma} <\infty$.
\end{remark}
\begin{remark}
It is also worth noticing that we have the following estimates for
the generalized symplectic matrix $J_\infty$:
\begin{equation*}
\|J_\infty\|_{\Gamma}=\Gamma^{-1}(0)\,\|J\|, \,\,\|J^{-1}_\infty\|_{\Gamma}=\Gamma^{-1}(0)\,\|J^{-1}\|
\end{equation*}
since $(J_\infty)_{ij}=J\delta_{ij}$, where $\delta_{ij}$ is the
Kr\"onecker symbol.
\end{remark}

\subsubsection{Representation of $DF_\lambda$ on the center subspace} \label{sect:normal}

In this section we study a suitable representation of $DF_\lambda$
applied to the basis of the center subspace given by the columns
of $\tilde M(\th)$. We begin by considering the case when $K$
is a solution of \eqref{translatedloc}.
\begin{lemma}\label{repEx}
Let $K$ be a solution of equation \eqref{translatedloc}. Then
there exists a $2l\times 2l$ matrix $\S_\lambda(\th)$ such that
\begin{equation}\label{cMtilde}
DF_{\lambda}(K(\th))\tilde{M}(\th)=\tilde{M}(\th+\omega)\mathcal{S}_\lambda(\th),
\end{equation}
with
\begin{equation}\label{smatrix}
\mathcal{S}_\lambda(\th)=\begin{pmatrix} \Id_l & A_\lambda(\th)\\
0_l &
  \Id_l \end{pmatrix}.
\end{equation}
The matrix $A_\lambda(\th)$ is
\begin{equation*}
A_\lambda(\th)=P(\th+\omega)^\top
[(DF_{\lambda}
J^c(K(\th))^{-1} P(\th)-[J^c(K)^{-1} P](\th+\omega)],
\end{equation*}
where $P(\th)=DK(\th)N(\th)$.
\end{lemma}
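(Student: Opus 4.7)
\medskip

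\textbf{Proof proposal.} The plan is to verify the two block identities for $DF_\lambda(K(\th))\tilde M(\th)$ column by column, using the three main ingredients available: (i) differentiation of the invariance equation $F_\lambda\circ K=K\circ T_\omega$, which yields $DF_\lambda(K(\th))\,DK(\th)=DK(\th+\omega)$; (ii) invariance of the center splitting under the cocycle $DF_\lambda\circ K$, which together with symplecticness of $F_\lambda$ gives the ``restricted symplectic'' identity
\[
DF_\lambda(K(\th))^\top\, J^c(K(\th+\omega))\, DF_\lambda(K(\th))\big|_{\E^c_{K(\th)}} \;=\; J^c(K(\th))\big|_{\E^c_{K(\th)}};
\]
(iii) the Lagrangian character of $\mathrm{range}\,DK$ inside $(\E^c,\Omega^c)$, which comes from Lemma~\ref{lagExact}: since $DK^\top J_\infty(K)\,DK\equiv 0$ and $\mathrm{range}\,DK\subset \E^c$, the identity $\Omega^c=\Omega_\infty|_{\E^c}$ gives $DK^\top J^c(K)\,DK\equiv 0$, and hence $P^\top J^c(K)\,DK\equiv 0$.

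For the first $l$ columns, since $DK(\th)\in\E^c_{K(\th)}$ we have $\Pi^c_{K(\th)}DK(\th)=DK(\th)$, and by invariance of $\E^c$ together with (i),
\[
DF_\lambda(K(\th))\,\Pi^c_{K(\th)}DK(\th)=DK(\th+\omega)=\Pi^c_{K(\th+\omega)}DK(\th+\omega),
\]
which matches the first block of $\tilde M(\th+\omega)\,\mathcal{S}_\lambda(\th)$ and produces the $\Id_l$, $0_l$ entries of $\mathcal{S}_\lambda$.

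For the second $l$ columns I would set
\[
w(\th)\;:=\;DF_\lambda(K(\th))\,J^c(K(\th))^{-1}P(\th)\;-\;J^c(K(\th+\omega))^{-1}P(\th+\omega),
\]
and argue that $w(\th)$ lies in the range of $V_1(\th+\omega)=DK(\th+\omega)$ with coordinate matrix $A_\lambda(\th)$. First, both terms defining $w(\th)$ lie in $\E^c_{K(\th+\omega)}$ (by invariance of the splitting and the fact that $J^c$ preserves $\E^c$), so $w=V_1 B_1+V_2 B_2$ for some $l\times l$ matrices $B_1,B_2$. Pairing with $P(\th+\omega)^\top J^c(K(\th+\omega))$ on the left kills the $V_1$-contribution by the Lagrangian identity in (iii), while on the $V_2$-contribution it produces $P^\top P=N$ (using $P^\top DK=\Id$ and symmetry of $N$). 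A direct computation of $P(\th+\omega)^\top J^c(K(\th+\omega))w(\th)$, expanding $P(\th+\omega)=DF_\lambda(K(\th))\,DK(\th)\,N(\th+\omega)$ from (i) and applying the symplectic identity (ii), shows this pairing vanishes; hence $B_2(\th)\equiv 0$.

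Once $w(\th)=DK(\th+\omega)B_1(\th)$ is known, left-multiplying by $P(\th+\omega)^\top$ and using $P^\top DK=\Id$ extracts
\[
B_1(\th)\;=\;P(\th+\omega)^\top\bigl[DF_\lambda(K(\th))\,J^c(K(\th))^{-1}P(\th)\;-\;J^c(K(\th+\omega))^{-1}P(\th+\omega)\bigr],
\]
which is exactly the formula in the statement for $A_\lambda(\th)$. Combining the two column computations gives \eqref{cMtilde} with the stated shape of $\mathcal{S}_\lambda(\th)$. The only delicate step is the symplectic transfer from $J_\infty$ to $J^c$ on the center; this is justified because the splitting $\E^s\oplus\E^c\oplus\E^u$ is symplectically orthogonal (as $\E^s,\E^u$ are isotropic and paired with each other in the infinite sums, argued via the hyperbolicity estimates in Definition~\ref{NDloc-hyp}), so $\Omega^c=\Omega_\infty|_{\E^c}$ and the symplectic identity for $DF_\lambda$ restricts correctly. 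All manipulations are algebraic and do not require any of the small-divisor machinery.
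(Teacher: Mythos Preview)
Your proposal is correct and follows essentially the same approach as the paper. The paper differentiates the invariance equation for the first block, then writes $DF_\lambda(K)\,J^c(K)^{-1}P$ as $DK(\th+\omega)A_\lambda + [J^c(K)^{-1}P](\th+\omega)B_\lambda$, multiplies by $DK^\top J^c(K)$ (using isotropy and the center-restricted symplectic identity exactly as in your (ii)--(iii)) to obtain $B_\lambda=\Id_l$, and then multiplies by $P(\th+\omega)^\top$ to extract $A_\lambda$. Your only cosmetic difference is that you subtract the $V_2$-term in advance and show $B_2=0$ rather than $B_\lambda=\Id_l$, and you pair with $P^\top J^c$ instead of $DK^\top J^c$ (which differs by the invertible factor $N^\top$).
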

\begin{proof}
Differentiating equation \eqref{translatedloc} with respect to
$\th$, we get
\begin{equation*}
DF_{\lambda}(K(\th))DK(\th)=DK(\th+\omega).
\end{equation*}
This shows that $\mathcal{S}_\lambda(\th)$ has the form
$$\begin{pmatrix} \Id_l & A_{\lambda} (\th)\\ 0_l & B_\lambda (\th)
\end{pmatrix}, $$ where $A_{\lambda}(\th), B_\lambda(\th)$ are $l \times l$ matrices.
We will now show that $B_\lambda=\Id_l$ via geometric properties. We should have
\begin{equation}
\label{formula-AB}
[DF_\lambda(K)J^c(K)^{-1}  DK \,N ](\th)= DK(\th+\omega)\,A_\lambda(\th)
+[ J^c(K)^{-1}DK \,N](\th+\omega)B_\lambda(\th).
\end{equation}
By the isotropic character of $K(\TT^l)$ we have 
$DK^\top J^c(K) DK =0$ and the definition of $N$, we have 
\begin{equation}
[DK ^{\top} J^c(K)](\th+\omega)
[DF(K) J^c(K)^{-1}DK \,N ](\th)=B_\lambda(\th).
\label{eq:formulaB}
\end{equation}

Also by the symplecticness of $F_\lambda$
$$
J^c(K(\th+\omega))DF_\lambda (K(\th)) = 
J^c(F_\lambda(K(\th))) DF_\lambda (K(\th)) = $$
$$ [DF_\lambda (K)^{-\top} J^c(K)] (\th),
$$
when restricted to $\E^c_{K(\th)}$.
Then equation \eqref{eq:formulaB} becomes
$$
B_\lambda(\th)=DK ^{\top} (\th+\omega)
[DF_\lambda (K)^{-\top} DK \,N ](\th)
= 
[DK ^{\top}  DK \,N ](\th)=\Id_l. 
$$

To obtain the expression of $A_\lambda(\th)$ we multiply  
\eqref{formula-AB} by $(DK\,N)(\th+\omega)^\top$ to get
\begin{equation}\label{defA}
A_\lambda(\th)=P(\th+\omega)^\top \Big[[DF_\lambda(K) J^c(K)^{-1}  P](\th)-[J^c(K)^{-1}P](\th+\omega)\Big]. 
\end{equation}
\end{proof}

\begin{figure} 
\begin{center} 
\includegraphics[height =  2.5 in]{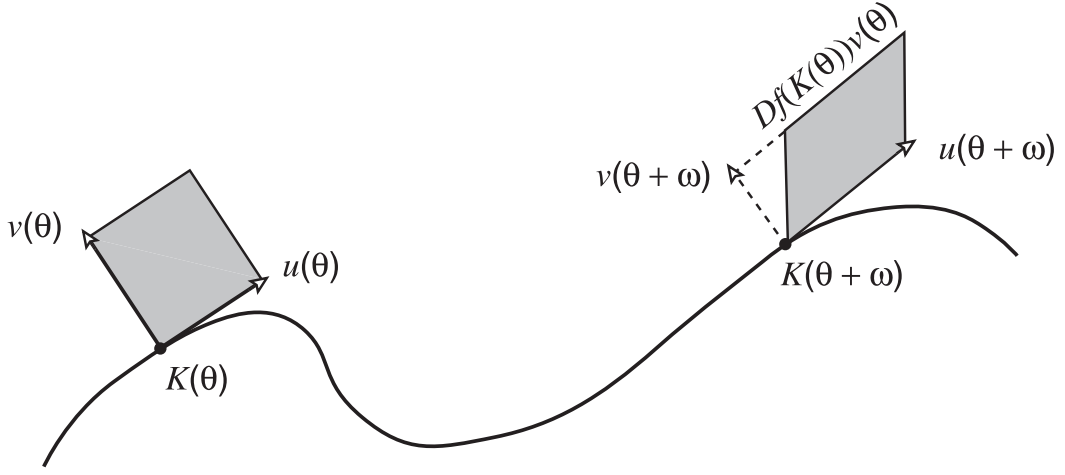} 
\end{center} 
\caption{Illustration of the geometric 
reason for reducibility of the equations in 
the center. Note that the base of a rectangle gets 
mapped into the base of the other (differentiating 
the invariance). The preservation of the symplectic 
form implies that the symplectic area of both rectangles is 
the same.} 
\end{figure} 

The matrix $\tilde{M}(\th)$ is not invertible since it is not
square. However we can derive a generalized inverse for
$\tilde{M}(\th)$. 
As a motivation for subsequent developments, we 
first present Lemma~\ref{isinvariant} which deals with  
the geometric cancellations in the case of an exactly invariant torus. The case of interest for 
a KAM algorithm --- when the torus is only approximately 
invariant --- will be studied in Lemma~\ref{repApp} as a perturbation of 
Lemma~\ref{isinvariant}. 

A straightforward calculation shows that 
\begin{equation}
\tilde{M}^\top J^c(K) \tilde{M}
=\begin{pmatrix} \hat L &   \Id_l \\ 
-\Id_l &(  N^{\top}DK^\top J^c(K)^{-\top} DK \,N  
\end{pmatrix}.
\label{eq:formJM}
\end{equation}

\begin{lemma} \label{isinvariant} 
Let $K$ be a solution of \eqref{translatedloc}. Then the matrix 
$\tilde{M}^\top J^c(K)\tilde{M} $ is invertible
and
\begin{equation*}
(\tilde{M}^\top  J^c(K)\tilde{M})^{-1}
=\begin{pmatrix} (- N^{\top}DK^\top J^c(K)^{-\top} DK \,N &
  -\Id_l \\ \Id_l & 0 
\end{pmatrix}.
\end{equation*}
\end{lemma}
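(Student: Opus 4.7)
The plan is to combine equation \eqref{eq:formJM} for $\tilde M(\th)^\top J^c(K(\th)) \tilde M(\th)$ with the vanishing of its upper-left block under the hypothesis that $K$ solves \eqref{translatedloc}, and then invert the resulting $2\times 2$ block matrix by direct calculation.

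The first step is to invoke Lemma~\ref{lagExact} together with the computation of $\hat L$ carried out in the paragraph immediately preceding the lemma statement. These give $\hat L = DK(\th)^\top J^c(K(\th)) DK(\th) \equiv 0$, expressing the isotropic character of the tangent planes of the exact invariant torus $K(\TT^l)$ with respect to the center-symplectic form $\Omega^c$. Substituting $\hat L = 0$ into \eqref{eq:formJM} reduces $\tilde M^\top J^c(K) \tilde M$ to the block form
\begin{equation*}
\begin{pmatrix} 0 & \Id_l \\ -\Id_l & B(\th) \end{pmatrix},\qquad B(\th) := N(\th)^\top DK(\th)^\top J^c(K(\th))^{-\top} DK(\th) N(\th).
\end{equation*}

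The second step is purely algebraic. A block matrix of this shape is invertible regardless of $B$: writing the putative inverse as $\bigl(\begin{smallmatrix} X & Y \\ Z & W \end{smallmatrix}\bigr)$ and imposing the identity $M M^{-1} = \Id_{2l}$ yields immediately $Z = \Id_l$, $W = 0$, $Y = -\Id_l$, and $X = B$, which (modulo the sign convention carried in the statement through the choice of $J^{c-\top}$ versus $J^{c-1}$) is exactly the claimed formula. A one-line multiplication confirms that the product is $\Id_{2l}$; alternatively, invertibility is manifest since the block determinant equals $\det(\Id_l) = 1$ irrespective of $B$.

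There is essentially no analytic obstacle. Lemma~\ref{lem:restriction} already guarantees that $J^c(K(\th))$ is a genuine invertible linear endomorphism of the finite-dimensional center subspace $\mathcal E^c_{K(\th)}$, the non-degeneracy hypothesis provides the invertible $l \times l$ normalization matrix $N(\th)$, and $\hat L \equiv 0$ has already been established. The only point that requires care is the sign bookkeeping: the antisymmetry $J^{c\top} = -J^c$ is what converts a naive $J^{c-1}$ in the lower-right block into $J^{c-\top}$, and keeping these signs consistent is the sole delicate aspect of what is otherwise a transparent $2l \times 2l$ matrix manipulation.
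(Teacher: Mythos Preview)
Your proposal is correct and follows exactly the paper's approach: the paper's proof is the single sentence ``It follows immediately from \eqref{eq:formJM} and the isotropic character of the invariant torus, i.e.\ $\hat L=0$,'' and you have simply written out the block inversion explicitly. Your computation $X=B$ (with a plus sign) is in fact the correct one, as confirmed by the paper's later formula for $V^{-1}$ in Lemma~\ref{repApp}; the minus sign and stray parenthesis in the statement of Lemma~\ref{isinvariant} are typos.
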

\begin{proof} 
It follows immediately from \eqref{eq:formJM} and 
the isotropic character of the
invariant torus, i.e. $\hat L=0$.
\end{proof}

Now we consider the case we are interested in, that is when $K$ is
an approximate solution with an error
$E(\th)=\mathcal{F}_{\omega}(\lambda,K)(\th)$, assumed to be
small. We will need the invertibility of $\tilde{M}(\th)^\top J^c(K(\th)) \tilde{M}(\th)$ in this case.

More precisely, we introduce 
\begin{equation}\label{e}
e(\th)=DF_\lambda(K(\th))\tilde{M}(\th)-\tilde{M}(\th+\omega)\mathcal{S}_\lambda(\th),
\end{equation}
where $\mathcal{S}_\lambda$ is given by \eqref{smatrix}.
If we denote $e(\th)=(e_1(\th), e_2(\th))$, a simple algebraic
computation yields
\begin{equation*}
\begin{split}
& e_1(\th)= DE^c(\th) \\
& e_2(\th)=[(DF_\lambda J^c(K)^{-1}DK\,N](\th)-
DK(\th+\omega)A_\lambda (\th)\\
& -[J^c(K)^{-1}DK\,N](\th+\omega)= O(E,DE)
\end{split}
\end{equation*}
by the choice of $A_\lambda$. 

We first prove the approximate isotropic character of the torus

\begin{lemma}\label{lagApp}
Under the previous conditions, let $K\in\A_{\rho,\underline
c,\Gamma}$ be a function which solves \eqref{translatedloc}
approximately and let $E=F_{\lambda} \circ K-K\circ T_\omega$ be the corresponding 
error. Then for $0<\delta<\rho/2$,
\begin{equation}\label{lag} \|L\|_{\rho-2\delta,\Gamma}\le C\kappa\,\delta^{-(\nu+1)}\
\|E\|_{\rho,\underline c,\Gamma},
\end{equation}
where $C$ depends on $l,\nu,R,\rho,\ \|DK\|_{\rho,\underline
c,\Gamma},\|F_{\lambda}\|_{C^1_\Gamma (B_r)},\|J^c(K)\|_{C^1 (B_r)}$.
\end{lemma}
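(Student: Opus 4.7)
The plan is to mimic the proof of Lemma \ref{lagExact}, where invariance combined with ergodicity forced $L$ to be constant and exactness then forced the constant to vanish. In the approximate setting, the first step is replaced by a quantitative small-divisors estimate for the cohomological equation $L\circ T_\omega - L = R$, while the exactness argument still gives, unperturbed, the vanishing of the average of $L$.

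First I would differentiate the error equation $F_\lambda \circ K - K\circ T_\omega = E$ to obtain
\begin{equation*}
DK(\theta+\omega) = DF_\lambda(K(\theta))\,DK(\theta) - DE(\theta).
\end{equation*}
Using this identity together with $K(\theta+\omega) = F_\lambda(K(\theta)) - E(\theta)$, I would compute $L(\theta+\omega) - L(\theta)$. The principal term is
\begin{equation*}
DK(\theta)^\top DF_\lambda(K(\theta))^\top J_\infty(F_\lambda(K(\theta)))\,DF_\lambda(K(\theta))\,DK(\theta),
\end{equation*}
which, by the symplecticity of $F_\lambda$ (i.e.\ $DF_\lambda^\top\,J_\infty\!\circ\!F_\lambda\,DF_\lambda = J_\infty$), collapses to $L(\theta)$. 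What remains is a remainder $R(\theta)$ in which every summand carries a factor of $E$ or $DE$, plus a $J_\infty(K(\theta+\omega)) - J_\infty(F_\lambda(K(\theta))) = O(E)$ contribution. The Banach algebra properties of the decay norms, the $C^1_\Gamma$-bound on $F_\lambda$ and $J_\infty$, and Cauchy's estimate $\|DE\|_{\rho-\delta,\underline c,\Gamma} \le C\delta^{-1}\|E\|_{\rho,\underline c,\Gamma}$ should then yield
\begin{equation*}
\|R\|_{\rho-\delta,\Gamma}\ \le\ C\,\delta^{-1}\,\|E\|_{\rho,\underline c,\Gamma}.
\end{equation*}

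Next, I would invoke exactness: since $\Omega_\infty = d\alpha_\infty$ formally and since $K\in \A_{\rho,\underline c,\Gamma}$, Lemma~\ref{sympDecay} (with the analogous statement for $\alpha_\infty$) makes $K^*\alpha_\infty$ a bona fide one-form on $\T^l$ satisfying $K^*\Omega_\infty = d(K^*\alpha_\infty)$. Writing this in coordinates, the matrix $L$ has the form $DL_1^\top - DL_1$ for some finite-dimensional vector-valued function $L_1$ on $\T^l$, so $\avg(L) = 0$. Finally, applying the cohomology Corollary component-wise to $L\circ T_\omega - L = R$ (whose right-hand side automatically has zero average), and noting that $L$ itself is the zero-average solution, gives
\begin{equation*}
\|L\|_{\rho-2\delta,\Gamma}\ \le\ C\kappa\,\delta^{-\nu}\,\|R\|_{\rho-\delta,\Gamma}\ \le\ C\kappa\,\delta^{-(\nu+1)}\,\|E\|_{\rho,\underline c,\Gamma},
\end{equation*}
which is exactly the claimed bound.

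The only delicate point I anticipate is bookkeeping the transition from the $\underline c,\Gamma$-norm in which $E$ is controlled to the $\Gamma$-norm (without centers) in which $L$ naturally lives. This transition is correct because $L(\theta)$ is a finite-dimensional $l\times l$ matrix obtained by summing lattice-local quadratic expressions in $DK$ against $J_\infty$; the Banach algebra property $\sum_k\Gamma(i-k)\Gamma(k-j)\le\Gamma(i-j)$, together with $\min_{p,q}\Gamma(c_p-c_q)\le 1$, absorbs all the decay weights uniformly in $\underline c$. No other subtlety is expected, as both the small-divisors estimate and the exactness argument transplant from the finite-dimensional setting without structural modification.
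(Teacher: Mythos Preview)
Your proposal is correct and follows essentially the same route as the paper: derive a cohomological equation $L - L\circ T_\omega = g$ with $\|g\|_{\rho-\delta}\le C\delta^{-1}\|E\|_{\rho,\underline c,\Gamma}$ from symplecticity of $F_\lambda$ and Cauchy estimates on $DE$, then invoke exactness ($L = DL_1^\top - DL_1$, hence $\avg(L)=0$) to apply the small-divisors estimate. The only cosmetic difference is that the paper phrases the first step in the language of pullbacks, writing $\Omega_e = (F_\lambda\circ K)^*\Omega_\infty - (K\circ T_\omega)^*\Omega_\infty$ and expanding in components, whereas you carry out the matrix computation with $DF_\lambda^\top J_\infty DF_\lambda = J_\infty$ directly; the content is identical.
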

\begin{proof}
We define the two-form on the torus $\T^l$
$$\Omega_e=K^*\Omega_ \infty-(K\circ T_\omega)^*\Omega_\infty .$$
The corresponding matrix is $L-L\circ T_\omega$. Using that
$F_\lambda$ is symplectic we have that for any $(\xi,\eta) \in \R^{2l}$
\begin{align*}
\Omega_e(\th)(\xi,\eta)&=\big((F_\lambda \circ K)^*
\Omega_\infty-(K\circ T_\omega)^*\Omega_\infty\big) (\th)
(\xi,\eta)\\
&=\dpy\sum_{i\in\Z^N}\Big[\Omega\Big((F_\lambda)_i\big(K(\th)\big)\Big)
\big(D((F_\lambda)_i\circ K)(\th)\xi,\ D((F_\lambda)_i\circ
K)(\th)\eta\big)\\
&\quad -\Omega\big(K\circ T_\omega (\th)\big)\big(D(K_i\circ
T_\omega)(\th)\xi,\ D(K_i\circ T_\omega)(\th)\eta\big)\Big].
\end{align*}
Since $D((F_\lambda)_i\circ K)-D(K_i\circ T_\omega)=DE_i$ and
$\|DE\|_{\rho-\delta,\underline c, \Gamma}\le \frac{1}{\delta}\,
\|E\|_{\rho,\underline c, \Gamma}$, using the decay properties of
$F\circ K$ and $K\circ T_\omega$ to sum the series, we obtain that
\begin{equation}\label{differenceL}
L-L\circ T_\omega=g
\end{equation}
with $\|g\|_{\rho-\delta}\le C\delta^{-1}\,\|E\|_{\rho,\underline
c, \Gamma}$ for some $C$ as in the statement. Now we use
Proposition \ref{sdrussmann} to finish the proof. 
\end{proof}

The next step is to ensure the invertibility of the $2l \times
2l$-matrix $\tilde{M}^\top  J^c(K)\tilde{M}$. According to expression
\eqref{eq:formJM}, we can write  
\begin{equation*}
\tilde{M}(\th)^\top  J^c(K)\tilde{M}(\th)=V(\th)+R(\th),
\end{equation*}
where
\begin{equation*}
V =\begin{pmatrix} 0 &
  \Id_l\\ -\Id_l  & N^\top DK^\top   J^c(K)^{-\top}DK\, N \end{pmatrix}
\end{equation*}
and 
\begin{equation*}
R =\begin{pmatrix} \hat L &   0\\0 & 0\end{pmatrix}.
\end{equation*}
We have the following lemma, providing the desired invertibility
result  under 
a smallness assumption on $E$, namely \eqref{smallnessdeltaE1} 
in the next lemma.

\begin{lemma}\label{repApp}
There exists a constant $C>0$ such that if 
\begin{equation} \label{smallnessdeltaE1}
C \kappa \delta^{-(\nu+1)} \|E\|_{\rho,\underline c,\Gamma} \leq 1/2 
\end{equation}
for some $0<\delta< \rho/2$ then the matrix $\tilde{M}^\top(\th) 
 J^c(K) \tilde{M}(\th)$ is
invertible for $\th \in D_{\rho-2\delta}$ and there exists a matrix $\tilde{V}(\th)$ such that 
\begin{equation*}
(\tilde{M}(\th)^\top J^c(K(\th)) \tilde{M}(\th))^{-1}=V(\th)^{-1}+\tilde{V}(\th)
\end{equation*}
with 
\begin{equation*}
\tilde{V}(\th)=\Big(\sum_{k=1}^{\infty}(-1)^k (V(\th)^{-1}R(\th))^k\Big)V(\th)^{-1},
\end{equation*}
where the series is absolutely convergent. Furthermore, we have the estimate 
\begin{equation}\label{invEst}
\|\tilde{V}\|_{\rho-2\delta,\Gamma} \leq C' \kappa \delta^{-(\nu+1)} \|E\|_{\rho,\underline c, \Gamma}, 
\end{equation} 
where the constant $C'>0$ depends on $l$, $\nu$, 
$\|F_/¬ambda\|_{C_\Gamma^1(B_r)}$, $\|J^c(K)\|_{C_\Gamma^1(B_r)}$,
$\|DK\|_{\rho,\underline c,\Gamma}$,
$\|N\|_{\rho,\Gamma}$. 
\end{lemma}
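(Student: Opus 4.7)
The plan is to treat $\tilde M^\top J^c(K)\tilde M = V+R$ as a small perturbation of the explicit block matrix $V$ and invert it via a Neumann series in the Banach algebra $\mathcal L_\Gamma$. First I would write $V$ in its block form and compute its inverse by inspection:
\begin{equation*}
V=\begin{pmatrix} 0 & \Id_l \\ -\Id_l & B \end{pmatrix}, \qquad B=N^\top DK^\top J^c(K)^{-\top} DK\,N, \qquad V^{-1}=\begin{pmatrix} B & -\Id_l \\ \Id_l & 0 \end{pmatrix}.
\end{equation*}
The norm $\|V^{-1}\|_{\rho-2\delta,\Gamma}$ is then bounded by a constant depending only on $\|N\|_{\rho,\Gamma}$, $\|DK\|_{\rho,\underline c,\Gamma}$, and $\|J^c(K)^{-1}\|_{C^0}$, i.e., on quantities already built into the constant $C'$ of the statement and, crucially, independent of $\underline c$.

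Next I would estimate the perturbation $R$. Because $R$ is block diagonal with $\hat L$ in the $(1,1)$ slot and zeros elsewhere, we have $\|R\|_{\rho-2\delta,\Gamma}=\|\hat L\|_{\rho-2\delta}$. The identification carried out just before \eqref{eq:formJM} shows that on the tangent directions $DK(\th)\R^l\subset\E^c_{K(\th)}$ the matrix $\hat L$ coincides with $L=DK^\top J_\infty(K) DK$. Hence Lemma~\ref{lagApp} directly yields the decisive bound
\begin{equation*}
\|R\|_{\rho-2\delta,\Gamma}\le C\kappa\,\delta^{-(\nu+1)}\|E\|_{\rho,\underline c,\Gamma}.
\end{equation*}

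With these two ingredients in hand, I would enlarge the constant $C$ in the smallness assumption \eqref{smallnessdeltaE1} so as to absorb $\|V^{-1}\|_{\rho-2\delta,\Gamma}$; then
\begin{equation*}
\|V^{-1}R\|_{\rho-2\delta,\Gamma}\le \|V^{-1}\|_{\rho-2\delta,\Gamma}\,\|R\|_{\rho-2\delta,\Gamma}\le 1/2.
\end{equation*}
Here I would emphasize that the submultiplicativity of the decay norm (the Banach-algebra property of $\mathcal L_\Gamma(\ell^\infty(\Z^N))$, Definition~\ref{defDecay}(2)) is what makes every power $(V^{-1}R)^k$ well defined in the same space with the expected norm bound. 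The Neumann series
\begin{equation*}
(V+R)^{-1}=(\Id+V^{-1}R)^{-1}V^{-1}=\sum_{k=0}^{\infty}(-V^{-1}R)^{k}V^{-1}=V^{-1}+\tilde V
\end{equation*}
then converges absolutely on $D_{\rho-2\delta}$, which proves the invertibility of $\tilde M^\top J^c(K)\tilde M$ there, and gives exactly the formula for $\tilde V$ stated in the lemma.

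Finally, the estimate \eqref{invEst} drops out of the geometric series:
\begin{equation*}
\|\tilde V\|_{\rho-2\delta,\Gamma}\le \|V^{-1}\|_{\rho-2\delta,\Gamma}\sum_{k=1}^{\infty}\|V^{-1}R\|_{\rho-2\delta,\Gamma}^{\,k}\le 2\,\|V^{-1}\|_{\rho-2\delta,\Gamma}^{\,2}\,\|R\|_{\rho-2\delta,\Gamma},
\end{equation*}
combined with the bound on $\|R\|$ above. There is no real obstacle here: once Lemma~\ref{lagApp} is granted the argument is a clean Neumann-series inversion. The only point that requires mild care, but which is essential for the later coupling of infinitely many breathers, is to verify at each step that all the estimates are obtained in the $\Gamma$-norm on $D_{\rho-2\delta}$ with constants depending only on the objects listed in the lemma and not on the number or position of the centers $\underline c$; this follows because $V$ and $R$ are built from the algebraic data $DK$, $N$, $J^c(K)$, $\hat L$, each of which has been estimated in a $\underline c$-independent fashion.
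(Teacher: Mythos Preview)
Your proposal is correct and follows essentially the same route as the paper: compute $V^{-1}$ explicitly, bound $R$ through $\hat L=L$ using Lemma~\ref{lagApp}, and invert $V+R$ by a Neumann series under the smallness hypothesis \eqref{smallnessdeltaE1}. The paper's proof is slightly terser but the logical steps and the estimates are identical.
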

\begin{proof}  The matrix
$V(\th)$ is invertible with 
\begin{equation*}
V^{-1}=\begin{pmatrix} N^\top DK^\top   J^c(K)^{-\top}DK\, N  & -\Id_l \\ 
\Id_l & 0
\end{pmatrix}. 
\end{equation*}

We can write
\begin{equation*}
\tilde{M}(\th)^\top  J^c(K(\th)) \tilde{M}(\th) =V(\th) (\Id_{2l}+V(\th)^{-1}R(\th)). 
\end{equation*}
To apply the Neumann series
(and consequently justify the existence of
the inverse of $\Id_{2l}+V^{-1}R$ as well as the estimates
for its size), we have to estimate the
term $V^{-1}R$. According to Lemma \ref{lagApp}, we have the
estimate for $L$
\begin{equation*}
\|L\|_{\rho-2\delta,\Gamma} \leq C \kappa \delta^{-(\nu+1)} \|E\|_{\rho,\underline c, \Gamma}
\end{equation*}
for all $\delta \in (0,\rho/2)$. 
This leads to the estimate 
\begin{equation*}
\|V^{-1}R\|_{\rho-2\delta,\Gamma} \leq C \kappa \delta^{-(\nu+1)} \|E\|_{\rho,\underline c, \Gamma} 
\end{equation*}
for $0<\delta <\rho/2$, 
where $C>0$ depends on $l$,  $\nu$, $\|DK\|_{\rho,\underline c, \Gamma}$,
$\|N\|_{\rho,\Gamma}$ and $\|J^c(K)^c\|_{\rho, \Gamma}$. 
Because of assumption \eqref{smallnessdeltaE1}, we have
that  the right-hand side of the last equation is less than $1/2$.

Then the matrix $\Id_{2l}+V(\th)^{-1}R(\th)$ is invertible with 
\begin{equation*}
\|(\Id_{2l}+V^{-1}R)^{-1}\|_{\rho-2\delta,\Gamma} \leq \frac{1}{1-\|V^{-1}R\|_{\rho-2\delta,\Gamma}} \leq 2.
\end{equation*}
Now the estimates follow immediately. 
\end{proof} 

\subsubsection{Identification of the center subspace}
In this section, we identify the center space as being very 
close (up to terms that can be bounded by the error) 
to the range of the matrix $\tilde{M}$. 
This will allow us to use the range of $\tilde{M}$ in 
place of $\E_{K(\th)}^c$ without changing the quadratic 
character of the method.

\begin{pro}\label{prop:distance}
Denote by $\Gamma_{K(\th)}$ the 
range of $\tilde{M}(\th)$ and by 
$\Pi^\Gamma_{K(\th)}$ the projection onto
$\Gamma_{K(\th)}$ according to the splitting 
$\E^s_{K(\th)} \oplus \Gamma_{K(\th)} \oplus \E^u_{K(\th)}$.

Then there exists a constant $C>0$ such that if  
$$
\delta^{-1} \| E\|_{\rho, \underline c, \Gamma} \leq C
$$
we have the estimate
\begin{equation} \label{eq:distancebound} 
\| \Pi_{K(\th)}^c - \Pi_{K(\th)}^\Gamma \|_{\rho - 2 \delta,\underline c, \Gamma}
\le  C\delta^{-1} \| E \|_{\rho,\underline c, \Gamma} 
\end{equation}
for every $\delta \in (0,\rho/2)$ and where $C$, as usual, depends on the non-degeneracy constants of 
the problem. 
\end{pro}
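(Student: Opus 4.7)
The plan is to exploit that the two projections $\Pi^c_{K(\th)}$ and $\Pi^\Gamma_{K(\th)}$ share the same kernel $\E^s_{K(\th)}\oplus\E^u_{K(\th)}$. Thus the operator identity $(\Pi^c - \Pi^\Gamma) = -(\Pi^s + \Pi^u)\,\Pi^\Gamma$ holds, and the required bound \eqref{eq:distancebound} reduces to showing that the range $\Gamma_{K(\th)}$ of $\tilde M(\th)$ is close to $\E^c_{K(\th)}$, quantitatively that $\Pi^s_{K(\th)}\tilde M(\th)$ and $\Pi^u_{K(\th)}\tilde M(\th)$ are of order $\delta^{-1}\|E\|_{\rho,\underline c,\Gamma}$.

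To obtain that smallness, I would start from the approximate cocycle identity \eqref{e}. The explicit expressions $e_1 = DE^c$ and $e_2 = O(E, DE)$, combined with a Cauchy estimate on a strip of width $\delta$, give
\begin{equation*}
\|e\|_{\rho-\delta,\underline c,\Gamma} \le C\delta^{-1}\|E\|_{\rho,\underline c,\Gamma}.
\end{equation*}
Projecting $DF_\lambda\circ K\cdot\tilde M = \tilde M\circ T_\omega\cdot \mathcal S_\lambda + e$ with $\Pi^{\sigma}_{K(\th+\omega)}$ for $\sigma\in\{s,u\}$ and using invariance of the splitting under the cocycle yields the matrix-valued hyperbolic cohomological equations
\begin{equation*}
DF_\lambda(K(\th))\,u^{\sigma}(\th) = u^{\sigma}(\th+\omega)\,\mathcal S_\lambda(\th) + \Pi^{\sigma}_{K(\th+\omega)} e(\th),
\end{equation*}
where $u^{\sigma}(\th) = \Pi^{\sigma}_{K(\th)}\tilde M(\th)$. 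Since $\mathcal S_\lambda$ is upper triangular with identity diagonal blocks (Lemma \ref{repEx}), its iterated products and inverses grow at most linearly in $n$. Iterating the $\sigma = s$ equation $n$ times and using the contraction estimate $\|DF_\lambda^n|_{\E^s}\| \le C\mu_1^n$ produces
\begin{equation*}
\|u^s(\th+n\omega)\|_{\rho-2\delta,\underline c,\Gamma} \le C(1+n)\mu_1^n\, \|u^s\|_{\rho-2\delta,\underline c,\Gamma} + C\|\Pi^s e\|_{\rho-\delta,\underline c,\Gamma}.
\end{equation*}
Taking the supremum over $\th$ (using translation invariance of the weighted sup norm) and choosing $n$ large enough that $C(1+n)\mu_1^n < 1/2$ absorbs the $\|u^s\|$ term on the right, giving $\|u^s\|_{\rho-2\delta,\underline c,\Gamma} \le C\delta^{-1}\|E\|_{\rho,\underline c,\Gamma}$. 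An analogous argument, iterating backward using $\|DF_\lambda^{-n}|_{\E^u}\| \le C\mu_2^n$, controls $u^u$.

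For the final step, the rank-$2l$ non-degeneracy of $\tilde M(\th)$ established immediately before the statement of the proposition yields $\|\alpha\| \le C\|\tilde M(\th)\alpha\|$ for every $\alpha\in\R^{2l}$. Hence, for $v = \tilde M(\th)\alpha\in\Gamma_{K(\th)}$, the identity $(\Pi^s+\Pi^u)v = (u^s+u^u)(\th)\alpha$ combined with the bounds on $u^{s,u}$ yields $\|(\Pi^s+\Pi^u)|_{\Gamma_{K(\th)}}\|_{\rho-2\delta,\underline c,\Gamma} \le C\delta^{-1}\|E\|_{\rho,\underline c,\Gamma}$. Together with the operator identity from the first paragraph and the uniform boundedness of $\Pi^\Gamma$ (inherited from the three hyperbolic projections), this produces the stated bound \eqref{eq:distancebound}. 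The entire argument is carried out in the decay norm via the Banach algebra property of Definition \ref{defDecay}(2), so all constants are uniform in $\underline c$, which is essential for the almost-periodic construction in Theorem \ref{thgl}.

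The main obstacle is the matrix-valued cohomological step: although $\mathcal S_\lambda$ is close to the identity, its iterated products grow linearly in $n$, and one must verify carefully that the geometric contraction rates $\mu_{1,2}^n$ dominate both this polynomial growth and the Cauchy losses, all while remaining in the decay-norm setting rather than merely in $\ell^\infty$. The non-commutativity of the matrix products, together with the need to preserve the $\Gamma$-structure through each iteration, makes the bookkeeping the most delicate part of the argument.
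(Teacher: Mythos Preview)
Your approach is correct and genuinely different from the paper's. The paper proceeds by showing that $\Gamma_{K(\th)}$ is an approximately invariant bundle with the polynomial growth rates characteristic of a center space: iterating \eqref{e} gives $\|DF_\lambda^{(n)}\tilde M\|\le Cn + C_n\delta^{-1}\|E\|$ in both time directions, and for $n$ large enough (depending only on $\mu_3$) this is bounded by $\mu_3^n$. It then invokes the general perturbation machinery of Proposition~\ref{NDmoveloc} to produce a true invariant $2l$-dimensional bundle near $\Gamma_{K(\th)}$, which by dimension count must coincide with $\E^c_{K(\th)}$; the distance estimate \eqref{eq:distancebound} then comes out of the conclusion of Proposition~\ref{NDmoveloc}.

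Your route is more direct: you project \eqref{e} onto $\E^s$ and $\E^u$ and solve the resulting matrix-valued hyperbolic equations for $u^\sigma=\Pi^\sigma\tilde M$ by a finite iteration and absorption, never appealing to Proposition~\ref{NDmoveloc}. This is more elementary and makes the dependence on the constants more explicit. The paper's approach, by contrast, reuses the graph-transform machinery already built for the KAM step, so it is shorter in context. One point you glossed over that deserves a sentence in a final write-up: the uniform boundedness of $\Pi^\Gamma$ is not ``inherited'' for free---rather, once you have shown $u^s,u^u$ are small, $\Pi^c\tilde M=\tilde M-u^s-u^u$ is an isomorphism from $\R^{2l}$ onto $\E^c_{K(\th)}$, whence $\Pi^\Gamma=\tilde M(\Pi^c\tilde M)^{-1}\Pi^c$ explicitly, with a norm bound depending only on $\|\tilde M\|$, $\|\Pi^c\|$, and the non-degeneracy constant from Lemma~\ref{repApp}. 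This also dispatches the implicit hypothesis that $\E^s\oplus\Gamma_{K(\th)}\oplus\E^u$ is a genuine splitting.
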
 

\begin{proof} 
{From} \eqref{e} and Cauchy estimates (see Lemma \ref{Cauchyestimates} in Appendix \ref{decayfunctions}), we 
have:
\[
\dist_{\rho-\delta,\underline c, \Gamma} ( (DF_\lambda \circ K) \Gamma_{K(\th)}, \Gamma_{K(\th)} \circ T_\omega)  
\le C \delta^{-1} \| E \|_{\rho,\underline c, \Gamma},
\]
where $dist$ stands for the distance between two spaces at the Grassmannian level. 
Using again equation \eqref{e} and iterating it, we obtain for $n \geq 1$

$$
DF_\lambda(K(\th + n \omega )) \times \cdots \times 
DF_\lambda(K(\th))\tilde{M}(\th) = 
$$
$$
\tilde{M}(\th + n \omega) \S_\lambda(\th + (n-1) \omega) \times \cdots \times \S_\lambda(\th) 
+ R_n,
$$
where 
$$
\| R_n \|_{\rho - \delta,\underline c, \Gamma} \le C_n \delta^{-1} \| E\|_{\rho,\underline c, \Gamma}
$$
and $C_n$ depends on $n$.

Since $\S_\lambda(\th)$ is upper triangular with Id$_l$ on the diagonal, 
we have:
\[
\S_\lambda(\th + (n-1) \omega)\times  \cdots \times \S_\lambda(\th)  = 
\begin{pmatrix} 
& \Id_l  & A_\lambda(\th + (n-1) \omega) + \cdots +A_\lambda (\th) \\
& 0 & \Id_l 
\end{pmatrix} .
\]

Therefore, by induction, we have for every $n \in  \nat$
\[
\| DF_\lambda(K(\th + n \omega )) \cdots 
DF_\lambda(K(\th))\tilde{M}(\th)  \|_{\rho - \delta,\underline c, \Gamma} 
\le 
C n + C_n \delta^{-1} \| E\|_{\rho,\underline c, \Gamma}.  
\]
Identical calculations give that 
\[
\| DF_\lambda^{-1}(K(\th - n \omega )) \cdots 
DF_\lambda^{-1} (K(\th))\tilde{M}(\th+\omega)  \|_{\rho - \delta,\underline c, \Gamma} 
\le 
C n + C_n \delta^{-1} \| E\|_{\rho,\underline c, \Gamma}.
\]

Note that, given any $\mu_3 > 1 $ (as in Definition \ref{NDloc-hyp}),  there exists an integer $n_{\mu_3} \geq 0$ such that for all $n \geq n_{\mu_3}$, 
we have $C n <   \mu_3 ^n $. Consequently, choosing such $n_{\mu_3}$ there exists a constant $C$ such that if the error satisfies
$$\delta^{-1} \| E\|_{\rho,\underline c, \Gamma} \leq C,$$
 we have 
$C n + C_n \delta^{-1} \| E\|_{\rho,\underline c, \Gamma} <   \mu_3^n $. 
In other words, the above estimates hold for all sufficiently large 
$n$, provided that we impose a suitable smallness condition on
$\delta^{-1} \| E\|_{\rho,\underline c, \Gamma}$.

As a consequence, $\Gamma_{K(\th)}$ is an approximately
invariant bundle, and we also have bounds on the rate of 
growth of the co-cycle both in positive and negative times. 
Using  Proposition~\ref{NDmoveloc}, this shows that indeed 
one can find a true invariant subspace $\tilde \E_{K(\th)}$ close to $\Gamma_{K(\th)}$. Since 
this invariant subspace should be of the same dimension of 
the center space $\E^c_{K(\th)}$, we deduce that 
$$\tilde \E_{K(\th)}=\E^c_{K(\th)}.$$ 
\end{proof}

\subsubsection{Estimates on the center subspace}
We recall the projection into the center subspace of the linearized
equation
\begin{equation}\label{linTr}
\Pi^c_{K(\th+\omega)}\frac{\partial F_\lambda}{\partial
\lambda}(K(\th)) \Lambda+DF_\lambda (K(\th)) \Delta^c(\th)
-\Delta^c(\th+\omega)=-E^c(\th).
\end{equation}
To shorten the notation till the end of the section we will
write\break $\frac{\partial F_\lambda}{\partial \lambda}$
$(K(\th))\Lambda$ instead of $\Pi^c_{K(\th+\omega)}\frac{\partial
F_\lambda}{\partial\lambda} (K(\th))\Lambda$.

We introduce the new function $W(\th)$ through
\begin{equation}\label{changeVar}
\Delta^c(\th)=\tilde{M}(\th)W(\th)+ \hat e(\th)  W(\th), 
\end{equation}
where 
\begin{equation}\label{hate} 
\hat e = \Pi_{K(\th+\omega)}^c - \Pi_{K(\th+\omega)}^\Gamma
\end{equation}  which 
was estimated in Proposition~\ref{prop:distance}. 

Substituting \eqref{changeVar} into equation \eqref{linTr} we get 
\begin{align}\label{change1-1}
& DF(K(\th))  \tilde{M}(\th)W(\th)  -
\tilde{M}(\th+\omega)W(\th+\omega) \\
& =-E^c(\th)-\frac{\partial F_\lambda}{\partial
\lambda}(K(\th)) \Lambda
+\hat e(\th+\omega) W(\th+\omega) - DF(K(\th)) \hat e(\th) W(\th) \nonumber
\end{align}

We anticipate that the term $\hat e W$ will be quadratic in the error. Similarly, writing 
$$
\frac{\partial F_\lambda}{\partial
\lambda}(K(\th))=\Pi^\Gamma_{K(\th+\omega)}\frac{\partial F_\lambda}{\partial
\lambda}(K(\th))+\hat e \frac{\partial F_\lambda}{\partial
\lambda}(K(\th)).
$$ 
we also anticipate that the term $\hat e \frac{\partial F_\lambda}{\partial
\lambda}(K(\th)) \Lambda$ will be quadratic 
in the error. 
As a consequence, we 
will ignore these two terms and the equation for $W$ is
\begin{align}\label{change2}
DF_{\lambda}(K(\th))
\tilde{M}(\th)W(\th)-\tilde{M}(\th+\omega)W(\th+\omega)=-E^c(\th)-\frac{\partial
F_\lambda}{\partial \lambda}(K(\th)) \Lambda.
\end{align}

Using $e(\th)$ above and multiplying equation \eqref{change2} by 
$\tilde{M}(\th+\omega)^\top  J^c(K(\th+\omega))$,  using Lemma \ref{repApp} (giving the invertibility of $\tilde{M}^\top J^c(K) \tilde{M}$) and
equation \eqref{e}, we end up with 
\begin{eqnarray}\label{eqSD1}
\left[ \begin{pmatrix} \Id_l & A_\lambda (\th)\\ 0_l & \Id_l
\end{pmatrix}+B(\th)\right]W(\th)-W(\th+\omega)=p_1(\th)+p_2(\th)\\
-[\tilde{M}^\top J^c(K)
\tilde{M}](\th+\omega)^{-1}[\tilde{M}^\top J^c(K)](\th+\omega) \frac{\partial
F_\lambda}{\partial \lambda}(K(\th))\Lambda,\nonumber 
\end{eqnarray}
where 
\begin{equation}\label{b}
B(\th)=[\tilde{M}^\top J^c(K)
\tilde{M}](\th+\omega)^{-1}[\tilde{M}^\top J^c(K)](\th+\omega)
e(\th),
\end{equation}

\begin{equation}\label{p1}
p_1(\th)=-V(\th+\omega)^{-1}[\tilde{M}^\top J^c(K)](\th+\omega)
E^c(\th) 
\end{equation}
and
\begin{equation}\label{p2}
p_2(\th)=-\tilde{V}(\th+\omega)[\tilde{M}^\top J^c(K)](\th+\omega) E^c(\th).
\end{equation}
The next result provides the estimates of the previously introduced
quantities.

\begin{lemma}\label{repres}
Assume $\omega \in D(\kappa,\nu)$ and $\delta$ and $\|E\|_{\rho,\underline{c},\Gamma}$ satisfy \eqref{smallnessdeltaE1}.
Then using the 
linear change of variables \eqref{changeVar}, 
equation \eqref{change2} becomes
\begin{equation}\label{eqSD}
\begin{split}
&\left[\S_\lambda(\th)+B(\th) \right] W(\th)-W(\th+\omega) = p_1(\th)+p_2(\th) \\
& -[\tilde{M}^\top J^c(K)
\tilde{M}](\th+\omega)^{-1}[\tilde{M}^\top J^c(K)](\th+\omega) \frac{\partial
F_\lambda}{\partial \lambda}(K(\th))\Lambda,\nonumber
\end{split}
\end{equation}
where $B$, $p_1$ and $p_2$ are given by equations \eqref{b},
\eqref{p1} and \eqref{p2} respectively.

Moreover the following estimates hold: for $p_1$ we have
\begin{equation}\label{estimp1}
\|p_1\|_{\rho,\underline{c},\Gamma} \leq C \|E\|_{\rho,\underline{c},\Gamma},
\end{equation}
where $C$ only depends on $\|J^c(K)\|_{\rho,\Gamma}$, $\|N\|_{\rho}$,
$\|DK\|_{\rho,\underline{c},\Gamma}$. For $p_2$ and
$B$, we have
\begin{equation}\label{estimp2}
\|p_2\|_{\rho-2\delta,\underline{c},\Gamma} \leq C \kappa \delta^{-(\nu+1)}
\|E\|^2_{\rho,\underline{c},\Gamma}
 \end{equation}
 and
\begin{equation}\label{estimb}
\|B\|_{\rho-2\delta,\Gamma} \leq C   \delta^{- 1 }\|E\|_{\rho,\underline
c,\Gamma},
 \end{equation}
where $C$ depends $l$, $\nu$, $\rho$, $R$, $\|N\|_{\rho}$,
$\|DK\|_{\rho,\underline{c},\Gamma}$, $\|F_\lambda\|_{C_\Gamma^1(B_r)}$,
$\|J^c(K)\|_{C^1(B_r)}$.
\end{lemma}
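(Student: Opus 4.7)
The plan is to derive \eqref{eqSD} algebraically and then estimate each of $p_1$, $p_2$, $B$ using Lemmas \ref{repApp} and \ref{lagApp}. First, I would substitute \eqref{e} into \eqref{change2} to replace $DF_\lambda(K(\th)) \tilde M(\th)$ by $\tilde M(\th+\omega) \S_\lambda(\th) + e(\th)$, producing on the left $\tilde M(\th+\omega)[\S_\lambda(\th) W(\th) - W(\th+\omega)] + e(\th) W(\th)$. I then apply on the left the matrix $[\tilde M^\top J^c(K) \tilde M]^{-1}(\th+\omega) [\tilde M^\top J^c(K)](\th+\omega)$, which is a well-defined left inverse of $\tilde M(\th+\omega)$ thanks to Lemma \ref{repApp} under the smallness hypothesis \eqref{smallnessdeltaE1}. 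Splitting $[\tilde M^\top J^c(K) \tilde M]^{-1} = V^{-1} + \tilde V$ as supplied by the same lemma cleanly separates the right-hand side into the terms $p_1, p_2$ and the parameter contribution that appear in \eqref{eqSD}.

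Next I would estimate $p_1$ and $p_2$. For $p_1$, the matrix $V^{-1}(\th+\omega)$ is written explicitly in terms of $N$, $DK$, and $J^c(K)$, all of which are controlled by the non-degeneracy hypotheses, and the Banach algebra property of the decay norm directly produces $\|p_1\|_{\rho, \underline c, \Gamma} \le C \|E\|_{\rho, \underline c, \Gamma}$. For $p_2$, Lemma \ref{repApp} already supplies $\|\tilde V\|_{\rho - 2\delta, \Gamma} \le C \kappa \delta^{-(\nu+1)} \|E\|_{\rho, \underline c, \Gamma}$; multiplying by the linearly bounded factor $[\tilde M^\top J^c(K)](\th+\omega) E^c(\th)$ then produces the quadratic estimate stated in \eqref{estimp2}.

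The main obstacle is the bound on $B$, which reduces to bounding the error matrix $e(\th) = DF_\lambda(K(\th)) \tilde M(\th) - \tilde M(\th+\omega) \S_\lambda(\th)$. The first block is straightforward: $e_1(\th) = DE^c(\th)$ modulo lower-order terms coming from the derivative of the invariant projection, so the Cauchy estimates (Lemma \ref{Cauchyestimates}) immediately yield $\|e_1\|_{\rho-\delta, \underline c, \Gamma} \le C \delta^{-1} \|E\|_{\rho, \underline c, \Gamma}$. The hard part is $e_2$: the definition of $A_\lambda$ in Lemma \ref{repEx} was engineered precisely to make $e_2$ vanish when $K$ is an exact solution, and that argument relied on three identities: the invariance $DF_\lambda(K(\th)) DK(\th) = DK(\th+\omega)$; the symplectic relation $J^c(K(\th+\omega)) DF_\lambda(K(\th)) = DF_\lambda(K(\th))^{-\top} J^c(K(\th))$ on $\E^c$; and the isotropy $\hat L = DK^\top J^c(K) DK = 0$. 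In the present approximate setting the symplectic relation is still exact, but the invariance holds only up to $DE$ (controlled by Cauchy by $\delta^{-1} \|E\|_{\rho, \underline c, \Gamma}$), and the isotropy only up to $\hat L$, controlled by Lemma \ref{lagApp}. Propagating these defects through the proof of Lemma \ref{repEx} and collecting the dominant contribution gives $\|e_2\|_{\rho-2\delta, \underline c, \Gamma} \le C \delta^{-1} \|E\|_{\rho, \underline c, \Gamma}$, and multiplying $e(\th)$ by the uniformly bounded left inverse $[\tilde M^\top J^c(K) \tilde M]^{-1}(\th+\omega) [\tilde M^\top J^c(K)](\th+\omega)$ yields \eqref{estimb}.
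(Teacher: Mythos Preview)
Your proposal is correct and follows essentially the same route as the paper: derive \eqref{eqSD} by substituting \eqref{e} into \eqref{change2} and left-multiplying by $[\tilde M^\top J^c(K)\tilde M]^{-1}[\tilde M^\top J^c(K)](\th+\omega)$, then estimate $p_1$ via the explicit form of $V^{-1}$, $p_2$ via the bound \eqref{invEst} on $\tilde V$, and $B$ via the bound on $e$.

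The only difference is emphasis. The paper spends most of its proof on the genuinely infinite-dimensional step you compress into ``the Banach algebra property'': it bounds $[\tilde M^\top J^c(K)](\th+\omega)E^c(\th)$ component by component, summing $\sum_j |\partial_{\th_i}K_j||E^c_j|$ over the lattice and using the decay of both $DK$ and $\Pi^c$ to control the series uniformly in $\underline c$ (this is where the $R$-dependence of the constant enters). Conversely, you expand on why $e_2=O(E,DE)$, which the paper disposes of in one line before the lemma by pointing to the choice of $A_\lambda$. Your invocation of the isotropy defect $\hat L$ there is more machinery than needed: since $A_\lambda$ is \emph{defined} as $P(\th+\omega)^\top G$ with $G=(DF_\lambda(K)(J^c)^{-1}P)(\th)-((J^c)^{-1}P)(\th+\omega)$, one has $e_2=(\Id-DK\,P^\top)(\th+\omega)\,G$, and the surviving piece is controlled by the invariance defect $DE$ and the base-point defect $J^c(F_\lambda(K))-J^c(K\circ T_\omega)=O(E)$, without appealing to Lemma~\ref{lagApp}.
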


\begin{proof}
We have basically to estimate $\tilde M(\th+\omega)^\top J^c(K(\th+\omega)) E^c(\th)$ and
$\tilde M(\th+\omega)^\top J^c(K(\th+\omega)) e(\th)$ and then use Lemma \ref{repApp}. First we bound
\begin{align*}
|E^c_i(\th)|&\le
\sum_{j\in\Z^N}\,|(\Pi^c_{K(\th+\omega)})_{ij}|\
|E_j(\th)|\\
&\le \sum_{j\in\Z^N}\,\|\Pi^c_{K(\th+\omega)}\|_{\rho,\Gamma}\
\Gamma(i-j)\ \|E\|_{\rho,\underline c,\Gamma}\ \max_k\
\Gamma(j-c_k)\\[-3mm]
&\le \|\Pi^c_{K(\th+\omega)}\|_{\rho,\Gamma}\
\|E\|_{\rho,\underline c,\Gamma}\ \sum^R_{k=1}\Gamma(i-c_k)\,.
\end{align*}
For $1\le i\le l$ we have, taking into account that $J^c$
is uncoupled, 

\begin{equation}\label{mtildei}
|\big(\tilde M(\th+\omega)^\top\,
J^c(K) E^c(\th)\big)_i | \leq C \sum_{j\in\Z^N}\,
|\partial_{\th_i}\,K_j(\th+\omega)| \,\,|E^c_j(\th)|. 
\end{equation}
We estimate from above by
\begin{equation} 
\begin{split} 
\sum_{j\in\Z^N}\, &|D_{\th_i}\,K_j(\th+\omega)|\
|E^c_j(\th)|\\
&\le \sum_j\,\|DK\|_{\rho,\underline c,\Gamma}\ \max_m
\,\Gamma(j-c_m)\,\|\Pi^c_{K(\th+\omega)}\|_{\rho,\Gamma}\,\|E\|_{\rho,\underline
c,\Gamma}\,\sum^R_{k=1}\Gamma(j-c_k)\\
&\le R\, \|DK\|_{\rho,\underline c,\Gamma}\
\|\Pi^c_{K(\th+\omega)}\|_{\rho, \Gamma}\,
\|E\|_{\rho,\underline c,\Gamma}\,.
\end{split} 
\end{equation} 

For $l+1\le i\le 2l$, one gets
\begin{eqnarray}\label{mtildeidos}
|\big(\tilde M(\th+\omega)^\top J^c(K(\th))\,
E^c(\th)\big)_i | \leq \\
C |\Big(N(\th+\omega)^\top
DK(\th+\omega)^\top\,\tilde
J^c\big(K(\th+\omega)^\top\big)\,E^c(\th)\Big)_i |. \nonumber
\end{eqnarray}

We get a similar bound for \eqref{mtildeidos} taking into account
that $N$ is a bounded finite dimensional matrix. 
Now the bounds \eqref{estimp1} and \eqref{estimp2} follow
immediately from Lemma \ref{repApp}. For the estimate on $B$ we  use Cauchy estimates
for $e_1(\th)$. From
\begin{equation*}
B(\th)=(V(\th+\omega)^{-1}+\tilde{V}(\th+\omega))\tilde{M}(\th+\omega)^\top
e(\th)
\end{equation*}
we have
\begin{equation*}
\begin{split}
\|B\|_{\rho-2\delta}&\leq \|V(\th+\omega)^{-1}\|_{\rho-2\delta}
\|\tilde{M}(\th+\omega)^\top e(\th)\|_{\rho-2\delta}\\
&\quad +\ |\tilde{V}(\th+\omega)\tilde{M}(\th+\omega)^\top
e(\th)\|_{\rho-2\delta}
\end{split}
\end{equation*}
and using estimate \eqref{invEst} we end up with
\begin{equation*}
\|B\|_{\rho-2\delta,\Gamma} \leq C\delta^{-1}
\|E\|_{\rho,\underline{c},\Gamma}+\kappa
\delta^{-(\nu+1)}\|E\|_{\rho,\underline{c},\Gamma}\ \delta^{-1}
\|E\|_{\rho,\underline{c},\Gamma}.
\end{equation*}
This gives the desired result.
\end{proof}

\subsubsection{Approximate solvability of the linearized equation on the center subspace}
\label{sec:temp}

In this section we find a solution of equation \eqref{eqSD1} up to
quadratic error. The convergence of the Newton scheme is of course not
affected (see \cite{Zehnder75a}).

For that we introduce the following operator
\begin{equation*}
\L W(\th)=\begin{pmatrix} \Id_l & A_\lambda(\th)\\ 0_l & \Id_l \end{pmatrix}W(\th)-W(\th+\omega).
\end{equation*}
Then equation \eqref{eqSD1} can be written as
\begin{align}\label{sd4}
\L W(\th)&+B(\th)W(\th)=p_1(\th)+p_2(\th)\\
& -[\tilde{M}^\top  J^c(K)
\tilde{M}](\th+\omega)^{-1}[\tilde{M}^\top J^c(K)](\th+\omega) \frac{\partial
F_\lambda}{\partial \lambda}(K(\th))\Lambda .\nonumber
\end{align}
We will reduce equation \eqref{sd4} to two small divisors equations.
Generically their right-hand sides will not have zero average, but
we will use the freedom in choosing $\Lambda$ and in fixing the
average of the solution to solve one after the other. By Lemma
\ref{repApp} we can write
\begin{equation*}
[(\tilde{M}^\top
J^c(K)\tilde{M})^{-1}\tilde{M}^\top J^c(K)](\theta+\omega) \frac{\partial F_\lambda
(K(\th))}{\partial \lambda}\Lambda=H(\th)\Lambda+q(\th)\Lambda,
\end{equation*}
where the $2l \times l$ matrix $H$ is
\begin{equation*}
H(\th)=V(\th+\omega)^{-1}\tilde{M}(\th+\omega)^\top J^c(K(\theta+\omega))  \frac{\partial F_\lambda (K(\th))}{\partial \lambda}
\end{equation*}
and $q$ satisfies for all $\delta \in (0,\rho/2)$
\begin{equation*}
\|q\|_{\rho-2\delta,\Gamma}\leq C \kappa \delta^{-(\nu+1)}
\Big\|\frac{\partial F_\lambda (K(\th))}{\partial
\lambda}\Big\|_{\rho,\underline{c},\Gamma}  \,
\|E\|_{\rho,\underline{c},\Gamma},
\end{equation*}
where the constant $C$ depends on $l$, $\nu$, $\rho$, $R$,
$\|N\|_{\rho}$, $\|DK\|_{\rho,\underline{c},\Gamma}$, $\|F
\|_{C_\Gamma^1(B_r)}$, $\|J^c(K)\|_{C^1(B_r)}$.

We will take as an  approximate solution the solution $v$ of
\begin{equation}\label{sd2Approx}
\L v(\th)=p_1(\th)-H(\th)\Lambda
\end{equation}
obtained from \eqref{sd4} by removing the terms containing $B,
p_2$ and $q$.

\begin{pro}\label{approximateSol}
Assume $\omega \in D(\kappa,\nu)$ and $(\lambda,K)$ is a
non-degenerate pair (i.e. $(\lambda,K) \in ND_{loc}(\rho,\Gamma)$). If the error
$\|E\|_{\rho,\underline{c},\Gamma}$ satisfies \eqref{smallnessdeltaE1}, there exist a mapping $v\in \mathcal A_{\rho-2\delta,\underline{c},\Gamma}$ for any $0 < \delta <\rho/2$
and a vector $\Lambda \in \mathbb{R}^{l}$
 solving equation \eqref{sd2Approx}.

Moreover there exists a constant $C>0$ depending on $\nu, \rho, l,
R, \|K\|_{\rho,\underline{c},\Gamma},$ $|\avg(Q_\lambda)|^{-1}$,
$|\avg(A_\lambda)|^{-1}$, $\|N\|_{\rho}$ and
$\|J^c(K)\|_{\rho}$ such that
\begin{equation}\label{cotav}
\|v\|_{\rho-2\delta,\underline{c},\Gamma} <C \kappa^2 \delta^{-2\nu}
\|E\|_{\rho,\underline{c},\Gamma}
\end{equation}
and
\begin{equation*}
|\Lambda| <C \|E\|_{\rho,\underline{c},\Gamma}.
\end{equation*}
\end{pro}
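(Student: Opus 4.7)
The plan is to exploit the upper triangular structure of the operator $\L$ to reduce the problem to two scalar cohomology equations on $\T^l$, each of which will be solved by R\"ussmann's Proposition~\ref{sdrussmann}. Write $v = (v_1, v_2)^\top$ and $p_1 = (p_{1,1}, p_{1,2})^\top$, $H = (H_1, H_2)^\top$ with each block of size $l$. The equation $\L v = p_1 - H\Lambda$ then decouples as
\begin{equation*}
v_1(\th) + A_\lambda(\th) v_2(\th) - v_1(\th+\omega) = p_{1,1}(\th) - H_1(\th)\Lambda,
\end{equation*}
\begin{equation*}
v_2(\th) - v_2(\th+\omega) = p_{1,2}(\th) - H_2(\th)\Lambda,
\end{equation*}
and the second equation can be solved first, independently.

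First I would handle the second equation. Since it is a pure cohomology equation with shift $\omega$, the compatibility condition is that the right hand side has zero average on $\T^l$, i.e.\ $\avg(H_2)\,\Lambda = \avg(p_{1,2})$. The key point is to identify $\avg(H_2)$ with $\avg(Q_\lambda)$ from Definition~\ref{NDloc}, up to harmless normalization constants coming from $V^{-1}$ and the block structure \eqref{smatrix}. This identification uses the explicit form of $V^{-1}$ from Lemma~\ref{repApp} and the fact that the lower-left block of $V^{-1}$ equals $\Id_l$. The parameter cohomological non-degeneracy then guarantees that $\avg(H_2)$ is invertible with inverse controlled by $|\avg(Q_\lambda)|^{-1}$, so $\Lambda$ is uniquely determined and satisfies $|\Lambda| \leq C\,\|p_1\|_{\rho,\underline c,\Gamma} \leq C\,\|E\|_{\rho,\underline c,\Gamma}$. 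With this choice of $\Lambda$, R\"ussmann's lemma yields a zero-average solution $v_2 \in \A_{\rho-\delta,\underline c,\Gamma}$ with
\begin{equation*}
\|v_2\|_{\rho-\delta,\underline c,\Gamma} \leq C\,\kappa\,\delta^{-\nu}\,\|E\|_{\rho,\underline c,\Gamma}.
\end{equation*}

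Next I would turn to the first equation with $v_2$ now known. Setting $\tilde p(\th) = p_{1,1}(\th) - H_1(\th)\Lambda - A_\lambda(\th) v_2(\th)$, we must solve $v_1(\th) - v_1(\th+\omega) = \tilde p(\th)$, which again requires $\avg(\tilde p) = 0$. The only free parameter left is the average of $v_2$ (which is not fixed by R\"ussmann, since we may add any constant in the kernel of $v_2 \mapsto v_2 - v_2\circ T_\omega$). The condition $\avg(\tilde p) = 0$ becomes
\begin{equation*}
\avg(A_\lambda)\,\avg(v_2) = \avg(p_{1,1}) - \avg(H_1)\Lambda - \avg\!\big(A_\lambda\,(v_2 - \avg(v_2))\big),
\end{equation*}
and the twist condition, which says $\avg(A_\lambda)$ is invertible with inverse bounded by $|\avg(A_\lambda)|^{-1}$, determines $\avg(v_2)$. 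A second application of R\"ussmann on a further shrunk domain then gives $v_1 \in \A_{\rho-2\delta,\underline c,\Gamma}$. Combining the two R\"ussmann estimates, each of which costs a factor $\kappa\,\delta^{-\nu}$ and a loss of $\delta$ in the analyticity strip, yields the claimed bound $\|v\|_{\rho-2\delta,\underline c,\Gamma} \leq C\,\kappa^2\,\delta^{-2\nu}\,\|E\|_{\rho,\underline c,\Gamma}$.

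The main obstacle is not the algebraic scheme, which is the standard reducibility trick from \cite{FontichLS09}, but rather the careful bookkeeping of the decay norms $\|\cdot\|_{\rho,\underline c,\Gamma}$: one must check that at every step the Banach algebra property of the decay function ensures that products such as $A_\lambda v_2$ and $H_1 \Lambda$ lie in the correct weighted space, with constants independent of $\underline c$ and of the number of centers $R$. Here one uses crucially that $A_\lambda$ and $H$ are finite-dimensional ($l\times l$ and $2l\times l$) matrices built from $DK$, $N$, $J^c(K)$, and $\partial_\lambda F_\lambda$, all of which are controlled by the non-degeneracy hypotheses uniformly in $\underline c$, together with the decay estimates for $p_1$ in Lemma~\ref{repres}.
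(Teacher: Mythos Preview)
Your proposal is correct and follows essentially the same route as the paper: split $\L v = p_1 - H\Lambda$ into the two triangular cohomology equations, choose $\Lambda$ via the invertibility of $\avg(Q_\lambda)$ to kill the obstruction in the $v_2$-equation, then choose $\avg(v_2)$ via the twist condition $\avg(A_\lambda)^{-1}$ to kill the obstruction in the $v_1$-equation, applying R\"ussmann's lemma once on each with the corresponding loss of $\delta$ and factor $\kappa\delta^{-\nu}$. The identification of $H_2$ with $Q_\lambda$ through the lower row $[\Id_l,0]$ of $V^{-1}$ is exactly what the paper does implicitly when it writes out $T_2$ explicitly.
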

\begin{proof}  We denote $T(\th)$ the right-hand side of equation
\eqref{sd2Approx}, i.e. we have to solve
\begin{equation}\label{eqTemp}
\L v(\th)=T(\th),
\end{equation}
with
\begin{equation*}
T=p_1-H\Lambda.
\end{equation*}
We now decompose equation \eqref{eqTemp} into two equations. Writing
$v=(v_1,v_2)^\top $, $T(\th)=(T_1(\th), T_2(\th))^\top $
equation \eqref{eqTemp} is equivalent to
\begin{eqnarray}
v_1(\th)+A_\lambda(\th)v_2(\th)=v_1(\th+\omega)+T_1(\th),\label{eqforv1}\\
v_2(\th)=v_2(\th+\omega)+T_2(\th).\label{eqforv2}
\end{eqnarray}
A simple computation shows that 
\begin{equation*}
T_2(\th) = - [DK^\top J^c(K)] \circ T_\omega \circ (E_2^c +  \frac{\partial F_\lambda (K(\th))}{\partial \lambda}\Lambda)
\end{equation*}
We begin by solving equation \eqref{eqforv2}. To apply Proposition
\ref{sdrussmann} we choose $\Lambda\in\R^l$ such that
$$\avg (T_2)=0.$$

This condition is equivalent to
\begin{equation*}
\avg \Big(DK^\top(\omega+\th) J^c(K(\omega+\th))
(E^c(\th)+ \frac{\partial F_\lambda (K(\th))}{\partial \lambda} \Lambda )\Big)=0.
\end{equation*}
This leads to 
\begin{align*}
\avg\Big((DK^\top(\omega+\th) & J^c(K(\omega+\th)) \frac{\partial F_\lambda (K(\th))}{\partial \lambda}\Big)\Lambda\\ = &
    -\avg \Big (DK^\top(\omega+\th) J^c(K(\omega+\th)) E^c(\th)\Big).    
    \end{align*}
Note that the matrix which applies to $\Lambda$ is the average of
$Q_\lambda$ which, by hypothesis, is invertible. This gives $|\Lambda|<C\,\|E\|_{\rho,\underline c,\Gamma}$.

{From} the expression of $T$ and the value of $\Lambda$ obtained
above, we have that there exists a constant $C$ such that
\begin{equation*}
\|T_i\|_{\rho,\underline{c},\Gamma} \leq C
\|E\|_{\rho,\underline{c},\Gamma},
\end{equation*}
 for $i=1,2$.

 Then Proposition \ref{sdrussmann} provides us with an analytic
 solution $v_2$ on $D_{\rho-\delta}$ with arbitrary average and
\begin{equation}\label{estimv2}
\|v_2\|_{\rho-\delta,\underline{c},\Gamma} \leq C \kappa \delta^{-\nu}
\|T_2\|_{\rho,\underline{c},\Gamma}+|\avg (v_2)|.
\end{equation}

Now we come to equation \eqref{eqforv1}. To apply Proposition
\ref{sdrussmann} we choose $\avg (v_2)$ such that $\avg
(T_1-A_\lambda v_2)=0$. This condition is equivalent to
\begin{equation*}
\avg (v_2)=\avg(A_\lambda)^{-1}(\avg (T_1)-\avg (A_\lambda
v^{\perp}_2)),
\end{equation*}
where $v_2=v_2^{\perp}+\avg (v_2)$. This is possible since by the
twist condition $\avg(A_\lambda) $ is invertible.

We have that $$|\avg (v_2)|\le
C\kappa\delta^{-\nu}\,\|E\|_{\rho,\underline c,\Gamma}.$$

Then we take $v_1$ as the unique analytic solution of
\eqref{eqforv1} with zero average. Furthermore, we have the
estimate
\begin{equation*}
\|v_1\|_{\rho-2\delta,\underline{c},\Gamma} \leq C \kappa \delta^{-\nu} \|T_1-A_\lambda
v_2\|_{\rho-\delta,\underline{c},\Gamma}.
\end{equation*}

Collecting the previous bounds we get the result.
\end{proof}

We now come back to the solutions of \eqref{eqCenter}. The above
procedure allows us to prove the following proposition, providing an approximate solution of the projection of $D_{\lambda,K}\mathcal{F}_{\omega}(\lambda,K)(\Lambda,\Delta)=-E$ on the center subspace.

\begin{pro}\label{solCenterloc}
Let $(\Lambda,W)$ be as in Proposition \ref{approximateSol}
and assume the hypotheses of that proposition hold. Define $\Delta^c(\th)=\tilde{M}(\th)W(\th)$.
Then, equation \eqref{eqCenter} is approximately solvable and we have the following estimates
\begin{equation}\label{cotadelta}
\|\Delta^c\|_{\rho-2\delta,\underline{c},\Gamma} \leq C \kappa^2
\delta^{-2\nu} \|E\|_{\rho,\underline{c},\Gamma},
\end{equation}
\begin{equation*}
\quad|\Lambda| \leq C \|E\|_{\rho,\underline{c},\Gamma},
\end{equation*}
where the constant $C$ depends on $\nu, \rho, l, R,
|\avg(Q_\lambda)|^{-1}$, $|\avg(A_\lambda)|^{-1}$, $\|N\|_{\rho}$,
$\|\frac{\partial F_\lambda(K)}{\partial
\lambda}\|_{\rho,\underline{c},\Gamma}$ and
$\|J^c(K)\|_{\rho}$ and
\begin{equation}\label{estimApprox}
\|D_{\lambda,K}\mathcal{F}_{\omega}(\lambda,K)(\Lambda,\Delta^c)+E^c\|_{\rho-2\delta,\underline{c},\Gamma}
\leq C \kappa^2 \delta^{-(2\nu+1)}
(\|E\|^2_{\rho,\underline{c},\Gamma} +
\|E\|_{\rho,\underline{c},\Gamma} |\Lambda|),
\end{equation}
where the constant $C$ depends on $l$, $\nu$, $\rho$, $R$,
$\|F\|_{C_\Gamma^1(B_r)}$, $\|DK\|_{\rho,\underline{c},\Gamma}$,
$\|N\|_{\rho}$, $|\avg (A_\lambda)|^{-1}$, $|\avg
(Q_\lambda)|^{-1}$ and $\|\frac{\partial F_\lambda(K)}{\partial
\lambda}\|_{\rho,\underline{c},\Gamma}$.
\end{pro}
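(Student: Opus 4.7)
The plan is simply to combine the work already done in Proposition~\ref{approximateSol} with careful bookkeeping of the several small terms that were deliberately dropped on the way from the exact center-projected equation \eqref{eqCenter} down to the reduced equation \eqref{sd2Approx}. First, I would take the pair $(\Lambda, W)$ produced by Proposition~\ref{approximateSol} and set $\Delta^c(\th) = \tilde M(\th)\,W(\th)$. The bound $|\Lambda|\le C\|E\|_{\rho,\underline c,\Gamma}$ is immediate, and the bound \eqref{cotadelta} for $\Delta^c$ follows from \eqref{cotav} together with the Banach-algebra properties of the $\|\cdot\|_{\rho,\Gamma}$-norm applied to $\tilde M = [\Pi^c DK,\; J^c(K)^{-1}DK\,N]$, whose $\Gamma$-norm is controlled by $\|\Pi^c\|_{\rho,\Gamma}$, $\|DK\|_{\rho,\underline c,\Gamma}$, $\|N\|_\rho$ and $\|J^c(K)\|_{\rho}$.

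The core of the argument is the residual estimate \eqref{estimApprox}. I would retrace the sequence of reductions performed in the preceding subsections and simply identify, at each step, the terms that were thrown away. Going from the exact equation \eqref{eqCenter} to \eqref{change2} required two ans\"atze: the decomposition $\Delta^c = \tilde M W + \hat e W$ with $\hat e = \Pi^c_{K(\cdot+\omega)} - \Pi^\Gamma_{K(\cdot+\omega)}$ (equation \eqref{hate}), and the splitting $\frac{\partial F_\lambda}{\partial\lambda}(K) = \Pi^\Gamma_{K(\cdot+\omega)}\frac{\partial F_\lambda}{\partial\lambda}(K) + \hat e \frac{\partial F_\lambda}{\partial\lambda}(K)$. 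Correspondingly, the discarded terms on the right-hand side of \eqref{change1-1} are
\[
\hat e(\th+\omega)W(\th+\omega) - DF_\lambda(K(\th))\,\hat e(\th)\,W(\th) - \hat e(\th+\omega)\,\tfrac{\partial F_\lambda}{\partial\lambda}(K(\th))\,\Lambda .
\]
By Proposition~\ref{prop:distance}, $\|\hat e\|_{\rho-2\delta,\underline c,\Gamma}\le C\delta^{-1}\|E\|_{\rho,\underline c,\Gamma}$, and then together with the bound on $\|W\|_{\rho-2\delta,\underline c,\Gamma}\le C\kappa^2\delta^{-2\nu}\|E\|_{\rho,\underline c,\Gamma}$ from \eqref{cotav}, the Banach-algebra property of the decay norm yields a contribution bounded by $C\kappa^2\delta^{-(2\nu+1)}(\|E\|^2_{\rho,\underline c,\Gamma} + \|E\|_{\rho,\underline c,\Gamma}|\Lambda|)$, which matches the RHS of \eqref{estimApprox}.

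Next, in going from \eqref{change2} to \eqref{sd4}, the passage through $\tilde M(\th+\omega)^\top J^c(K(\th+\omega))$ and the Neumann expansion of $(\tilde M^\top J^c(K)\tilde M)^{-1}$ produces, on top of $\L W$, the additional pieces $B(\th)W(\th)$, $p_2(\th)$, and the inhomogeneous term $q(\th)\Lambda$, each of which was dropped to obtain \eqref{sd2Approx}. The estimates $\|B\|_{\rho-2\delta,\Gamma}\le C\delta^{-1}\|E\|_{\rho,\underline c,\Gamma}$ and $\|p_2\|_{\rho-2\delta,\underline c,\Gamma}\le C\kappa\delta^{-(\nu+1)}\|E\|^2_{\rho,\underline c,\Gamma}$ from Lemma~\ref{repres}, combined with \eqref{cotav}, give
\[
\|BW\|_{\rho-2\delta,\underline c,\Gamma} \le C\kappa^2\delta^{-(2\nu+1)}\|E\|^2_{\rho,\underline c,\Gamma},
\]
and analogously the $p_2$ and $q\Lambda$ contributions are bounded by $C\kappa^2\delta^{-(2\nu+1)}\|E\|^2_{\rho,\underline c,\Gamma}$ and $C\kappa\delta^{-(\nu+1)}\|E\|_{\rho,\underline c,\Gamma}|\Lambda|$ respectively. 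Finally, to pass from a bound on the reduced equation back to a bound on the true equation \eqref{eqCenter}, I would multiply the reduced residual by $\tilde M(\th+\omega)$ (whose $\Gamma$-norm is already controlled) and absorb the constants; the left multiplication by $\tilde M^\top J^c(K)$ was invertible in the relevant sense by Lemma~\ref{repApp}, and the inverse has a $\Gamma$-bound independent of $\underline c$.

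Summing these four error contributions and using the Banach-algebra property once more produces the estimate \eqref{estimApprox} with the stated dependence of $C$ on the non-degeneracy data. The main obstacle, in my view, is purely bookkeeping: one must make sure that \emph{every} discarded term is genuinely quadratic, i.e.\ either $O(\|E\|^2)$ or $O(\|E\||\Lambda|)$, and that each multiplication in the chain $\hat e \cdot W$, $B\cdot W$, $\tilde M \cdot(\cdot)$, etc., stays inside the Banach algebra $\L_\Gamma(\ell^\infty(\ZZ^N))$ so that the constants do not depend on $R$ or on the positions of the centers $c_j$. Using the decay-function norm estimates from the appendix, this is exactly the point where the Banach-algebra property of $\Gamma$ is decisive and where uniformity in $\underline c$ is preserved, as required for the iteration and for the later almost-periodic application in Theorem~\ref{thgl}.
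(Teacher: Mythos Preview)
Your plan is in the right spirit, but there is a genuine gap in the step where you ``multiply the reduced residual by $\tilde M(\th+\omega)$'' to pass back from the $\R^{2l}$-valued equation to \eqref{eqCenter}. The passage from \eqref{change2} to \eqref{sd4} is left-multiplication by $(\tilde M^\top J^c(K)\tilde M)^{-1}\tilde M^\top J^c(K)$, a map $\M\to\R^{2l}$ which is \emph{not} invertible; Lemma~\ref{repApp} only gives invertibility of the $2l\times 2l$ block $\tilde M^\top J^c(K)\tilde M$. Left-multiplying by $\tilde M$ therefore recovers only $P\cdot(\text{residual})$, where $P=\tilde M(\tilde M^\top J^c(K)\tilde M)^{-1}\tilde M^\top J^c(K)$ is a projection onto $\mathrm{range}\,\tilde M$. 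Since $E^c$ and $\Pi^c\tfrac{\partial F_\lambda}{\partial\lambda}\Lambda$ lie in $\mathcal E^c_{K(\th+\omega)}$ rather than in $\mathrm{range}\,\tilde M(\th+\omega)$, the missing piece $(\Id-P)\big(E^c+\Pi^c\tfrac{\partial F_\lambda}{\partial\lambda}\Lambda\big)$ is nonzero and must be bounded separately via Proposition~\ref{prop:distance}. Relatedly, your accounting of the $\hat e W$ terms is off: with $\Delta^c=\tilde M W$ as defined in the statement, no $\hat e W$ term appears when you substitute into \eqref{eqCenter}; the role of $\hat e$ is precisely to control $(\Id-P)$ on $\mathcal E^c$.

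The paper avoids this bookkeeping by computing the residual directly in $\M$. Using $DF_\lambda(K)\tilde M=\tilde M\circ T_\omega\,\mathcal S_\lambda+e$ from \eqref{e} together with $\mathcal L W=p_1-H\Lambda$, one writes
\[
D_{\lambda,K}\mathcal F_\omega(\lambda,K)(\Lambda,\Delta^c)+E^c
=\tilde M\circ T_\omega\,(p_1-H\Lambda)+eW+E^c+\Pi^c\tfrac{\partial F_\lambda}{\partial\lambda}\Lambda,
\]
and then expands $p_1,H$ via $V^{-1}\tilde M^\top J^c$ to see that $\tilde M p_1$ cancels $E^c$ and $\tilde M H\Lambda$ cancels $\Pi^c\tfrac{\partial F_\lambda}{\partial\lambda}\Lambda$ \emph{up to} the projection $P$, leaving $\tilde M(q\Lambda-p_2)+eW$. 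This is the same endpoint your argument reaches, but the cancellation mechanism (the identity $P=\Id$ on $\mathrm{range}\,\tilde M$) is what replaces your ``invert and multiply back'' step.
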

\begin{proof}  For the first estimate we take $\th \in
D_{\rho-2\delta}$ and write $W = (W_1, W_2)$. Then
\begin{equation*}
\Delta^c(\th) = DK(\th) W_1(\th) + J^c(K(\th))^{-1}
DK(\th) N(\th) W_2(\th).
\end{equation*}
We have
\begin{equation*}
\begin{split}
\big\vert(DK(\th) W_1(\th))_i\big\vert &= \Big\vert
\sum^l_{j=1} D_{\th_j} K_i(\th) W_{1,j}(\th) \Big\vert\\&
\leq \| DK\|_{\rho,\underline{c},\Gamma}\, \max_{k}\,
\Gamma(i-c_k) \| W_1\|_{\rho-2\delta}.
\end{split}
\end{equation*}
Also, since $ J_\infty$ is uncoupled and $N$ is finite dimensional,
\begin{equation*}
\begin{split}
&\Big\vert\big(J^c(K(\th))^{-1} DK(\th) N(\th)
W_2(\th)\big)_i\Big\vert \leq \| J(K)\|_\rho
\Big\vert\big(DK(\th) N(\th) W_2(\th)\big)_i\Big\vert\\
&\qquad \leq \| J(K)\|_\rho \| N\|_\rho \|
DK\|_{\rho,\underline{c},\Gamma}\, \max\, \Gamma(i-c_k) \|
W_2\|_{\rho-2\delta}.
\end{split}
\end{equation*}
Now using \eqref{cotav}, we obtain \eqref{cotadelta}.

For \eqref{estimApprox}, using the previous notations and Lemma
\ref{repres},
\begin{equation}
\begin{split}
D_{\lambda,k}\, &\F_\omega (\lambda,K)(\Lambda, \Delta_c) (\th) +
E^c(\th)\label{formuladcalf}\\
&= \frac{\partial F_\lambda}{\partial \lambda} (K(\th)) \Lambda +
\tilde M(\th+ \omega) \big[\S_\lambda(\th) W(\th) - W(\th +
\omega)\big]\\&\quad + e(\th) W(\th) + E^c(\th) =
\frac{\partial F_\lambda}{\partial \lambda} (K(\th))
\Lambda\\&\quad + \tilde M (\th + \omega) \big[p_1(\th)
-Q(\th) \Lambda\big] + e(\th) W(\th)+ E^c(\th).
\end{split}
\end{equation}
Note that
\begin{align*}
&\tilde M (\th +\omega)p_1(\th) = \\
&\tilde M (\th +\omega)
\big[-([\tilde M^\top  J^c(K) \tilde M)^{-1}] (\th+\omega)[\tilde M^\top
J^c(K)](\th + \omega) E^c(\th) - p_2(\th)\big]
\end{align*}
and also that $\tilde M (\tilde M^\top J^c(K)  \tilde M)^{-1}\tilde M^\top J^c(K) $
is symmetric and
$$\tilde M  \tilde M^\top J^c(K)  \tilde M)^{-1}\tilde
M^\top J^c(K) - \Id $$ maps  the vectors of the center subspace to zero
because it is generated by the columns of $\tilde M$.

Also we have 
 \begin{align*}
& \tilde M(\th+\omega)Q_\lambda (\th) \Lambda = \\
&\tilde M (\theta+\omega) \Big [ [\tilde M^\top J^c(K) \tilde
M)^{-1}\tilde M^\top J^c(K)] (\th+\omega) \frac{\partial
F_\lambda}{\partial \lambda} (K(\th))\Lambda -
q(\th)\Lambda\Big]. \end{align*}

We recall that here the derivative of $F_\lambda$ with respect to $\lambda$
actually means the projection of it into the center subspace.

Therefore \eqref{formuladcalf} becomes
$$\tilde M(\th+\omega) [q(\th) \Lambda - p_2(\th)] + e(\th) W(\th)$$
and \eqref{estimApprox} follows.
\end{proof}

 \subsection{Solution of the equation in  the hyperbolic subspaces}
\label{sec:hyperbolicequation} 

In this section, we study the projection of the Newton 
equation \eqref{linear} on the 
the hyperbolic spaces. 

According to the splitting \eqref{splittingReseau},
 there exist projections on the linear spaces
 $\mathcal{E}^s_{{K(\th)}}$ and $\mathcal{E}^u_{{K(\th)}}$. The
analytic regularity of the splitting implies the analytic dependence
of these projections in $\th$. We denote
$\Pi^s_{K(\th)}$ (resp. $\Pi^u_{K(\th)}$) the
projections on the stable (resp. unstable) invariant subspace.

We project equation \eqref{linear} on the stable and unstable subspaces to obtain
\begin{equation}\label{eqStable}
\Pi^s_{K(\th+\omega)}\Big(\frac{\partial
F_\lambda(K(\th))}{\partial\lambda}\Lambda +DF_{\lambda}(K
(\th))\Delta(\th)-\Delta(\th+\omega)\Big)=
-\Pi^s_{K(\th+\omega)}E(\th),
\end{equation}
\begin{equation}\label{eqUnstable}
\Pi^u_{K(\th+\omega)}\Big( \frac{\partial
F_\lambda(K(\th))}{\partial\lambda}\Lambda +DF_{\lambda} (K
(\th))\Delta(\th)-\Delta(\th+\omega)\Big)=
-\Pi^u_{K(\th+\omega)}E(\th).
\end{equation}
The invariance of the splitting reads
\begin{equation*}
\Pi^s_{K(\th+\omega)}DF_{\lambda}(K (\th))
\Delta(\th)=DF_{\lambda}(K(\th))\Pi^s_{K(\th)}\Delta(\th)
\end{equation*}
for the stable part and
\begin{equation*}
\Pi^u_{K(\th+\omega)}DF_{\lambda}(K(\th))\Delta(\th)=DF_{\lambda}
(K (\th))\Pi^u_{K(\th)}\Delta(\th)
\end{equation*}
for the unstable one.

We define
$$\Delta^{s,u}(\th)=\Pi^{s,u}_{K(\th)}\Delta(\th).$$
Using the notation $\th' = \th +\omega$, equations
\eqref{eqStable}-\eqref{eqUnstable} become
\begin{equation}\label{eqStable2}
DF_{\lambda}(K)\circ T_{-\omega}(\th')\Delta^s(T_{-\omega}(\th'))-\Delta^s(\th')
=-\tilde{E}^s(\th',\Lambda) ,
\end{equation}
where
\begin{equation*}
\tilde{E}^s(\th',\Lambda)=\Pi^s_{K(\th')}\Big( \frac{\partial
 F_\lambda(K(T_{-\omega}(\th')))}{\partial\lambda}\Lambda\Big)+\Pi^s_{K(\th')}E \circ
T_{-\omega}(\th')
\end{equation*}
and
\begin{equation}\label{eqUnstable2}
DF_{\lambda}(K)\circ T_{-\omega}(\th')\Delta^u(T_{-\omega}(\th'))-\Delta^u(\th')
=-\tilde{E}^u(\th',\Lambda),
\end{equation}
where
\begin{equation*}
\tilde{E}^u(\th',\Lambda)=\Pi^u_{K(\th')}\Big(\frac{\partial
F_\lambda(K(T_{-\omega}(\th')))}{\partial\lambda}\Lambda\Big)+\Pi^u_{K(\th')}E\circ
T_{-\omega}(\th').
\end{equation*}

Contrary to the projections on the center subspace, the projections
on the hyperbolic subspaces can be solved exactly. The key point for
the estimates is the Banach algebra property of the decay functions.
\begin{pro}\label{hyperbloc}
Fix $\rho>0$. Then equation \eqref{eqStable2}
(resp. \eqref{eqUnstable2}) admits a unique analytic solution
$\Delta^s:D_{\rho} \rightarrow \mathcal{E}^s_{K(\th)}$
(resp. $\Delta^u:D_{\rho} \rightarrow
\mathcal{E}^u_{K(\th)}$). Furthermore there exists a constant $C$
depending only on the hyperbolicity constant $\mu_1$ (resp. $\mu_2$),
the norm of the projector $\|\Pi^s_{K(\th)}\|_{\rho,\Gamma}$
(resp. $\|\Pi^u_{K(\th)}\|_{\rho,\Gamma}$) and $\|\frac{\partial
  F_{\lambda}(K)}{\partial \lambda}\|_{\rho,\underline{c},\Gamma}$
such that
\begin{equation}\label{estimHyperbloc}
\|\Delta^{s,u}\|_{\rho,\underline{c},\Gamma} \leq C
(\|E\|_{\rho,\underline{c},\Gamma}+|\Lambda|).
\end{equation}
\end{pro}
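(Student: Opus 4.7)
The plan is to reduce each of the projected equations \eqref{eqStable2} and \eqref{eqUnstable2} to a geometrically convergent telescoping series in $\mathcal A_{\rho,\underline c,\Gamma}$, exploiting the contraction/expansion rates \eqref{ndeg1loc}, \eqref{ndeg2loc} together with the Banach algebra property of $\|\cdot\|_\Gamma$. Since the equations are linear and the hyperbolicity is given in the strong decay norm, no small divisors appear and no loss of analyticity domain is needed.

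For the stable equation, writing $\psi = T_{-\omega}\th'$ and iterating \eqref{eqStable2} backwards $n$ times, I obtain
\begin{equation*}
\Delta^s(\th')=\bigl[DF_\lambda(K)\!\circ\! T_{-\omega}\cdots DF_\lambda(K)\!\circ\! T_{-n\omega}\bigr](\th')\,\Delta^s(T_{-n\omega}\th')+\sum_{k=0}^{n-1}\mathcal A_k(\th')\,\tilde E^s(T_{-k\omega}\th',\Lambda),
\end{equation*}
where $\mathcal A_k(\th')=DF_\lambda(K\circ T_{-\omega}(\th'))\cdots DF_\lambda(K\circ T_{-k\omega}(\th'))$ and $\mathcal A_0=\Id$. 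Since $\tilde E^s(T_{-k\omega}\th',\Lambda)\in\mathcal E^s_{K(T_{-k\omega}\th')}$, the bound \eqref{ndeg1loc} applied with $n=k$ and base point $T_{-k\omega}\th'$ yields
\begin{equation*}
\|\mathcal A_k(\th')\,\tilde E^s(T_{-k\omega}\th',\Lambda)\|_{\rho,\underline c,\Gamma}\le C_h\mu_1^{k}\,\|\tilde E^s(T_{-k\omega}\th',\Lambda)\|_{\rho,\underline c,\Gamma}.
\end{equation*}
Because the rotation $T_{-k\omega}$ preserves $\|\cdot\|_{\rho,\underline c,\Gamma}$ on analytic sections and $\tilde E^s$ is a $\|\Pi^s\|_{\rho,\Gamma}$-bounded linear combination of $E$ and $(\partial_\lambda F_\lambda)\Lambda$, the Banach algebra property gives
\begin{equation*}
\|\tilde E^s(T_{-k\omega}\th',\Lambda)\|_{\rho,\underline c,\Gamma}\le \|\Pi^s\|_{\rho,\Gamma}\bigl(\|E\|_{\rho,\underline c,\Gamma}+\|\partial_\lambda F_\lambda(K)\|_{\rho,\underline c,\Gamma}|\Lambda|\bigr).
\end{equation*}
Hence the candidate solution
\begin{equation*}
\Delta^s(\th')=\sum_{k=0}^{\infty}\mathcal A_k(\th')\,\tilde E^s(T_{-k\omega}\th',\Lambda)
\end{equation*}
defines an analytic element of $\mathcal A_{\rho,\underline c,\Gamma}$ with the desired estimate \eqref{estimHyperbloc}, since $\sum_k \mu_1^k=(1-\mu_1)^{-1}$. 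Substituting into \eqref{eqStable2} and using the geometric decay of the tail (i.e.\ $\mathcal A_n\Delta^s(T_{-n\omega}\th')\to 0$ for any bounded $\Delta^s$) shows this series actually solves the equation, and uniqueness within bounded analytic sections follows from the same tail estimate applied to the difference of two solutions.

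The unstable equation \eqref{eqUnstable2} is solved symmetrically: rewrite it as $\Delta^u(\psi)=DF_\lambda(K(\psi))^{-1}\bigl[\Delta^u(T_\omega\psi)-\tilde E^u(T_\omega\psi,\Lambda)\bigr]$ and iterate forward, using \eqref{ndeg2loc} in place of \eqref{ndeg1loc}; the roles of ``forward cocycle'' and ``inverse cocycle'' are exchanged, but the geometric series argument is identical. The main technical point throughout is that the decay norms of products of linear operators only involve the sup of matrix entries weighted by $\Gamma$, and the submultiplicativity built into Definition \ref{defDecay}(2) propagates the hyperbolicity estimate to the composed cocycle acting on elements of $\mathcal A_{\rho,\underline c,\Gamma}$; this is the place where the choice of spaces of decay functions is indispensable, and it is what makes the estimates independent of $\underline c$ and of the dimension of the hyperbolic bundles.
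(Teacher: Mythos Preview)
Your proof is correct and follows essentially the same approach as the paper: write the solution as a geometrically convergent series using the cocycle contraction rate \eqref{ndeg1loc}, bound $\tilde E^s$ via the projector and the data, and sum. You are slightly more explicit than the paper about the iteration leading to the series, the verification that the series actually solves \eqref{eqStable2}, and uniqueness, but the argument is the same.
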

\begin{proof}
We only give the proof for the stable case, the unstable
case being very similar. Using equation \eqref{eqStable2}, we claim
\begin{equation}\label{seriesloc}
\Delta^s(\th')=\displaystyle{\sum_{k=0}^{\infty}}(DF_{\lambda}(K)\circ
T_{-\omega}(\th')\times\dots \times DF_{\lambda}(K)\circ T_{-k\omega}(\th'))
\tilde{E}^s(T_{-k\omega}(\th'),\Lambda).
\end{equation}
We introduce the map
\begin{equation*}
\mathcal{F}^{co}(k, \th')=DF_{\lambda}(K)\circ
T_{-\omega}(\th')\times\dots \times DF_{\lambda}(K)\circ
T_{-k\omega}(\th').
\end{equation*}
{From} the definition of $\tilde E^s$ we have $\|\tilde E^s
(T_{-k\omega}(\th'), \Lambda)\|_{\rho,\underline{c},\Gamma}
\leq C\big(\| E\|_{\rho,\underline{c}, \Gamma} + \vert
\Lambda\vert\big)$. Using\eqref{ndeg1loc} we obtain that the $k$
term in \eqref{seriesloc} is bounded by
\begin{equation*}
\|\mathcal{F}^{co}(k, \th') \tilde E^s(T_{-k \omega}(\th'),
\Lambda)\|_{\rho, \underline{c}, \Gamma} \leq C_h \mu^k_1
\|\tilde E^s (T_{-k\omega}(\cdot),
\Lambda)\|_{\rho,\underline{c}, \Gamma}
\end{equation*}
and therefore
\begin{equation*}
\|\Delta^s\|_{\rho,\underline{c},\Gamma}
 \leq C_h \|\tilde{E}^s\|_{\rho,\underline{c},\Gamma}
 \displaystyle{\sum_{k=0}^{\infty}}\,
 \mu_1^k \leq C \big(\| E\|_{\rho,\underline{c},\Gamma} + \vert
 \Lambda\vert\big),
\end{equation*}
since $\mu_1 <1$.
\end{proof}

\begin{remark}
\label{changes} 
It is perhaps interesting to compare the method of proof of
this paper with that of \cite{FontichLS09}. Both papers use
the invariance equations and formulate a quasi-Newton method
that can be solved using techniques from hyperbolic lore 
and some geometric identities. 
One of the strengths of the set-up based on decay functions 
is that we can obtain 
estimates independent on the number and positions of the centers of activity
by methods that resemble the finite dimensional methods. 

Both \cite{FontichLS09} and the present paper 
use a counterterm to adjust some of the constants
and then prove a vanishing lemma. In \cite{FontichLS09}, the counterterm is
obtained adding $J^{-1}\circ K_0 DK_0 \lambda$. In this 
paper, we consider a family of symplectic maps. This 
allows us to use the vanishing lemma only once at the end of 
the proof, whereas in \cite{FontichLS09}, the vanishing lemma 
had to be used at each iterative step.

We also deal in a different way with the invertibility of 
the linear change of variables $M$. In the present paper, we 
obtain the invertibility on the range using some geometric 
identities, whereas in \cite{FontichLS09}, we used some easier 
argument based on finite dimensional arguments. 

One geometric aspect that required several changes (due in part 
to the changes in the counterterm and to the infinite dimensional 
character) is the estimates on the difference between 
the center space and the range of the change of variables $M$. 
\end{remark}

\section{Iteration of the modified Newton method, convergence and proof of Theorem~\ref{existencetranslatedloc}}\label{convergence}

This section is devoted to the iteration of the Newton method.
 We derive first the usual KAM estimates for convergence. We assume
      that we are under the assumptions of
       Theorem \ref{existencetranslatedloc}.
        We note that with the decay norms the estimates are
         very similar to the ones we obtained in the finite dimensional case (see \cite{FontichLS09}).
\subsection{Iteration of the method}

Let $(\lambda_0, K_0)$ be an approximate solution of
\eqref{translatedloc} (i.e. a solution of the linearized equation
with error $E_{0}$). Following the Newton scheme we define the
following sequence of approximate solutions
\begin{align*}
K_m&=K_{m-1}+\Delta K_{m-1},\qquad m\ge 1,\\
\lambda_m&=\lambda_{m-1}+\Lambda_{m-1},\qquad m\ge 1,
\end{align*}
where $(\Lambda_{m-1},\Delta K_{m-1})$ is a solution of
\begin{equation*}
D_{\lambda,K}\mathcal{F}_{\omega}(\lambda_{m-1},K_{m-1})(\Lambda_{m-1},\Delta K_{m-1})=-E_{m-1}
\end{equation*}
with
$E_{m-1}(\th)=\mathcal{F}_{\omega}(\lambda_{m-1},K_{m-1})(\th)$.
The next lemma states a classical result in KAM theory: the
approximation to the solution at step $m$ has an error which is
bounded in a smaller complex domain by the square of the norm of the
error at step $m-1$.

\begin{pro}\label{improvementloc}
Assume $(\lambda_{m-1},K_{m-1}) \in ND_{loc}(\rho_{m-1},\Gamma)$ is an approximate
solution of equation \eqref{translatedloc}  and that the following holds
\begin{equation*}
r_{m-1}=\|K_{m-1}-K_0\|_{\rho_{m-1},\underline{c},\Gamma} < r\,.
\end{equation*}
If $E_{m-1}$ is small enough such that lemma \ref{solCenterloc}
applies then there exists a function $\Delta K_{m-1} \in
\mathcal{A}_{\rho_{m-1}-3\delta_{m-1},\underline{c},\Gamma}$ for any
$0< \delta_{m-1} < \rho_{m-1} /3$ and a vector $\Lambda_m \in
\mathbb{R}^l$ such that
\begin{equation}\label{improve1loc}
\|\Delta K_{m-1}\|_{\rho_{m-1}-2\delta_{m-1},\underline{c},\Gamma}
 \leq (C^1_{m-1}+C^2_{m-1} \kappa^{2} \delta_{m-1}^{-2\nu}) \|E_{m-1}\|_{\rho_{m-1},\underline{c},\Gamma},
\end{equation}
\begin{equation}\label{improveLamloc}
|\Lambda_m| \leq C \|E_{m-1}\|_{\rho_{m-1},\underline{c},\Gamma}
\end{equation}
\begin{equation}\label{improve2loc}
\|D\Delta K_{m-1}\|_{\rho_{m-1}-3\delta_{m-1},\underline{c},\Gamma}
 \leq (C^1_{m-1}\delta_{m-1}^{-1}+C^2_{m-1} \kappa^{2} \delta_{m-1}^{-(2\nu+1)})
  \|E_{m-1}\|_{\rho_{m-1},\underline{c},\Gamma},
\end{equation}
where $C^1_{m-1}, C^2_{m-1}$ depend only on $\nu$, $l$,
$|F_{\lambda}|_{C^1_\Gamma(B_r)}$,
$\|DK_{m-1}\|_{\rho_{m-1},\underline{c},\Gamma}$,
$||\Pi^c_{K_{m-1}(\th)}||_{\rho_{m-1},\Gamma}$,
$||\Pi^s_{K_{m-1}(\th)}||_{\rho_{m-1},\Gamma}$,
$||\Pi^u_{K_{m-1}(\th)}||_{\rho_{m-1},\Gamma}$,
$|\avg(Q_{\lambda_{m-1}})|^{-1}$ and
$|\avg(A_{\lambda_{m-1}})\vert^{-1}$. Moreover, if
$K_m=K_{m-1}+\Delta K_{m-1}$ and
\begin{equation*}
r_{m-1}+C^1_{m-1}+\Big (C^2_{m-1} \kappa^{2} \delta_{m-1}^{-2\nu} \Big ) \|E_{m-1}\|_{\rho_{m-1},\underline c, \Gamma}<r
\end{equation*}
then we can redefine $C^1_{m-1}$ and $C^2_{m-1}$ and all previous
quantities such that the error
$E_m(\th)=\mathcal{F}_{\omega}(\lambda_m,K_m)(\th)$ satisfies
\begin{equation}\label{estimateloc}
\|E_m\|_{\rho_m,\underline{c},\Gamma} \leq C_{m-1}\kappa^4
 \delta_{m-1}^{-4\nu}
 \|E_{m-1}\|^2_{\rho_{m-1},\underline{c},\Gamma},
\end{equation}
where we take $\rho_m = \rho_{m-1}-3\delta_ {m-1}$.
\end{pro}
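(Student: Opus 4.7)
My plan is to assemble an approximate Newton step by treating the center and hyperbolic components of the splitting separately, and then to read off the quadratic error from a Taylor expansion. I would first use Propositions~\ref{NDmoveloc}--\ref{tempmove} to obtain a truly invariant splitting at $K_{m-1}$ from the $\eta$-approximate one inherited from the previous iterate, and decompose
\[
\Delta K_{m-1} = \Delta K_{m-1}^c + \Delta K_{m-1}^s + \Delta K_{m-1}^u
\]
along this splitting. Proposition~\ref{solCenterloc} produces the pair $(\Lambda_{m-1}, \Delta K_{m-1}^c)$ together with the bounds $\|\Delta K_{m-1}^c\|_{\rho_{m-1}-2\delta_{m-1},\underline c,\Gamma} \le C\kappa^2\delta_{m-1}^{-2\nu} \|E_{m-1}\|_{\rho_{m-1},\underline c,\Gamma}$ and $|\Lambda_{m-1}| \le C\|E_{m-1}\|_{\rho_{m-1},\underline c,\Gamma}$, modulo a quadratic residual in the projected equation. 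For the hyperbolic pieces, Proposition~\ref{hyperbloc} supplies $\Delta K_{m-1}^{s,u}$ exactly (no residual), with $\|\Delta K_{m-1}^{s,u}\|_{\rho_{m-1},\underline c,\Gamma} \le C(\|E_{m-1}\|_{\rho_{m-1},\underline c,\Gamma}+|\Lambda_{m-1}|)$; here no domain loss is incurred because the cocycle series converges absolutely by the hyperbolicity estimates. Summing the three contributions and using the Banach algebra bound on the projections yields \eqref{improve1loc} and \eqref{improveLamloc}.

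Estimate \eqref{improve2loc} on $D\Delta K_{m-1}$ then follows from \eqref{improve1loc} via the Cauchy estimate in the weighted decay norm proved in Appendix~\ref{decayfunctions}, which costs the additional factor $\delta_{m-1}^{-1}$ when restricting from $D_{\rho_{m-1}-2\delta_{m-1}}$ to $D_{\rho_{m-1}-3\delta_{m-1}}$. Under the smallness condition in the hypothesis, $\|K_m - K_0\|_{\rho_m,\underline c,\Gamma} < r$, so $K_m$ maps $D_{\rho_m}$ into $B_r$ and the composition $F_{\lambda_m}\circ K_m$ is well defined in the domain of analyticity of $F_\lambda$.

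For the key quadratic estimate \eqref{estimateloc}, I would write
\[
E_m = F_{\lambda_{m-1}+\Lambda_{m-1}}\circ(K_{m-1}+\Delta K_{m-1}) - (K_{m-1}+\Delta K_{m-1})\circ T_\omega
\]
and Taylor expand to second order in $(\lambda, K)$ about $(\lambda_{m-1},K_{m-1})$, to obtain
\[
E_m = E_{m-1} + D_{\lambda,K}\mathcal{F}_\omega(\lambda_{m-1},K_{m-1})(\Lambda_{m-1},\Delta K_{m-1}) + R_m,
\]
where $R_m$ collects all second-order Taylor terms. The first bracket on the right is exactly the residual $\tilde E_{m-1}$ bounded by Lemma~\ref{main} and Proposition~\ref{solCenterloc}, namely $\|\tilde E_{m-1}\|_{\rho_m,\underline c,\Gamma}\le C\kappa^2\delta_{m-1}^{-(2\nu+1)}\|E_{m-1}\|_{\rho_{m-1},\underline c,\Gamma}^2$, once one recalls that $|\Lambda_{m-1}|\le C\|E_{m-1}\|$ to absorb the cross term. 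Writing $R_m$ in integral remainder form and invoking the $C^2_\Gamma$ bound on $F_\lambda$ together with the Banach algebra property of $\|\cdot\|_{\rho,\underline c,\Gamma}$, one gets
\[
\|R_m\|_{\rho_m,\underline c,\Gamma} \le C\|F\|_{C^2_\Gamma(B_r)} \bigl(\|\Delta K_{m-1}\|_{\rho_m,\underline c,\Gamma}^2 + |\Lambda_{m-1}|^2\bigr).
\]
Inserting \eqref{improve1loc} and \eqref{improveLamloc} bounds this by $C\kappa^4 \delta_{m-1}^{-4\nu}\|E_{m-1}\|_{\rho_{m-1},\underline c,\Gamma}^2$, which dominates the linearized residual and gives \eqref{estimateloc}.

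The main technical point to watch is that the remainder $R_m$ must be controlled in the same weighted decay norm rather than in a weaker norm, as this is what keeps the constants independent of $\underline c$ and allows the same inductive step to be reused uniformly as the number of excited centers grows. This uniformity is guaranteed by the Banach algebra property of the decay norms (so that $DF$, $D^2F$, projections, and their products all stay in the decay class with compatible bounds) and by the fact that Proposition~\ref{tempmove} gives control of the change in the projections and hyperbolicity constants purely in terms of $\|\Delta K_{m-1}\|_{\rho_{m-1},\underline c,\Gamma}$, so that every constant appearing in $C^1_{m-1}, C^2_{m-1}$ is continuous in the data and can be absorbed into a fixed constant at the next iteration.
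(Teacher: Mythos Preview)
Your proposal is correct and follows essentially the same route as the paper: decompose $\Delta K_{m-1}$ along the invariant splitting, invoke Proposition~\ref{solCenterloc} for the center piece and Proposition~\ref{hyperbloc} for the hyperbolic pieces to obtain \eqref{improve1loc}--\eqref{improveLamloc}, apply Cauchy estimates for \eqref{improve2loc}, and then Taylor-expand $\mathcal{F}_\omega$ to split $E_m$ into the linearized residual (controlled by \eqref{estimApprox}) plus a quadratic Taylor remainder bounded via the $C^2_\Gamma$ norm of $F_\lambda$. Your added remarks on uniformity in $\underline c$ and the role of the Banach algebra property of the decay norms are accurate elaborations of points the paper leaves implicit.
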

\begin{remark}
The estimate \eqref{estimateloc} showing that the norm of the error
at step $m$ is essentially bounded by the square of the norm of the
error at step $m-1$ was already in \cite{Zehnder75a,Zehnder75b}.
\end{remark}
\begin{proof}  Taking into account that $\Delta K_{m-1} (\th)$ is the sum
 of its three projections on the stable, center and unstable
 subspaces
  estimates \eqref{improve1loc} and \eqref{improveLamloc}
follow from Proposition \ref{solCenterloc} and Proposition
\ref{hyperbloc}. Estimate \eqref{improve2loc} follows from estimate
 \eqref{improve1loc} and Cauchy's inequalities.
  Define the remainder of the Taylor expansion
\begin{align*}
\mathcal{R}(\lambda ',\lambda,K',K) =
&\, \mathcal{F}_{\omega}(\lambda,K)-\mathcal{F}_{\omega}(\lambda',K')\\
&-D_{\lambda,K}\mathcal{F}_{\omega}(\lambda',K')(\lambda-\lambda',K-K').
\end{align*}
Then putting $\lambda = \lambda_m$ and $\lambda'=\lambda_{m-1}$ $K=K_{m-1}$ and $K'=K_{m-1}$, we
have
\begin{align*}
E_m(\th)=E_{m-1}(\th)&+D_{\lambda,K}\mathcal{F}_{\omega}
(\lambda_{m-1},K_{m-1}(\th))(\Lambda_{m-1},\Delta
K_{m-1}(\th))\\&+
\mathcal{R}(\lambda_{m-1},\lambda_m,K_{m-1},K_m)(\th).
\end{align*}
According to estimate \eqref{estimApprox} and since the equations on
 the hyperbolic subspace are {\sl exactly} solved, we have
\begin{eqnarray*}
\|E_{m-1}+D_{\lambda,K}\mathcal{F}_{\omega}(\lambda_{m-1},K_{m-1})(\Lambda_{m-1},\Delta
K_{m-1})\|_{\rho_m,\underline{c},\Gamma} \\
\leq c_{m-1} \kappa^2
\delta_{m-1}^{-(2\nu+1)}\|E_{m-1}\|^2_{\rho_{m-1},\underline{c},\Gamma}.
\end{eqnarray*}
Estimate \eqref{estimateloc} then follows from Taylor's remainder.
\end{proof}

In the following, we derive the changes in the non-degeneracy
conditions during the iterative step.

For the twist condition, we have the following lemma, which is 
proved easily noting that we are 
just perturbing finite dimensional matrices. 

\begin{lemma}
Assume that the hypothesis of Proposition \ref{improvementloc} hold.
 If $\|E_{m-1}\|_{\rho_{m-1},\underline c, \Gamma}$ is small enough, then
\begin{itemize}
\item If $DK_{m-1}^\top DK_{m-1}$ is invertible with inverse $N_{m-1}$
  \\then $DK_{m}^\top DK_{m}$ is invertible with inverse $N_m$ and we have
\begin{equation*}
\|N_m\|_{\rho_{m}} \leq \|N_{m-1}\|_{\rho_{m-1}}+C_{m-1} \kappa^2
\delta_{m-1}^{-(2\nu+1)}
 \|E_{m-1}\|_{\rho_{m-1},\underline{c},\Gamma}.
\end{equation*}
\item If $\avg(A_{\lambda_{m-1}})$ is non singular then, $\avg(A_{\lambda_{m}})$
is non-singular  and we have the estimate
 \begin{equation*}
|\avg(A_{\lambda_m})|^{-1}\leq
|\avg(A_{\lambda_{m-1}})|^{-1}+C'_{m-1} \kappa^2
\delta_{m-1}^{-(2\nu+1)}
\|E_{m-1}\|_{\rho_{m-1},\underline{c},\Gamma}.
\end{equation*}
\item If $\avg(Q_{\lambda_{m-1}})$ is non singular then, 
 $\avg(Q_{\lambda_{m}})$ is non-singular  and we have the estimate
 \begin{equation*}
|\avg(Q_{\lambda_m})|^{-1}\leq
|\avg(Q_{\lambda_{m-1}})|^{-1}+C''_{m-1}
 \kappa^2 \delta_{m-1}^{-(2\nu+1)} \|E_{m-1}\|_{\rho_{m-1},\underline{c},\Gamma}.
\end{equation*}
\end{itemize}
\end{lemma}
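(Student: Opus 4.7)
The plan is to exploit the fact that each of $N$, $\avg(A_\lambda)$, $\avg(Q_\lambda)$ is a finite-dimensional $l\times l$ matrix given by an explicit algebraic expression in $(\lambda, K, DK, F_\lambda(K), DF_\lambda(K), J^c(K), \partial_\lambda F_\lambda(K))$ (possibly averaged over $\T^l$). Since Proposition~\ref{improvementloc} supplies
\begin{gather*}
|\Lambda_{m-1}| \le C\|E_{m-1}\|_{\rho_{m-1},\underline c,\Gamma},\\
\|\Delta K_{m-1}\|_{\rho_m,\underline c,\Gamma} \le C\kappa^2\delta_{m-1}^{-2\nu}\|E_{m-1}\|_{\rho_{m-1},\underline c,\Gamma},\\
\|D\Delta K_{m-1}\|_{\rho_m,\underline c,\Gamma} \le C\kappa^2\delta_{m-1}^{-(2\nu+1)}\|E_{m-1}\|_{\rho_{m-1},\underline c,\Gamma},
\end{gather*}
the argument reduces to propagating these small perturbations through the defining formulas and invoking the Neumann series on finite-dimensional matrices.

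For the bound on $N_m$, I would expand
\[
DK_m^\top DK_m = DK_{m-1}^\top DK_{m-1} + R_m,
\]
with $R_m = DK_{m-1}^\top D\Delta K_{m-1} + (D\Delta K_{m-1})^\top DK_{m-1} + (D\Delta K_{m-1})^\top D\Delta K_{m-1}$. Using the Banach algebra property of $\|\cdot\|_\Gamma$ together with the bound on $\|D\Delta K_{m-1}\|_{\rho_m,\underline c,\Gamma}$, one gets $\|R_m\|_{\rho_m} \le C\kappa^2\delta_{m-1}^{-(2\nu+1)}\|E_{m-1}\|_{\rho_{m-1},\underline c,\Gamma}$. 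Factoring as $DK_m^\top DK_m = (DK_{m-1}^\top DK_{m-1})(\Id_l + N_{m-1}R_m)$, and requiring $\|E_{m-1}\|_{\rho_{m-1},\underline c,\Gamma}$ small enough that $\|N_{m-1}R_m\|_{\rho_m} < 1/2$, the Neumann series provides invertibility and the announced bound on $\|N_m\|_{\rho_m}$.

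For $\avg(A_{\lambda_m})$ and $\avg(Q_{\lambda_m})$ I would substitute $(\lambda_m, K_m)$ into \eqref{Adefined} and \eqref{Qdefined}. Using the $C^2_\Gamma$ regularity of $F_\lambda$ jointly in $\lambda$ and the spatial variable, the bounds on $\Lambda_{m-1}$, $\Delta K_{m-1}$, $D\Delta K_{m-1}$, combined with the estimate on $N_m-N_{m-1}$ just established, yield
\[
\|A_{\lambda_m}^m - A_{\lambda_{m-1}}^{m-1}\|_{\rho_m} + \|Q_{\lambda_m}^m - Q_{\lambda_{m-1}}^{m-1}\|_{\rho_m,\Gamma} \le C\kappa^2\delta_{m-1}^{-(2\nu+1)}\|E_{m-1}\|_{\rho_{m-1},\underline c,\Gamma}.
\]
Averaging over $\T^l$ preserves this estimate, and a Neumann series applied to the finite-dimensional invertible matrices $\avg(A_{\lambda_{m-1}}^{m-1})$ and $\avg(Q_{\lambda_{m-1}}^{m-1})$ then gives non-singularity of their perturbed versions together with the inverse bounds.

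No major obstacle stands in the way: the lemma is a perturbative continuity statement rather than a KAM estimate, and the hard analytic work has already been carried out in Proposition~\ref{improvementloc}. The only point requiring care is that the formula for $A_\lambda$ involves $J^c(K)$, which implicitly depends on the invariant center subspace of the perturbed embedding; this is controlled by Proposition~\ref{tempmove}, whose bounds on $\|\Pi^c_{K_m} - \Pi^c_{K_{m-1}}\|_{\rho_m,\Gamma}$ are of the same perturbative order and are thus absorbed into the constant $C$.
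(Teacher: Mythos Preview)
Your proposal is correct and is precisely the approach the paper intends: the paper dismisses the lemma in one line, stating it ``is proved easily noting that we are just perturbing finite dimensional matrices,'' which is exactly the Neumann-series perturbation argument you spell out. Your additional remark about controlling the change in $J^c(K)$ via Proposition~\ref{tempmove} is a genuine subtlety that the paper leaves implicit, so if anything your write-up is more complete than the original.
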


\subsection{Iteration of the Newton step and convergence}

Once we have the estimates for a step of the iterative method,
following the standard scheme in KAM theory one can prove the
convergence of the method. One takes $0<\delta_0< \min(1,
\rho_0/12),\ \delta_m = \delta_0/2$ and $\rho_m = \rho_{m-1}-
3\delta_{m-1}.$

{From} \eqref{estimateloc} we have that
\begin{equation}
\| E_m\|_{\rho_m,\underline{c}, \Gamma} \le C_{m-1} \kappa^4
\delta^{-4\nu}_0 2 ^{4\nu(m-1)}\|
E_{m-1}\|^2_{\rho_{m-1},\underline{c},\Gamma}\,.\label{formulaem}
\end{equation}
Moreover the constants $C_m$ are bounded uniformly in $m$ by a
constant $C$. Using \eqref{formulaem} iteratively one obtains
$$\| E_m\|_{\rho_m, \underline{c},\Gamma} \le
\Big(C \kappa^4 \delta^{-4\nu}_0 2^{4\nu} \| E_0\|_{\rho_0,
\underline{c},\Gamma}\Big)^{2^m}.$$
 Then, if
$\| E_0\|_{\rho_0,\underline{c},\Gamma}$ is small enough the
iteration converges to a pair $(\lambda_\infty, K_\infty) \in
ND_{loc}(\rho_\infty, \Gamma),$\ with $K_\infty \in
\A_{\rho_\infty, \underline{c}, \Gamma}$ such that
$F_{\lambda_\infty} \circ K_\infty=K_\infty \circ T_\omega$.

\section{Vanishing lemma and existence of invariant tori.
Proof of Theorem~\ref{existenceembedloc}} 
\label{secvanishing}
In this context of infinite dimensional lattices, one could ask for
the existence of a vanishing lemma, which would ensure that the
translated tori are actually invariant ones. The issue is that we do
not have a true symplectic form on the whole manifold $M^{\ZZ^N}$ but just
 a formal one. The proof we give is inspired by  the one
  in \cite{FontichLS09} but we have to take into account that formal forms make sense only via their pull-backs to  $\torus^l$.

The following lemma, called the vanishing lemma,  is more or less 
 equivalent to showing that some averages cancel, 
but is somewhat easier to implement.

\begin{lemma}\label{vanishingloc}
Assume  $F_{\lambda}$ is analytic, smooth in $\lambda \in
\mathbb{R}^l$ and maps $\M$ into itself. Assume $\omega \in
D(\kappa,\nu)$ and let $(\lambda, K) \in ND_{loc}(\rho, \Gamma),$
where $K \in \A_{\rho,\underline{c},\Gamma}$ is a solution of
\begin{equation*}
F_{\lambda} \circ K=K \circ T_{\omega}.
\end{equation*}
Assume furthermore:
\begin{itemize}
\item
$F_0$ is exact symplectic, $F_\lambda$ is symplectic for 
$\lambda \neq 0$ and $F_\lambda$ is constructed as in Appendix \ref{maps}.
\item $F_\lambda$ extends analytically to a neighborhood of
  $K(\torus^l)$.
\item We have $|\lambda| \leq \lambda^*$, where $\lambda^*$ depends only on derivatives
 of $F$ and the symplectic structure $J$.
\end{itemize}
Then
\[
\lambda = 0.
\]
\end{lemma}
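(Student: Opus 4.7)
The plan is to exploit the failure of $F_\lambda$ to be exact symplectic for $\lambda \neq 0$ by pulling back the formal Liouville form $\alpha_\infty$ through $K$, and to extract an obstruction that forces $\lambda = 0$. Since $K \in \A_{\rho, \underline c, \Gamma}$, the decay properties of $K$ make $K^* \alpha_\infty$ into a genuine one-form on $\T^l$ (as in Lemma \ref{sympDecay}), even though $\alpha_\infty$ itself is only a formal object on $\M$. The whole argument will take place on the finite-dimensional torus, where averaging is meaningful.

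First, I would pull the invariance equation $F_\lambda \circ K = K \circ T_\omega$ back through $\alpha_\infty$ to obtain
\[
K^* F_\lambda^* \alpha_\infty \;=\; (F_\lambda \circ K)^* \alpha_\infty \;=\; (K \circ T_\omega)^* \alpha_\infty \;=\; T_\omega^* K^* \alpha_\infty.
\]
Subtracting $K^* \alpha_\infty$ from both sides and averaging over $\T^l$ with Lebesgue measure, the right-hand side vanishes (since $T_\omega$ is measure-preserving), giving
\[
\avg\bigl(K^*(F_\lambda^* \alpha_\infty - \alpha_\infty)\bigr) \;=\; 0.
\]
Symplecticness of $F_\lambda$ implies $d(F_\lambda^*\alpha_\infty - \alpha_\infty) = F_\lambda^*\Omega_\infty - \Omega_\infty = 0$, and Definition \ref{exSymploc} says that for $\lambda=0$ the one-form $F_0^*\alpha_\infty - \alpha_\infty$ has the component-wise form $dW_j$, so its pullback under $K$ is an exact one-form on $\T^l$ and contributes nothing to the average.

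Second, I would use the explicit construction of $F_\lambda$ from Appendix \ref{maps}, in which $\lambda$ enters as a translation parameter that turns the exact symplectic $F_0$ into a family of symplectic but no longer exact maps. Taylor expanding gives
\[
F_\lambda^*\alpha_\infty - F_0^*\alpha_\infty \;=\; \mathcal B\,\lambda \;+\; O(|\lambda|^2),
\]
where $\mathcal B$ is an explicit linear map determined by $\partial_\lambda F_\lambda|_{\lambda=0}$ and by the symplectic structure. Combining with the vanishing of the average of the exact part yields
\[
\avg\bigl(K^* \mathcal B\bigr)\,\lambda \;+\; O(|\lambda|^2) \;=\; 0.
\]
A short calculation, contracting with tangent vectors along $DK$ and using the description of the symplectic structure on the center subspace through $J^c \circ K$, identifies $\avg(K^*\mathcal B)$ (up to an invertible finite-dimensional factor) with the matrix $\avg(Q_0)$ defined in \eqref{Qdefined}. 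Since $(0,K)\in ND_{loc}(\rho,\Gamma)$, the matrix $\avg(Q_0)$ is invertible, so the linear term dominates the quadratic remainder provided $|\lambda| \le \lambda^*$ with $\lambda^*$ depending only on $|\avg(Q_0)|^{-1}$ and the $C^2_\Gamma$ bounds on $F$; this forces $\lambda = 0$.

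The main obstacle is the rigorous handling of the formal nature of $\alpha_\infty$ and $\Omega_\infty$. One cannot literally assert $F_\lambda^*\alpha_\infty - \alpha_\infty = \tilde\alpha_\lambda$ at the level of $\M$, since the defining sums need not converge. What must be verified is that every pullback and average computed above reduces to absolutely convergent sums of smooth forms on $\T^l$, thanks to the decay estimates on $K$ (which propagate through $F_\lambda \in C^2_\Gamma$) and to the Banach algebra properties of the decay functions. In particular, the identity $K^* F_0^*\alpha_\infty = \sum_{j\in\ZZ^N} (F_0\circ K)^* \pi_j^* \alpha$ must be shown to converge summand by summand, so the individual terms can be manipulated as in the finite-dimensional exact symplectic case. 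This localizes the argument to a statement on the torus even though the ambient phase space is infinite dimensional.
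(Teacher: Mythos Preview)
Your overall architecture is correct and matches the paper's: pull back $\alpha_\infty$ through the invariance equation, extract the closed one-form $K^*(F_\lambda^*\alpha_\infty-\alpha_\infty)$ on $\T^l$, observe that its ``cohomological part'' must vanish (you phrase this via averaging; the paper uses integrals over the basic cycles $\sigma_{i,\hat\th_i}$, which is the same thing), expand to first order in $\lambda$ using the explicit construction of Appendix~\ref{maps}, and conclude $\lambda=0$ from invertibility of the linear coefficient. The paper streamlines the computation by rewriting $F_\lambda\circ K=K\circ T_\omega$ as $F_0\circ K=R_\lambda\circ K\circ T_\omega$ with $R_\lambda=F_0\circ F_\lambda^{-1}$, which lets it use the exactness of $F_0$ directly and reduces the leading term to $\sum_{j,k}\lambda_k^j\int K^*\delta_k^j$; your direct expansion of $F_\lambda^*\alpha_\infty-F_0^*\alpha_\infty$ arrives at the same place.

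The one substantive difference is your invertibility step. The paper does \emph{not} identify the linear coefficient with $\avg(Q_0)$; it argues topologically that since $K$ is an embedding, the forms $K^*\delta_k^j$ represent a basis of $H^1(\T^l)$, so the matrix of their periods is automatically nonsingular. Your route through $\avg(Q_0)$ can be made to work, but it is not a ``short calculation'': one needs that $\partial_\lambda F_\lambda|_{\lambda=0}$ evaluated on $K$ is (to leading order) the vector field $X^{(k)}$ of Appendix~\ref{maps} composed with $K\circ T_\omega$, that $\mathrm{range}\,DK(\th)\subset\E^c_{K(\th)}$, and crucially the symplectic orthogonality of $\E^c$ to $\E^s\oplus\E^u$ from Lemma~\ref{lem:restriction} to pass from $J_\infty$ to $J^c$. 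Only after these steps does the pairing $\langle DK,J_\infty X^{(k)}(K)\rangle$ reduce to the expression defining $Q$. Also note that the hypothesis gives $(\lambda,K)\in ND_{loc}$, hence invertibility of $\avg(Q_\lambda)$ rather than $\avg(Q_0)$; this is harmless for small $\lambda$ but should be said. The paper's topological argument bypasses all of this and is the cleaner route here.
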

\begin{proof}
We will write equation \eqref{translatedloc} as
\begin{equation}\label{embed2loc}
F_0 \circ K =  R_\lambda \circ K\circ T_\omega,
\end{equation}
where $R_\lambda=F_0 \circ F^{-1}_\lambda$.  We denote
\begin{equation}\label{thetahat}
\hth_i = (\th_1, \ldots, \th_{i-1}, \th_{i+1},\ldots,\th_l)\in \T^{l-1}
\end{equation}
and similarly
 $\hat \omega_i = (\omega_1, \ldots, \omega_{i-1},
\omega_{i+1},\ldots, \omega_l)\in \RR^{l-1}$.
We also denote $\sigma_{i, \hth_i}:\torus \to \torus^l$ the path
given by
\begin{equation}\label{sigmai}
\sigma_{i, \hth_i}(\eta) = (\th_1, \ldots, \th_{i-1}, \eta,
\th_{i+1},\ldots, \th_l).
\end{equation}

We will compute the integral
$\int_{\torus^{l-1}}\int_{\sigma_{i, \hth_i}}
K^*F_0^*\alpha_{\infty}$ in two different ways. Note that the
quantity $\int_{\sigma_{i, \hth_i}}  K^*F_0^* \alpha_{\infty}$ is
well-defined since $K$ has decay on $\M$.

Using that $F_0$ is exact symplectic we have:
\begin{equation}\label{calculation1loc}
\begin{split}
\int_{\sigma_{i, \hth_i+\hat \omega_i}} K^*F_0^* \alpha_{\infty} &=
  \int_{\sigma_{i, \hth_i+\hat \omega_i}}  (K^*\alpha_{\infty}  + dW_K) \\
&= \int_{\sigma_{i, \hth_i+\hat \omega_i}}  K^*\alpha_{\infty}
 =\int_{K \circ \sigma_{i, \hth_i+\hat \omega_i}} \alpha_{\infty} .
\end{split}
\end{equation}

Similarly, we have
\begin{equation}\label{calculation2loc}
\begin{split}
\int_{\sigma_{i, \hth_i}} (R_\lambda \circ K \circ T_\omega)^*\alpha_{\infty} &=
  \int_{\sigma_{i, \hth_i}}  T^*_\omega (R_\lambda \circ K)^* \alpha_\infty \\
&= \int_{\sigma_{i, \hth_i+\hat \omega_i}}  (R_\lambda \circ K)^*
\alpha_\infty\,.
\end{split}
\end{equation}

Since the averages over $\torus^{l-1}$ are the same, one gets
\begin{equation}\label{calc3}
\begin{split}
0&=\int_{\torus^{l-1}}\int_{\sigma_{i, \hth_i}}[K^* \alpha_\infty-(R_\lambda \circ K)^*\alpha_\infty]\\
&= -\int_{\torus^{l-1}}\int_{\sigma_{i, \hth_i}}K^*(R^*_\lambda
\alpha_\infty-\alpha_\infty).
\end{split}
\end{equation}

Now, we estimate $K^* \Big (R^*_\lambda \alpha_\infty-\alpha_\infty \Big )$. We know that
\begin{equation*}
R_\lambda=F_0 \circ F_\lambda^{-1   }.
\end{equation*}
Therefore, we have
\begin{equation*}
K^* \Big ( R^*_\lambda \alpha_\infty-\alpha_\infty \Big )=K^* \Big ( (F_\lambda^{-1})^* (F_0^*\alpha_\infty-F_\lambda^* \alpha_\infty) \Big ).
\end{equation*}
This gives, using the exact symplecticness of $F_0$ 
\begin{equation*}
\begin{split}
0=\int_{\torus^{l-1}}\int_{\sigma_{i, \hth_i}}K^* (F_\lambda^{-1})^*(\alpha_\infty-F_\lambda^* \alpha_\infty).
\end{split}
\end{equation*}
By the construction of the map $F_\lambda$, we have
\begin{equation*}
\begin{split}
(F_\lambda^{-1})^*(\alpha_\infty-F_\lambda^* \alpha_\infty)
=\frac{1}{2} (F_\lambda^{-1})^*\Big \{ \sum_{ j \in \J}\sum_{k=1}^l
\lambda^j_k F_0^* \delta^j_k +d\beta \Big \},
\end{split}
\end{equation*}
where $\beta$ is a smooth function on the torus and
 $(\delta^j_k)_{k=1,\dots ,l}$ is a basis of $H^1(\T^l)$. This gives
\begin{equation*}
\begin{split}
(F_\lambda^{-1})^*(\alpha_\infty-F_\lambda^* \alpha_\infty)
=\frac{1}{2} \sum_{j \in \J} \sum_{k=1}^l \lambda^j_k R_\lambda^*
\delta^j_k +d\bar{\beta}.
\end{split}
\end{equation*}
Therefore, one gets
\begin{equation*}
\begin{split}
\frac{1}{2} \sum_{j \in \J} \sum_{k=1}^l \lambda^j_k
\int_{\torus^{l-1}}\int_{\sigma_{i, \hth_i}}K^* R_\lambda^*
\delta^j_k=0.
\end{split}
\end{equation*}
By the smoothness of $F_\lambda$ with respect to $\lambda$ we can
write
\begin{equation*}
R_\lambda=\mbox{Id}+O(|\lambda|).
\end{equation*}
This gives
\begin{equation*}
\begin{split}
0&=\frac{1}{2} \sum_{j \in \J} \sum_{k=1}^l \lambda^j_k \int_{\torus^{l-1}}
\int_{\sigma_{i, \hth_i}}K^* R_\lambda^* \delta^j_k\\
&= \frac{1}{2} \sum_{j \in \J} \sum_{k=1}^l \lambda^j_k
\int_{\torus^{l-1}}\int_{\sigma_{i,
\hth_i}}K^*\delta^j_k+(|\lambda|^2).
\end{split}
\end{equation*}
Since $K$ is an embedding, $\{\delta^k_j\}_{1\le j\le l}$ is a basis
of $H^1(K(\T^l))$. Consequently, the map $v\mapsto
\int_{\torus^{l-1}}\int_{\sigma_{i, \hth_i}}K^*\delta^j_k v$ is
invertible and the smallness assumption on $\lambda$ (which is
satisfied in particular by the KAM theorem) ensures, by the Implicit
Function Theorem, that $\lambda=0$.
\end{proof}

\begin{figure} 
\begin{center} 
\includegraphics[height = 2.5 in]{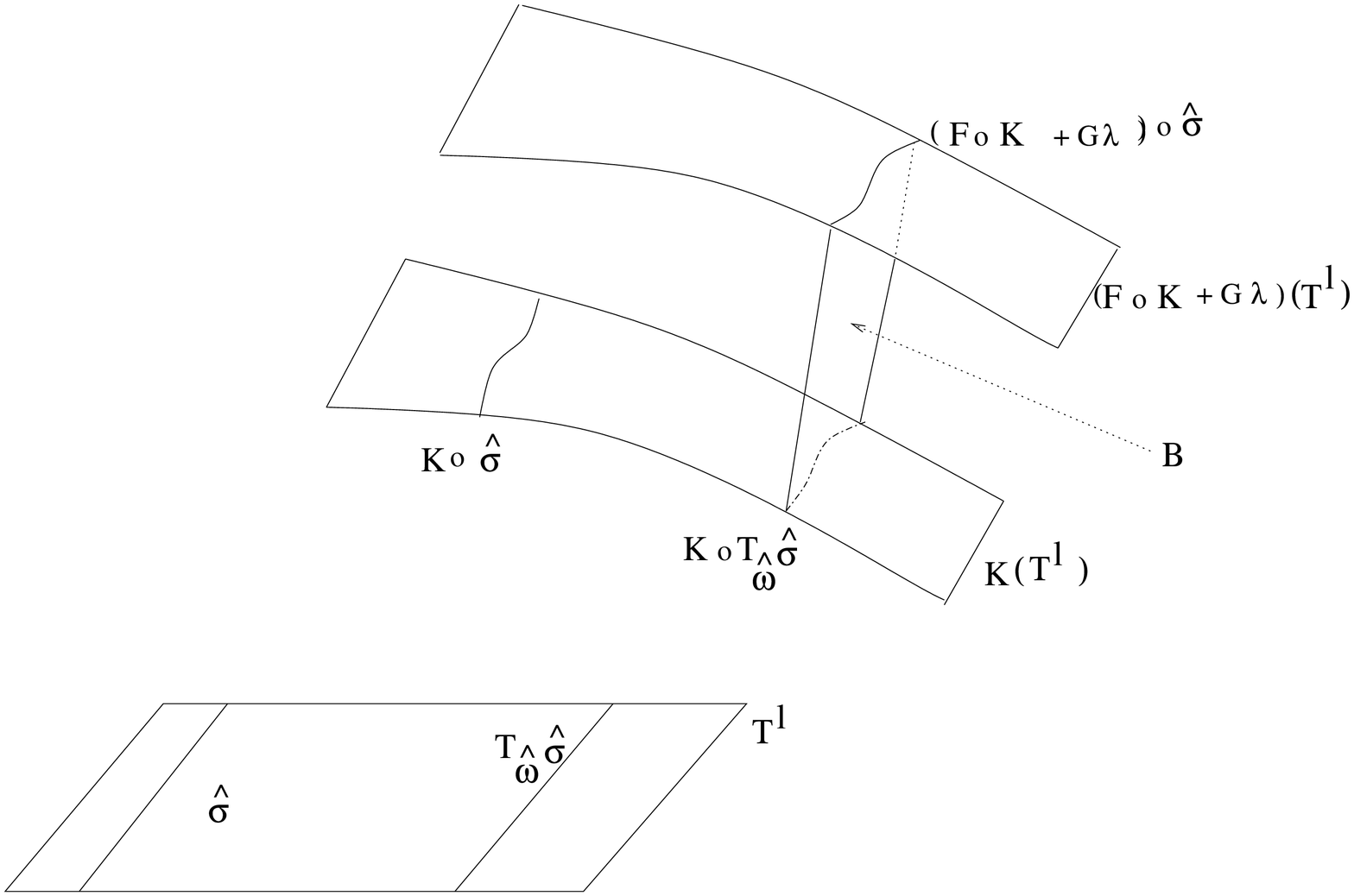} 
\end{center}
\caption{Illustration of the proof of the vanishing 
lemma, Lemma~\ref{vanishingloc}. } 
\end{figure} 

Once we have the vanishing lemma, Theorem \ref{existenceembedloc}
follows directly from Theorem \ref{existencetranslatedloc} by using
the construction in Appendix \ref{maps}. Indeed, starting with the
exact symplectic map $F$ we construct the family $F_\lambda$ to
which we apply Theorem \ref{existencetranslatedloc}, with an
approximate solution $(\lambda_0=0, K_0)$, to obtain
$(\lambda_\infty, K_\infty)$ such that
$$F_{\lambda_\infty}\circ K_\infty = K_\infty\circ T_\omega .$$
The vanishing lemma implies that $\lambda_\infty = 0$.

\section{Uniqueness results}
In this section, we prove Theorem \ref{uniquenessLattice}. We closely follow
the proof in \cite{FontichLS09}. It is based on showing that the operator
$D\mathcal{F}_\omega(K)$ has an approximate left inverse (as in
\cite{Zehnder75a,Zehnder75b}). Notice first that the composition on the right by every
translation of a solution of \eqref{invarmap1} is also a
solution. Therefore, one cannot expect a strict uniqueness
result. Moreover, the second statement in Lemma \ref{main} and the calculation on the hyperbolic directions
show that,
roughly speaking, two solutions of the linearized equation differ by their average.
 Moreover this difference is in the direction of the tangent space of the torus.
The idea behind the local uniqueness result is to
prove that one can transfer the difference of the averages between two
solutions to a difference of phase between the two solutions.

Now
we assume that the embeddings $K_1 $ and $K_2 $
satisfy the hypotheses in Theorem \ref{uniquenessLattice}, in particular $K_1$ and $K_2$ are solutions
of \eqref{invarmap1}, or \eqref{translatedloc} with $\lambda = 0$.
If $\tau\ne 0$ we write $K_1$ for $K_1\circ T_\tau$
which is also a solution.
Therefore
$\mathcal{F}_{\omega}(0,K_1)=\mathcal{F}_{\omega}(0,K_2)=0$.
By Taylor's theorem we can write
\begin{equation}\label{unique}
\begin{split}
0=\mathcal{F}_{\omega}(0,K_1)-\mathcal{F}_{\omega}(0,K_2)=&D_{\lambda,K}\mathcal{F}_{\omega}(0,K_2)(0,K_1-K_2)\\
&+\mathcal{R}(0,0,K_1,K_2).
\end{split}
\end{equation}
Moreover, there exists $C>0$ such that
\begin{equation*}
\|\mathcal{R}(0,0,K_1,K_2)\|_{\rho,\underline{c},\Gamma} \leq C \|K_1-K_2\|_{\rho,\underline{c},\Gamma}^2
\end{equation*}
since $F \in C^2_\Gamma$.
Hence we end up with the following linearized
equation
\begin{equation*}
D_{\lambda,K}\mathcal{F}_{\omega}(0,K_2)(0,K_1-K_2)=-\mathcal{R}(0,0,K_1,K_2).
\end{equation*}
We denote $\Delta=K_1-K_2$.

Projecting this equation on the center subspace, writing
$\Delta^c(\th) = \Pi^c_{K_2(\th)}\Delta (\th)  $ and making the
change of function $\Delta^c(\th)= \tilde M(\th) W(\th) $, where
$\tilde M$ is defined in \eqref{definicioMtilde} with $K=K_2$, we
obtain
\begin{align}\label{change-uniq}
DF(K_2(\th))\tilde{M}(\th)W(\th)&-\tilde{M}(\th+\omega)W(\th+\omega) \nonumber \\
& = - \Pi^c_{K_2(\th+\omega )} \mathcal{R}(0,0,K_1,K_2).
\end{align}
Applying the property $ DF(K_2(\th))\tilde{M}(\th) = \tilde{M}(\th+\omega) \mathcal{S}_0(\th)$
for solutions of \eqref{invarmap1}, multiplying both sides by  $\tilde{M}(\th+\omega)^\top J^c(K)$
and using that $\tilde{M}^\top J^c(K) \tilde{M} $ is invertible we get
$$
\mathcal{S}_0(\th) W(\th) - W(\th+\omega) =$$
$$
- [(\tilde{M}^\top J^c(K)\tilde{M})^{-1} \tilde{M}^\top J^c(K)](\th+\omega) \Pi^c_{K_2(\th+\omega )} \mathcal{R}(0,0,K_1,K_2).
$$
We get bounds for $W$, from the fact that it solves the previous equation,
using the methods in Section \ref{sec:temp}.
We write $W=(W_1, W_2) $. Since $\mathcal{S}_0$ is triangular we begin by looking for $W_2$. We search for it 
in the form $W_2= W_2^\bot + \avg(W_2) $. We have
$\|W_2^\bot\|_{\rho-\delta,\underline{c},\Gamma} = C \kappa \delta ^{-\nu} \|K_1-K_2\|_{\rho,\underline{c},\Gamma}^2 $.
For $W_1$ we have
\begin{align}\label{condW1}
W_1(\th) - W_1(\th+\omega)
= & [N_2DK_2^\top](\th) (\Pi^c_{K_2(\th+\omega )} \mathcal{R}(0,0,K_1,K_2))_1(\th) \nonumber \\
 & - A_0(\th)W^\bot_2(\th) - A_0(\th) \avg(W_2),
\end{align}
where $N_2=\Big ( DK_2^\top DK_2\Big )^{-1}$. 
The condition that 
the right-hand side of \eqref{condW1} has  zero average gives
$|\avg(W_2)  | \le  C\kappa \delta ^{-\nu} (\|K_1-K_2\|_{\rho,\underline{c},\Gamma})^2$. Then
$$
\|W_1 -\avg(W_1) \|_{\rho-2\delta} \le C\kappa^2 \delta ^{-2\nu}
\|K_1-K_2\|_{\rho,\underline{c},\Gamma}^2
$$
but $\avg(W_1) $ is free. Then
\begin{equation*}
\big\|\Delta^c-DK_2\avg(W_1)
\big\|_{\rho-2\delta,\underline{c},\Gamma} \leq C \kappa ^2 \delta
^{-(2\nu+1)} \|K_1-K_2\|_{\rho,\underline{c},\Gamma}^2.
\end{equation*}
The next step  is done in the same way as in \cite{LGJV05}. We quote Lemma 14 of that reference
using our notation.
\begin{lemma} \label{lem:tau}
There exists a constant $C$ such that if $C
\|K_1-K_2\|_{\rho,\underline{c},\Gamma} \leq 1$ then there exits a
phase $\tau_1 \in \left \{ \tau  \in
  \mathbb{R}^l \mid \; |\tau|< \|K_1-K_2\|_{\rho,\underline{c},\Gamma} \right \}$ such that
 \begin{equation*}
\avg\big({N_2}DK_2^\top \Pi^c_{K_2(\th)}(K_1\circ
T_{\tau_1}-K_2)(\th)\big)=0.
\end{equation*}
\end{lemma}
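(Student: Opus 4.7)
The plan is to realize $\tau_1$ as a zero of the finite-dimensional map
$$G : \RR^l \to \RR^l, \qquad G(\tau) = \avg\bigl(N_2\, DK_2^\top\, \Pi^c_{K_2(\cdot)}\, (K_1\circ T_\tau - K_2)\bigr),$$
which is well defined because $K_1, K_2 \in \A_{\rho,\underline c,\Gamma}$, $\Pi^c_{K_2}$ and $DK_2$ have decay, while $N_2 = (DK_2^\top DK_2)^{-1}$ is a bounded $l\times l$ matrix-valued function. The strategy is to show that $G(0)$ is controlled by $\|K_1-K_2\|_{\rho,\underline c,\Gamma}$, that $DG(0)$ is close to $\mathrm{Id}_l$, and then to invoke a quantitative implicit function theorem (equivalently, a contraction argument on a small ball) to produce $\tau_1$ with the required size bound.

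First I would estimate $G(0)$ directly from the boundedness of $\|N_2\|_\rho$, $\|DK_2\|_{\rho,\underline c,\Gamma}$ and $\|\Pi^c_{K_2}\|_{\rho,\Gamma}$, which are among the non-degeneracy data for $K_2$, to obtain $|G(0)| \le C_0 \|K_1-K_2\|_{\rho,\underline c,\Gamma}$. Next I would differentiate under the average:
$$DG(\tau) = \avg\bigl(N_2\, DK_2^\top\, \Pi^c_{K_2(\cdot)}\, DK_1(\cdot+\tau)\bigr).$$
At $\tau = 0$, writing $DK_1 = DK_2 + D(K_1-K_2)$, the key geometric fact is that $\mathrm{Range}(DK_2(\th)) \subset \E^c_{K_2(\th)}$. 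This follows by differentiating the invariance equation $F_0 \circ K_2 = K_2 \circ T_\omega$, which gives $DF_0(K_2(\th))\, DK_2(\th) = DK_2(\th+\omega)$; hence the co-cycle acts neutrally on the tangent directions to the torus, forcing them into the center bundle by the spectral characterization of the splitting. Therefore $\Pi^c_{K_2}\, DK_2 = DK_2$, and using the very definition of $N_2$,
$$DG(0) = \avg\bigl(N_2\, DK_2^\top\, DK_2\bigr) + O\bigl(\|K_1-K_2\|_{\rho,\underline c,\Gamma}\bigr) = \mathrm{Id}_l + O\bigl(\|K_1-K_2\|_{\rho,\underline c,\Gamma}\bigr).$$
So $DG(0)$ is invertible with $|DG(0)^{-1}|\le 2$ once $\|K_1-K_2\|_{\rho,\underline c,\Gamma}$ is small.

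Finally, I would apply the contraction mapping to the Newton-like map $T(\tau) = \tau - DG(0)^{-1}G(\tau)$ on the closed ball of radius $2|DG(0)^{-1}|\cdot|G(0)|$ around the origin, bounding the Lipschitz constant of $DG$ via Cauchy estimates on $DK_1$ in a slightly smaller strip. Choosing the constant $C$ in the hypothesis $C\|K_1-K_2\|_{\rho,\underline c,\Gamma}\le 1$ large enough (depending on $\rho$, $\|N_2\|_\rho$, $\|DK_2\|_{\rho,\underline c,\Gamma}$, $\|\Pi^c_{K_2}\|_{\rho,\Gamma}$ and $\|F\|_{C^2_\Gamma}$) ensures both that $T$ contracts and that the resulting fixed point $\tau_1$ satisfies the quantitative bound $|\tau_1| < \|K_1-K_2\|_{\rho,\underline c,\Gamma}$. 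The main conceptual point is the identity $\Pi^c_{K_2}DK_2 = DK_2$ combined with the normalization $N_2 DK_2^\top DK_2 = \mathrm{Id}_l$, which together are exactly what make $DG(0)$ the identity up to the perturbation; everything else is routine quantitative implicit function theorem. Note that the constants remain independent of $\underline c$ because all inputs to $G$ are bounded in the decay norms whose constants in the hypotheses are already independent of $\underline c$.
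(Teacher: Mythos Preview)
Your approach is exactly what the paper intends: it does not spell out a proof but simply cites Lemma~14 of \cite{LGJV05} and remarks that ``the proof is based on the application of the Banach fixed point theorem in $\mathbb{R}^l$.'' Your map $G$, the key identity $N_2\,DK_2^\top\,\Pi^c_{K_2}\,DK_2=\Id_l$ (which follows, as you say, from $\mathrm{Range}\,DK_2(\th)\subset\E^c_{K_2(\th)}$ via the spectral characterization of the splitting), and the contraction on $\tau\mapsto\tau-DG(0)^{-1}G(\tau)$ are precisely the ingredients of that argument.

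One small correction: your final sentence claims that taking $C$ large in the hypothesis $C\|K_1-K_2\|_{\rho,\underline c,\Gamma}\le 1$ forces the sharp bound $|\tau_1|<\|K_1-K_2\|_{\rho,\underline c,\Gamma}$. It does not. The fixed-point argument yields $|\tau_1|\le C'\,\|K_1-K_2\|_{\rho,\underline c,\Gamma}$ with $C'$ depending on $\|N_2\|_\rho$, $\|DK_2\|_{\rho,\underline c,\Gamma}$, $\|\Pi^c_{K_2}\|_{\rho,\Gamma}$; shrinking $\|K_1-K_2\|$ does not improve the ratio $|\tau_1|/\|K_1-K_2\|$. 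The lemma as stated (and as quoted from \cite{LGJV05}) is slightly informal on this point; what is actually needed in the uniqueness iteration is only $|\tau_1|\le C'\|K_1-K_2\|$, and the extra constant is harmless for the superexponential convergence. Everything else in your outline is correct.
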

The proof is based on the application of the Banach fixed point theorem in $\mathbb{R}^l$.

As a
consequence of Lemma \ref{lem:tau}, if $\tau_1$ is as in the statement, then $K_1 \circ
T_{\tau_1}$ is a solution of \eqref{invarmap1} such that if
\begin{equation*}
W=(\tilde{M}(\th+\omega)^\top J^c(K_2)\tilde{M}(\th+\omega))^{-1}\tilde{M}(\th+\omega)^\top \Pi^c_{K_2(\th)}(K_1\circ
T_{\tau_1}-K_2),
\end{equation*}
for all $\delta \in (0,\rho/2)$ 
we have the estimate
\begin{equation*}
\|W\|_{\rho-2\delta} <C \kappa^2 \delta^{-2\nu}
\|\mathcal{R}\|_{\rho,\underline{c},\Gamma} \leq C \kappa^2
\delta^{-2\nu} \|K_1-K_2\|^2_{\rho,\underline{c},\Gamma}.
\end{equation*}
This leads to on the center subspace
\begin{equation*}
\|\Pi^c_{K(\th)}(K_1\circ
T_{\tau_1}-K_2)\|_{\rho-2\delta,\underline{c},\Gamma} \leq C
\kappa^2 \delta^{-2\nu} \|K_1-K_2\|^2_{\rho,\underline{c},\Gamma}.
\end{equation*}
Furthermore, we can show that
$\Delta^h=\Pi^h_{K(\th)} (K_1\circ T_{\tau_1}-K_2)$ satisfies the estimate
\begin{equation*}
\|\Delta^h\|_{\rho-2\delta,\underline{c},\Gamma} <C \|\mathcal{R}\|_{\rho,\underline{c},\Gamma}.
\end{equation*}
All in all, we have proven the estimate for $K_1 \circ T_{\tau_1}
-K_2$ (up to a change in the original constants)
\begin{equation*}
\|K_1\circ T_{\tau_1}-K_2\|_{\rho-2\delta,\underline{c},\Gamma} \leq C \kappa^2
\delta^{-2\nu} \|K_1-K_2\|^2_{\rho,\underline{c},\Gamma}.
\end{equation*}
We are now in position to perform the same scheme used in Section \ref{convergence}. We can take a sequence $\left \{ \tau_m \right
\}_{m \geq 1}$ such that $|\tau_1| \le \|K_1  -K_2 \|_{\rho,\underline{c},\Gamma}$ and
\begin{equation*}
|\tau_m -\tau_{m-1}| \leq \|K_1 \circ T_{\tau_{m-1}} -K_2 \|_{\rho_{m-1},\underline{c},\Gamma} , \qquad m\ge 2,
\end{equation*}
and
\begin{equation*}
\|K_1 \circ T_{\tau_{m}} -K_2 \|_{\rho_{m},\underline{c},\Gamma} \leq C \kappa^2
\delta_m^{-2\nu} \|K_1 \circ T_{\tau_{m-1}} -K_2 \|^2_{\rho_{m-1},\underline{c},\Gamma},
\end{equation*}
where $\delta_1=\rho/4$, $\delta_{m+1}=\delta_m/2$ for $m\ge 1$ and
$\rho_0=\rho $, $\rho_m=\rho_0-\sum_{k=1}^m \delta_k$ for $m\ge 1$.
By an induction argument we end up with
\begin{equation*}
\|K_1 \circ T_{\tau_{m}} -K_2 \|_{\rho_{m},\underline{c},\Gamma} \leq
(C\kappa^2\delta^{-2\nu}_1 2^{6\nu} \|K_1-K_2\|_{\rho_0,\underline{c},\Gamma})^{2^m}
2^{-2\nu(3+m)}.
\end{equation*}
Therefore, under the smallness assumptions on $\|K_1-K_2\|_{\rho_0,\underline{c},\Gamma}$,
the sequence $\left \{\tau_m \right \}_{m \geq 1}$ converges and one gets
\begin{equation*}
\|K_1 \circ T_{\tau_\infty} -K_2\|_{\rho /2,\underline{c},\Gamma}=0.
\end{equation*}
Since both $K_1 \circ T_{\tau_\infty} $ and $ K_2$ are analytic in $D_\rho$ and coincide in
$D_{\rho/2}$ we obtain
the result.

\section{Results for flows on lattices}\label{secflows}

In this section, we study vector-fields defined on lattices. The
models we consider -- which have appeared naturally in solid state
physics and in biophysics, see \cite{ChazottesF05} for a review --
consist of a sequence of  copies  of an individual system arranged
in a lattice coupled with their neighbors.

The  basic result  we will prove is
Theorem~\ref{existenceHamillattice}.
The proof is based on applying the result for maps to the time-one
map of the flow. This requires to study the decay properties of
flows generated by Hamiltonian systems with decay, which may have
some independent interest. Again, it is important to emphasize that
the results we prove have an {\sl a posteriori} format, showing that
close to approximate solutions  -- which satisfy some hyperbolicity
and twist conditions -- there is a true solution. We will not need
that the system is close to integrable.

\subsection{Integrating decay vector fields} 
\label{sec:vectorfields}
In this section, we study properties of
 vector-fields with decay. We will show in Proposition \ref{decayODE}
that the flows they generate are families of diffeomorphisms with
decay. As we will see, this is a consequence
of the composition properties of spaces of functions with decay,
which in turn is a property of the Banach algebra properties under
multiplication. This, together with some more delicate study of the
non-degeneracy conditions, will allow us to apply the existence
Theorem~\ref{existenceHamillattice} in the next section
 to the time-one map of the flow for a model problem given by a
 vector-field $X$.

We consider the equation on $\M =\ell^\infty(\ZZ^N)$
\begin{equation}\label{hamilLattice}
\partial_{\omega} K(\th)=X \circ K(\th),
\end{equation}
where $X$ is a vector-field on $\M$ and $K$ maps $\T^l$ into $\M$.

We note that we can deal with systems represented by formal
Hamiltonians which are given by formal sums which do not need to
converge and hence do not define a function, but however their
partial derivatives and therefore the differential equations they determine are
well-defined. Therefore, the invariance equations make sense. This
is one of the reasons why the present method, based on the study of
the invariance equation has advantages over the more classical
methods \cite{Zehnder75b} based on transformations of the
Hamiltonian function.

We prove that decay vector fields generate 
flows $\{ S_t\}_{t \in \real}$ such that all $S_t$ 
are decay diffeomorphisms.  

\begin{pro}\label{decayODE}
Let $X$ be a $C^r$ vector-field, $r\ge 1,$ on an open set $\B
\subset \M$ (recall that we consider $\M$ endowed 
with the $\ell^\infty$ topology) 
and consider the differential equation
\begin{equation}
x' = X(x).\label{odelat}
\end{equation}
Let $\B_1\subset\B$ be an open set 
such that $d(\B_1, \B^c)=\eta > 0$.

Then there exist $T>0$ such that
for all the initial conditions $ x_0 \in \B_1$ there is a unique solution $x_t$ of 
the Cauchy problem corresponding to \eqref{odelat} defined for 
$|t| < T$. 
We denote by $S_t(x_0) = x_t$. Note that, by the 
uniqueness result, we have $S_{t+s} = S_t \circ S_{s}$ when 
all the maps  are defined and the composition makes sense. Moreover

\begin{enumerate}
\item  For all $t\in(-T, T),\ S_t :\B_1\to \B$ is a diffeomorphism
onto its image.
\item If $X\in C^r_\Gamma(\B)$ then $S_t \in C^r_\Gamma(\B_1)$ for
all $t\in(-T, T)$. Moreover, there exist $C, \mu > 0$ such that
$$\| D S_t(x)\|_\Gamma \leq C e^{\mu t},\qquad x\in\B_1,\ \
t\in(-T, T).$$
\end{enumerate}

Note also that, when $\B = \M$ and $DX$ is bounded, we have $T = \infty$.
\end{pro}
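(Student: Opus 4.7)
The plan is to establish local existence and uniqueness via the standard Picard iteration in the Banach space $\ell^\infty$, and then to show that the decay structure is preserved by analyzing the variational equation, exploiting the Banach algebra property of $\L_\Gamma$.

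First, I would obtain local existence and uniqueness. Since $X \in C^r(\B)$ with $r\ge 1$, the Lipschitz constant $L = \sup_{x \in \B}\|DX(x)\|$ is finite (bounded by $\|DX\|_{C^0_\Gamma(\B)} \cdot \Gamma(0)^{-1}$, using that a linear map in $\L_\Gamma(\ell^\infty)$ is continuous with operator norm bounded by its $\Gamma$-norm times a constant depending only on $\Gamma$). The Picard--Lindel\"of theorem in Banach spaces then produces a unique solution to the initial value problem for $x_0 \in \B_1$, provided the solution remains in $\B$. Setting $T := \min\bigl(\eta/M, 1/(2L)\bigr)$ with $M = \sup_{\B}\|X\|$, the estimate $\|S_t(x_0)-x_0\| \le |t|\,M < \eta$ for $|t|<T$ guarantees $S_t(x_0)\in \B$. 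The semigroup law $S_{t+s} = S_t\circ S_s$ and the fact that $S_t$ is a diffeomorphism onto its image with inverse $S_{-t}$ follow from uniqueness.

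The central step, and the main obstacle, is showing $DS_t(x) \in \L_\Gamma$ with the asserted exponential bound. Differentiating the integral form $S_t(x) = x + \int_0^t X(S_s(x))\,ds$ with respect to $x$, one obtains the variational equation
\begin{equation*}
DS_t(x) = \Id + \int_0^t DX(S_s(x))\,DS_s(x)\,ds\,.
\end{equation*}
Since $X\in C^r_\Gamma(\B)$, the matrix-valued function $s\mapsto DX(S_s(x))$ takes values in $\L_\Gamma$ with $\|DX(S_s(x))\|_\Gamma \le \|X\|_{C^1_\Gamma(\B)}$. The Banach algebra property of $\L_\Gamma$ --- which is a direct consequence of condition (2) in Definition~\ref{defDecay}, via
\begin{equation*}
|(AB)_{ik}| \le \sum_j \|A\|_\Gamma \|B\|_\Gamma\,\Gamma(i-j)\Gamma(j-k) \le \|A\|_\Gamma \|B\|_\Gamma \Gamma(i-k)
\end{equation*}
--- ensures $\|DX\circ S_s \cdot DS_s\|_\Gamma \le \|DX\|_{C^0_\Gamma}\,\|DS_s\|_\Gamma$. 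The variational equation can therefore be viewed as an ODE in the Banach space $\L_\Gamma$, and another Picard iteration (this time in $\L_\Gamma$) produces a unique $\L_\Gamma$-valued solution. Noting $\|\Id\|_\Gamma = \Gamma(0)^{-1}$, Gronwall's inequality applied in $\|\cdot\|_\Gamma$ yields
\begin{equation*}
\|DS_t(x)\|_\Gamma \le \Gamma(0)^{-1}\, e^{\mu |t|}\,,\qquad \mu = \|DX\|_{C^0_\Gamma(\B)}\,,
\end{equation*}
which is the desired bound. Uniqueness of the variational equation in $\L_\Gamma$, combined with uniqueness of the classical derivative, identifies the two, confirming $DS_t(x)\in\L_\Gamma$.

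For the higher regularity case $r>1$, I would proceed inductively, differentiating the integral equation $r$ times and observing that each higher variational equation has the same structure: its right-hand side is a polynomial in previously-controlled $\L_\Gamma$-valued quantities, so the Banach algebra property propagates decay at every order. Finally, in the global case $\B = \M$ with $\|DX\|$ bounded, the Lipschitz constant of $X$ is uniform, no blow-up is possible, and the local solutions extend to all $t\in\RR$ by the standard continuation argument, so $T=\infty$. The main conceptual obstacle throughout is the delicate functional analysis in $\ell^\infty$ discussed earlier; it is avoided here precisely because the decay structure reduces the estimates to the algebraic Banach algebra inequality, mimicking the finite-dimensional variational argument without invoking properties of the dual of $\ell^\infty$.
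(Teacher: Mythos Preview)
Your proposal is correct and follows essentially the same route as the paper: invoke standard Banach-space ODE theory for existence, uniqueness, and the flow property, then solve the first (and higher) variational equations inside the Banach algebra $\L_\Gamma$ using its multiplicative closure. The only cosmetic difference is that the paper writes out the Picard iterates $\Phi^k_t$ for the variational equation and bounds the telescoping differences explicitly to get the exponential estimate, whereas you appeal to Gronwall in $\|\cdot\|_\Gamma$; the content is the same.
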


\noindent {\bf Remark.} When $M$ is the complexified manifold the
derivatives can  be considered as complex derivatives. Therefore
in such  a case $S_t$ is analytic according to Definition~\ref{analyt}.

Moreover if $0$ is an equilibrium point of $X, S_t(0)=0$ and hence
if $\psi \in \A_{\rho,\underline{c},\Gamma}$, by Lemma \ref{compo2},
$S_t \circ \psi \in \A_{\rho,\underline{c},\Gamma}$.

\begin{proof}
The first claim (1) follows from the standard proof of existence, uniqueness and
regularity of solutions of ordinary differential equations in Banach
spaces \cite{Hale80}.
Let $m=\| X\|_{C^1}$ and $T<\eta/m$. Then we have
that $S_t(x)$ is $C^r$ with respect to $(t,x)\in (- T, T)\times
\B_1$. The uniqueness implies the flow property
$$S_{t+s}(x)= S_t(S_s(x))$$
and this property implies that $S^{-1}_t = S_{-t}$.

We note that the standard theory of existence of solutions, also 
gives that, when $\B = \M$, we have $T = \infty$ (recall 
that our definition of $C^r$ implies that the derivatives are 
bounded, so that $X$ is globally Lipschitz). 

(2) By the general theory we also have that $DS_t$ satisfies the
first order variational equation
$$(DS_t(x))' = DX(S_t(x)) DS_t(x),\qquad DS_0(x)=\Id,$$
and by the theory of linear systems we know that $DS_t(x)$ is the
limit of the sequence given by
\begin{equation}\label{phio}
\Phi^0_t(x) =\Id,
\end{equation}
\begin{equation}\label{phik}
\Phi^k_t(x) =\Id  + \int^t_0 DX(S_s(x))\Phi^{k-1}_s(x)\, ds,\qquad
k\ge 1
\end{equation}
for $t\in(-T, T), x\in \B_1$. To get that $DS_t(x)\in \L_\Gamma$ we
make estimates of the $\Gamma$-norm of $\Phi^k_t$.

Using \eqref{phio} and \eqref{phik} we obtain by induction that
\begin{equation*}
\begin{split}
(1,k)&\ \Phi^k_t(x)\in \L_\Gamma,\quad\forall x\in \B_1,\ \
\forall
t\in(-T, T),\\
(2,k)&\ \sup_{x\in\B_1} \big\| \Phi^{k+1}_t (x) -
\Phi^k_t(x)\big\|_\Gamma \le \frac{1}{(k+1)!}\big(\|
X\|_{C^1_\Gamma} \vert t\vert\big)^{k+1},  \forall t \in (-T, T),
\end{split}
\end{equation*}
for $k\ge 0$.

In particular, we note that if $\Phi_t^k(x)$ is a linear operator 
represented by its matrix, then, so is $\Phi_t^{k+1}(x)$. 

Writing
$$\Phi^k_t(x) = \Id + \sum^k_{j=1}\big(\Phi^j_t(x) - \Phi^{j-1}_t(x)\big),$$
by $(1,k), (2,k)$ we easily obtain that $\Phi^k_t(x)$ converges in
$\L_\Gamma$. Hence $DS_t(x)\in \L_\Gamma$ and
$$\| DS_t(x)\|_\Gamma \le \Gamma^{-1}(0) + \sum^\infty_{j=1}
 \frac{1}{j!}\big(\| X\|_{C^1_\Gamma}\vert t\vert\big)^j = \Gamma^{-1}(0) - 1 +
  \exp\big(\| X\|_{C^1_\Gamma}\vert t\vert\big).$$

The higher order derivatives of $S_t$ satisfy higher order variational
equations which are  also linear equations. By an analogous
argument we get that 
$D^kS_t(x) \in \L_\Gamma(\M, \L^{k-1}(\M,\M))$.
We present  the details for the case $k=2$
and leave to the reader the adaptation of the typography for 
larger $k$. We have:
\begin{equation*}
\begin{split}
(D^2S_t(x))^\prime &= DX(S_t(x)) D^2 S_t(x)\\
&\quad + D^2X(S_t(x))(DS_t(x),\
DS _t(x)), \qquad D^2S_0(x)=0.
\end{split}
\end{equation*}

Let
$$G_t(x) = \int^t_0 D^2X(S_s(x)) (DS_s(x),\, DS_s(x))\, ds.$$
By Lemma \ref{lemAB}, $G_t (x)\in \L^2_\Gamma$. We can write
$$D^2 S_t(x) = \int^t_0 DX(S_s(x)) D^2S_s(x)\, ds + G_t(x) .$$

The sequence given by
\begin{equation*}
\begin{split}
\Psi^0_t (x) &= 0\\
\Psi^k_t (x) &= \int^t_0 DX(S_s(x))  \Psi^{k-1}_s (x)\, ds + G_t(x)
\end{split}
\end{equation*}
converges to $D^2S_t(x)$. Similarly to the case $k=1$ one proves
by induction that $\Psi^k_t(x) \in \L^2_\Gamma$. Since
$\L^2_\Gamma$ is complete we obtain that $D^2S_t(x)\in
\L^2_\Gamma$.
\end{proof}

We also remark that, using the standard argument of 
adding extra equations \cite{Hale80}, we can obtain the 
smooth dependence on parameters.

\subsection{Invariant tori for flows} 
 The following result  is our main KAM theorem for
vector-fields on lattices. For the sake of simplicity, 
we have not formulated the most general result possible
but have rather stated the result for models that appear in the 
Physics literature.

In the following, for the sake of simplicity, we consider equations of the type 

\begin{equation}\label{glVF}
\dot u = J_\infty \nabla H(u)
\end{equation}
where 
\begin{itemize}
\item The operator $J_\infty$ is given by 
$$J_\infty (z)= {\rm diag} \big(\dots, J,\dots\big),$$

where $J$ is the standard symplectic form. 

\item The $\nabla$ operator is the standard operator induced by the $\ell^2(\Z^N)$ metric on the lattice. 

\end{itemize} 

The previous equation \eqref{glVF} arise in the context of statistical physics as described in the introduction. If one desires to consider more general vector-fields $X$, we refer the reader to the paper \cite{FontichLS09} where algorithms and proofs are provided in this more general context.

We first describe the non-degeneracy conditions. The linearized equation 
\begin{equation*}
\frac{d\Delta}{dt}=J_\infty D\nabla H(K(\th+\omega t))\Delta
\end{equation*}
plays a crucial role. We denote $A(\th )\equiv J_\infty D\nabla H(K(\th))$ and we remark that since the vector field $J_\infty \nabla H$ has decay, by Proposition \ref{decayODE}, the vector field $J_\infty \nabla H$ generates an evolution operator, denoted $U_{\th}(t)$ with decay. We have
\begin{equation*}
\frac{d}{dt}U_{\th}(t)=A(\th+ \omega t)U_{\th}(t),
\end{equation*} 
and $U_{\th}(0)=\Id$.
We now have the following definitions. 
\newtheorem{condition}[thm]{Condition}

\begin{condition}\label{ND1VFNH}(Spectral non-degeneracy condition)
Given an embedding $K: D_\rho\supset \TT^l \rightarrow \M$ 
we say that $K$ is
hyperbolic
 non-degenerate if 
there is an analytic splitting 
\begin{equation*}
T_{K(\th)} \mathcal{M}=\mathcal{E}_{K(\th)}^s\oplus
\mathcal{E}_{K(\th)}^c  \oplus \mathcal{E}_{K(\th)}^u 
\end{equation*} 
invariant under the linearized equation \eqref{varNH} in the sense that 
\begin{equation*}
U_{\th}(t)\mathcal{E}^{s,c,u}_{K(\th)}=\mathcal{E}^{s,c,u}_{K(\th+\omega t)}. 
\end{equation*}
Moreover the center subspace $\mathcal{E}^c_{K(\th)}$ has dimension $2l$.
We denote $\Pi_{K(\th)}^s$, $\Pi_{K(\th)}^c$ and
$\Pi_{K(\th)}^u$ the projections associated to this splitting and we denote
\begin{eqnarray*}
U^{s,c,u}_{\th}(t)=U_{\th}(t)|_{\mathcal{E}^{s,c,u}_{K(\th)}}.
\end{eqnarray*}

Furthermore, we assume that 
there exist $\beta_1 ,\,\beta_2 ,\, \beta_3>0$ and $C_h>0$ independent of $\th$ satisfying $\beta_3<\beta_1  $, $\beta_3<\beta_2 $ and such that the splitting is characterized by the following rate conditions:
\begin{eqnarray}
\|U^s_{\th}(t)U^s_{\th}(\tau)^{-1}\|_{\rho,\underline{c} \Gamma} &\leq &C_h e^{-\beta_1 (t-\tau)}, \qquad t \geq \tau,\nonumber \\
\|U^u_{\th}(t)U^u_{\th}(\tau)^{-1} \|_{\rho,\underline{c},\Gamma} &\leq& C_h e^{\beta_2 (t-\tau)},  \qquad  t\leq \tau ,\label{cotes-scu}\\
\|U^c_{\th}(t)U^c_{\th}(\tau)^{-1}\|_{\rho,\underline{c},\Gamma} &\leq &C_h e^{\beta_3 |t-\tau|},  \qquad t,\tau \in \mathbb{R}\nonumber . 
\end{eqnarray}
\end{condition}
\begin{condition}\label{ND2VFNH} Let 
$N(\th)=[DK(\th)^\top DK(\th)]^{-1}$
  and $P(\th)=DK(\th)N(\th)$. The
  average on $\torus^l$ of the matrix 
\begin{equation*}
S(\th)=N(\th)DK(\th)^\top [A(\th)J_\infty-J_\infty A(\th)]DK(\th)N(\th).
\end{equation*}
is non-singular. Here $A(\th)=J_\infty D\nabla H(K(\th))$. 
\end{condition}

We now state our theorem 

\begin{thm}\label{existenceHamillattice}
Let $H$ be a formal Hamiltonian function on $T ^*\mathcal \mathcal M$ such that the associated vector-field $X=J_\infty \nabla H$ is a $C^2_\Gamma$, analytic vector-field in
$\M$.  For some decay function $\Gamma$, let $\omega\in D_h (\kappa,
\nu)$ for some $\kappa > 0,\ \nu\ge l-1,\ \rho_0>0$ and $\underline
c=(c_1,\dots ,\,c_R)\in (\Z^N)^R$. Denote $S_t$ the flow associated
to $X$.

Consider the equation
\begin{equation}\label{flots}
\sum_{i=1}^l \omega_i \frac{\partial K}{\partial \th_i} (\th)=(X
\circ K)(\th).
\end{equation}
Assume
\begin{enumerate}
\item $X$ extends analytically to a complex neighborhood $\,\U$ of
$K_0 (D_{\rho_0})$:
\begin{equation*}
B_r=\big \{ z \in \M | \exists\, \th \in \T^l\,\,|\,\, 
|\Im\, \th| < \rho_0 ,  \ |z-K_0(\th)| <r \big
\},
\end{equation*}
for some $r>0$. 
 \item
There exists $K_0\in\A_{\rho_0,\underline c,\Gamma}$ such that
$K_0 \in ND_{loc}(\rho_0,\Gamma)$ (the embedding $K_0$ is
  non-degenerate) in the sense that it satisfies non-degeneracy conditions \ref{ND1VFNH} and \ref{ND2VFNH}.

 \item There exists a constant $C>0$ depending on
$l$, $\kappa$, $\nu$, $\rho_0$, $\|H\|_{C^3(B_r)_\Gamma}$,
$\|DK_0\|_{\rho_0,\underline{c},\Gamma}$, $\|N_0\|_{\rho_0}$,
$\|S_{0}\|_{\rho_0}$, $|$\avg$(S_{0})|^{-1}$ (where $S_{0}$ and
$N_0$ are as in definitions \ref{ND1VFNH}-\ref{ND2VFNH} replacing $K$ by $K_0$) and
$\|\Pi^{c,s,u}_{K_0(\th)}\|_{\rho_0,\Gamma}$ such that
$E_0=J_\infty \nabla H (K_0)-\partial_\omega K_0 $ satisfies the
following estimates
\begin{equation*}
C\kappa^4 \delta^{-4\nu} \|E_0\|_{\rho_0,\underline{c},\Gamma} <1
\end{equation*}
and
\begin{equation*}
C\kappa^2 \delta^{-2\nu} \|E_0\|_{\rho_0,\underline{c},\Gamma} <r,
\end{equation*}
where $0 < \delta <\min(1,\rho_0/12)$ is fixed.
\end{enumerate}
Then there exists an analytic embedding
$K\in\A_{\rho-6\delta,\underline c,\Gamma}$ such that $K\in
ND_{loc}(\rho-6\delta,\Gamma)$ and satisfies equation \eqref{flots}
for all $t\in \R$.
\end{thm}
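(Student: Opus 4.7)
The strategy is to reduce to the map version, Theorem~\ref{existenceembedloc}, by considering the time-one map $F:=S_1$ of the flow generated by $X=J_\infty\nabla H$. Proposition~\ref{decayODE} already guarantees that $F$ is an analytic $C^2_\Gamma$ diffeomorphism of $\M$, with $\|DF\|_{C^2_\Gamma}$ controlled by $\|H\|_{C^3_\Gamma(B_r)}$. Because $X$ is formally Hamiltonian with respect to $\Omega_\infty=d\alpha_\infty$, the one-parameter group $\{S_t\}$ preserves $\alpha_\infty$ up to the exact form $d\bigl(\int_0^1 (\iota_X \alpha_\infty - H)\circ S_s\,ds\bigr)$; this makes $F$ exact symplectic in the sense of Definition~\ref{exSymploc}, which is exactly the input required to apply the vanishing lemma and hence Theorem~\ref{existenceembedloc}.

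The invariance equations are linked as follows. If $K$ solves $X\circ K=\partial_\omega K$, then $t\mapsto S_t\circ K$ and $t\mapsto K\circ T_{\omega t}$ both solve the same ODE with initial datum $K$, so they agree; specializing at $t=1$ gives $F\circ K=K\circ T_\omega$. To quantify the approximate version, define $Y(\th,t):=S_t(K_0(\th))-K_0(\th+\omega t)$ and differentiate in $t$ to obtain $\partial_t Y = E_0(\th+\omega t) + \bigl[X(K_0(\th+\omega t)+Y)-X(K_0(\th+\omega t))\bigr]$, where $E_0=X\circ K_0-\partial_\omega K_0$. Gr\"onwall in $\mathcal A_{\rho_0,\underline c,\Gamma}$ then yields $\|F\circ K_0-K_0\circ T_\omega\|_{\rho_0,\underline c,\Gamma}\le C\|E_0\|_{\rho_0,\underline c,\Gamma}$ with $C$ depending only on $\|X\|_{C^1_\Gamma}$. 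Next, the spectral hypothesis~\ref{ND1VFNH} for $X$ transfers to Definition~\ref{NDloc-hyp} for $F$ by setting $\mu_1=e^{-\beta_1}$, $\mu_2=e^{-\beta_2}$, $\mu_3=e^{\beta_3}$, so that $\beta_3<\min(\beta_1,\beta_2)$ becomes $\mu_1\mu_3<1$ and $\mu_2\mu_3<1$; the same splitting $\E^{s,c,u}_{K_0(\th)}$ serves for both objects since $S_1$ and $X$ commute on each invariant subspace. The twist condition~\ref{ND2VFNH} will be shown to imply the twist condition of Definition~\ref{NDloc}, and the parametric non-degeneracy will be handled by embedding $F$ in the parametric family $F_\lambda$ of Appendix~\ref{maps} (with $\lambda_0=0$), after which the vanishing lemma in Section~\ref{secvanishing} forces $\lambda_\infty=0$.

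Having produced $K_\infty\in\A_{\rho_\infty,\underline c,\Gamma}$ with $F\circ K_\infty=K_\infty\circ T_\omega$, I recover the flow equation by a uniqueness argument. Since $F=S_1$ commutes with every $S_t$, the family $K_t:=S_t\circ K_\infty$ satisfies $F\circ K_t = K_t\circ T_\omega$ for every $t$. By the local uniqueness Theorem~\ref{uniquenessLattice}, there exists a continuous phase $\tau(t)\in\R^l$ with $\tau(0)=0$ and $K_t=K_\infty\circ T_{\tau(t)}$; the group property $S_{t+s}=S_t\circ S_s$ and the uniqueness of the phase give $\tau(t+s)=\tau(t)+\tau(s)$, hence $\tau(t)=ct$ for some $c\in\R^l$. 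Continuity together with $K_1=K_\infty\circ T_\omega$ forces $c=\omega$. Differentiating $S_t\circ K_\infty=K_\infty\circ T_{\omega t}$ at $t=0$ yields $X\circ K_\infty=\partial_\omega K_\infty$, which is~\eqref{flots}.

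The main obstacle is the translation of the twist/non-degeneracy data between flow and map. Hyperbolicity passes trivially through the exponential relation between rates, and the cohomology estimates only use a decay Diophantine condition whose translation is inherited from $\omega\in D_h(\kappa,\nu)$ applied to the small divisors arising in the Newton step (which are genuinely of the form $i\omega\cdot k$ at the infinitesimal level in the center directions, even though the map picture suggests $e^{2\pi i\omega\cdot k}-1$; one passes between them by expanding the cohomology operator for $F=S_1$ about $t=1$). The delicate point is verifying that $\avg(S)$ nonsingular implies $\avg(A_0)$ nonsingular. Writing the time-one map representation of $DF$ restricted to $\E^c$ in the basis~\eqref{definicioMtilde} and using that the isotropy of $K(\T^l)$ (proved exactly as in Lemma~\ref{lagExact}, since the pull-back $K^*\Omega_\infty$ is still a genuine two-form by Lemma~\ref{sympDecay}) kills the boundary terms, one obtains $A_0(\th)=\int_0^1 S(\th+\omega s)\,ds + R(\th)$ with $R$ having zero average. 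Hence $\avg(A_0)=\avg(S)$, and the twist condition carries over; this is the calculation that requires the most care, but is essentially finite-dimensional on the $2l$-dimensional center subspace.
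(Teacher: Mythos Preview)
Your strategy is not the one the paper actually carries out. Despite the sentence in the opening of Section~\ref{secflows} about ``applying the result for maps to the time-one map,'' the proof written after the statement of Theorem~\ref{existenceHamillattice} runs the Newton scheme \emph{directly} on $\partial_\omega K = X\circ K$: the linearized equation $\partial_\omega\Delta-A(\th)\Delta=-E$ is projected onto the splitting of Condition~\ref{ND1VFNH}, the hyperbolic components are solved by integrating against the variational co-cycle $U_\th(t)$, and on the $2l$-dimensional center bundle one uses the flow R\"ussmann estimate (Proposition~\ref{russVF}, with small divisors $\omega\cdot k$) together with the reducibility Lemma~\ref{represVFNH}. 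Theorem~\ref{existenceembedloc} is never invoked.

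More importantly, your reduction has a genuine gap at the Diophantine hypothesis. The statement assumes $\omega\in D_h(\kappa,\nu)$, i.e.\ $|\omega\cdot k|^{-1}\le\kappa|k|^\nu$ for $k\ne0$, whereas Theorem~\ref{existenceembedloc} requires $\omega\in D(\kappa',\nu')$, i.e.\ $|\omega\cdot k-n|^{-1}\le\kappa'|k|^{\nu'}$ for $k\ne0$, $n\in\Z$. The first does \emph{not} imply the second: take $\omega=(1,\sqrt2)$, which lies in $D_h$ since $\sqrt2$ is badly approximable, yet $\omega\cdot(1,0)-1=0$, so $\omega\notin D$ for any constants and $e^{2\pi i\omega\cdot(1,0)}-1=0$. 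The difference equation $v\circ T_\omega-v=h$ arising in the Newton step for $F=S_1$ is then obstructed at every mode $k$ with $\omega\cdot k\in\Z\setminus\{0\}$, and your phrase ``one passes between them by expanding the cohomology operator for $F=S_1$ about $t=1$'' has no content---there is nothing to expand when the divisor vanishes identically. A time-one-map route can be made to work, but only by passing to a Poincar\'e section transverse to the flow (dropping the torus dimension to $l-1$, after which the induced frequency \emph{does} lie in $D$) or by re-opening the map proof to argue that the resonant modes carry no obstruction because the error originates from a flow; neither is a black-box call to Theorem~\ref{existenceembedloc}. The paper's direct vector-field proof sidesteps the issue entirely by never leaving the $D_h$ world. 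As a secondary point, your claim $\avg(A_0)=\avg(S)$ via $A_0(\th)=\int_0^1 S(\th+\omega s)\,ds+R(\th)$ with $\avg R=0$ is the right statement, but it needs the variation-of-constants representation of $DS_1|_{\E^c}$ in the frame $\tilde M$, not merely ``isotropy kills the boundary terms.''
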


\begin{proof}

We will only sketch the proof and refer the reader to \cite{FontichLS09} where the complete proofs are provided in the case of finite dimensions. In the present framework, the Banach algebra properties of our spaces make the proofs in the infinite dimensional case very similar to the the ones in the finite dimensional context. 

We consider the following linearized equation: 

\begin{equation}\label{varNH}
\frac{d\Delta}{dt}-A(\th+ \omega t)\Delta=-E(\th+\omega t). 
\end{equation}

We first project equation \eqref{varNH} on the center subspace and on the hyperbolic subspaces. On the center subspace, one has 
\begin{equation}\label{centerVFNH}
\partial_{\omega} \Delta^c(\th)-A(\th)\Delta^c(\th)=-E^c(\th).
\end{equation}
Using the following proposition (\cite{Russmann76}, \cite{Russmann76a},
\cite{Russmann75}, \cite{Llave01c}), one can prove the following reducibility property in Lemma \ref{represVFNH}. 
\begin{pro}\label{russVF}
Assume that $\omega\in D_h(\kappa,\nu)$ with 
$\kappa>0$ and $\nu \ge l-1$, i.e.
\begin{equation*}
|\omega \, \cdot \,k|^{-1} \leq \kappa |k|^{\nu},\qquad
\mbox{for all $k\in\integer^l \setminus \{ 0\}$}.
\end{equation*}
Let $h:D_\rho \supset\TT^l \rightarrow \M$ be a real analytic function with zero
average. Then, for any  $0<\delta <\rho$ there exists a unique analytic solution 
$v:D_{\rho-\delta} \supset\TT^l \rightarrow \M$ 
of the linear equation 
\begin{equation*}
\sum_{j=1}^l \omega_j \frac{\partial v}{ \partial \th_j}=h
\end{equation*} 
having zero
average.
Moreover, if $h\in \mathcal{A}_{\rho,\underline c, \Gamma}$ then $v$ satisfies the
following estimate 
\begin{equation*}
\|v\|_{\rho-\delta,\underline c, \Gamma} \leq C \kappa\delta^{-\nu} \|h\|_{\rho,\underline c, \Gamma}, \qquad 0<\delta <\rho.
\end{equation*}
The constant $C$ depends on $\nu$ and the dimension of the torus $l$ but is independent of $\underline c$.  
\end{pro}

\begin{lemma}\label{represVFNH}
Assume $\omega\in D_h(\kappa,\nu)$ with $\kappa>0$ and $\nu\ge l-1$
and $\|E\|_{\rho,\underline c, \Gamma}$ is small enough. Then there exist a matrix
$B(\th)$ and vectors $p_1$ and $p_2$ such that equation 
\begin{equation}\label{tempcVFNH}
[\partial_{\omega}\tilde{M}(\th)-A(\th)\tilde{M}(\th)]\xi(\th)+\tilde{M}(\th)\partial_{\omega}
\xi(\th)=-E^c(\th),
\end{equation}

 can be written as
\begin{align}\label{eqSDVFNH}
\Big[ \begin{pmatrix} 0_l & S(\th)\\ 0_l & 0_l
\end{pmatrix}   &+  B(\th) \Big]  \xi(\th)+\partial_{\omega}\xi(\th)\nonumber = 
p_1(\th)+p_2(\th). 
\end{align}
Moreover, the following estimates hold:
\begin{equation}\label{estimp1VFNH}
\|p_1\|_{\rho,\underline c,\Gamma} \leq C  \|E\|_{\rho,\underline c,\Gamma},
\end{equation}
where $C$ just depends on $\|J_\infty(K)\|_{\rho,\Gamma}$, $\|N\|_{\rho}$, $\|DK\|_{\rho,\underline c,\Gamma}$ and
$\|\Pi^c_{K(\th)}\|_{\rho,\Gamma}$. For   $p_2$ and $B$  we have 
\begin{equation}\label{estimp2VFNH}
\|p_2\|_{\rho-2\delta,\underline c,\Gamma} \leq C \kappa \delta^{-(\nu+1)} \|E\|^2_{\rho,\underline c,\Gamma}
 \end{equation}
 and
\begin{equation}\label{estimbVFNH}
\|B\|_{\rho-2\delta} \leq C \kappa \delta^{-(\nu+1)} \|E\|_{\rho,\underline c, \Gamma}
 \end{equation}
for
$\delta \in (0,\rho/2)$, 
where $C$ depends on $l$, $\nu$, $\|N\|_{\rho}$, 
$\|DK\|_{\rho,\underline c, \Gamma}$, $|H|_{C^3(B_r)_\Gamma}$, $|J|_{C^1(B_r)}$ and
$\|\Pi^c_{K(\th)}\|_{\rho,\Gamma}$. 
\end{lemma}

The solution of the reduced equations works in the same way as in
the case of maps. We sketch the procedure and we emphasize on the differences.   

We write $\xi=(\xi_1,\xi_2)$. Consider the equation 
\begin{equation}\label{sdApproxNH}
\begin{pmatrix} 0_l & S(\th)\\ 0_l & 0_l
\end{pmatrix}\xi(\th)+\partial_{\omega} \xi (\th)=p_1(\th),
\end{equation}
where $p_1=(p_{11},p_{12})$.
Using this decomposition of $\E^c_{K(\th)}$ we can write equation \eqref{sdApproxNH}
in the form
\begin{eqnarray*}
S(\th)\xi_2(\th)+\partial_\omega \xi_1(\th)=p_{11}(\th),\\
\partial_\omega \xi_2(\th)=p_{12}(\th).
\end{eqnarray*}
where 
$$p_{12}(\th)=DK(\th)^\top J_\infty DE(\th). $$

In order to be able to solve this small divisor equations, one has to ensure that the average on $\T^l$ of $DK(\th)^\top J_\infty DE(\th)$ is zero. This is the main difference with the finite dimensional case and we perform now the computation. We use the fact that $J_\infty$ has a special structure. Indeed, we have 
$$(J_\infty)_{ij}=J \delta_{ij} $$
and then 

$$(DK^\top J_\infty DE)_{ij}=\sum_{k \in \Z^N} (DK^\top)_{ik}(J_\infty DE)_{kj}. $$     
But, we have 
$$(J_\infty DE)_{kj}= J (DE)_{kj}$$
and then 
$$(DK^\top J_\infty DE)_{ij}=\sum_{k \in \Z^N}(DK^\top)_{ik} J (DE)_{kj}.$$

Thereofore, the average on $\T^l$ of $DK^\top J_\infty DE^c$ amounts to compute the average on $\T^l$ of $(DK^\top)_{ik} J (DE)_{kj}$. 

\begin{remark}
Here we have use the fact 
$$E^c=\tilde M E +\hat e E$$
where $\hat e= \pi^c_{K(\th+\omega)}-\pi^\Gamma_{K(\th+\omega)}$ and the term $\hat e E$ being quadratic in the error, one can omit it. 
\end{remark} 

By the computations in \cite{LGJV05}, one proves that then the average of $(DK^\top)_{ik} J (DE)_{kj}$ is zero. Hence this gives the desired result.

We now project the linearized equation \eqref{varNH} on the stable and unstable subspaces
by using the projections $\Pi_{K(\th)}^s$ and $\Pi_{K(\th)}^u$ respectively.  
We denote 
$\Delta^s(\th)=\Pi_{K(\th)}^s \Delta(\th)$, $\Delta^u(\th)=\Pi_{K(\th)}^u \Delta(\th)$.

Using the previous notation, we obtain
\begin{equation}\label{VFstNH}
\partial_{\omega} \Delta^s(\th) -A(\th) \Delta^s(\th) 
= -\Pi_{K(\th)}^sE(\th)  
\end{equation}  
for the stable part and
\begin{equation}\label{VFinstNH}
\partial_{\omega} \Delta^u(\th) -A(\th) \Delta^u(\th) 
= -\Pi_{K(\th)}^uE(\th)   
\end{equation}  
for the unstable one. 

The following result provides the solution of the
previous equations.  
\begin{pro}
Given  $\rho>0$, equations \eqref{VFstNH} and 
\eqref{VFinstNH}  admit      unique analytic solutions
$\Delta^s:D_{\rho} \rightarrow \mathcal{E}^s$ and
$\Delta^u:D_{\rho} \rightarrow
\mathcal{E}^u $ respectively,
such that $\Delta^{s,u}(\th) \in  \mathcal{E}^{s,u}_{K(\th)}$. 
Furthermore there exist constants $C^{s,u}$
such that 
\begin{equation}\label{estimHyperbVFNH}
\|\Delta^{s,u}\|_{\rho,\underline c, \Gamma} \leq C^{s,u} \|E\|_{\rho,\underline c, \Gamma},
\end{equation} 
where  $C^{s,u}$ depend on $\beta_1$,  
$\|\Pi^s_{K(\th)}\|_{\rho,\Gamma}$ (resp.  $\beta_2$, $\|\Pi^u_{K(\th)}\|_{\rho, \Gamma}$)
and $C_h$ but is independent of $\underline c$. 
\end{pro}  

The  proof of Theorem \ref{existenceHamillattice} processes then as in the finite dimensional case. 

\end{proof}

\section{Proof of Theorem~\ref{thgl}}
\label{sec:thmthgl}

The goal of this section is to prove 
Theorem~\ref{thgl}.  We proceed in three stages: 
\begin{enumerate}
\item In the first stage, we construct 
quasiperiodic breathers around one site
indexed by a frequency $\omega \in \Xi(\ep^*)$. 
This will be a straightforward application of 
Theorem~\ref{existenceembedloc}. See Section~\ref{sec:1breathers}.
 We will use as initial 
approximation the solutions in which one site is 
oscillating quasi-periodically and the others are at the fixed point. 
This is an exact solution when $\ep = 0$ and will be an 
approximate solution when $\ep$ is sufficiently small. 
Note that, since the system is translation invariant, the center site
can be chosen to be any point on the lattice. 

\item In a second stage, carried out  in Section~\ref{sec:coupling}
we show that, given two
solutions which are centered around two groups of sites, if 
we displace far enough these solutions  and add them, we obtain an 
approximate solution (for a slightly slower decay function).
Then we can conclude 
to the existence of a true solution close to them. The estimates of solutions displaced 
will be the content of the \emph{coupling lemma} (Lemma~\ref{lem:coupling}), which is the centerpiece of 
the argument. 
This second stage of coupling different
solutions  requires several new techniques. In particular, 
a detailed discussion of Diophantine vectors in infinite dimensions. 
It will also be crucial that many of the estimates that we have 
obtained before are uniform in the number and the geometry of 
the sites. 

\item Finally, in a third stage, we will show that there is a 
limit to this process of clustering breathers. We obtain a well defined limit if 
the centers are placed far enough apart. 

\end{enumerate}
We will need the following definition. 
\begin{defi}
Given $m \in \ZZ^N$, let  $\tau^m :\M\to \M$
be defined by
\begin{equation*}
\big(\tau^m (x)\big)_i= x_{i+m}, \qquad\,\,\,i \in \ZZ^N\,.
\end{equation*}
In particular if $F:\M \to \M$\and
 $k: \torus^{p} \to \M$
\begin{align*}
(\tau^m F)_i (x) &= F_{i+m}(x),\\
(\tau^m k)_i(\th)&= k_{i+m}(\th).
\end{align*}
\end{defi}

Let $S_t$ be the flow of the system associated to the Hamiltonian in
the statement of Theorem \ref{thgl}, and let $\tilde S_t = \tau^m
S_t\ \tau^{-m}$, with $m\in \ZZ^N$. Both $S_t$ and $\tilde
S_t$ satisfy the same initial value problem, hence they coincide
wherever they are defined. As a consequence we have,
using $F = S_1$, 
\[
F= \tau^m\circ F\circ \tau^{-m}\,.
\]

{From} this we deduce that if $K_\omega : \torus^{rl}\to
\M$ with $\omega \in \mathbb{R}^{rl}$ is a solution of
$F\circ K_\omega = K_\omega \circ T_\omega$ then for all $m\in
\ZZ^N $ we have that $\tau^m K_\omega$ is also a solution.

\subsection{Existence of quasi-periodic breathers centered around one site (Part A of 
Theorem~\ref{thgl}) } 
\label{sec:1breathers}

Since the problem is invariant under translations, we 
will choose, without loss of generality, to center the 
breather at the origin. We then consider the Hamiltonian 

\begin{equation*}
H_\ep (q,p)=\sum_{n \in \ZZ^N}\Big( \frac{1}{2}\,
p^2_n+W(q_n)\Big)+\ep \sum_{j\in \Z^N}\ \sum_{n \in \ZZ^N}
V_j(q_n-q_{n+j}).
\end{equation*}

We note that, by Proposition \ref{decayODE}  for 
$\ep$ small enough, we can obtain a time-$1$ map,
which we will denote by $F_\ep$. This map will be exact 
symplectic by Proposition~\ref{prop:symplectic} in Appendix \ref{sec:symplectic}. 

We also note that, for $\ep = 0$, $F_0$ is an uncoupled map
$$(F_0(x) )_i  = f_0(x_i),\,\,\,\, i\in \Z^N$$
 with $f_0$ the time-$1$ map
of the the flow on $M$ corresponding to the Hamiltonian 
$\frac{1}{2}p^2 + W(q)$. 

Assumption {\bf H2} of Theorem~\ref{thgl}, implies 
that, for $\omega \in \Xi_0$ we can find an embedding 
$k_\omega: \torus^l \rightarrow M$ such that 
\[
f_0 \circ k_\omega = k_\omega \circ T_\omega .
\]
We can then consider the embedding $K:\torus^l \rightarrow \M$ 
defined by:
\[
\big(K_\omega(\th)\big)_i = 
\begin{cases} 
k_\omega(\th) &\qquad {i = 0} \\
0 &\qquad {i \ne 0} .
\end{cases}
\]

Note that $F_0 \circ K_\omega = K_\omega \circ T_\omega$. 
What we want to do is to check that $K_\omega$ satisfies the 
hypothesis of Theorem~\ref{existenceembedloc}. 

We start by embedding $F_\ep$ into a family $F_{\ep,\lambda}$, 
$\lambda \in \real^l$, 
constructed by setting 
\[
(F_{\ep,\lambda}(z) \big)_i = 
\begin{cases} 
(F_{\ep}(z) \big)_i + (0,\lambda)  &\qquad i = 0 \\
(F_{\ep}(z) \big)_i  &\qquad i \ne 0 \\
\end{cases} 
\]

We can think of $F_{\ep,\lambda}$ as the composition of 
the map $F_\ep$ and a translation in the direction of 
the action in the $i = 0$ component. Both maps are symplectic
but the translation is not exact symplectic. 

To verify the quality of the embedding, 
we note that
\[
\sum_{n \in \ZZ^N} 
\frac{\partial (K_\omega)_n}{\partial \th_i} 
\frac{\partial (K_\omega)_n}{\partial \th_j} 
= 
\frac{\partial (k_\omega)}{\partial \th_i} 
\frac{\partial (k_\omega)}{\partial \th_j} 
\]
and, by assumption the later is non-degenerate uniformly 
in $\omega$.

We note that for the uncoupled map,  the twist condition and the 
parameter nondegeneracy conditions in Definition
\ref{NDloc} reduce to the conditions for the time-one map. 
Similarly, the hyperbolicity conditions (Definition~\ref{NDloc-hyp}) 
are satisfied whenever $\ep = 0$. The stability results 
developed before show that these conditions remain true
(with uniform values) for $|\ep| \ll 1$.

If we choose $\Xi_0(\ep^*)$ 
with uniform Diophantine constants, and chose $\ep*$
accordingly, we obtain from Theorem~\ref{existenceembedloc} 
the existence of the KAM tori. The uniformity of the 
hyperbolicity and the non-degeneracy constants is 
a consequence of the perturbation results for the non-degeneracy conditions.

\subsection{Number-theoretic properties of infinite sequences of
  frequencies }
\label{sec:diophantineinfinite}
This section is devoted to some results on infinite sequences of
frequencies. We want to introduce the concept of 
Diophantine sequence (see Definition~\ref{diophInfty}) 
and show that these sequences are very abundant in the sense that they 
have full probability with respect to several probability measures. 

Let us  consider $\Xi_0 \subset [-L, L]^l$ with $l \in
\nat$ and $L>0$. Assume that $\Xi_0$ has positive Lebesgue
measure. Later we will take as $\Xi_0$ to be a subset of 
$\D(\kappa_0, \nu_0) $ such that there are KAM tori 
in the uncoupled system with this Diophantine properties 
(See Assumption {\bf H2} in Theorem~\ref{thgl}.)

To discuss infinite products of measures,  we consider the normalized 
probability measure
\begin{equation*}
\meas_*(\cdot) =\frac{\meas(\cdot)}{\meas(\Xi_0)}.
\end{equation*}
where $\meas(\cdot)$ is any measure absolutely continuous with respect to the Lebesgue measure on $\RR^l$. 
By a theorem of Kolmogorov \cite{Durrett96}, the
product set $\Xi_0^{\nat}$ with the product
$\sigma$-algebra can be endowed with the product probability measure
$\meas{}_*^{\nat}$.

Note that, there are different sets $\Xi_0$ which satisfy the 
assumption {\bf H2} of Theorem~\ref{thgl}. Each of these choices 
will lead to mutually singular measures in the infinite product. 
Nevertheless, we do not include $\Xi_0$ in the notation for 
the infinite measure. The result will, of course, be valid 
for all choices.

Now we introduce a notion of Diophantine sequences in 
$(\R^l)^\NN$ which is well adapted for our needs. 
Basically, we just require that for every
 $r \geq 1$ the first $r$ components are Diophantine,
even if the exponent and the constant change with $r$.  The Diophantine
properties of the sequence are just the sequence of 
Diophantine properties of the  truncations. 
This will be natural for us since at every stage of the 
argument we will be working with just a finite number of 
frequencies.

We introduce the following notation: consider sequences
$$\underline{\omega} = (\omega_1, \omega_2,\dots) \in \Xi^{\mathbb N}_0\quad {\rm and}\quad
\underline{k} = (k_1, k_2,\dots ) \in (\Z^l)^{\mathbb
N}\,\backslash\,\{0\}$$ 
and denote $\underline{\omega}^{( r)} =
(\omega_1,\dots,\omega_r)$
 and $\underline {k}^{( r)} = (k_1,\dots, k_r)$ the truncated
 sequences of length $r$. Hence
 $$\underline {\omega}^{( r)} \cdot \underline{k}^{( r)} =
  \sum^r_{i=1} \omega_i \cdot k_i\quad{\rm and}\quad
  \big\vert k^{( r)}\big\vert = \sum^r_ {i=1} \vert k_i\vert\,,$$
  where $k_i = (k_{i,1},\dots, k_{i,l}) \in \Z^l$ and $\vert k_i\vert 
  = \vert k_{i,1}\vert+ \dots +\vert k_{i,l}\vert .$
 Also, given $\omega_1 \in \R^{r l}$ and $\omega_2 \in \R^l$ we
will write $\omega_{12}(\omega_1, \omega_2) \in \R^{(r+ 1) l}$ the
concatenation of the vectors $\omega_1, \omega_2$.
\begin{defi}\label{diophInfty}
We define
\begin{equation*}
\mathcal{D}=\bigcup_{(\underline{\kappa},\underline{\nu}) \in
  (\mathbb{R}^+)^{\nat} \times (\mathbb{R}^+)^{\nat}}
  \mathcal{D}(\underline{\kappa},\underline{\nu}) ,
\end{equation*}
where $\underline \kappa =(\kappa_1,...,\kappa_r,...)$, $\underline \nu =(\nu_1,...,\nu_r,...)$ and 
\begin{equation*}
\mathcal{D}(\underline{\kappa},\underline{\nu})=\left \{
\begin{array}{cc}
\underline{\omega} \in
  \Xi_0^{\nat}\,|\, \forall
  r \geq 1, \quad 
\big| \sum_{i = 1}^r \omega_i \cdot k_i -m\big|^{-1} \leq \kappa _r
  |k^{( r)}|^{ \nu_r}, \\ \forall \underline{k} \in
  (\ZZ^N)^\nat\ {\rm s.t.}\ k^{( r)} \not= 0, \; \forall m\in \Z
\end{array} \right \}.
\end{equation*}
\end{defi}
\begin{remark}
Note that, since we are considering infinite dimensions, the notion
of $|k|$ we are using in  Definition \ref{diophInfty} could matter.
We note however that changing the norms only changes the sequence
$\underline{\kappa}$, the sequence $\underline{\nu}$ remaining the
same.
\end{remark}

The next result ensures that there are many sequences of Diophantine
vectors.
\begin{lemma}
Let $\underline{\nu}$ be a given sequence such that $\nu_r > r l $. Then,
\[
\meas^{\nat}_* \Big(\Xi_0^{\nat}\setminus \bigcup_{\underline{\kappa}
\in (\RR^+)^\nat
}\mathcal{D}(\underline{\kappa},\underline{\nu})\Big)=0.
\]
\end{lemma}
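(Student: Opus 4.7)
The plan is to show that the complement decomposes as a countable union of cylinder sets, each of which is handled by a classical one-shot Borel--Cantelli/slab argument, and then to use the product-measure structure.

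First I would rewrite the event. A sequence $\underline{\omega}$ lies in $\bigcup_{\underline\kappa}\mathcal D(\underline\kappa,\underline\nu)$ precisely when, for every $r\ge 1$, there exists some $\kappa_r>0$ making the $r$-truncation $\underline\omega^{(r)}\in\R^{rl}$ Diophantine with exponent $\nu_r$ (since the choice of $\kappa_r$ can be made independently in $r$, one assembles the sequence $\underline\kappa$ diagonally). Hence the complement equals $\bigcup_{r\ge 1}B_r$, where
\[
B_r=\Bigl\{\underline{\omega}\in\Xi_0^{\nat}\ :\ \forall \kappa>0,\ \exists\,\underline k^{(r)}\ne 0,\ \exists\,m\in\Z,\ \bigl|\underline\omega^{(r)}\!\cdot\!\underline k^{(r)}-m\bigr|<\kappa\,|\underline k^{(r)}|^{-\nu_r}\Bigr\}.
\]
By countable subadditivity it suffices to prove $\meas_*^{\nat}(B_r)=0$ for each fixed $r$. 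Since $B_r$ is a cylinder set depending only on the first $r$ coordinates, its product measure equals $\meas_*^{r}$ of its base $\tilde B_r\subset\Xi_0^{r}\subset[-L,L]^{rl}$; and because $\meas_*$ is absolutely continuous with respect to Lebesgue measure, it is enough to verify that $\tilde B_r$ has Lebesgue measure zero in $\R^{rl}$.

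Next I would run the standard slab estimate in dimension $d:=rl$. For fixed $k\in(\Z^{l})^{r}\setminus\{0\}$ and fixed $m\in\Z$, the set
\[
\bigl\{\underline\omega^{(r)}\in[-L,L]^{d}\ :\ |\underline\omega^{(r)}\!\cdot\!k-m|<\kappa\,|k|^{-\nu_r}\bigr\}
\]
is a slab of thickness $2\kappa |k|^{-\nu_r}/|k|$ normal to $k$, so its Lebesgue measure is at most $C L^{d-1}\kappa |k|^{-\nu_r-1}$. Only $O(L|k|)$ values of $m$ give a slab meeting the cube, so summing in $m$ produces a bound $C L^{d}\kappa |k|^{-\nu_r}$ for each $k$. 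Summing over $k\in(\Z^l)^r\setminus\{0\}$ uses the hypothesis $\nu_r>rl=d$, which is exactly the threshold making $\sum_{k\ne 0}|k|^{-\nu_r}$ convergent; this yields a total bound $C'L^{d}\kappa$ for the Lebesgue measure of the set of $\underline\omega^{(r)}$ that fail the Diophantine condition at level $\kappa$. Letting $\kappa\downarrow 0$ shows that $\tilde B_r$ has Lebesgue measure zero, and therefore $\meas_*^{\nat}(B_r)=0$.

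There is no genuine obstacle here: the lemma is essentially the observation that the classical measure-theoretic proof that almost every frequency vector is Diophantine extends verbatim to each finite truncation, and the product-measure formalism together with countable subadditivity handles the passage to the infinite product. The only place where the quantitative hypothesis enters is the convergence of $\sum_{k\ne 0}|k|^{-\nu_r}$ in $\R^{rl}$, which is precisely guaranteed by $\nu_r>rl$; this is why the statement requires the exponents to grow with the truncation level.
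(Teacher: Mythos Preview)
Your proposal is correct and follows essentially the same slab argument as the paper. The only cosmetic difference is in the order of operations: you first observe that $\bigcup_{\underline\kappa}\mathcal D(\underline\kappa,\underline\nu)$ is exactly the set where every finite truncation is Diophantine for \emph{some} constant, decompose the complement as the countable union $\bigcup_r B_r$ of cylinder sets, and show each $B_r$ is null by letting $\kappa\downarrow 0$; the paper instead bounds the measure of $\Xi_0^{\nat}\setminus\mathcal D(\underline\kappa,\underline\nu)$ for a fixed $\underline\kappa$ by $\sum_r C_{rl,\nu_r}\kappa_r^{-1}$ and then chooses the sequence $\underline\kappa$ to make this sum arbitrarily small. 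Both routes rest on the identical estimate that the union over $k,m$ of the resonant slabs at level $r$ has measure $O(\kappa_r^{-1})$ (in the paper's convention) precisely because $\nu_r>rl$ makes $\sum_{k\ne 0}|k|^{-\nu_r}$ convergent.
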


\begin{proof}
We follow the standard argument for the finite dimensional case (see
\cite{Llave01c} for a pedagogical exposition).

Notice first that  $L$ 
-- the size of the box in $\RR^l$ containing our 
set $\Xi_0$ -- 
is fixed. We start by considering $r$ fixed. 

 For $k \in \ZZ^{rl}$,
$\kappa_r \in \RR^+$, $\nu_r \in \RR^+$, we define
\[
B_{k,m, \kappa_r, \nu_r} = \big\{ \omega \in ([-L, L]^l)^\nat \,|\,
|\, \omega^{( r)} \cdot k -m| < \kappa_r^{-1} | k|^{-\nu_r} \big\}\,.
\]

We note that
\begin{equation}\label{estimDioph}
\Xi^\NN_0 \setminus \mathcal{D}(\underline{\kappa},\underline{\nu}) =
\bigcup_{r \geq 1}\ \bigcup_{k \in \ZZ^{rl} \setminus \{0\},m} B_{k,m, \kappa_r,
\nu_r}\,.
\end{equation}

Geometrically, the sets $B_{k,m, \kappa_r, \nu_r} $ are slabs of width $2 \kappa_r^{-1}
|k|^{-\nu_r - 1}$. As a consequence, we obtain that
\[
\meas^\NN( B_{k,m, \kappa_r, \nu_r} \cap \Xi_0^\nat )\le
\meas^r( B_{k,m, \kappa_r, \nu_r} \cap \Xi_0^r )
\leq C_r \kappa_r^{-1} |k|^{-\nu_r - 1}\,.
\]
Moreover, given $k$, the number of sets $B_{k,m, \kappa_r, \nu_r}$
intersecting $\Xi^r_0$ is bounded by a 
constant depending on the dimension 
times $|k|$.
Hence, we have that for $\nu_r  > r l$,
\begin{align*}
\meas^\NN(\cup_{k \in \ZZ^{rl} \setminus \{0\},m}
 B_{k, m,\kappa_r, \nu_r})
 &\leq \sum_{k \in \ZZ^{rl} \setminus \{0\}}
\meas^\NN( B_{k, m,\kappa_r, \nu_r})C_{rl}|k| \\
&\leq \meas(\Xi_0)^{-r} 2rl\ \kappa^{-1}_r \sum_{s=1}^\infty 
C'_{rl} \frac{s^{rl-1 }}{s^{\nu_r }} \leq  C''_{rl,\nu_r}\
\kappa^{-1}_r,
\end{align*}
where $C_{r l}$, $C'_{r l}$ and $C''_{r l,\nu_r}$ are explicit constants. The right-hand side of the previous expression can be estimated from
above by $\sum_{r\geq 1} C_{rl,\nu_r} \kappa_r^{-1} $.

By choosing a suitable sequence $\underline\kappa$, the sum can be
made as small as desired.
\end{proof}

\subsection{Constructing more complicated breathers
out of simpler ones. The coupling lemma} 
\label{sec:coupling} 
The main goal of this section is to prove 
Lemma~\ref{lem:coupling} that shows 
that if we have two solutions of 
the invariant equation and put them in places separated sufficiently
far apart, when we add them, we obtain a very 
approximate solution of the invariance equations. 

In Lemma~\ref{lem:nondeg} we will show that
these solutions obtained superimposing the 
two non-degenerate (in the sense of 
Definition~\ref{NDloc}) solutions centered around very far apart 
centers also satisfy the same non-degeneracy assumptions
with only slight worse constants.

We will also show in Lemma~\ref{lem:nondeg-hyperbolic} 
that if the approximate solutions are (up to a  
bounded error $\eta$) superpositions of centered breathers, 
then, they satisfy the hyperbolicity conditions
of Theorem~\ref{existencetranslatedloc} with uniform bounds. 
The crucial point of Lemma~\ref{lem:nondeg-hyperbolic} is that the 
estimates on the  $\eta$ allowed and the non-degeneracy constants 
are independent of $\underline{c}$, the finite set of sites  that we 
are considering. This will be a relatively easy consequence of 
all the uniformity properties that we have developed so far.

\begin{figure} 
\begin{center} 
\includegraphics[ height = 1.5 in]{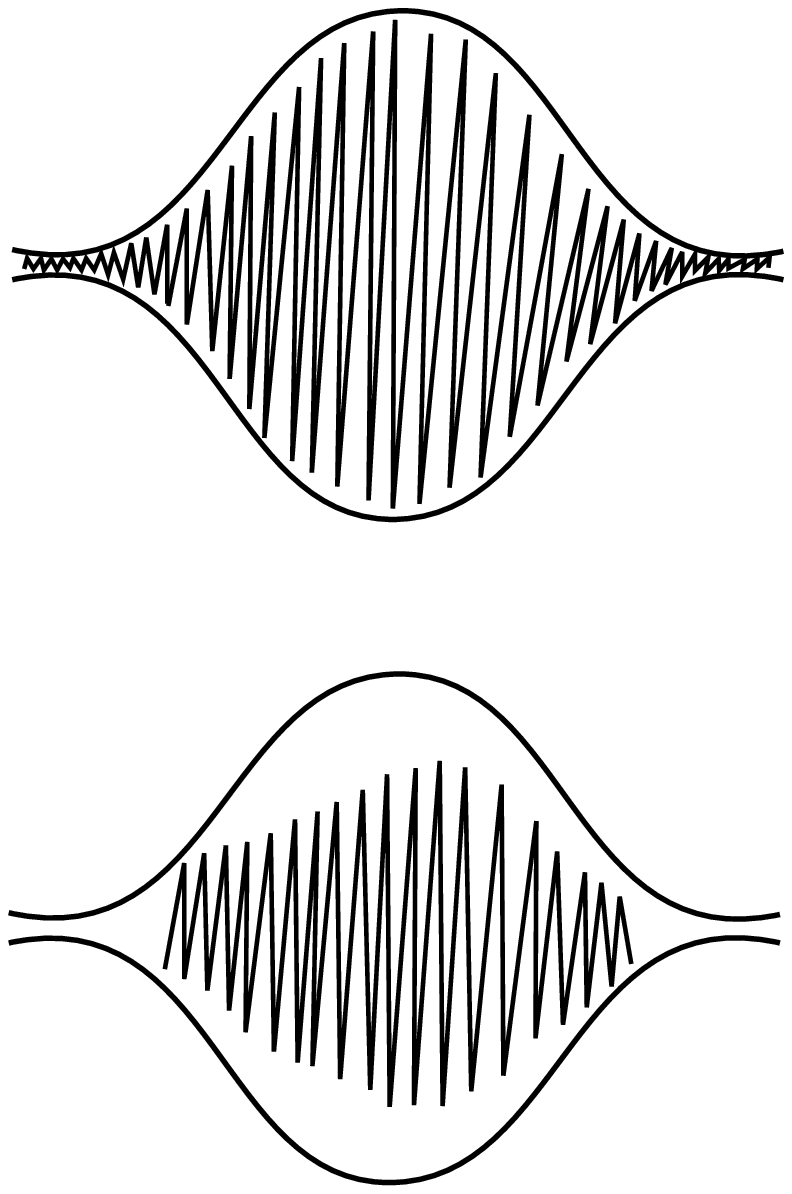} 
\end{center} 
\begin{center} 
\includegraphics[ height = 1.5 in]{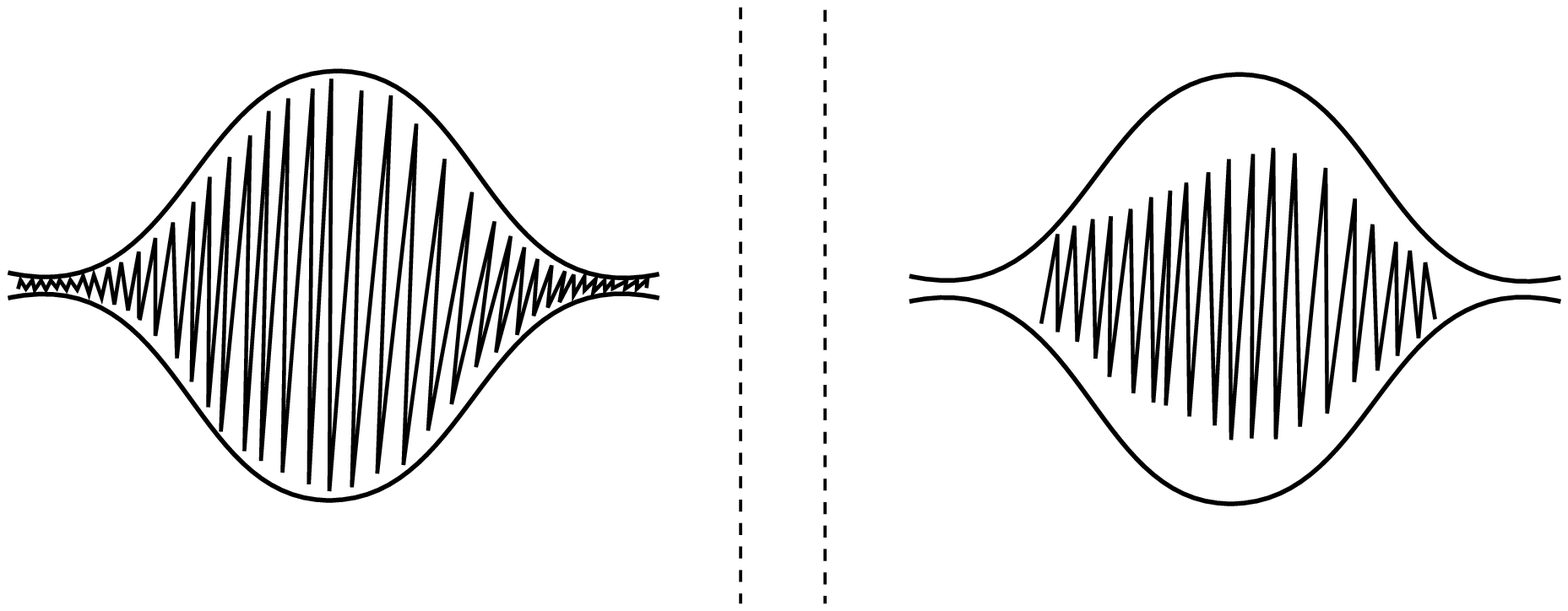} 
\end{center} 
\begin{center} 
\includegraphics[ height = 1.5 in]{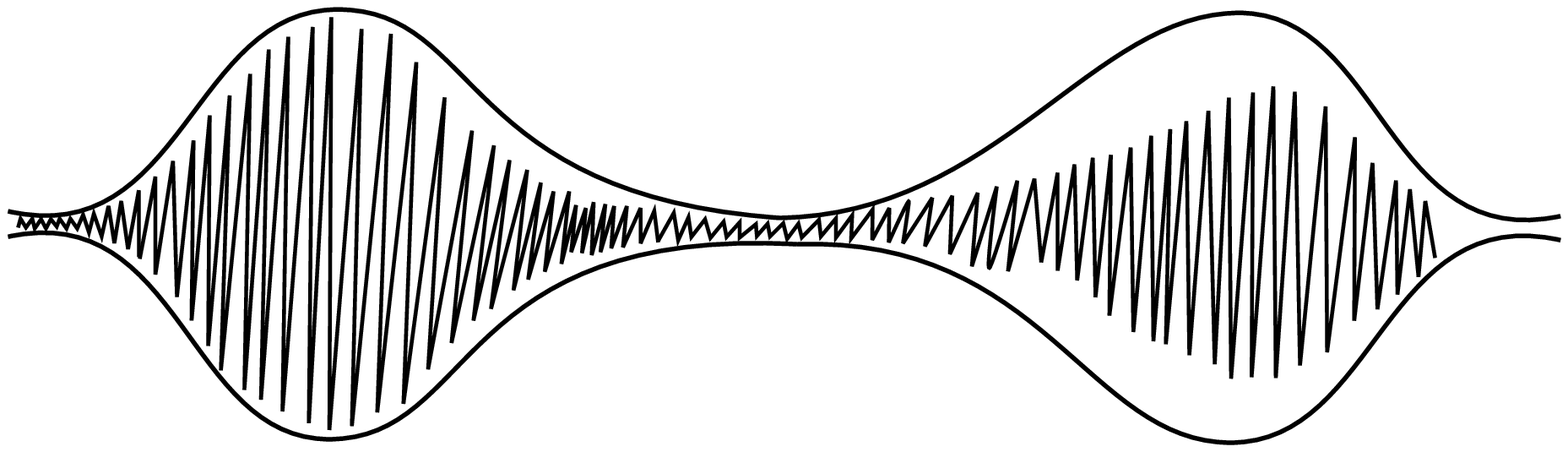} 
\end{center} 
\caption{Given two  breathers, placing them far apart, we obtain 
an approximate solution. Using the a-posteriori
Theorem~\ref{existenceembedloc}, we obtain that there is
a true solution close to it. See Lemma~\ref{lem:coupling} }
\end{figure}

\subsubsection{Some elementary calculations with the 
decay functions in Proposition~\ref{exDecay}}

Theorem~\ref{thgl} is formulated with 
the special scale of
decay functions $\Gamma_\beta$ defined by
$$\Gamma_\beta (i)=\begin{cases}
a\ \vert i \vert^{-\alpha}\ e^{-\beta\vert i\vert} &\qquad {\rm
if}\qquad i\not= 0,\\
a&\qquad{\rm if} \qquad i = 0\,,\end{cases}
$$
with $\alpha=\alpha_0 >N$ fixed  and $0 < \beta \leq \beta_0$.

In the proof of Proposition \ref{exDecay} in \cite{JL00} it is shown
that the value of $a$ can be chosen as any value less than some
$a_0$ independent of $\beta$. Actually we have 
$$a_0(\alpha) < \big(2^{\alpha+1} K_{N,\alpha} + 2\big)^{-1},\qquad
 {\rm with}\quad K_{N,\alpha}= \sum_{j\in \ZZ^N\backslash\{0\}} \vert j\vert^{-\alpha}.$$
Throughout this section, we set $\Gamma=\Gamma_{2\beta_0}$
In the definition of both $\Gamma$ and $\Gamma_\beta$  we take the
 value of $a = \min \big(a_0(\alpha), a_0(2\alpha)\big).$

 With this choice we have the following properties:
 \begin{enumerate}
 \item if $\tilde\beta < \beta$ then $\Gamma_\beta(i) \le
 \Gamma_{\tilde\beta}(i)$ for all $i\in \ZZ^N$.
  \item if $\tilde\beta < \beta$ then  
  $\displaystyle{\lim_{\vert m\vert\to\infty}} \frac{\Gamma_\beta
 (m)}{\Gamma_{\tilde\beta} (m)} =0.$
 \item for any $\beta, \tilde\beta \leq \beta_0$
 \end{enumerate}
\begin{equation}
\Gamma(i) \leq \frac 1 a\ \Gamma_\beta (i)\ \Gamma_{\tilde\beta}(i),
\qquad i \in \ZZ^N.\label{propgammabeta}
 \end{equation}
 We will encounter the quantity $\displaystyle{\sum^r_{k=1}} \Gamma_\beta (i-c_k)$.
 To be able to estimate it in a convenient way, independently on
 $r$, we will work with sequences of sites $\underline{c} = (c_1, c_2,\dots) \in (\ZZ^N)^\NN$
 satisfying the property
\begin{defi} 
\label{propertyP} 
We say that a sequence of sites $\underline{c}$ is spatially non-resonant when for all  $i\in \ZZ^N$ there exist at most two different sites 
$c_p, c_q$ in the sequence
such that $\vert c_{p}-i\vert = \vert c_q - i\vert$.
\end{defi}

\begin{remark} 
If we arrange the sites $c_k$ in a
coordinate plane of $\ZZ^N$, for instance $\ZZ^2 \times
\{0\}^{N-2}$, and for all $k$ we have $\vert (c_{k+1}- c_k)_2\vert <
\vert(c_{k+1}-c_k)_1\vert$, then $\underline{c}$
is spatially non resonant according to Definition~\ref{propertyP}. 
\end{remark}

\begin{lemma}
Let $\underline{c}$ be a spatially non-resonant sequence. 

Let $\beta \in (0,\beta_0)$. Then for every $i\in \ZZ^N$ and $r\geq 2$ we have
\begin{equation}
\sum^r_{k=1} \Gamma_\beta (i-c_k) < \frac{2}{1-e^{-\beta}}\
\max_{k}\ \Gamma_\beta (i-c_k).\label{sumagamma}\end{equation}
\end{lemma}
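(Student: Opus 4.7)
The plan is to exploit the fact that $\Gamma_\beta$ depends only on the norm of its argument and is essentially geometric in the distance, then use the spatial non-resonance assumption to control the combinatorics of sites at equal distance from~$i$.

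First, I would identify where the maximum is attained. Since $\Gamma_\beta(j) = a|j|^{-\alpha} e^{-\beta|j|}$ for $j \neq 0$ and $\Gamma_\beta(0) = a$, and since the choice of $a = a_0(\alpha) < 1/2 < 1$ guarantees $\Gamma_\beta(0) > \Gamma_\beta(v)$ for every $v \neq 0$, while $d \mapsto a d^{-\alpha}e^{-\beta d}$ is strictly decreasing for $d \geq 1$, the function $\Gamma_\beta$ is monotonically non-increasing in $|j|$. Therefore, letting $d^\star = \min_{1\le k \le r}|i - c_k|$ and letting $k^\star$ be any index realizing it, we have $M := \max_k \Gamma_\beta(i - c_k) = \Gamma_\beta(i - c_{k^\star})$. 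Write $\phi(d)$ for the common value of $\Gamma_\beta(v)$ for any $v \in \ZZ^N$ with $|v| = d$.

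Second, I would group the sum according to distance. Let $D = \{|i - c_k| : 1 \leq k \leq r\}$ be the (finite) set of distinct distances realized. By Definition~\ref{propertyP}, the number $n(d)$ of indices $k$ with $|i - c_k| = d$ satisfies $n(d) \leq 2$ for every $d \in D$, so
\[
\sum_{k=1}^r \Gamma_\beta(i - c_k) = \sum_{d \in D} n(d)\, \phi(d) \leq 2 \sum_{d \in D} \phi(d) .
\]
Because the norm on $\ZZ^N$ takes integer values on differences of integer points, $D$ is a subset of $\{d^\star, d^\star + 1, d^\star + 2, \dots\}$ (possibly sparse). Thus $\sum_{d \in D}\phi(d) \leq \sum_{n=0}^{\infty}\phi(d^\star + n)$.

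Third, I would compare $\phi(d^\star + n)$ to $M = \phi(d^\star)$. If $d^\star \geq 1$, then
\[
\frac{\phi(d^\star+n)}{\phi(d^\star)} = \Bigl(\frac{d^\star}{d^\star+n}\Bigr)^{\!\alpha} e^{-\beta n} \leq e^{-\beta n},
\]
and if $d^\star = 0$, then $\phi(n)/\phi(0) = n^{-\alpha} e^{-\beta n} \leq e^{-\beta n}$ for $n \geq 1$ (with equality at $n = 0$). In either case, summing the geometric series yields
\[
\sum_{d \in D} \phi(d) \leq M \sum_{n = 0}^{\infty} e^{-\beta n} = \frac{M}{1 - e^{-\beta}},
\]
which combined with the factor of~$2$ gives the claimed bound. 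The strict inequality asserted in \eqref{sumagamma} follows because $D$ is finite (since $r < \infty$), so the geometric series is actually truncated, and because the polynomial prefactor in $\phi$ gives a genuinely smaller ratio whenever $d^\star \geq 1$ and $n \geq 1$.

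The only nuance is the monotonicity at the jump $|j| = 0 \mapsto |j| = 1$, which is ensured by the explicit choice of $a$ in the construction of $\Gamma_\beta$; otherwise the argument is purely combinatorial-geometric and uses no structural property of the dynamics. No step presents a real obstacle — this lemma is the clean elementary companion to the dominated-decay scales introduced earlier and is tailored precisely so that sums of the form $\sum_k \Gamma_\beta(i - c_k)$ that appear throughout the norms $\|\cdot\|_{\rho,\underline c,\Gamma_\beta}$ remain controlled uniformly in the number~$r$ of centers.
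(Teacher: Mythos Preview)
Your proof is correct and follows essentially the same route as the paper's: both arguments pick the nearest center, use the spatial non-resonance to bound the multiplicity at each distance by~$2$, compare $\Gamma_\beta$ at larger distances to the maximum via the exponential factor $e^{-\beta n}$, and sum the resulting geometric series. Your write-up is a bit more explicit about the integer-valuedness of the distances and the strictness of the inequality; the one unnecessary remark is that the monotonicity at $|j|=0\to 1$ follows already from $e^{-\beta}<1$ and does not rely on the size of~$a$.
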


\begin{proof}
Let $i\in \ZZ^N$ be fixed. Let $k_0$ be such that $\vert i -
c_{k_0}\vert = \min_k \vert i -c_k\vert$. By the spatially non resonant property in the sum
\eqref{sumagamma} for any value $\Gamma_\beta(i-c_k)$ there are at
most two terms taking the same value. Then we can group the terms in
pairs. Moreover if $\vert i -c_p\vert > \vert i -c_q\vert$ then
$$\Gamma_\beta(i - c_p)< \Gamma_\beta(i - c_q)\ e^{-\beta (\vert i - c_p\vert - \vert i - c_q\vert)}.$$
Therefore
$$\sum^r_{k=1} \Gamma_\beta (i - c_k) < 2 \Gamma_\beta (i - c_{k_0}) + 2 \sum^\infty_{m=1}
\Gamma_\beta(i-c_{k_0}) e^{-\beta m}$$ and \eqref{sumagamma}
follows.
\end{proof}

\subsubsection{Statement and proof of the \emph{Coupling Lemma}}

\begin{lemma}({\bf Coupling lemma})
\label{lem:coupling}
Let
$K_{\underline{\omega}_1} \in
 \mathcal{A}_{\rho,\underline{c}_1,\Gamma_\beta} 
\cap ND_{\rm loc} (\rho, \Gamma_\beta)$, 
$K_{\omega_2}
\in \mathcal{A}_{\rho,{c}_2,\Gamma}  \cap ND_{\rm loc} (\rho, \Gamma)$, $\beta <\beta_0$, 
be the
  parameterizations of two invariant tori for $F$, localized around $\underline{c}_1$ and
${c}_2$ respectively, vibrating with frequencies $\underline{\omega}_1 \in
\R^{rl}$ and $\omega_2\in \R^l$ respectively.

Then, if $\vert m\vert$ is large enough, $K_{\omega_{12}}:
\T^{(r+1)l} \to \mathcal M$ defined by
$$K_{\omega_{12}} = K_{\underline{\omega}_1} + \tau^m K_{\omega_2}$$
is an approximate solution of
$$F \circ K = K \circ T_{\omega_{12}}$$
in the following sense: given $0 < \tilde{\beta}< \beta$ we have the
estimate
\begin{align}
\| F\circ K_{\omega_{12}} &- K_{\omega_{12}} \circ
T_{\omega_{12}}\|_{\rho, \underline{c}_{12}, \Gamma_{\tilde
\beta}} \nonumber\\
&\leq \max \big(\| K_{\omega_1}\|_{\rho, \underline{c}_1,
\Gamma_\beta},\, \| K_{\omega_2}\|_{\rho, c_2, \Gamma}\big),
\Phi(m) \label{estimcrucial}
\end{align}
where $\underline{c}_{12} = (\underline{c}_1, c_2- m)$, and
 $\Phi $   depends  on $  F, \underline{c}_1, c_2, \beta, \tilde \beta$ 
  and  $$ \lim_{\vert m\vert \to \infty} \Phi(m)= 0.$$
\end{lemma}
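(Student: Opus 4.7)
The strategy is to exploit translation invariance of $F$ and the fact that both $K_{\underline{\omega}_1}$ and $K_{\omega_2}$ are exact solutions in their own variables, so that the obstruction to $K_{\omega_{12}}$ solving the invariance equation reduces to a purely nonlinear ``cross-term'' that is bilinear in $K_{\underline{\omega}_1}$ and $\tau^m K_{\omega_2}$. First I would substitute $K_{\omega_{12}}=K_{\underline{\omega}_1}+\tau^m K_{\omega_2}$ into the error, use the translation commutation identity $F\circ \tau^m=\tau^m\circ F$ stated at the beginning of Section~\ref{sec:thmthgl}, and use the exact invariance equations for $K_{\underline{\omega}_1}$ and $K_{\omega_2}$ to cancel the linear contributions. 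This yields
\[
E:=F\circ K_{\omega_{12}}-K_{\omega_{12}}\circ T_{\omega_{12}}=F(K_{\underline{\omega}_1}+\tau^m K_{\omega_2})-F(K_{\underline{\omega}_1})-F(\tau^m K_{\omega_2})+F(0),
\]
where $F(0)=0$ by hypothesis, but keeping it makes the next step transparent: applying the fundamental theorem of calculus twice gives the manifestly bilinear representation
\[
E=\int_0^1\!\!\int_0^1 D^2F\bigl(sK_{\underline{\omega}_1}+t\tau^m K_{\omega_2}\bigr)\bigl(K_{\underline{\omega}_1},\tau^m K_{\omega_2}\bigr)\,ds\,dt.
\]

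Next I would estimate this pointwise using the $C^2_\Gamma$ regularity of $F$, which provides decay bounds of the schematic form $|(D^2F(x)(u,v))_i|\le C\|F\|_{C^2_\Gamma}\sum_{j,k}\Gamma(i-j)\Gamma(i-k)|u_j||v_k|$, together with the componentwise bounds $|(K_{\underline{\omega}_1})_j|\le \|K_{\underline{\omega}_1}\|_{\rho,\underline{c}_1,\Gamma_\beta}\max_{p}\Gamma_\beta(j-c_{1,p})$ and $|(\tau^m K_{\omega_2})_k|\le \|K_{\omega_2}\|_{\rho,c_2,\Gamma}\,\Gamma(k-c_2+m)$. Using $\Gamma=\Gamma_{2\beta_0}\le \Gamma_\beta$ and the Banach algebra property $\Gamma_\beta\ast\Gamma_\beta\le\Gamma_\beta$ to perform the convolutions, I obtain the pointwise bound
\[
\|E_i\|_\rho\le C\,\|K_{\underline{\omega}_1}\|_{\rho,\underline{c}_1,\Gamma_\beta}\|K_{\omega_2}\|_{\rho,c_2,\Gamma}\,\max_p\Gamma_\beta(i-c_{1,p})\,\Gamma(i-c_2+m).
\]

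To pass from this bound to the norm $\|\cdot\|_{\rho,\underline{c}_{12},\Gamma_{\tilde\beta}}$ I would split $\mathbb{Z}^N$ into the region where the nearest center in $\underline{c}_{12}$ is some $c_{1,p^\star}\in \underline{c}_1$, and the complementary region where the nearest center is $c_2-m$. In the first region, the triangle inequality gives $|i-c_2+m|\ge |c_{1,p^\star}-c_2+m|/2$, and the gap $\beta>\tilde\beta$ lets me write $\Gamma_\beta(i-c_{1,p})/\Gamma_{\tilde\beta}(i-c_{1,p^\star})$ as a bounded quantity; absorbing the remaining $\Gamma(i-c_2+m)$ produces a factor $\Phi_1(m)\to 0$ that depends on $\max_p|c_{1,p}-c_2+m|$. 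The symmetric argument applies in the second region, using that in there $|i-c_{1,p}|\ge |c_{1,p}-c_2+m|/2$ uniformly in $p$. Taking $\Phi(m)$ to be the maximum of these two quantities yields the desired inequality with $\Phi(m)\to 0$ as $|m|\to\infty$.

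The main obstacle is precisely this region-splitting step: one must check that in \emph{both} regions the pointwise bound on $\|E_i\|_\rho$ can be written as $\Gamma_{\tilde\beta}(i-q)$ times a factor that is small \emph{uniformly in $i$}. The weaker decay $\tilde\beta<\beta$ is what provides the slack (through $\Gamma_\beta\ll\Gamma_{\tilde\beta}$ in the sense of Definition~\ref{dominated}), converting the spatial separation between $\underline{c}_1$ and $c_2-m$ into a quantitative exponential smallness in $|m|$ while keeping all constants independent of the number of sites in $\underline{c}_1$, which will be crucial for the inductive construction of almost periodic solutions.
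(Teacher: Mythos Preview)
Your approach is correct and genuinely different from the paper's. The paper does not form the bilinear second-difference representation; instead it splits $\ZZ^N$ into $\mathcal I_1$ (sites closer to $\underline c_1$) and $\mathcal I_2$ (sites closer to $c_2-m$) \emph{before} estimating $E_i$, and in each region uses only a \emph{first-order} Taylor expansion around the locally dominant piece: around $K_{\underline\omega_1}$ in $\mathcal I_1$ and around $\tau^m K_{\omega_2}$ in $\mathcal I_2$. That yields, e.g.\ in $\mathcal I_1$, a bound $\|E_i\|_\rho\le(\|F\|_{C^1_\Gamma}+1)\|K_{\omega_2}\|_{\rho,c_2,\Gamma}\,\Gamma(i+m-c_2)$ (note the residual ``$+1$'' coming from the uncancelled $\tau^m K_{\omega_2}\circ T_{\omega_{12}}$ term), and symmetrically in $\mathcal I_2$. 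The final norm estimate is then packaged via two auxiliary quantities $B_1(\beta,\tilde\beta,m)$ and $B_2(\beta,\tilde\beta,m)$, whose vanishing as $|m|\to\infty$ is proved in a separate lemma immediately preceding the coupling lemma.

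Your route buys a cleaner algebraic cancellation (no leftover ``$+1$'') and a single pointwise bound valid everywhere, at the cost of invoking $C^2_\Gamma$ rather than $C^1_\Gamma$. One technical point to tighten: the $C^2_\Gamma$ norm gives $|(D^2F)_{i,j,k}|\le\|F\|_{C^2_\Gamma}\min(\Gamma(i-j),\Gamma(i-k))$, not the product $\Gamma(i-j)\Gamma(i-k)$ you wrote schematically. To pass from the $\min$ to a product suitable for separating the $j$- and $k$-sums you must invoke the special property $\Gamma(i)\le a^{-1}\Gamma_\beta(i)\Gamma_\beta(i)$ of the scale in Proposition~\ref{exDecay} (the paper records this as \eqref{propgammabeta} and uses the same trick in the proof of Lemma~\ref{lem:nondeg}). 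With that adjustment your convolution argument and the region-splitting for the weighted norm go through; the paper's $B_1,B_2$ lemma is essentially the precise version of your ``$\Gamma_\beta\ll\Gamma_{\tilde\beta}$ plus triangle inequality'' step.
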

\begin{remark}
Note  that the approximate torus
 $K_{\omega_{12}}$ is 
in an space of slightly slower decay 
than the space of the invariant tori $K_{\underline{\omega}_{1}}$ and 
$K_{\omega_{2}}$ since the decay estimate
\eqref{estimcrucial} involves the weight $\Gamma_{\tilde{\beta}}$
instead of the weight $\Gamma_\beta$.

It is important to emphasize that if 
we choose two solutions, fix a decay function slower than that of  the two 
solutions, any 
target     smallness for the error of 
the coupled solution
can be accomplished by setting the translated solution far enough. 
In other words, we can choose all the parameters of the 
Lemma~\ref{lem:coupling} and adjust all the requirements by 
setting the solutions far apart. 
\end{remark} 

\begin{remark}

Notice also that in  Lemma~\ref{lem:coupling}, the approximate torus
$K_{\omega_{12}}$ is defined on
 $\T^{rl} \times \T^l=\T^{(r+1)l}$. To
make the notations coherent we embed the tori $\T^{rl}$ and
$\T^{l}$ into $\T^{(r+1)l}$ identifying $\T^{rl}$ with $\T^{rl}
\times \{0\}$  and $\T^l$ with $\{0\} \times \T^l$ respectively.
Hence if $\th = (\th_1, \th_2) \in \T^{(r+1)l},
K_{\underline{\omega}_1}(\th) = K_{\underline{\omega}_1}(\th_1)$ and $K_{\omega_2}
(\th) = K_{\omega_2}(\th_2).$
\end{remark}

\begin{remark} 
For simplicity, we have stated Lemma~\ref{lem:coupling} as joining together
a breather around one site to an already constructed solution. 
It is possible  (and perhaps more natural) to prove a lemma that asserts 
that given two solutions (each containing oscillations around 
many sites) one can displace them and obtain a very approximate solution 
(in a slower decay space).  We leave the precise formulation and 
the proof to the reader. 
\end{remark} 

Before proving Lemma \ref{lem:coupling} we establish a lemma with two technical
estimates. Given $\underline{c}_1 = (c_{1,1},\dots, c_ {1,r}) \in (\Z^N)^r$ and
$c_{2},  m \in \Z^N$ we introduce the sets of indices
\begin{align*}
\mathcal{I}_1 &= \big\{i \in \Z^N\, \vert\, \min_k \vert i -
c_{1,k}\vert < \vert i +
m - c_2\vert\big\}\,,\\
\mathcal{I}_2 &= \Z^N \setminus
 \mathcal{I}_1\,,
\end{align*}
and the functions
\begin{align*}
B_1(\beta, \tilde\beta,m) &=\  \sup_{i \in \mathcal I_1}\
\frac{\Gamma_\beta (i + m - c_2)}{\max_{k} \Gamma_{\tilde
\beta}(i-c_{1,k})}\,,\\
 B_2(\beta, \tilde\beta,m) &=\  \sup_{i \in
\mathcal I_2}\ \frac{\max_{k}\Gamma_{\beta} (i-c_{1,k})}{
\Gamma_{\tilde\beta}(i+ m -c_2)}\,.
\end{align*}

\begin{lemma}
If\ $0 < \tilde\beta<\beta$ we have
$$\lim_{\vert m\vert\to \infty} B_1 (\beta, \tilde\beta, m)=
 \lim_{\vert m\vert\to\infty} B_2 (\beta, \tilde \beta, m) = 0\,.$$
\end{lemma}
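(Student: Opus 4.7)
My plan is to use monotonicity of $\Gamma_\beta$ in the modulus of its argument together with the reverse triangle inequality to reduce both $B_1$ and $B_2$ to explicit exponential ratios that tend to zero as $|m|\to\infty$. Introduce the shorthand
\[
d_1(i)=\min_k|i-c_{1,k}|, \qquad d_2(i)=|i+m-c_2|, \qquad C_{12}=\max_k|c_{1,k}-c_2|.
\]
Since $\Gamma_\beta$ depends only on $|\cdot|$ and is decreasing in the modulus, $\max_k\Gamma_\beta(i-c_{1,k})=\Gamma_\beta(d_1(i))$ and similarly with $\tilde\beta$. By definition $\mathcal I_1=\{d_1<d_2\}$ and $\mathcal I_2=\{d_2\le d_1\}$. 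Applying the reverse triangle inequality to $i+m-c_2=(i-c_{1,k})+(c_{1,k}+m-c_2)$ and minimising in $k$ yields
\[
d_2(i)\ge |m|-C_{12}-d_1(i), \qquad d_1(i)\ge |m|-C_{12}-d_2(i),
\]
which is the only geometric input needed.

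To bound $B_1(\beta,\tilde\beta,m)$ I will split $\mathcal I_1$ into a near region $\mathcal R_{\rm n}=\{d_1(i)\le |m|/3\}$ and a far region $\mathcal R_{\rm f}=\{d_1(i)>|m|/3\}$. In $\mathcal R_{\rm n}$ the triangle inequality forces $d_2(i)\ge |m|/2$ once $|m|$ is large, while the denominator $\Gamma_{\tilde\beta}(d_1(i))$ is bounded below by a universal multiple of $\min\!\bigl(a,(|m|/3)^{-\alpha}e^{-\tilde\beta|m|/3}\bigr)$; inserting the explicit form of $\Gamma_\beta$ in the numerator gives a ratio of order $e^{-(\beta/2-\tilde\beta/3)|m|}$ up to a polynomial in $|m|$, which tends to zero because $\tilde\beta<\beta$. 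In $\mathcal R_{\rm f}$ both distances exceed $|m|/3$ and $d_2>d_1$, so the polynomial prefactor in the ratio is $\le 1$ and the exponential part is controlled by $e^{\tilde\beta d_1-\beta d_2}\le e^{-(\beta-\tilde\beta)d_1}\le e^{-(\beta-\tilde\beta)|m|/3}$.

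The estimate for $B_2$ is obtained by the identical argument with the roles of $d_1$ and $d_2$ interchanged. On $\mathcal I_2$ one has $d_2\le d_1$; the near region $\{d_2\le |m|/3\}$ forces $d_1\ge |m|/2$ by the triangle inequality, while the far region $\{d_2>|m|/3\}$ has both distances large with $d_1\ge d_2$, and each case again produces an exponential in $|m|$ with positive rate. The degenerate values $d_1(i)=0$ or $d_2(i)=0$, where the formula for $\Gamma_\beta$ is replaced by the constant $a$, are absorbed into these bounds since for $|m|$ large the opposite distance exceeds $|m|-C_{12}$ and so lies in the large-argument regime of $\Gamma_\beta$.

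I do not anticipate any serious obstacle. The only delicate point is choosing the splitting threshold so that the strict inequality $\tilde\beta<\beta$ produces a positive exponent simultaneously in the near and far regions, but the choice $|m|/3$ used above works with room to spare (any threshold of the form $c|m|$ with $c$ small would suffice). No small-divisor analysis or compactness argument is required; everything reduces to elementary manipulations of the explicit formulas defining $\Gamma_\beta$ and $\Gamma_{\tilde\beta}$.
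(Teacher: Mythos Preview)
Your argument is correct, but it takes a longer route than the paper's. The paper avoids the near/far case split entirely by a single monotonicity trick: on $\mathcal I_1$ one has $d_1(i)<d_2(i)$, so $\Gamma_{\tilde\beta}(d_1(i))>\Gamma_{\tilde\beta}(d_2(i))$, and therefore
\[
\frac{\Gamma_\beta(d_2(i))}{\Gamma_{\tilde\beta}(d_1(i))}
<\frac{\Gamma_\beta(d_2(i))}{\Gamma_{\tilde\beta}(d_2(i))}=e^{-(\beta-\tilde\beta)\,d_2(i)}.
\]
Then the triangle inequality (essentially your inequality $d_2\ge |m|-C_{12}-d_1$ combined with $d_1<d_2$) gives $d_2(i)>\tfrac12\min_k|c_{1,k}+m-c_2|$, which tends to infinity uniformly in $i\in\mathcal I_1$, and the right-hand side goes to zero. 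The treatment of $B_2$ is symmetric. So the paper reduces each $B_j$ to the single-variable ratio $\Gamma_\beta/\Gamma_{\tilde\beta}$ evaluated at a point that escapes to infinity, rather than tracking the two distances separately.

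Your splitting at threshold $|m|/3$ recovers the same decay but with more bookkeeping: you handle the near and far regions with different exponents ($\beta/2-\tilde\beta/3$ versus $\beta-\tilde\beta$) and have to check separately that the polynomial prefactors are harmless in each case. The paper's argument is shorter and yields the cleaner rate $(\beta-\tilde\beta)$ directly, but your approach is perfectly valid and has the minor advantage of making the dependence on $C_{12}$ and the threshold explicit throughout.
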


\begin{proof}
First we note that if $i\in \mathcal I_1$ then
\begin{equation}\label{cotaI1}
\vert i + m -c_2\vert > \frac 1 2 \min_{k} \vert c_{1,k}+ m -
c_2\vert\,.
\end{equation}
Indeed, let $k_0$ be such that $\vert i - c_{1, k_0}\vert = \min_k
\vert i - c_{1,k}\vert$. Then
\begin{align*}
\vert i + m - c_2\vert &\geq \vert c_{1,k_0} + m - c_2\vert - \vert
i - c_{1, k_0}\vert\\[1mm]
&> \vert c_{1, k_0} + m - c_2\vert - \vert i + m - c_2\vert
\end{align*}
and hence
$$\vert i + m - c_2\vert > \frac 1 2\ \vert c_{1, k_0} + m - c_2 \vert \geq \frac 1 2\ \min_{k} \vert c_{1,k}
+ m - c_2\vert\,.$$ Moreover, if $i\in \mathcal I_1$, by the
monotonicity of $\Gamma_{\tilde\beta}$ we have $\max_{k}
\Gamma_{\tilde\beta}(i- c_{1,k})> \Gamma_{\tilde\beta}(i + m -
c_2)$. Now
\begin{equation*}
\frac{\Gamma_\beta (i + m - c_2)}{\max_{k}\Gamma_{\tilde\beta}(i -
c_{1,k})} < \frac{\Gamma_\beta (i+m - c_2)}{\Gamma_{\tilde\beta}(i +
m -c_2)}\,.
\end{equation*}
The bound \eqref{cotaI1} shows that when $\vert m \vert \to \infty,\
\vert i + m - c_2\vert$ goes to infinity uniformly in $i \in
\mathcal{I}_1$. Hence by the second property of the scale
$\Gamma_\beta$ we obtain the first limit. The second limit is proved
in an analogous way, checking first that if $i\in \mathcal I_2$
$$\min_{k} \vert i - c_{1,k}\vert \geq \frac{1}{2} \min_{k} \vert c_{1,k} + m - c_2\vert$$
and using that if $i\in \mathcal I_2$
\begin{equation*}
\frac{\max_{k}\Gamma_\beta (i - c_{1,k})} {\Gamma_{\tilde \beta}(i+m-c_2)}
\leq \frac{\max_{k} \Gamma_\beta  (i-
c_{1,k})}{\max_{k}\Gamma_{\tilde \beta}(i - c_{1,k})}\,.
\end{equation*}
\end{proof}
\begin{proof}
[Proof of the coupling lemma Lemma~\ref{lem:coupling}]

 We denote $E(K)= F \circ K - K \circ
T_{\omega_{12}}$ the error of the invariance equation for the coupled 
breather. 

We are going to estimate the $i$-th component of $E =
E(K_{\omega_{12}})$. Note that the torus $\tau^m K_{\omega_2}$ is
localized around the site $c_2-m.$ We distinguish two cases: either
$i \in \mathcal I_1$ or $i\in\mathcal I_2$. In the first case we write
\begin{align*}
E_i= F_i (K_{\underline{\omega}_1}) & +\ \int^1_0 \Big[DF \big(K_{\underline{\omega}_1} + s
\tau^m K_{\omega_{2}}\big)\ \tau^m K_{\omega_2}\Big]_i\ ds\\
&\quad -\ \big[K_{\underline{\omega}_1} \circ T_{\omega_{12}}\big]_i -
\big[\tau^m K_{\omega_2}\circ T_{\omega_{12}}\big]_i\,.
\end{align*}
We recall that $\th = (\th_1, \th_2) \in \T^{rl}\times
\T^l$. Since $K_{\underline{\omega}_1}$ does not depend on $\th_2$ then
$F(K_{\underline{\omega}_1}) = K_{\underline{\omega}_1} \circ T_{\omega_{12}}$. Therefore
\begin{align*}
\| E_i\|_\rho &\le \sum_j \| F\|_{C^1_\Gamma}\,
\Gamma(i-j)\, \| K_{\omega_2}\|_{\rho,c_2,\Gamma}\
\Gamma(j+m-c_2)\\
&\quad + \| K_{\omega_2}\|_{\rho,c_2,\Gamma}\ \Gamma(i + m-c_2)\\
&\leq \big(\| F\|_{C^1_\Gamma}+ 1\big)\, \|
K_{\omega_2}\|_{\rho, c_2, \Gamma}\  \Gamma(i+m-c_2)\,.
\end{align*}
Similarly, if $i\in \mathcal I_2$  we expand $F$ around $\tau^m\,
K_{\omega_2}$ and we obtain
\begin{equation*}
\| E_i\|_\rho \leq \Big(\frac{2}{1-e^{-\beta}}\, \|
F\|_{C^1_\Gamma}+ 1\Big)\, \| K_{\underline{\omega}_1}\|_{\rho,
\underline{c}_1, \Gamma_\beta}\, \max_{k}\, \Gamma_\beta(i-c_{1,
k})\,.
\end{equation*}
We take $\tilde\beta < \beta$ and we compute
\begin{align*}
\| E\|_{\rho,\underline{c}_{12}, \Gamma_{\tilde\beta}}& =\
\max \Big(\sup_{i\in\mathcal I_1}\ \min \big(\min_{k}\,
\Gamma^{-1}_{\tilde\beta} (i - c_{1,k})\,,\,
\Gamma^{-1}_{\tilde\beta}
(i+ m-c_2)\big)\, \| E_i\|_\rho\,,\\
&\qquad \sup_{i\in \mathcal I_2}\ \min\big(\min_{k}\
\Gamma^{-1}_{\tilde \beta}(i-c_{1,k}),\, \Gamma^{-1}_{\tilde
\beta}(i + m-c_2)\big)\, \| E_i\|_\rho\Big)\\
&\leq\ C\ \max \Big(\sup_{i\in \mathcal I_1}\ \min_{k}\
\Gamma^{-1}_{\tilde \beta}(i- c_{1,k})\, \Gamma(i + m-c_2),\\
&\qquad \sup_{i\in \mathcal I_2}\, \Gamma^{-1}_{\tilde \beta}
(i+m-c_2)\ \max_{k}\ \Gamma_\beta (i-c_{1,k})\Big)\\
&= C\ \max \big(B_1 (2\beta_0, \tilde\beta, m),\ B_2(\beta,
\tilde\beta, m)\big)\,,
\end{align*}
where
$$C = \Big(\frac{2}{1-e^{-\beta}}\ \| F\|_{C^1_\Gamma}+1\Big)\ \max
 \Big(\| K_{\underline{\omega}_1}\|_{\rho,\underline c_1,\Gamma_\beta}\, ,\
 \| K_{\omega_2}\|_{\rho, c_2, \Gamma}\Big)\,.$$
\end{proof}

\subsubsection{Statement and proof of 
Lemma~\ref{lem:nondeg-hyperbolic}. Verifying the 
non-degeneracy condition of the coupled 
solutions}

In this section, we verify the nondegeneracy conditions
provided that $\ep$ is small enough and that
$K$ is sufficiently close to an uncoupled 
solution with all the sites far enough apart. 
That is, we consider situations when 
we are close to the completely uncoupled solution. 

The result Lemma~\ref{lem:nondeg-hyperbolic} will be clear because  
all the uncoupled solutions for the uncoupled dynamics 
satisfy the non-degeneracy assumptions. The change of the non-degeneracy 
assumptions between this uncoupled case can be controlled by
elementary perturbation theories. Thanks to the systematic use of 
our framework, we have perturbation theories which are uniform on the 
excited sites. 

Let  $\underline{c}, \underline{\omega}$ be 
sequences of $r$ sites and frequencies. We consider 
$k_{\omega_i}$, parameterizations of invariant tori w.r.t. $f_0$, the time-one map of just one site. We denote
\[
K^* = \sum_{i = 1}^r \tau^{c_i} k_{\omega_i} 
\]
We note that $F_0 \circ K^* = K^*\circ T_{\underline{\omega}}$
and that $K^*$ is uniformly non degenerate. 

The hyperbolic splitting for $F_0\circ K^*$
is 
\begin{equation}\label{simplesplitting}
\Pi^{s,c,u} = \oplus_{i \in \ZZ^N} \Pi^{s,c,u}_i
\end{equation}
where $\Pi^{s,c,u}_i$ is the splitting corresponding 
to the $i$ torus. If $i$ is an index in $\underline{c}$, then we have 
$\Pi^c_i = \Id_{\RR^{2l}}$, $\Pi^s_i = 0$, 
$\Pi^u_i = 0$. Otherwise, one gets 
$\Pi^c_i = 0$, and $\Pi^s_i, \Pi^s_i$ are the 
projections corresponding to the stable and unstable directions 
at the fixed point. 

Notice also that the $rl \times rl$ matrix  $DK^\top DK$ 
is block diagonal. The diagonal has $r$ $l\times l$ blocks
$\{ Dk_{\omega_i}^\top Dk_{\omega_i}\}_{i = 1}^r$.

\begin{lemma} {\bf(Hyperbolicity conditions)}
\label{lem:nondeg-hyperbolic}
Assume the   hypothesis and the notation in Theorem~\ref{thgl}.
In particular, $F_\ep$ is an analytic family of exact
symplectic maps in $C^2_\Gamma (\B)$. 
Let $K\in \A_{\rho, \underline{c}, \tilde\Gamma}$
with $\tilde \Gamma < \Gamma_\beta$ for $\beta < \beta_0$.

Assume that
$\ep$, $\eta \equiv \| K - K^*\|_{\rho, \underline{c}, \tilde \Gamma} $ are
smaller than a number that is independent of $\underline{c}$
and of $\Gamma$ -- it depends only on 
$\| F\|_{C^2_\Gamma(\B_1)}$, 
$\|\partial_\ep F\|_{C^2_\Gamma (\B_1\times \{|\ep| \le \ep^*\})}$, 
$\|\partial_\ep^2 F\|_{C^2_\Gamma (\B_1\times \{|\ep| \le \ep^*\})}$
and the hyperbolicity constants of the 
uncoupled splitting.

Then, $K$ and $ F_\ep$ satisfy the non-degeneracy conditions in 
Definition~\ref{NDloc-hyp} with
uniform constants.
\end{lemma}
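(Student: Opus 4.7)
The plan is to proceed in two stages: first identify a very explicit approximately invariant splitting from the uncoupled situation, and second apply the perturbation results of Section~\ref{sec:hyperbolic} (Propositions~\ref{NDmoveloc} and \ref{tempmove}) to obtain an honest invariant splitting satisfying Definition~\ref{NDloc-hyp}, taking care throughout that all constants depend only on the one-site data and on the $C^2_\Gamma$-norms of $F_\ep$, never on $\underline{c}$.

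First I would treat the reference case $\ep = 0$, $K = K^*$. Because $F_0$ is uncoupled, $DF_0 \circ K^*(\th)$ is block diagonal across the lattice, so the splitting \eqref{simplesplitting} is exactly invariant under the cocycle over $T_{\underline{\omega}}$. For an index $i$ belonging to $\underline c$, the restricted cocycle is just $Df_0 \circ k_{\omega_i}$, whose $2l$-dimensional center space is precisely the tangent directions to the individual KAM torus coming from Hypothesis \textbf{H2}; by classical KAM reducibility this cocycle is conjugate to a rotation on the center, hence satisfies \eqref{ndeg3loc} with $\mu_3$ arbitrarily close to $1$ and a uniform $C_h$. For $i \notin \underline c$, the cocycle is just $Df_0(0)$, which by Hypothesis \textbf{H1} is hyperbolic, providing uniform $\mu_1,\mu_2 < 1$. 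Because the decay norm $\|\cdot\|_{\rho,\Gamma}$ of a block-diagonal operator is exactly $\Gamma(0)^{-1}$ times the supremum of the norms of the blocks, the projections $\Pi^{s,c,u}$ all lie in $\L_\Gamma$ with $\|\Pi^{s,c,u}\|_{\rho,\Gamma}$ uniform in $\underline c$, and the full conditions \eqref{ndeg1loc}--\eqref{ndeg3loc} of Definition~\ref{NDloc-hyp} hold with constants extracted purely from the one-site dynamics.

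Next I would transfer these properties to $(\ep, K)$. Since $F_\ep - F_0 = O(\ep)$ in $C^2_\Gamma$ (by Hypothesis \textbf{H3} and the flow bounds of Proposition~\ref{decayODE}), and $K - K^* = O(\eta)$ in $\A_{\rho,\underline c,\tilde\Gamma}$, the Banach algebra property of decay norms (Lemma~\ref{lemAB}, composition estimates) gives
\[
\|DF_\ep\circ K - DF_0 \circ K^*\|_{\rho,\underline c,\tilde\Gamma} \le C(\ep + \eta),
\]
where $C$ depends only on $\|F\|_{C^2_\Gamma(\B_1)}$, $\|\partial_\ep F\|_{C^2_\Gamma}$ and $\|DK^*\|_{\rho,\underline c,\tilde\Gamma}$ (all uniform in $\underline c$ by the block-diagonal structure of $K^*$). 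Consequently the splitting \eqref{simplesplitting} is $(C(\ep+\eta))$-invariant for the new cocycle $DF_\ep \circ K$ over $T_{\underline{\omega}}$ in the sense of Definition~\ref{NDloc2}, with the same growth exponents up to an $O(\ep+\eta)$ error. Choosing $\ep,\eta$ smaller than the threshold $\delta_0$ of Proposition~\ref{NDmoveloc} (which depends only on the uniform quantities just listed), we obtain a genuine analytic invariant splitting for $(F_\ep, K)$ together with uniform bounds
\[
\|\Pi^{s,c,u}_{K(\th)} - \Pi^{s,c,u}\|_{\rho,\Gamma} \le C(\ep+\eta), \qquad |\mu_{1,2,3}^{\text{new}} - \mu_{1,2,3}^{\text{ref}}| \le C(\ep+\eta),
\]
and a constant $C_h$ satisfying \eqref{ndeg1loc}--\eqref{ndeg3loc}. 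This is exactly Definition~\ref{NDloc-hyp} with uniform constants.

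The real obstacle lies in verifying that \emph{every} constant entering the smallness threshold and the final estimates is genuinely independent of $\underline c$ and of its length $r$. This comes down to three ingredients: (i) the supremum-type definition of $\|\cdot\|_{\rho,\Gamma}$, which turns block-diagonal operators into a pointwise supremum over blocks; (ii) property~\eqref{sumagamma} together with the spatial non-resonance of $\underline c$, which prevents the accumulation of $\Gamma$-weighted contributions as $r$ grows; and (iii) the $\underline c$-independence already built into Propositions~\ref{NDmoveloc} and \ref{tempmove}. Granting these, the passage from the uncoupled reference splitting to the true one inherits uniformity automatically, and the claim of Lemma~\ref{lem:nondeg-hyperbolic} follows.
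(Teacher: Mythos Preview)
Your argument is correct and follows essentially the same route as the paper: write $DF_\ep\circ K - DF_0\circ K^*$ as a sum of two increments bounded by $C\eta$ and $C|\ep|$ via the composition estimates, observe that the block-diagonal splitting \eqref{simplesplitting} is exactly invariant for the uncoupled cocycle $DF_0\circ K^*$, and then apply Proposition~\ref{NDmoveloc} to upgrade the resulting approximately invariant splitting to a true one with uniformly controlled constants. One small remark: your ingredient (ii) invoking \eqref{sumagamma} and spatial non-resonance is not actually needed at this step --- the $\underline c$-independence here comes entirely from the block-diagonal structure of $K^*$ and of $DF_0$ (your ingredient (i)) together with the $\underline c$-uniformity already built into Proposition~\ref{NDmoveloc}; the summation estimate \eqref{sumagamma} only becomes relevant later, in the coupling lemma.
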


\begin{proof}
We  make the elementary remark
\begin{equation}\label{addsubstract} 
DF_\ep\circ K = DF_0\circ K^* + 
\big( DF_0 \circ K - DF_0\circ K^* \big)+
\big( DF_\ep \circ K - DF_0\circ K \big)
\end{equation}
and we will control the terms in parenthesis. 

By the estimates in composition in Section~\ref{sec:composition},
 we obtain that:
\[
\begin{split}
\|DF_0 \circ K - DF_0\circ K^* \|_{\rho, \underline c, \tilde \Gamma}
& \le C \| K  - K^*\|_{\rho, \underline{c}, \tilde\Gamma}\\
\| DF_\ep \circ K - DF_0\circ K\|_{\rho,\underline c,\tilde \Gamma}
&\le C |\ep|
\end{split}
\]
so we 
obtain that 
$ \| DF_\ep\circ K - DF_0\circ K^* \|_{\rho, \underline c, \tilde \Gamma}$ is 
small. 

The splitting indicated in \eqref{simplesplitting} is 
invariant for $DF_0$. Hence, it is approximately invariant 
for  $DF_\ep\circ K$ and this satisfies the conditions for
approximately invariant splittings 
Definition~\ref{NDloc2}.

We note that Proposition~\ref{NDmoveloc} ensures that, if $\ep$ and $\eta$ are small enough, there is an invariant splitting satisfying Definition 
\ref{NDloc-hyp}. 
\end{proof} 

\subsubsection{Statement and proof of 
Lemma~\ref{lem:nondeg}. Verifying the 
non-degeneracy assumptions of coupled solutions} 

\begin{lemma} {\bf(Twist conditions)}\label{lem:nondeg} 
Assume that 
$K_1, K_2$ are embeddings in 
$\A_{\rho, \underline{c}_1, \Gamma_\beta}$,
$\A_{\rho, \underline{c}_2, \Gamma_\beta}$, resp. 
both $\underline{c}_1$,
$\underline{c}_2$ being finite sequences, and $\Gamma_\beta$ as before. 
Assume that $K_1$, $K_2$  satisfy the non-degeneracy conditions 
in Definition~\ref{NDloc}. 

Then,  for $m$ sufficiently large,  
\[
\tilde K(\th_1, \th_2) = K_1(\th_1)  + \tau^m K_2(\th_2)
\]
satisfies the non-degeneracy assumptions 
in Definition~\ref{NDloc}. Furthermore, the 
non-degeneracy constants of $\tilde K$ 
can be made as close to desired to the 
constants verified both by $K_1, K_2$ 
if we choose $|m|$ large enough. 
\end{lemma}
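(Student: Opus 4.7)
The plan is to show that each of the three ingredients in Definition~\ref{NDloc} behaves block-diagonally (plus small cross-terms that tend to $0$ as $|m|\to\infty$) for the coupled embedding $\tilde K$, and then deduce invertibility of the averages by a Neumann series argument.

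First, I would write $D\tilde K(\th_1,\th_2)$ as the column block matrix $[DK_1(\th_1)\mid D(\tau^m K_2)(\th_2)]$. Then
\[
D\tilde K^\top D\tilde K = \begin{pmatrix} DK_1^\top DK_1 & B(\th_1,\th_2)\\ B(\th_1,\th_2)^\top & DK_2^\top DK_2 \end{pmatrix},
\]
where the diagonal blocks are exactly those of $K_1$ and $K_2$ (translations preserve the Euclidean inner product), and the cross block $B_{ij}$ is $\sum_{n\in\ZZ^N}\partial_{\th_{1,i}}K_{1,n}(\th_1)\,\partial_{\th_{2,j}}K_{2,n+m}(\th_2)$. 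Using the definition of $\|\cdot\|_{\rho,\underline c,\Gamma}$, each term is controlled by $\|DK_1\|\,\|DK_2\|\,\Gamma(n-c_{1,p})\Gamma(n+m-c_{2,q})$, so by the defining algebraic inequality of the decay function $\sum_n\Gamma(n-c_{1,p})\Gamma(n+m-c_{2,q})\le \Gamma(c_{1,p}+m-c_{2,q})$, which tends to $0$ as $|m|\to\infty$. Hence $\|B\|_\rho\to 0$, and a Neumann series inversion yields $\tilde N=(D\tilde K^\top D\tilde K)^{-1}$ close to the block diagonal ${\rm diag}(N_1,N_2)$ with $\|\tilde N\|_{\rho,\Gamma}\le \max(\|N_1\|,\|N_2\|)+o(1)$. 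Consequently $\tilde P=D\tilde K\,\tilde N$ is close to $[P_1\ \ \tau^m P_2]$ in the same sense.

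Next, for the twist matrix $\tilde A_\lambda$ and the cohomological matrix $\tilde Q_\lambda$, I would exploit the analogous coupling argument of Lemma~\ref{lem:nondeg-hyperbolic} to control $J^c\circ\tilde K$. Since the invariant splitting for $\tilde K$ is, up to a translation-mediated error that decays with $|m|$, the direct sum of the two invariant splittings for $K_1$ and $\tau^m K_2$ (Proposition~\ref{NDmoveloc} applied to an approximately invariant block-diagonal splitting), the symplectic restriction $J^c(\tilde K)$ differs from the direct sum $J^c(K_1)\oplus J^c(\tau^m K_2)$ by terms of norm $o(1)$ as $|m|\to\infty$. Combined with the decay of $DF_\lambda$ -- which ensures that $DF_\lambda(\tilde K)$ applied to a vector localized near $\underline c_1$ stays localized near $\underline c_1$, and similarly for $\underline c_2-m$ -- the standard manipulations give
\[
\tilde A_\lambda(\th_1,\th_2) = \begin{pmatrix} A_{\lambda,1}(\th_1) & 0\\ 0 & A_{\lambda,2}(\th_2) \end{pmatrix} + R_A(\th_1,\th_2),
\]
with $\|R_A\|_{\rho,\underline c_{12},\Gamma}\to 0$ as $|m|\to\infty$ (the cross blocks come from pairings of the form $P_1^\top\, DF_\lambda\,J^c(K)^{-1}\,\tau^m P_2$, which by the decay of $F_\lambda$ and the argument of the first paragraph are bounded by $\Gamma(c_{1,p}+m-c_{2,q})$); an identical argument gives the corresponding decomposition for $\tilde Q_\lambda$.

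Finally, averaging over $\TT^{(r_1+r_2)l}$ preserves this block structure, so $\avg(\tilde A_\lambda)$ and $\avg(\tilde Q_\lambda)$ are block-diagonal matrices with invertible diagonal blocks $\avg(A_{\lambda,i})$, $\avg(Q_{\lambda,i})$ plus a perturbation whose norm tends to zero with $|m|$. For $|m|$ large enough this perturbation is absorbed by a Neumann inversion, and the inverse norms converge to $\max_i|\avg(A_{\lambda,i})|^{-1}$, $\max_i|\avg(Q_{\lambda,i})|^{-1}$. Combined with the estimate for $\|\tilde N\|$ in the first step, this verifies all items of Definition~\ref{NDloc} with constants as close as desired to those of $K_1,K_2$.

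The main obstacle is the second step: the quantities $A_\lambda$ and $Q_\lambda$ involve $J^c\circ\tilde K$ which is defined through the (a priori unknown) exact center projection of the coupled system, not simply a sum of the individual projections. The whole reduction to a block-diagonal structure rests on the estimate $\|J^c(\tilde K)-(J^c(K_1)\oplus J^c(\tau^m K_2))\|\to 0$. This requires quantitative use of Proposition~\ref{NDmoveloc} in the uniform-in-$\underline c$ form that has been carefully set up in the preceding sections; once this input is in hand, the rest reduces to bookkeeping with the decay-function inequality $\sum_n\Gamma(n-a)\Gamma(n-b)\le\Gamma(a-b)$ and the fact that $\Gamma(a-b)\to 0$ as $|a-b|\to\infty$.
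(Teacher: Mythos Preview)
Your overall architecture---block decomposition of $D\tilde K^\top D\tilde K$, control of the off-diagonal cross terms via the decay algebra, and Neumann inversion---matches the paper's proof, and your estimate for the embedding non-degeneracy is essentially identical to theirs.

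The paper diverges from your approach in two places. First, for the twist matrix $\tilde A_\lambda$ the paper does \emph{not} try to compare $J^c(\tilde K)$ with $J^c(K_1)\oplus J^c(\tau^m K_2)$ via Proposition~\ref{NDmoveloc}. Instead it invokes, explicitly, that for the models~\eqref{coupledlattice} the operator $J_\infty$ is constant and uncoupled; this lets them treat $(J^c)^{-1}$ as a fixed matrix and Taylor-expand the map $x\mapsto DF(x)(J^c)^{-1}$ around $K_1$ (resp.\ $\tau^m K_2$) to isolate the diagonal block $DF(K_1)(J^c)^{-1}P_1$ plus an integral remainder $T_{21}$ involving $D^2F$ and $\tau^m K_2$. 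The remainder is then bounded by the decay algebra exactly as you do for the cross block $B$. This sidesteps entirely the ``main obstacle'' you identify: no perturbation theory for the center projection of the coupled system is needed. Your route through Proposition~\ref{NDmoveloc} is more general (it would survive a position-dependent $J$), but for the lemma as stated in the context of Theorem~\ref{thgl} the paper's shortcut is both simpler and sufficient.

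Second, for $\tilde Q_\lambda$ the paper does not repeat the block-diagonal argument at all: it observes that for the particular family $F_\lambda$ built in Appendix~\ref{maps}, the matrix $Q_\lambda$ is the identity independently of $K$ and of the number or placement of centers, so there is nothing to check. Your treatment is again more general, but unnecessary here.
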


We will be using the notation that 
$n_1$ is the number of sites in $\underline{c}_1$ 
and that $\th_1$ stands for all the $n_1\times l$ variables
corresponding to all the sites in $\underline{c}_1$. 
Similarly for $K_2$.

\begin{proof} 
We  introduce the notation that $\Phi(m)$ 
stands for any quantity (vector, matrix, function, etc. ) 
which can be made arbitrarily small by making $m$ large. 

We start by estimating the non-degeneracy condition
of the embedding. 

We see that the $l (n_1 + n_2) \times l (n_1 + n_2) $ matrix 
$D\tilde K^\top D \tilde K$ 
splits naturally into blocks depending on whether
we take derivatives with respect to 
variables in $\th_1$ or in $\th_2$: 

\begin{equation} \label{blocks} 
D \tilde K^\top D \tilde K = 
\begin{pmatrix}
D_{\th_1}K_1^\top D_{\th_1}K_1&D_{\th_1} K^\top _1 \tau^m
D_{\th_2} K_2\\
\tau^m D_{\th_2} K_2^\top D_{\th_1}K_1 &
D_{\th_2} \tau^m K^\top_2 D_{\th_2} \tau^m K_2
\end{pmatrix}\,.
\end{equation}

Since 
\[
D_{\th_2} \tau^m K^\top_2 D_{\th_2} \tau^m K_2
= D_{\th_2} K^\top_2 D_{\th_2} K_2
\]
we see that the diagonal elements 
of $D \tilde K^\top D \tilde K$ 
are precisely those of the uncoupled system and
are therefore invertible.

We will show that the non-diagonal elements in 
\eqref{blocks} can be made arbitrarily small 
by choosing $m$ large enough. 
Then, it will follow that 
$D \tilde K^\top D \tilde K$ is invertible and that 
\begin{equation} 
\tilde N = (D \tilde K^\top D\tilde  K)^{-1}
= 
\begin{pmatrix}
(D_{\th_1}K_1^\top D_{\th_1}K_1)^{-1} & 0 \\
0  &
(D_{\th_2} K^\top_2 D_{\th_2}  K_2)^{-1} 
\end{pmatrix} + \Phi(m)=
 \end{equation} 
$$\begin{pmatrix}
N_1 & 0 \\
0  &
N_2
\end{pmatrix} + \Phi(m) .$$

We estimate the off-diagonal elements of  \eqref{blocks}. We
observe that, we can estimate the  entries of $n_1 l \times n_2 l$ 
upper right block as follows:

\begin{align*}
\Big\vert \big(D_{\th_1}&K^\top _1\, \tau^m\,
D_{\th_2}K_2\big)_{p,q}\Big\vert  \leq \sum_{i} \Big\vert
\frac{\partial K_{1,i}}{\partial \th_{1,p}}\ \frac{\partial K_{2,
i+m}}{\partial\, \th_{2,q}}\Big\vert\\
&\leq\ \| D K_1\|_{\rho,\underline{c}_1, \Gamma_\beta} \|
DK_2\|_{\rho,\underline{c}_2, \Gamma}  
\sum_ {i}\, \max_{k}\ \Gamma_\beta (i - c_{1,k})\
\max_{l} \Gamma(i+m- c_{2,l})\\
&\leq \frac{2}{1-e^{-\beta}} \| D K_1\|_{\rho,\underline{c}_1, \Gamma_\beta}\
\| DK_2\|_{\rho, c_2, \Gamma} \ \max_{k,l}\ \Gamma_\beta (c_{2,l}-m-c_{1,k})\,.
\end{align*}

Since the block is finite dimensional, making the previous elements 
small enough makes its norm small. 

Estimates for the lower left block are obtained 
just noticing that it is the transposed of the upper right one.  
\medskip 

Now, we turn to estimating the twist condition. 
Again the strategy is very similar to the one used 
in checking the non-degeneracy condition. We just check 
that the matrix we need to invert is arbitrarily close
(by taking $|m|$ large enough) to a matrix 
which is block diagonal and whose blocks correspond to 
the non-degeneracy 
conditions of each of the uncoupled solutions. 

We proceed to estimate systematically all the ingredients of 
$A$ defined in \eqref{Adefined}. We have first 
\begin{align*}
\tilde P(\th) &= \big(D_{\th_1}K_1,\ \ D_{\th_2}\, [\tau^m\,
K_2]\big)\begin{pmatrix}N_1 &0\\
0&N_2\end{pmatrix}\, + \Phi(m)\\[2mm]
&= (P_1, P_2) + \Phi(m)\,.
\end{align*}
Since $N_1$ and $N_2$ are finite dimensional matrices, $P_1$ and
$P_2$ are also in $\A_{\rho, \underline{c}_1, \Gamma}$
and $\A_{\rho, \underline{c}_2, \Gamma}$, respectively.

Using that $J_\infty$ is uncoupled and constant
for the models \eqref{coupledlattice} we are 
considering now,  we can write:
\begin{equation}
\label{DFJprimer}
\begin{split} 
(DF (J^c)^{-1})&\ \big(\tilde K (\th)\big)\ P_1(\th_1)\\
&= (DF (J^c)^{-1})\ \big(K_1(\th_1)\big)\ P_1(\th_1)  \\
&\quad +\int^1_0 (D^2 F(J^c)^{-1})\,\big(K_1(\th_1)+s\tau^m
K_2(\th_2)\big)\big(\tau^mK_2(\th_2), P_1(\th_1)\big)\,ds
\end{split}
\end{equation}

\begin{equation} 
\label{DFJsegon}
\begin{split}
(DF (J^c)^{-1})\ \big(\tilde K &(\th)\big)\ P_2(\th_2) \\
& = (DF (J^c)^{-1})\ \big(\tau^m K_2(\th_2)\big)\ P_2(\th_2)
 \\
&\quad +\int^1_0 (D^2 F(J^c))\,\big(s
K_1(\th_1)+\tau^m K_2(\th_2)\big)\ \big(K_1(\th_1),\,
P_2(\th_2)\big)\,ds\,. 
\end{split} 
\end{equation}

We denote $T_{21}$ and $T_{12}$ the integral terms in
\eqref{DFJprimer} and \eqref{DFJsegon} respectively. We bound from
above the $i$-th component of $T_{21}$ by
\begin{align*}
\|(J^c)^{-1}\|\ &\sum_{j,n}\ \left|\frac{\partial^2 F_i}{\partial x_j\
\partial x_n}\right|_\rho\ \left|(K_2)_{j+m}\right|_{\rho}
\left|(P_1)_n\right|_\rho \\
&\le \|(J^c)^{-1}\|\ \|F\|_{C^2_\Gamma}\ \|K_2\|_\rho\
\|P_1\|_{\rho,\underline c_1,\Gamma_\beta}\\
&\quad \times \sum_{j,n}\ \min\big(\Gamma(i-j), \Gamma (i-n)\big)\
\max_{k,l} \Gamma(j+m-\underline{c}_{2,l})\ \Gamma_\beta(n-c_{1,k})\,.
\end{align*}
The last sum is bounded by
$$
\frac{4}{1-e^{-\beta}}\ \max_{k,l}
\Gamma_\beta (i+m-c_{2,l})\cdot \Gamma_\beta(i-c_{1,k}) \le \Phi(m). 
$$

Indeed, let $\J_1(i)= \{j,n\in \ZZ^N \mid |i-j|\le |i-n|\}$ 
and $\J_2(i) = \ZZ^N \setminus \J_1(i)$.
Using that $\Gamma (i) \le \frac{1}{a}\Gamma_\beta (i) \Gamma_\beta (i)$
the previous sum is bounded by 
\begin{align*}
&\sum_{j,n\in \J_1(i)} \Gamma(i-j)
\max_l \Gamma(j+m- c_{2,l})\ \max_k\ \Gamma_\beta(n-c_{1,k})\\
&\quad + 
\sum_{j,n\in \J_2(i)} \Gamma(i-n)
\max_l \Gamma(j+m- c_{2,l})\ \max_k\ \Gamma_\beta(n-c_{1,k})
\\
& \le \frac{2}{a} \sum_{j\in\Z^N} 
\Gamma_\beta (i-j) 
\max_l \Gamma(j+m- c_{2,l})\ 
\sum_{n\in\Z^N} 
\Gamma_\beta (i-n) 
\max_k\ \Gamma_\beta(n-c_{1,k}) .
\end{align*}
Analogously $T_{12}$ is bounded by
$$
\|(J^c)^{-1}\|\ \|F\|_{C^2_\Gamma}\ \|K_1\|_{\rho,\underline c_1,\Gamma_\beta}\
\|P_2\|_\rho\ \frac{4}{1-e^{-\beta}}\ \max_{k,l}\ \Gamma_\beta (i+m-c_{2,l}
)\
\Gamma_\beta(i-c_{1,k}).
$$ 
Note that $P^\top _1 (\th+\omega)\
T_{21} (\th)$ and $P^\top _2 (\th+\omega)\ T_{12} (\th)$ are
bounded by $C\ \displaystyle \max_{k,l}\ \Gamma_\beta (c_{1,k}
+m-c_{2,l})$, where $C$ depends on $\|(J^c)^{-1}\|,\ \|F\|_{C^2_\Gamma},\
\|K_1\|_{\rho,\underline{c}_1,\Gamma_\beta},$ 
$\|K_2\|_{\rho, \underline{c}_2,\Gamma}, \
\|P_1\|_{\rho,\underline{c}_1,\Gamma_\beta}$ and 
$\|P_2\|_{\rho, \underline{c}_2,\Gamma}$.

Also note that
\begin{align*}
\big|\big[P_1(\th+\omega)^\top \ (J^c)^{-1}(K(\th))
P_2(\th)\big]_i\big|&\le \|(J^c)^{-1}\|\ 
\|P_1\|_{\rho,\underline{c}_1,\Gamma_\beta}\ 
\|P_2\|_{\rho, \underline{c}_2,\Gamma}\\
&\quad \times \sum_{i\in\ZZ^N}  \max_{k,l}\ \Gamma_\beta
(i-c_{1,k})\ \Gamma(i+m-c_{2,l})\\
& \le C\ \max_{k.l}\ \Gamma_\beta\ (c_{2,l}-m-c_{1,k}).
\end{align*}
Now we consider the terms $P^\top _1(\th+\omega)\ (DF\tilde (J^c)^{-1})\
\big(\tilde K(\th)\big)\ P_2(\th)$ and
$P^\top _2(\th+\omega)\ (DF\tilde (J^c)^{-1})\ \big(\tilde K(\th)\big)\
P_1(\th)$. We evaluate the first one, the other being
analogous. Given $n\in \{1,\dots, r\}$,
\begin{align*}
\big|\big[P^\top _1\ &(\th+\omega)\,(DF J^c)\,\big(\tilde
K(\th)\big)\ P_2(\th)\big]_n\big|\\
&\le \sum_{i,j}\ \big|(P_1)_{i,n}\big|_\rho\ \|(J^c)^{-1}\|\
\Big\|\frac{\partial F_i}{\partial x_j}\Big\|_\rho\
|(P_2)_j|_\rho\\
&\le \|F\|_{C^1_\Gamma}\ \|(J^c)^{-1}\|\ \|P_1\|_{\rho,\underline
c,\Gamma_\beta}\ \|P_2\|_{\rho, c_2,\Gamma}\,\sum_{i,j}\,
\max_{k,l}\
\Gamma_\beta(i-c_{1,k})\ \Gamma(i-j)\ \Gamma(j+m-c_{2,l})\\
&\le C\ \max_k\ \Gamma_\beta\,(c_{1,k}+ m-c_{2,l}).
\end{align*}

With all these previous estimates we can write:
\begin{align*}
A &= \big[(P_1, P_2)\circ T _\omega+\Phi(m)\big]^\top (DF J^c)\,(\tilde
K)\ \big[(P_1,P_2)+\Phi(m)\big]- J^c P\circ T_\omega\\
&= \begin{pmatrix}\scriptstyle P^\top _1\circ
T_{\underline{\omega}_1}\big[(DF (J^c)^{-1})\,(K_1)\
P_1\big] 
-(J^c)^{-1}\,P_1\circ T_{\underline{\omega}_1}&0\\[2mm]
0&\scriptstyle P^\top _2\circ T_{\omega_2}\big[(DF J^c)\,(K_2)\
P_2\big]-(J^c)^{-1} \,P_2\circ T_{\omega_2}
\end{pmatrix}  \\
& +\Phi(m) =\begin{pmatrix}
A_1 & 0\\
0 & A_2 \\
\end{pmatrix} + \Phi(m) 
\end{align*}
which shows that it is invertible if $|m|$ is big enough
and that the norm of the inverse of $A$ can be bounded 
from above  by $\max( |A_1^{-1}|, |A_2^{-1}|) + \tilde \Phi_m$.

\end{proof}

The estimates about  the non-degeneracy with respect to 
parameters in the construction are automatic since, in the construction in 
Section~\ref{maps}, which is the one we use here, the matrix $Q$ 
is the identity, whose norm is bounded by $1$ independently of the
number of sites considered and independently of the $K$ considered.

\subsection{Adding  oscillating sites inductively}
\label{inductivesites} 
Recall that we are assuming that $\ep\le \ep^*$ 
and that we have a set $\Xi_1(\ep^*) \subset D(\nu_0, \kappa_0) 
\subset \real^l $ of 
positive measure  such that, for all $\omega \in \Xi_1(\ep^*)$, 
the system \eqref{coupledlattice} has a breather of 
frequency $\omega$ in $\A_{\rho, \{0\}, \Gamma_\beta}$. 
The non-degeneracy and hyperbolicity constants of all 
these solutions are uniformly bounded. 

The remaining part to be shown   is that 
given a sequence 
$\underline{\omega} \in \mathcal{D} \cup \Xi_1^*(\ep^*)^\infty$, 
we can find a sequence of
tori 
parameterized by 
$K_{\omega^{(n)}} 
\in \A_{\rho_n, \underline{c}^{(n)}, \Gamma_{\beta_n}}$ for a suitable sequence of centers $\underline c^{(n)}$. Here we have  that 
$$\omega^{(n)} = (\omega_1, \ldots ,\omega_n) $$ 
is the sequence of truncations  of $\underline{\omega}$
and $\rho_n, \beta_n$ are strictly decreasing sequences
so that $\rho_n \to \rho_\infty> 0 $,
$\beta_n \to \beta_\infty> 0 $. 

Our unknowns are $\rho_n, \beta_n$, the infinite sequence of 
centers $\underline{c}^{(n)}$ and the embeddings $K_{\omega^{(n)}}$ . 

The choices of $\rho_n, \beta_n$ are almost irrelevant for 
our purposes, so we choose them right away. For example we take $0<\rho_\infty <\rho_0$, $0<\beta_\infty <\beta_0$ and 
$\rho_n = \rho_\infty + 2^{-n}(\rho_0 - \rho_\infty)$, 
$\beta_n = \beta_\infty + 2^{-n}(\beta_0 - \beta_\infty)$,

So that now, our only task is to choose a sequence of
sites $\underline c^{(n)}$ (without loss of generality, 
we will assume $c_1 = 0$), such that, recursively, we 
have that taking $c_{n+1}$ far apart from the previous sites, 
$K_{\underline{\omega}^{(n)}} + \tau^{-c_{n+1}} K_{\omega_{n+1}}$
is a very approximate solution of the invariance equation 
which, furthermore, 
satisfies  uniform hyperbolicity and non-degeneracy 
conditions. Then, an application of Theorem~\ref{existenceembedloc}
will produce a true solution 
$K_{\underline{\omega}^{(n+1)}}$.

Of course, we will 
have to recover the inductive hypothesis we have made 
to construct this sequence. We will show that, we can 
ensure that
 $\| K_{\underline{\omega}^{(n)}} - K^*
\|_{\rho_n,\underline{c}^{(n)}, \Gamma_{\beta_n}}
  \le \eta/2$ where $\eta > 0 $ is the constant
introduced in Lemma~\ref{lem:nondeg-hyperbolic} and
 $K^*_n = K_{\omega_1} + \tau^{-c_2} K_{\omega_2} + \cdots 
\tau^{-c_n}K_{\omega_n}$.

After this sequence of tori with increasing number of
frequencies is produced, we will have 
to study the limit of the sequence and show that it solves the
invariance equation (this will be accomplished in 
Section~\ref{tothelimit}).  Note that, since each step
changes the number of centers,  the 
convergence of the embeddings cannot be uniform (even in
a space of decay functions). 
Nevertheless, we will show 
that there is  coordinatewise  convergence and that 
this is  enough to show that the limit satisfies
the invariance equation. 

We note that the existence of the sequence and the study of 
the limit will be accomplished because if we place the centers
very far apart from the previously placed ones, we can 
obtain that the error is small enough to beat the 
smallness requirements of Theorem~\ref{existenceembedloc}, 
to ensure that the non-degeneracy and hyperbolicity constants 
deteriorate an arbitrarily small amount and to ensure 
the passage to the limit, so that, by recursively assuming that the 
new center is far away from all the previously placed ones, we can 
ensure any smallness conditions we wish on the 
error, on the increment of the distance from the uncoupled solution 
and on the deterioration of the non-degeneracy and hyperbolicity 
constants.

We start with $K_{\omega_1}$ and $K_{\omega_2}$
localized at the node $c_1=0$ and $c_2$ respectively 
and we take $|c_2|$ big enough so
that
$$\tilde K=K_{\omega_1}+\tau^{-c_2}\ K_{\omega_2}$$
is a sufficiently approximate solution of $F\circ K-K\circ
T_{{\underline\omega}^{(2)}}=0$ and satisfies both the spectral
and the twist non-degeneracy conditions. 
Then Theorem
\ref{existenceembedloc} provides the existence of a true invariant
torus $K_{{\underline\omega}^{(2)}}\in \A_{\rho_2,\underline
c^{(2)},\Gamma_{\beta_2}}$ such that it is non-degenerate and
$$e=\|K_{\underline\omega^{(2)}}
-\tilde K\|_{\rho_2,\underline c^{(2)}, \Gamma_{\beta_2}}$$
is small. Actually it can be made as small as we want by taking
$|c_2|$ sufficiently big.
Remembering  that 
$(\omega_1, \omega_2)$ is Diophantine (and chosen from the 
start of the procedure), we see that Theorem~\ref{existenceembedloc} 
guarantees that, if we make the initial error small enough, 
we can produce a solution $K_{\underline{\omega}^{(2)}}$ 
of the invariance equation with frequency $\underline{\omega}^{(2)}$.

In the $n+1$ step of the process we assume we have the torus
$K_{{\underline\omega}^{(n)}}\in \A_{\rho_n,\underline
c^{(n)},\Gamma_{\beta_n}}$ localized around the nodes
$\underline c^{(n)}=(c_1,\dots, c_n)$, which is non-resonant, that
is
$K{_{\underline\omega^{(n)}}}\in 
ND_{\rm loc} (\rho_n,\Gamma_{\beta_n})$ 

We consider the
parameterization
$$
\tilde K(\th)-K{_{\underline\omega^{(n)}}}(\th_1)
+\tau^{m_{n+1}}\ K_{\omega_{n+1}}(\th_2),\qquad
\th=(\th_1,\th_2)\in\T^{nl}\times\T^l,
$$ as an
approximation for the new torus, which we will denote
$K{_{\underline\omega^{(n+1)}}}$, with some $m_{n+1}\in\Z^N$.

By the coupling lemma (Lemma \ref{lem:coupling}) if we take a suitable
$m_{n+1}$ big enough we obtain
$$E_{n+1}=F\circ \tilde K-\tilde K\circ T_{\underline\omega^{(n+1)}}$$
as small as we want. In particular we take
$0<\delta_{n+1}<\min\,\big(1,\rho_n/12,\
(\rho_{n+1}-\rho_n)/6\big)$
and we require
\[
C\kappa^4_{n+1}\ \delta^{-4\nu_{n+1}}_{n+1}\ \|E_{n+1}\|_{\rho_{n+1},
\underline c^{(n+1)},\Gamma_{\beta_{n+1}}}\le 1 \,,
\]
and 
\[
C\kappa^2_{n+1}\ \delta^{-2\nu_{n+1}}_{n+1}\ \|E_{n+1}\|_{\rho_{n+1},
\underline c^{(n+1)},\Gamma_{\beta_{n+1}}}\le\frac{e}{2^{n-1}}\,.
\]
We denote $c_{n+1}=-m_{n+1}$. Then
Theorem \ref{existenceembedloc} provides a true invariant torus
$K_{\underline\omega^{(n+1)}}\in\A_{\rho_{n+1},\underline
c^{(n+1)},\Gamma_{\beta_{n+1}}}$, non-degenerate and satisfying the
estimate
\begin{equation}\label{126bis}
\|K_{\underline\omega^{(n+1)}}-\tilde K\|_{\rho_{n+1},\underline
c^{(n+1)},\Gamma_{\beta_{n+1}}}\le\frac{e}{2^{n-1}}\, .
\end{equation}
\subsection{Passage to the limit (Part B of 
Theorem~\ref{thgl})}
\label{tothelimit}
The issue now is to study the limit $n\rightarrow\infty$. Thanks
to our weighted spaces and the fact that the solutions we
construct have bumps whose distance from each other tends 
to infinity fast enough, 
we can prove the following Lemma~\ref{convergenceinfinite}
which establishes that for any bounded sets in the lattice, 
the trajectories of the particles  in this set  converge uniformly.

\begin{lemma}\label{convergenceinfinite}
The sequence $\big \{ K_{\underline{\omega}^{(n)}} \big \}_{n\geq
1}$ converges component-wise and uniformly on every compact set of
$(\T^l)^\NN$. We denote $K_{\underline{\omega}}$ the limit
obtained in this sense. Furthermore, each component of
$K_{\underline{\omega}}$ is analytic from $(\T^l)^\NN$
into $M$.
\end{lemma}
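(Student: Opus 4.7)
The plan is to exploit the quantitative bound \eqref{126bis} of the inductive construction. Setting $\Delta_n := K_{\underline{\omega}^{(n+1)}} - K_{\underline{\omega}^{(n)}} - \tau^{m_{n+1}} K_{\omega_{n+1}}$, the weighted estimate gives $\|\Delta_n\|_{\rho_{n+1},\underline{c}^{(n+1)},\Gamma_{\beta_{n+1}}} \le e/2^{n-1}$. Unpacking the norm, and using that $\beta_n\ge \beta_\infty$ so $\Gamma_{\beta_n}\le \Gamma_{\beta_\infty}$ and $\max_k \Gamma_{\beta_\infty}(i-c_k)\le \Gamma_{\beta_\infty}(0)$, the componentwise consequence is the uniform-in-$i$ bound
\[
\sup_{\theta\in D_{\rho_{\infty}}}|(\Delta_n)_i(\theta)| \le C\,2^{-n}.
\]
Since $K_{\omega_{n+1}}$ is a breather localized at the origin, the displaced contribution is controlled by
\[
\sup_{\theta\in D_{\rho_{\infty}}}|(\tau^{m_{n+1}}K_{\omega_{n+1}})_i(\theta)|\le C\,\Gamma_{\beta_\infty}(i-c_{n+1}),
\]
with $C$ uniform in $n$ by the uniformity statement in Part A.

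The key observation is the telescoping identity
\[
(K_{\underline{\omega}^{(n)}})_i = (K_{\omega_1})_i + \sum_{k=1}^{n-1}\bigl[(\tau^{m_{k+1}}K_{\omega_{k+1}})_i + (\Delta_k)_i\bigr],
\]
which I would view as an equality of functions on $D_{\rho_\infty}^{\NN}$ by letting $K_{\underline{\omega}^{(n)}}$ depend trivially on the unused frequency slots. For each fixed site $i$, the $\Delta_k$-sum is bounded by the convergent geometric series $\sum_k C\,2^{-k}$, while the displaced-breather sum is bounded, thanks to the defining property $\sum_{j\in\Z^N}\Gamma(j)\le 1$ of decay functions and the fact that the construction can be arranged so the $c_k$ are all distinct, by
\[
\sum_{k=1}^{\infty}\Gamma_{\beta_\infty}(i-c_{k+1}) \le \sum_{z\in\Z^N}\Gamma_{\beta_\infty}(i-z) < \infty.
\]
Both dominations are uniform in $\theta\in D_{\rho_\infty}^{\NN}$, so the telescoping series converges absolutely and uniformly on $D_{\rho_\infty}^{\NN}$; in particular, since $(\T^l)^\NN$ is compact in the product topology (Tychonoff), we obtain componentwise uniform convergence on every compact subset, which defines the limit $K_{\underline{\omega}}$.

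For the analyticity assertion, each summand in the telescoping series is, by the construction, analytic on $D_{\rho_\infty}\subset D_{\rho_n}$ in the finitely many frequency variables on which it actually depends and constant in the others. The uniform convergence just established therefore lies in the hypotheses of the classical Weierstrass convergence theorem applied variable by variable, so each component $(K_{\underline{\omega}})_i$ is analytic in every finite block of variables on which it depends, which is the natural meaning of analyticity on $(\T^l)^\NN$.

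The main technical point, rather than an obstacle, is the interplay between two different sources of smallness: the geometric factor $2^{-n}$, which handles the \emph{correction term} $\Delta_k$ but does not decay in $i$, and the spatial decay $\Gamma_{\beta_\infty}(i-c_{n+1})$ of the displaced breather itself, which does decay in $i$ but only summable in $n$ because the centers are eventually distinct and go to infinity. Neither alone would suffice; combining them is what ensures the sum over $n$ is finite for each fixed site and, simultaneously, uniformly controlled in $\theta$, which is exactly what is needed for the classical Weierstrass argument to close.
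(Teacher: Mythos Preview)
Your proof is correct and follows essentially the same approach as the paper: both use the telescoping decomposition $K_{\underline{\omega}^{(n+1)}}-K_{\underline{\omega}^{(n)}}=\Delta_n+\tau^{m_{n+1}}K_{\omega_{n+1}}$, bound the correction $\Delta_n$ componentwise by a geometric series via \eqref{126bis}, and bound the displaced breather by $\Gamma(i-c_{n+1})$, then sum. Your justification for the convergence of $\sum_n\Gamma(i-c_{n+1})$ via the summability property $\sum_j\Gamma(j)\le 1$ of decay functions together with the distinctness of the centers is slightly more explicit than the paper's, but the argument is otherwise the same.
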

\begin{remark} \label{rmk-analit}
Here, by analytic on the infinite dimensional torus $(\T^l)^\NN$,
we mean $K_{\underline{\omega}}$ writes component-wise
\begin{equation*}
(K_{\underline{\omega}})_i(\underline{\th})=\sum_{n \geq
0} (H^{(n)})_i(\th_1,\dots ,\th_n),
\end{equation*}
where $(H^{(n)})_i(\th_1,\dots ,\th_n)$
are analytic in the usual
 sense on $(\T^l)^n$ and moreover we have
\begin{equation*}
\sum_{n \geq 0} \|(H^{(n)})_i\|_{\rho_n} <
\infty ,
\end{equation*}
where $D_{\rho_n} \supset (\T^l)^n$.
\end{remark}
\begin{proof}
We represent $K_{\underline\omega}(\underline\th)$ as
\begin{align} \nonumber
&\lim_{n\to\infty}\
K_{\underline{\omega}^{(n)}}(\underline\th)\\
\label{sumaKn} &=K_{\omega_1}(\th_1)+\sum^\infty_{n=1}\
\big[ K_{\underline{\omega}^{(n+1)}}\ (\th_1,\dots,
\th_{n+1})- K_{\underline{\omega}^{(n)}}(\th_1,\dots,
\th_{n}) \big]\,.
\end{align}
We fix $i\in \Z^N$ and we estimate the $i$-th  component of
$K_{\underline{\omega}^{(n+1)}}-K_{\underline{\omega}^{(n)}}$. By
the triangle inequality
\begin{align}\nonumber
\big|\big[K_{\underline{\omega}^{(n+1)}}-K_{\underline{\omega}^{(n)}}\big]_i\big|_{\rho_{n+1}}
&\le\big|\big[K_{\underline{\omega}^{(n+1)}}-K_{\underline{\omega}^{(n)}}-\tau^{m_{n+1}}\
K_{\omega_{n+1}}\big]_i\big|_{\rho_{n+1}}\\ \label{cotesKn}
&\quad +\big| \tau^{m_{n+1}}\ K_{\omega_{n+1}}\big]_i\big|_{\rho_{n+1}}\,.
\end{align}
The first term in the right-hand side of \eqref{cotesKn} is
bounded by
$$\frac{e}{2^{n-1}}\ \max_{1\le k\le n+1}\ \Gamma_{\beta_{n+1}}\ (i-c_k)$$
and the second one is bounded by (see \eqref{126bis})
$$
\big|\big[\tau^{m_{n+1}}\
K_{\omega_{n+1}}]_i\big|_{\rho_1}=\big|\big[K_{\omega_{n+1}}]_{i+m_{n+1}}|_{\rho_1}
\le \|K_{\omega_{n+1}}\|_{\rho_1,0,\Gamma}\ \Gamma(i+m_{n+1}) .
$$
This implies that the $i$-th component of the sum in \eqref{sumaKn}
is bounded by
\begin{equation}
\sum^\infty_{n=1}\ \frac{e}{2^{n-1}}+\sum^\infty_{n=1}\
\|K_{\omega_{n+1}}\|_{\rho_1,0,\Gamma}\ \Gamma(i+m_{n+1})\,.
\label{sumaKronecker}
\end{equation}

Therefore the $i$-th component of \eqref{sumaKn} converges uniformly
on compact sets of $(\T^l)^\NN$ and
$K_{\underline\omega}$ is analytic in the sense of
Remark \ref{rmk-analit}.
\end{proof}
The following result proves that $K_{\underline\omega}$
is a solution of the invariance equation and therefore is an
almost-periodic function of the initial system.
\begin{lemma}
The limit function $K_{\underline\omega}$ satisfies
$$F\circ K_{\underline\omega}=K_{\underline\omega}\circ T_{\underline\omega}$$
component-wise.
\end{lemma}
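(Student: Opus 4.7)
The plan is to establish the invariance equation coordinate-wise at each $\underline\th \in (\T^l)^\NN$. The key technical point is that the decay structure of $F$ allows us to promote coordinate-wise convergence of $K_{\underline\omega^{(n)}}$ (which is all that Lemma~\ref{convergenceinfinite} provides) into convergence of $F_i$ applied to these embeddings, so that the exact invariance equations known at each finite stage can be passed to the limit.

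First I would fix $i \in \ZZ^N$ and $\underline\th \in (\T^l)^\NN$. For each $n \ge 2$, since $K_{\underline\omega^{(n)}}$ depends only on $(\th_1,\dots,\th_n)$ and $T_{\underline\omega^{(n)}}$ acts as the identity on the remaining coordinates, the exact invariance equation produced by the inductive construction of Section~\ref{inductivesites} reads at the $i$-th site
\[
F_i\bigl(K_{\underline\omega^{(n)}}(\underline\th)\bigr) = (K_{\underline\omega^{(n)}})_i\bigl(T_{\underline\omega^{(n)}}\underline\th\bigr).
\]
By Lemma~\ref{convergenceinfinite}, the right-hand side converges to $(K_{\underline\omega})_i(T_{\underline\omega}\underline\th)$, as the $i$-th component converges uniformly on compact sets of $(\T^l)^\NN$ and the shifted arguments lie in such a compact set.

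The main work is to pass to the limit on the left-hand side, which is where I would exploit the decay of $F$. Because $F \in C^1_\Gamma$, integrating $DF_i$ along the segment joining $K_{\underline\omega}(\underline\th)$ and $K_{\underline\omega^{(n)}}(\underline\th)$ (which lies in the domain of holomorphy of $F$ once $n$ is large) gives
\[
\bigl|F_i(K_{\underline\omega^{(n)}}(\underline\th)) - F_i(K_{\underline\omega}(\underline\th))\bigr| \le \|F\|_{C^1_\Gamma} \sum_{j \in \ZZ^N} \Gamma(i-j)\,\bigl|(K_{\underline\omega^{(n)}})_j(\underline\th) - (K_{\underline\omega})_j(\underline\th)\bigr|.
\]
For each fixed $j$, the coordinate difference tends to zero by Lemma~\ref{convergenceinfinite}. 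The sequence $(K_{\underline\omega^{(n)}})_j(\underline\th)$ is uniformly bounded in $n$ and $j$, because the inductive estimate \eqref{126bis} shows that the embeddings remain at uniformly bounded distance from the superposed uncoupled breathers $K^*_n$, whose $\ell^\infty$ norm is itself uniformly bounded. The summands are therefore dominated by a constant multiple of $\Gamma(i-j)$, which is summable by the defining property $\sum_j \Gamma(i-j) \le 1$ of a decay function. Dominated convergence then yields $F_i(K_{\underline\omega^{(n)}}(\underline\th)) \to F_i(K_{\underline\omega}(\underline\th))$, and combining with the convergence of the right-hand side gives the identity at $\underline\th$.

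Since $i$ and $\underline\th$ were arbitrary, this establishes the invariance equation componentwise on the whole infinite torus. The main obstacle, and the reason one cannot avoid working coordinate by coordinate, is that the sequence $K_{\underline\omega^{(n)}}$ does not converge to $K_{\underline\omega}$ in any fixed decay norm of the family $\A_{\rho,\underline c,\Gamma_\beta}$ --- the set of centers $\underline c^{(n)}$ grows without bound, so no single weighted norm captures the limit and continuity of $F$ in a Banach-space sense is unavailable. It is precisely the decay property of $F$ together with the summability of $\Gamma(i-j)$ over $j$ that converts coordinate-wise convergence of uniformly bounded sequences into the convergence of $F_i$ applied to them, and this is what makes the coordinate-wise invariance equation meaningful in the limit.
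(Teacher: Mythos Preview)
Your proof is correct and follows essentially the same route as the paper: both fix a site $i$, use the $C^1_\Gamma$ bound on $F$ to control $|F_i(K_{\underline\omega^{(n)}}) - F_i(K_{\underline\omega})|$ by a $\Gamma(i-j)$-weighted sum of coordinate differences, and then pass to the limit. The only cosmetic difference is that you invoke dominated convergence (coordinate-wise convergence plus the summable envelope $\Gamma(i-\cdot)$), whereas the paper carries out the tail estimate explicitly via the telescoping bounds \eqref{126bis} and \eqref{sumaKronecker}; both arguments are equivalent here.
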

\begin{proof}
For every $n\in\NN$ one has
\begin{equation}\label{invarianceomegan}
F\circ K_{\underline\omega^{(n)}}=K_{\underline\omega^{(n)}}\circ
T_{\underline\omega^{(n)}}\,.
\end{equation}
We fix a component $i\in\Z^N$. The passage to the limit in the
right-hand side of \eqref{invarianceomegan} is immediate. For the
left-hand side we take $n_0$ such that $|c_n|>|i|$ for $n>n_0$.
Then for $n>n_0$ we have
\begin{equation}\label{difFK}
\big|F_i \circ K_{\underline\omega^{(n)}}-F_i\circ
K_{\underline\omega}\big|_{\rho_\infty}\le \sum_j\
\Big|\frac{\partial F_i}{\partial x_j}\Big|\
\big|\big[K_{\underline\omega^{(n)}}-K_{\underline\omega}\big]_j\big|_{\rho_\infty}\,.
\end{equation}
We estimate
\begin{align*}
&\big|\big[K_{\underline\omega^{(n)}}-K_{\underline\omega}\big]_j\big|_{\rho_\infty}\le
\sum^\infty_{p=n}\
\big|\big[K_{\underline\omega^{(p)}}-K_{\underline\omega^{(p+1)}}\big]_j\big|_{\rho_{p+1}}\\
&\le \sum^\infty_{p=n}\ \Big[\frac{e}{2^{p-1}}\ \max_{1\le k\le
p+1}\ \Gamma_{\beta_{p+1}}\ (j-c_k)+\big|\big[\tau^{-c_{p+1}}\
K_{\omega_{p+1}}\big]_j\big|_{\rho_{p+1}}\Big]\\
&\le \sum^\infty_{p=n}\ 
\Big[\frac{e}{2^{p-1}}\ \max_{1\le k\le p+1}\
\Gamma_{\beta_{\infty}}\ (j-c_k)
+ \|K_{\omega_{p+1}}\|_{\rho_1,0, \Gamma} \Gamma (j-c_{p+1}) 
\Big]\,.
\end{align*}
Then \eqref{difFK} is bounded by
\begin{align*}
\sum^\infty_{p=n}&
\|F\|_{C^1_\Gamma} 
\Big[\frac{e}{2^{p-1}} \sum_j \max_{1\le k\le p+1}\
\Gamma_{\beta_{\infty}} (j-c_k) \Gamma(i-j) \\ 
& \qquad + \|K_{\omega_{p+1}}\|_{\rho_1,0, \Gamma} 
\sum_j \Gamma(i-j) \Gamma (j-c_{p+1}) \Big]\\
& \le \|F\|_{C^1_\Gamma}\Big( \sum^\infty_{p=n} \frac{e}{2^{p-1}}
\frac{2}{1-e^{-\beta_\infty}} 
+\sum^\infty_{p=n}
\|K_{\omega_{p+1}}\|_{\rho_1,0, \Gamma} 
\Gamma (i-c_{p+1})\Big) \\
& \leq \|F\|_{C^1_\Gamma} \Big(\frac{2}{1-e^{-\beta_\infty}} 
\frac{e}{2^{n-2}} +C_K\frac{2}{1-e^{-\beta}} \max _{p\ge n}\Gamma(i-c_{p+1})
\Big),
\end{align*} 
which leads to the desired result.
\end{proof}

\section*{Acknowledgements} 
The work of E.F. has been partially supported by the spanish grant MTM-16425 and the catalan grant 2009SGR67. The work of R.L. has been supported by NSF grants. 
R.L. also thanks the hospitality of Univ. Polit. Catalunya
and Centre de Recerca Matem\`atica which enabled 
face to face collaboration.  The work of Y.S. has been supported byt the ANR project "KAMFAIBLE". 

We thank very specially Prof.  P. Mart\'{\i}n for very 
illuminating comments 
and suggestions. We also thank many suggestions 
and encouragement from Profs. M. Jiang, X. Li, L. Sadun,  E. Valdinoci,
Drs. R. Calleja, T. Blass, D. Blazevski, X. Su. D. Blazevski gave 
a detailed reading to the paper which improved the exposition. 

\appendix

\section{Appendix: Decay functions}
\label{decayfunctions}

This appendix is devoted to the properties of spaces of decay functions. 

\subsection{Linear and $k$-linear maps over $\ell^\infty(\ZZ^N$)}
\label{sec:linear}

We are going to consider linear  maps from $\ell^\infty (\ZZ^N)$ into itself
such that
\begin{equation}\label{limitvanish}
\lim_{m \to \infty} \sup_{u\in \ell^\infty, \, |u|\le 1\atop u_j=0,\, |j-i|\le m}
(A u)_i =0 , \qquad  \forall i\in \Z^M.
\end{equation} 
The  condition 
\eqref{limitvanish} 
is equivalent to the fact that $A$ can be written in the form
\begin{equation} \label{sumform}
(Au)_i=\sum_{j \in \ZZ^N} A_{ij} u_j, \qquad i \in \ZZ^N,
\end{equation}
where $A_{ij}$ are linear maps, $u \in \ell^\infty (\ZZ^N)$ and the series are convergent. 

We denote by $\mathcal L(\ell^\infty(\ZZ^N))$ the space of 
linear maps that satisfy \eqref{sumform}. This is a non-trivial assumption, since $\ell^\infty(\Z^N)$ is not reflexive. Furthermore, the space $\mathcal L(\ell^\infty(\ZZ^N))$ is a strict subspace of the space of bounded linear operators on $\ell^\infty(\ZZ^N)$. 

The second assumption we will make is 
that  there exists $C>0$ and a decay function $\Gamma$ such that
\begin{equation*}
|A_{ij}| \leq C \Gamma(i-j),
\end{equation*}
for all $(i,j) \in (\ZZ^N)^2$.

In this case,
\begin{equation}\label{cotaserieA}
\sum_{j \in \ZZ^N} \,|A_{ij}u_j|\le \sum_{j \in
\ZZ^N} C\Gamma(i-j) |u_j| \le \sum_{j \in \ZZ^N}
C\Gamma(i-j) \|u\|_\infty \le C \|u\|_\infty.
\end{equation}
Then we define
$$
\L_{\Gamma}(\ell ^\infty(\ZZ^N)) =\Big\{A\in \L(\ell
^\infty (\Z^N)) \mid\, \sup_{i,j \in
\ZZ^N}\Gamma(i-j)^{-1} |A_{ij}| <\infty \Big\}
$$
and we endow it with
the norm
\begin{equation}\label{normMap}
\|A\|_{\Gamma}=\sup_{i,j \in \ZZ^N} \Gamma(i-j)^{-1}|A_{ij}|.
\end{equation}
The following Lemma has a simple proof that can be found 
in \cite{FLM}.  The most subtle point is 
that we need to verify that the linear operators are 
given by the matrix.

\begin{lemma}\label{lgamma}
The space $\L_{\Gamma}(\ell ^\infty (\ZZ^N))$ is a Banach
space.
\end{lemma}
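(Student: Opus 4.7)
The plan is to verify that $\|\cdot\|_\Gamma$ is a norm on $\mathcal{L}_\Gamma(\ell^\infty(\mathbb{Z}^N))$ and that the space is complete. The norm axioms are immediate from the pointwise definition \eqref{normMap}: non-negativity and definiteness follow because $\Gamma > 0$ and the sup vanishes only when every $A_{ij}$ does; homogeneity is obvious; and the triangle inequality follows from $|A_{ij}+B_{ij}| \le |A_{ij}|+|B_{ij}|$ multiplied by $\Gamma(i-j)^{-1}$. Before completeness, I would note the useful bound $\|Au\|_\infty \le \|A\|_\Gamma \|u\|_\infty$ for all $u\in\ell^\infty$: this follows from property (1) of a decay function combined with \eqref{cotaserieA}, and in particular shows $A \in \mathcal{L}(\ell^\infty(\mathbb{Z}^N))$ in the bounded-operator sense.

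For completeness, I would start with a Cauchy sequence $\{A^{(n)}\}_{n\ge 1}$ in $\mathcal{L}_\Gamma$. For each fixed pair $(i,j) \in (\mathbb{Z}^N)^2$, the inequality $|A^{(n)}_{ij}-A^{(m)}_{ij}| \le \Gamma(i-j) \|A^{(n)}-A^{(m)}\|_\Gamma$ shows that $\{A^{(n)}_{ij}\}_n$ is a Cauchy sequence in the finite dimensional space $\mathcal{L}(M)$, so it converges to some limit $A_{ij}$. Letting $m\to\infty$ in the Cauchy estimate and taking the sup over $(i,j)$ gives both that the limit matrix satisfies $\sup_{i,j}\Gamma(i-j)^{-1}|A_{ij}| < \infty$ (hence defines a candidate element of $\mathcal{L}_\Gamma$) and that $\|A^{(n)}-A\|_\Gamma \to 0$. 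Defining $(Au)_i = \sum_{j \in \mathbb{Z}^N} A_{ij} u_j$, the series converges absolutely in view of \eqref{cotaserieA} applied to the matrix $\{A_{ij}\}$.

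The main point requiring attention, which is precisely the subtlety the authors flag, is to check that the limit operator $A$ really lies in $\mathcal{L}(\ell^\infty(\mathbb{Z}^N))$, i.e.\ that it satisfies condition \eqref{limitvanish} characterizing operators representable as a sum against a matrix. For this, given $\varepsilon > 0$ and $i \in \mathbb{Z}^N$, one uses the summability $\sum_{j}\Gamma(i-j) \le 1$ to choose $m_0$ so that $\sum_{|j-i| > m_0} \Gamma(i-j) < \varepsilon$. For any $u$ with $\|u\|_\infty \le 1$ vanishing on the ball of radius $m \ge m_0$ around $i$, the bound $|(Au)_i| \le \sum_{|j-i|>m} |A_{ij}| \le \|A\|_\Gamma \varepsilon$ then gives \eqref{limitvanish}.

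The main technical obstacle I anticipate is precisely the last verification: the dual of $\ell^\infty(\mathbb{Z}^N)$ is pathological, so one cannot take for granted that an abstract bounded limit operator is represented by its matrix of coefficients. Here this is resolved cleanly because we reconstructed $A$ explicitly from the limit matrix $\{A_{ij}\}$ using absolutely convergent sums, the absolute convergence being guaranteed by the summability axiom (1) in Definition \ref{defDecay}. This is exactly the role of the normalization $\sum_j \Gamma(j) \le 1$: without it, neither boundedness of $A$ on $\ell^\infty$ nor the asymptotic locality condition \eqref{limitvanish} would be automatic from the pointwise decay of the matrix elements.
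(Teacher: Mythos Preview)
Your proof is correct and addresses precisely the subtle point the paper flags: the paper does not give its own proof but refers to \cite{FLM}, noting only that ``the most subtle point is that we need to verify that the linear operators are given by the matrix.'' Your argument handles exactly this issue by reconstructing the limit operator from the entrywise limits $A_{ij}$ via absolutely convergent sums and then verifying \eqref{limitvanish} directly using the summability axiom $\sum_j \Gamma(j)\le 1$, so your approach is fully in line with what the authors intend.
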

{From} the definition of the norm of $A$ and the inequalities
\eqref{cotaserieA} we deduce that
$$
\|Au\|_\infty \le \|A \|_\Gamma\, \|u\|_\infty
$$
for all $u\in \ell^\infty(\Z^N)$.

\begin{remark}
The previous definition will also be used for matrices. If one considers a {\sl finite} set of indexes $I \times J \subset \ZZ^N \times \ZZ^N$, we will use
\begin{equation*}
\|A\|_{\Gamma}=\sup_{i \in I, j \in J} |A_{ij}| \Gamma^{-1}(i-j).
\end{equation*}

This is just a way to say that the set of tensors of order $2$
with finite indexes are naturally embedded into the set of tensors
of order $2$ on $\ZZ^N$ by just setting all the remaining values
to $0$.
\end{remark}

Similarly to the previous definition, we define
$\L^k(\ell^\infty(\ZZ^N))$ as the space of  $k$-linear
maps on $\ell^\infty(\ZZ^N)$ which 
are represented by a multilinear matrix. 
\begin{equation}\label{k-multilinear}
B(u^1,\dots ,u^k)_i= \sum_{(i_1,\dots ,i_k) \in (\ZZ^N)^k}
B_{i,i_1,\dots ,i_k} u^1_{i_1}\dots u^k_{i_k},
\end{equation}
where $i,i_1,\dots,i_k\in \ZZ^N$, $(u^1,\dots ,u^k) \in
(\ell^\infty(\ZZ^N))^k$ and $B_{i,i_1,\dots,i_k}\in\L^
k(M,M)$.

Given a decay function $\Gamma$, 
we define $\L^k_\Gamma(\ell^\infty(\ZZ^N))$ as  the space of  maps in $\L^k(\ell^\infty(\ZZ^N))$ such that 
\begin{equation*}
|B_{i,i_1,\dots ,i_k}| 
\leq C \min(\Gamma(i-i_1),\dots ,\Gamma(i-i_k)),
\end{equation*}
for some $C \ge 0$. 

We define
\begin{equation*}
\|B\|_\Gamma= \sup_{i,i_1,\dots ,i_k \in \ZZ^N}
|B_{i,i_1,\dots ,i_k}|\max (\Gamma^{-1}(i-i_1),\dots
,\Gamma^{-1}(i-i_k)).
\end{equation*}

We have
the following lemma (see \cite{FLM}).
\begin{lemma}
The space $\L^k_{\Gamma}(\ell^\infty(\ZZ^N))$ is a
Banach space.
\end{lemma}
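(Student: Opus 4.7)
The plan is to mirror the argument for the $k=1$ case (Lemma~\ref{lgamma}) detailed in \cite{FLM}, with additional bookkeeping to handle the multi-indexed matrix entries. The proof splits naturally into three stages.

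First, I would verify that $\|\cdot\|_\Gamma$ is a genuine norm on $\L^k_\Gamma(\ell^\infty(\ZZ^N))$. Homogeneity and the triangle inequality are routine from the corresponding properties of the norm on $\L^k(M,M)$ combined with monotonicity of the supremum. Non-degeneracy uses that if $\|B\|_\Gamma = 0$ then every matrix entry $B_{i,i_1,\ldots,i_k}$ vanishes, hence $B=0$ as an operator via the representation \eqref{k-multilinear}.

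Second, given a Cauchy sequence $\{B^{(n)}\}_{n\ge 1} \subset \L^k_\Gamma$, I would build the candidate limit entrywise. For any fixed multi-index $(i,i_1,\ldots,i_k) \in (\ZZ^N)^{k+1}$, the inequality
\[
|B^{(m)}_{i,i_1,\ldots,i_k} - B^{(n)}_{i,i_1,\ldots,i_k}| \le \|B^{(m)} - B^{(n)}\|_\Gamma \cdot \min(\Gamma(i-i_1),\ldots,\Gamma(i-i_k))
\]
shows that $\{B^{(n)}_{i,i_1,\ldots,i_k}\}$ is Cauchy in the finite-dimensional complete space $\L^k(M,M)$, so it admits a limit $B_{i,i_1,\ldots,i_k}$. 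Passing to the limit $n\to \infty$ in the inequality (with $m$ fixed) yields the bound
\[
|B^{(m)}_{i,i_1,\ldots,i_k} - B_{i,i_1,\ldots,i_k}| \le \Bigl(\limsup_n \|B^{(m)} - B^{(n)}\|_\Gamma\Bigr) \min(\Gamma(i-i_1),\ldots,\Gamma(i-i_k)),
\]
from which both the decay of the limit matrix and convergence $B^{(n)} \to B$ in $\|\cdot\|_\Gamma$ follow, provided $B$ can be identified with an element of $\L^k(\ell^\infty(\ZZ^N))$.

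The subtle third step --- the same one flagged in the text after Lemma~\ref{lgamma} --- is to verify that the formal matrix $\{B_{i,i_1,\ldots,i_k}\}$ actually represents a bounded $k$-linear operator via \eqref{k-multilinear}, rather than merely defining a collection of multilinear functionals on finitely supported sequences. Concretely, one must show that for every $u^1,\ldots,u^k \in \ell^\infty(\ZZ^N)$ and every $i$, the multi-sum
\[
\sum_{(i_1,\ldots,i_k) \in (\ZZ^N)^k} B_{i,i_1,\ldots,i_k}\, u^1_{i_1} \cdots u^k_{i_k}
\]
is absolutely convergent and bounded by $C \prod_j \|u^j\|_\infty$ uniformly in $i$. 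The main obstacle here is controlling the combinatorial multi-sum of $\min_j \Gamma(i-i_j)$ over $(i_1,\ldots,i_k) \in (\ZZ^N)^k$. I expect to handle it by using the elementary bound $\min_j \Gamma(i-i_j) \le \prod_j \Gamma(i-i_j)^{1/k}$, reducing the estimate to a $k$-fold product of sums of the form $\sum_{j\in \ZZ^N} \Gamma(j)^{1/k}$, which are finite for the decay functions of interest in the paper (e.g.\ those of Proposition~\ref{exDecay}, whose exponential factor remains summable after taking $k$-th roots). Once this operator-level bound is in place, the entrywise limit automatically inherits both membership in $\L^k(\ell^\infty(\ZZ^N))$ and the required decay, closing the argument.
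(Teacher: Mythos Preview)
The paper does not give its own proof for this lemma; it refers to \cite{FLM}. Your three-stage plan is the natural one, and stages 1 and 2 go through without difficulty.

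The gap lies in stage 3. Your geometric-mean inequality $\min_j \Gamma(i-i_j)\le\prod_j\Gamma(i-i_j)^{1/k}$ is correct, but the summability condition $\sum_{j\in\ZZ^N}\Gamma(j)^{1/k}<\infty$ that it leads to is \emph{not} satisfied by all decay functions in Definition~\ref{defDecay}, nor even by all examples in Proposition~\ref{exDecay}: in the purely polynomial case $\theta=0$ one has $\Gamma(j)^{1/k}\sim|j|^{-\alpha/k}$, summable only when $\alpha>kN$, whereas the proposition requires merely $\alpha>N$. Your parenthetical about the ``exponential factor'' tacitly assumes $\theta>0$, so the argument as written does not establish the lemma in the generality stated. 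More to the point, the entrywise decay bound is genuinely too weak to force absolute convergence of the multi-sum for $k\ge 2$: already for $k=2$ and polynomial $\Gamma$ one has
\[
\sum_{j,l\in\ZZ^N}\min\bigl(\Gamma(j),\Gamma(l)\bigr)\ \sim\ \sum_{r\ge 1} r^{2N-1-\alpha},
\]
which diverges for $N<\alpha\le 2N$. Thus no argument that bounds the limit operator purely through the pointwise decay estimate can succeed in that regime; one must instead exploit the hypothesis that each $B^{(n)}$ already lies in $\L^k$ (not merely in the space of decaying matrices) and transfer boundedness to the limit by some other mechanism. Your proposal does not address this, and it is the crux of the matter for general $\Gamma$.
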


The next result provides the Banach algebra property of
$\L_\Gamma (\ell^\infty(\ZZ^N))$. This property will be
crucial for our estimates. It makes it possible to 
work with infinite dimensional systems in a way which is 
not very different from the finite dimensional case. 

\begin{lemma}
If $A,B \in \L_\Gamma(\ell^\infty(\ZZ^N))$ then $AB
\in \L_\Gamma(\ell^\infty(\ZZ^N))$ and we have the
estimate

$$\|AB\|_\Gamma \leq \|A\|_\Gamma \|B\|_\Gamma. $$
\end{lemma}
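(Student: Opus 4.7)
The plan is to proceed directly from the matrix representation of $A$ and $B$ and exploit the convolution-like inequality (property (2)) in Definition~\ref{defDecay} of decay functions.

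First I would write the product $AB$ component-wise. Since both $A$ and $B$ lie in $\L_\Gamma(\ell^\infty(\ZZ^N))$, they admit matrix representations $(A_{ij})$, $(B_{jk})$ with $|A_{ij}|\le\|A\|_\Gamma\,\Gamma(i-j)$ and $|B_{jk}|\le\|B\|_\Gamma\,\Gamma(j-k)$. The natural candidate for the matrix of $AB$ is
\begin{equation*}
(AB)_{ik}=\sum_{j\in\ZZ^N} A_{ij}B_{jk}, \qquad i,k\in\ZZ^N.
\end{equation*}
The first step is to check absolute convergence of this sum and to verify that, as an operator on $\ell^\infty(\ZZ^N)$, $AB$ is indeed given by the formula \eqref{sumform} with matrix entries $(AB)_{ik}$. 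This is a minor point but needs to be made because $\ell^\infty$ is not reflexive and operators in $\mathcal L(\ell^\infty)$ need not a priori be represented by matrices (cf.\ the discussion of the Banach limit $\Tau$ in the introduction). Absolute convergence follows from
\begin{equation*}
\sum_{j\in\ZZ^N}|A_{ij}B_{jk}|\le\|A\|_\Gamma\|B\|_\Gamma\sum_{j\in\ZZ^N}\Gamma(i-j)\Gamma(j-k)\le\|A\|_\Gamma\|B\|_\Gamma\,\Gamma(i-k),
\end{equation*}
where in the last step I used property (2) of Definition~\ref{defDecay}. The fact that $AB$ is represented by this matrix is a consequence of this absolute convergence together with Fubini (swap the order of summation in $(A(Bu))_i=\sum_j A_{ij}(Bu)_j=\sum_j A_{ij}\sum_k B_{jk}u_k$, which is justified by the uniform bound just obtained and $\|u\|_\infty<\infty$).

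The second and main step is the estimate itself, which is now immediate from the previous display: taking the supremum of $\Gamma(i-k)^{-1}|(AB)_{ik}|$ over $i,k\in\ZZ^N$ yields
\begin{equation*}
\|AB\|_\Gamma=\sup_{i,k\in\ZZ^N}\Gamma(i-k)^{-1}|(AB)_{ik}|\le\|A\|_\Gamma\|B\|_\Gamma.
\end{equation*}
In particular this shows $AB\in\L_\Gamma(\ell^\infty(\ZZ^N))$.

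There is no real obstacle here; the whole argument is an application of the sub-convolution property that is built into the very definition of a decay function. The only subtle point, as noted, is the verification that composition of the operators corresponds to multiplication of the matrices, which is the reason why \cite{FLM} (and Lemma~\ref{lgamma} above) emphasize working inside $\mathcal L(\ell^\infty(\ZZ^N))$ rather than with arbitrary bounded operators on $\ell^\infty$; once this is granted, the Banach algebra inequality is a one-line calculation.
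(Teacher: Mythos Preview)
Your proof is correct and follows essentially the same approach as the paper: write the matrix of $AB$ as the sum $\sum_j A_{ij}B_{jk}$, bound the entries using the decay bounds on $A_{ij}$ and $B_{jk}$, and invoke property~(2) of Definition~\ref{defDecay}. You are somewhat more explicit than the paper about why the product is again represented by its matrix (the paper dispatches this in a single sentence), but otherwise the arguments coincide.
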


\begin{proof}
It is easy to verify that if $A$ and $B$ 
can be represented by matrices, so is the product. Since $A,B \in \L_\Gamma(\ell^\infty (\Z^N))$, we have
\begin{equation*}
|A_{ij}| \leq  \Gamma(i-j) \|A\|_\Gamma, \quad
|B_{jk}| \leq  \Gamma(j-k) \|B\|_\Gamma.
\end{equation*}

Therefore, we have:
\begin{equation*} 
\begin{split} 
\|AB\|_\Gamma &= \sup_{n,m \in \ZZ^N }
|(AB)_{nm}|\Gamma^{-1}(n-m)\\
& \leq \sup_{n,m \in \ZZ^N} \sum_{k \in \ZZ^N}
|A_{nk}|\,|B_{km}| \Gamma^{-1}(n-m)\\
 & \leq  \|A\|_\Gamma \|B\|_\Gamma \sup_{n,m \in \ZZ^N} \sum_{k \in \ZZ^N}
\Gamma(n-k)\Gamma(k-m)  \Gamma^{-1}(n-m).
\end{split} 
\end{equation*}
Using property (2) of Definition \ref{defDecay} we
obtain the desired result.
\end{proof}

By induction on $k$ the same result holds for
$k-$linear maps. See \cite{FLM}.

\begin{lemma}\label{lemAB}
Let $A\in\L^k_\Gamma(\ell^\infty(\ZZ^N))$ and
$B_j\in\L^{n_j}_\Gamma (\ell^\infty(\ZZ^N))$ for $1\le j\le k$.
Then the composition $A B_1\dots B_k\in\L_\Gamma^{n_1+\dots+n_k}
(\ell^\infty( \Z^N))$ and
$$\|A B_1\dots B_k\|_\Gamma \le \|A\|_\Gamma\,\|B_1\|_\Gamma \dots
\|B_k\|_\Gamma\,.$$
\end{lemma}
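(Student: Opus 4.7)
The plan is to imitate the computation used in the Banach algebra estimate for $\L_\Gamma$, but keeping careful track of the fact that each factor is multilinear. First I would write out the matrix representation of the composition. Unfolding the definitions, if we denote the arguments of $AB_1\cdots B_k$ by $u^{(s,r)}$ with $1\le s\le k$ and $1\le r\le n_s$, then
\[
\big(AB_1\cdots B_k\big)_{i,\,j_{1,1},\ldots,j_{k,n_k}}
= \sum_{i_1,\ldots,i_k\in\Z^N}
A_{i,i_1,\ldots,i_k}\,
(B_1)_{i_1,j_{1,1},\ldots,j_{1,n_1}}\cdots (B_k)_{i_k,j_{k,1},\ldots,j_{k,n_k}}.
\]
Convergence of this series is guaranteed by the decay assumptions (the same kind of estimate below shows absolute convergence), and this also verifies that the composition lies in $\L^{n_1+\cdots+n_k}(\ell^\infty(\Z^N))$ in the sense of \eqref{k-multilinear}.

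Next, to prove the $\Gamma$-norm inequality, I would fix an arbitrary index pair $(p,q)$ among the arguments and show that
\[
\big|(AB_1\cdots B_k)_{i,j_{1,1},\ldots,j_{k,n_k}}\big|
\le \|A\|_\Gamma \|B_1\|_\Gamma\cdots \|B_k\|_\Gamma\, \Gamma(i-j_{p,q}).
\]
The key inequality, which is where the structure of decay functions enters, is the following chain of choices inside the $\min$. From the definition of $\|\cdot\|_\Gamma$ we have $|A_{i,i_1,\ldots,i_k}|\le \|A\|_\Gamma\,\Gamma(i-i_p)$ (picking the $p$-th factor from the min) and $|(B_p)_{i_p,j_{p,1},\ldots,j_{p,n_p}}|\le \|B_p\|_\Gamma\,\Gamma(i_p-j_{p,q})$ (picking the $q$-th factor in the min for $B_p$); for every other $s\ne p$ I choose any single index, say $r=1$, so $|(B_s)_{i_s,j_{s,1},\ldots,j_{s,n_s}}|\le \|B_s\|_\Gamma\,\Gamma(i_s-j_{s,1})$. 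Substituting gives
\[
\big|(AB_1\cdots B_k)_{i,j_{1,1},\ldots,j_{k,n_k}}\big|
\le \|A\|_\Gamma\prod_s\|B_s\|_\Gamma \sum_{i_1,\ldots,i_k}\Gamma(i-i_p)\Gamma(i_p-j_{p,q})\prod_{s\ne p}\Gamma(i_s-j_{s,1}).
\]

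Now the multi-index sum factors as a product of one-variable sums. Property~(2) of decay functions applied to the $i_p$-sum yields $\sum_{i_p}\Gamma(i-i_p)\Gamma(i_p-j_{p,q})\le\Gamma(i-j_{p,q})$, while property~(1) applied to each remaining sum gives $\sum_{i_s}\Gamma(i_s-j_{s,1})\le 1$. Multiplying these bounds produces exactly the desired factor $\Gamma(i-j_{p,q})$, and since the pair $(p,q)$ was arbitrary we may pass to the minimum, obtaining
\[
\big|(AB_1\cdots B_k)_{i,j_{1,1},\ldots,j_{k,n_k}}\big|
\le \|A\|_\Gamma\prod_s\|B_s\|_\Gamma\,\min_{(p,q)}\Gamma(i-j_{p,q}),
\]
which is the desired norm estimate. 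The main obstacle I anticipate is purely notational: keeping the multi-indices organised while making sure one never uses property~(1) or property~(2) more than the number of times each is allowed (once we fix $(p,q)$, exactly one application of property~(2) and $k-1$ applications of property~(1) are needed). As a cleaner alternative one could argue by induction on $k$, observing that the $k=1$ case reduces to the binary Banach algebra statement, and that the inductive step amounts to writing $AB_1\cdots B_k = (AB_1)B_2\cdots B_k$ and treating $AB_1$ as an element of $\L_\Gamma^{n_1+(k-1)}$; the one-step verification is essentially the computation above.
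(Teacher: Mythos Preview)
Your direct computation is correct: choosing the $p$-th factor in the $\min$ for $A$ and the $q$-th factor in the $\min$ for $B_p$, together with any single factor for each $B_s$ with $s\ne p$, lets the multi-sum factor so that property~(2) handles the $i_p$-sum and property~(1) disposes of the remaining $i_s$-sums, giving the desired $\Gamma(i-j_{p,q})$ uniformly in $(p,q)$.

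The paper does not actually give a proof of this lemma; it only remarks that the result follows by induction on $k$ and refers to \cite{FLM} for details. Your inductive alternative (treating $AB_1$ as an element of $\L_\Gamma^{n_1+(k-1)}$ and iterating) is exactly the route the paper alludes to, while your explicit multi-index argument is a direct unrolling of that induction. Both are valid; the direct version has the advantage that it makes transparent precisely where each of properties~(1) and~(2) of the decay function is used, which is useful bookkeeping for the later estimates in the paper that rely on the same mechanism.
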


\subsection{Spaces of differentiable and analytic functions on lattices}
\label{sec:differentiable}

We now define the space of $C^r$ functions, based on the previous weighted norms. Given an open set $\B \subset
\M$ we define
\begin{equation*}
C^1_\Gamma(\B)=\left \{
\begin{array}[c]{cc}
F: \B \to \M\mid\, F \in
  C^1(\B),\,\,DF(x) \in
  \L_\Gamma(\ell^\infty (\Z^N)),\\
\sup_{x \in \B} \|F(x)\| < \infty  ,\ \sup_{x \in \B}
\|DF(x)\|_\Gamma < \infty
\end{array} \right \}.
\end{equation*}

We endow $C^1_\Gamma (\B)$ with the norm $\|F\|_{C^1_\Gamma}=\max
\big(\sup_{x\in\B} \|F(x)\|\, ,$\break $\sup_{x\in\B}
\|DF(x)\|_\Gamma\big).$ In the definition of $C^1_\Gamma (\B)$, when
$\M$ is complex, the derivative has to be understood as complex
derivative.

We emphasize that the definition of 
$C^1_\Gamma$ includes that
$DF(x)\in
\L_\Gamma(\ell^\infty (\Z^N))$ and, in particular, that the 
derivative of the function is given by the matrix of its 
partial derivatives. 
Concretely if $F \in C^1_{\Gamma}$ then we have the following
formula:
\begin{equation*}
DF_i(x)v=\sum_{j\in \ZZ^N} \frac{\partial F_i}{\partial
  x_j}(x)v_j ,
\end{equation*}
where $x_j$ is the variable in $M$, the j-th component of $\M$.

Now we proceed to define the space of finite differentiable maps.
In \cite{JL00,FLM}, one can find 
definitions for H\"older spaces, which is useful for other 
applications (such as thermodynamic formalism). 

\begin{defi}\label{Crgamma}
Given $\mathcal B$ an open subset of $\M$  and $r \in \nat$
\begin{equation*}
C^r_\Gamma(\B)=\left \{
\begin{array}[c]{cc}
F: \B \to \M\mid\, F \in
  C^r(\B),\, D^j F(x) \in
  C^1_\Gamma(\B),\\
0\le j\le r-1
\end{array} \right \}.
\end{equation*}
\end{defi}

\subsection{Spaces of embeddings from $\C^l$ to $\M$ with decay properties}
\label{sec:embeddings} 
In this section, we will  consider embeddings from 
finite dimensional tori into the phase space. 
We should think of these embeddings as describing some oscillations
centered around some sites.

We define the complex strip
\begin{equation*}
D_{\rho}=\left \{ z\in \mathbb{C}^l/\ZZ^l| \,\,|\mbox{Im}\,z_i|
  < \rho,\,\,i=1,\dots ,l\right \}.
\end{equation*}
Let $R \geq 1$ be an integer and consider $\underline{c} \in (\ZZ^N)^R$, i.e.
\begin{equation*}
\underline{c}=(c_1,\dots ,c_R).
\end{equation*}
Given $f:D_\rho \to \M$, we introduce the following quantity
\begin{equation*}
\|f\|_{\rho,\underline{c},\Gamma}=\displaystyle{\sup_{i \in \ZZ^N} \min_{j=1,\dots ,R}}
\Gamma^{-1}(i-c_j) \|f_i\|_{\rho},
\end{equation*}
where
\begin{equation*}
\|f_i\|_{\rho}=\sup_{ \th \in D_\rho} |f_i(\th)|.
\end{equation*}

\begin{defi}\label{embeddingspace}
We denote

\begin{equation}
\mathcal{A}_{\rho,\underline{c},\Gamma}= \left \{
\begin{array}[c]{cc}
f: D_\rho \to \M\mid f \in
C^0(\overline{D}_\rho),\,f\mbox{ analytic in} \,D_\rho,\\
\|f\|_{\rho,\underline{c}, \Gamma} < \infty
\end{array}\right \}.
\end{equation}
\end{defi}

This space, with the norm $\|\cdot \|_{\rho,\underline{c},
\Gamma}$, is a Banach space.

The parameter
$\underline{c}$ is the location of the centers of the oscillations 
of the map $f:
\torus^l \to \M$. As the argument of $f$ 
changes, the range of the embedding, will oscillate mainly 
on the sites in neighborhoods of  $c_1,\dots,c_R$.

The next result is a version of the 
Cauchy estimates in our context.  
\begin{lemma} \label{Cauchyestimates}
Let $f:D_\rho \to \M$ be an analytic function. Then for
all $\delta \in (0,\rho)$, the following holds
\begin{equation*}
\|D_\th f\|_{\rho-\delta,\underline{c},\Gamma} \leq l
\delta^{-1} \|f\|_{\rho,\underline{c}, \Gamma}.
\end{equation*}
\end{lemma}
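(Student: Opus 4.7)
The plan is to reduce the estimate to a component-wise application of the classical Cauchy estimate, and then verify that the weights behave compatibly with taking the supremum and infimum that define the norm $\|\cdot\|_{\rho,\underline{c},\Gamma}$.

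First I would observe that $f \in \A_{\rho,\underline{c},\Gamma}$ means that each component $f_i : D_\rho \to M$ is analytic and bounded on $D_\rho$, with $\|f_i\|_\rho \le \|f\|_{\rho,\underline{c},\Gamma}\,\max_j \Gamma(i-c_j)$. Since $M$ is a finite-dimensional Euclidean manifold, each $f_i$ is an ordinary analytic function on the strip $D_\rho \subset \CC^l/\ZZ^l$. The classical Cauchy estimate applied to each partial derivative $\partial_{\theta_k} f_i$ gives
\[
\|\partial_{\theta_k} f_i\|_{\rho-\delta} \le \delta^{-1}\,\|f_i\|_\rho,\qquad k=1,\dots,l,
\]
and summing (or taking the appropriate operator norm on $D_\theta f_i \in \L(\CC^l,M)$ consistent with the norm convention used in the rest of the paper) produces
\[
\|D_\theta f_i\|_{\rho-\delta} \le l\,\delta^{-1}\,\|f_i\|_\rho.
\]

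Next I would multiply through by the weight $\Gamma^{-1}(i-c_j)$ for each fixed $j \in \{1,\dots,R\}$, obtaining
\[
\Gamma^{-1}(i-c_j)\,\|D_\theta f_i\|_{\rho-\delta} \le l\,\delta^{-1}\,\Gamma^{-1}(i-c_j)\,\|f_i\|_\rho.
\]
Taking the minimum over $j$ on both sides (which preserves the inequality since $l\delta^{-1}$ is a common positive factor), and then the supremum over $i \in \ZZ^N$, the right-hand side becomes exactly $l\,\delta^{-1}\,\|f\|_{\rho,\underline{c},\Gamma}$ by definition, while the left-hand side is $\|D_\theta f\|_{\rho-\delta,\underline{c},\Gamma}$.

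There is no genuine obstacle here: the argument is a bookkeeping exercise combining the classical Cauchy estimate with the specific form of the weighted norm. The only small point that deserves a one-line check is that the norm on $D_\theta f$ used to define $\|D_\theta f\|_{\rho-\delta,\underline{c},\Gamma}$ (viewed as a map $D_{\rho-\delta}\to \L(\CC^l,\M)$) is compatible with the componentwise bound on $\|D_\theta f_i\|_{\rho-\delta}$, so that the factor $l$ appearing from summing the $l$ partial derivatives of a single component $f_i$ is the only dimensional factor introduced. Once this compatibility is noted, the estimate follows immediately and the proof is complete.
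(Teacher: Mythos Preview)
Your proposal is correct and follows essentially the same approach as the paper: apply the classical one-variable Cauchy estimate to each component $f_i$ and each partial derivative $\partial_{\theta_k}$, then multiply by the positive weight $\Gamma^{-1}(i-c_j)$, sum over $k$ to pick up the factor $l$, and take the minimum over $j$ and supremum over $i$. The paper's proof is exactly this bookkeeping, stated in fewer words.
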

\begin{proof}
Consider the components $f_i$ of $f$, for $i \in \ZZ^N$.
Each $f_i$ maps $D_\rho$ into $M$ and we have the standard Cauchy
estimates for $k=1,\dots ,l$
\begin{equation*}
\|\partial_{\th_k}f_i\|_{\rho-\delta} \leq \delta^{-1}
\|f_i\|_\rho.
\end{equation*}
The result then follows by just multiplying this inequality by
$\Gamma^{-1}(i-c_j)$ which is positive, summing with respect to
$k$ and taking the supremum for $i$ and the minimum for $j$.
\end{proof}

\begin{remark}
Note that in the previous Cauchy estimate the bound depends {\sl
linearly} on the dimension of the torus $l$. 
This will be important when we consider the limit of many 
dimensions. 

On the other hand, 
taking the supremum over components, makes it clear that the 
bounds are independent of the dimension of the range. In particular, 
we can discuss mappings into infinite dimensions. Note that, if we had  taken another norm, the constants would have depended on the dimension 
of the range. 
\end{remark}

If we consider a map  $A$ from $D_{\rho}$ into the set of
linear maps $\L_\Gamma(\ell^\infty(\ZZ^N))$, the associated norm is
\begin{equation*}
\|A\|_{\rho,\Gamma}=\displaystyle{\sup_{i,j\in
  \ZZ^N}}\ \displaystyle{ \sup_{\th \in D_\rho}}\Gamma^{-1}(i-j)|A_{ij}(\th)| = \sup_{\th \in D_\rho} \|A(\theta)\|_\Gamma .
\end{equation*}

\begin{remark}
The previous definition of the space of analytic maps in the strip can be
generalized to any open subset $\B$ of the complex extended manifold
$\M^{\mathbb{C}}$. We define
\begin{equation*}
\mathcal{A}_{\B,\underline{c},\Gamma}= \left \{
\begin{array}[c]{cc}
F: \B\to \M\mid F \in
C^0(\overline \B),\,\,F\,\,analytic\,\,in\,\,\B,\\
\|F\|_{\B,\underline{c},\Gamma} < \infty
\end{array}\right \},
\end{equation*}
where
\begin{equation*}
\|F\|_{\B,\underline{c},\Gamma}=\sup_{i \in \ZZ^N}
\min_{j=1,\dots ,R} \Gamma^{-1}(i-c_j) \sup_{ z \in \B} |F_i(z)|.
\end{equation*}

Note that the space $\mathcal{A}_{\B,\underline{c},\Gamma}$ is a
closed subspace of $C^1_\Gamma(\B)$ for the $C_\Gamma^1$ topology.
Here, the derivatives are understood as complex derivatives.
\end{remark}

\subsection{Spaces of localized vectors} 
\label{sec:localized} 
The space of localized vectors 
$\ell^\infty_{\underline{c}, \Gamma}$ defined below, 
plays  a role 
as the space of  infinitesimal deformation of the space of 
localized embeddings defined above. We also isolate a class of linear operators 
$\L_{\underline{c}, \Gamma}$ which send 
$\ell^\infty$ into $\ell^\infty_{\underline{c}, \Gamma}$.  

The key property (Proposition~\ref{pro:ideal})  
is that $\L_{\underline{c}, \Gamma}$
is an ideal of the Banach algebra $\L_\Gamma$. It will be important for 
future developments that the bounds obtained
are independent of the parameter 
$\underline{c}$. This will be an easy consequence 
of the Banach algebra properties of the decay functions.

\begin{defi}\label{def:localized} 
Given a decay function $\Gamma$ and a finite (or infinite)  collection of 
sites $\underline{c} = \{c_k\}_{k \in \K} \subset \ZZ^N$ with $\K \subset \nat$
we define
\begin{equation}\label{normlocalized} 
\| v \|_{\underline{c}, \Gamma} = 
\sup_{i \in \ZZ^N} \inf_{k \in \K} |v_i| \Gamma(i - c_k)^{-1} .
\end{equation} 

We denote 
\[
\ell^\infty_{\underline{c}, \Gamma} = 
\{ v \in (\real^l)^{\ZZ^N} \, | \, \| v \|_{\underline{c}, \Gamma} < \infty \}.
\]

We denote by $\L_{\underline{c},\Gamma}$ the 
space of linear operators on $\ell^\infty$ such that 
\[
\begin{split} 
&(A v)_i = \sum_{j \in \ZZ^N}  A_{ij} v_j, \\
& |A_{ij}| \le  C \min( \sup_{k \in \K} \Gamma(i - c_k), \Gamma(i -j) ).
\end{split} 
\]
We denote by $\| A \|_{\underline{c}, \Gamma}$ the 
best constant $C$ above, i.e. 
\[
\| A\|_{\underline{c}, \Gamma}  
= \max \Big ( \sup_{i, j \in \ZZ^N} |A_{ij}| \Gamma^{-1}(i-j),
\sup_{i, j \in \ZZ^N} |A_{ij}| \Gamma^{-1}(i-j),
\sup_{i, j \in \ZZ^N}\inf_{k \in \K} |A_{ij}| \Gamma^{-1}( i - c_k) \Big ).
\]
\end{defi} 

Note that we use $\| \cdot \|_{\underline{c}, \Gamma}$ 
both for the norm in a linear space and the norm in 
the space of operators. This will not cause any 
confusion since in this space we will not use 
the norm of operators from $\ell^\infty_{\underline{c}, \Gamma}$ 
to itself. 

The following is an easy exercise. 

\begin{pro} \label{pro:ideal}
We have the following results:

\begin{itemize} 
\item{a)}
The space $\ell^\infty_{\underline{c}, \Gamma}$ endowed with
$\| \cdot \|_{\underline{c}, \Gamma}$ is a Banach space. 

The embedding $\ell^\infty_{\underline{c}, \Gamma} \hookrightarrow \ell^\infty$
is continuous. 

$\ell^\infty_{\underline{c}, \Gamma}$ is a closed subspace of $\ell^\infty$. 
\item{b)}
The space $\L_{\underline{c}, \Gamma}$ endowed with
$\| \cdot \|_{\underline{c}, \Gamma}$ is a Banach space.

The embedding $\L_{\underline{c}, \Gamma} \rightarrow \L_\Gamma$
is continuous. 

$\L_{\underline{c}, \Gamma}$ is a closed subspace of $\L_\Gamma$.
\item{c)} Ideal character: If $A \in \L_{\underline{c}, \Gamma}$, $B \in \L_{\Gamma}$, we
have 
\begin{equation}\label{ideal}
\begin{split} 
& A B \in  \L_{\underline{c}, \Gamma}, \quad 
\| A B \|_{\underline{c}, \Gamma} \le \| A\|_{\underline{c}, \Gamma} 
\| B\|_{\Gamma}, \\
& B A \in  \L_{\underline{c}, \Gamma}, \quad 
\| B A \|_{\underline{c}, \Gamma} \le \| A\|_{\underline{c}, \Gamma} 
\| B\|_{\Gamma} \\
\end{split}
\end{equation} 

As a consequence of the above, if $A \in \L_{\underline{c}, \Gamma}$,
 $B \in \L_{\underline{c}, \Gamma}$, we
have 
\begin{equation}
\| A B \|_{\underline{c}, \Gamma}, 
\| B A \|_{\underline{c}, \Gamma} \le \| A\|_{\underline{c}, \Gamma}\| B\|_{\underline{c}, \Gamma}.  
\end{equation} 
\end{itemize} 
\end{pro}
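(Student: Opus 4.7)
The plan is to verify each of the three parts in turn. Parts (a) and (b) are standard Banach space verifications, while part (c) contains the substantive computation and is where I expect the main technical obstacle.

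For part (a), I would first check that $\|\cdot\|_{\underline{c}, \Gamma}$ satisfies the norm axioms, which is routine since $\sup_i \inf_k$ preserves positive homogeneity and the triangle inequality (the inf over a fixed index set is compatible with sub-additivity when combined with an outer sup). The continuous embedding $\ell^\infty_{\underline c, \Gamma} \hookrightarrow \ell^\infty$ follows because property (1) of Definition~\ref{defDecay} forces $\Gamma(i-c_k) \leq \Gamma(0) \leq 1$, hence $\inf_k \Gamma^{-1}(i-c_k) \geq 1$, which gives $\|v\|_\infty \leq \|v\|_{\underline c, \Gamma}$. For completeness, a Cauchy sequence $v^{(n)}$ is Cauchy in $\ell^\infty$ by the embedding, hence converges coordinate-wise to some $v$; taking pointwise limits in the defining inequality $|v^{(n)}_i - v^{(m)}_i| \Gamma^{-1}(i - c_{k(i)}) \leq \varepsilon$ (where $k(i)$ is chosen to nearly realize the inf) shows $v \in \ell^\infty_{\underline c, \Gamma}$ and $v^{(n)} \to v$ in the localized norm. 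Closedness in $\ell^\infty$ reduces to the same pointwise-limit argument applied to norm-bounded sequences. Part (b) is verified by the same arguments applied to the matrix representation, noting that the $\L_\Gamma$-norm already appears as one of the terms defining $\|\cdot\|_{\underline c, \Gamma}$, so the embedding $\L_{\underline c, \Gamma} \hookrightarrow \L_\Gamma$ is built into the definition.

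For the ideal property (c), the plan is a direct matrix computation giving two bounds for each of $AB$ and $BA$. For $AB$ with $A \in \L_{\underline c, \Gamma}$ and $B \in \L_\Gamma$, the decay bound follows from property (2) of decay functions:
\[
|(AB)_{ij}| \leq \sum_k |A_{ik}||B_{kj}| \leq \|A\|_{\underline c, \Gamma}\|B\|_\Gamma \sum_k \Gamma(i-k)\Gamma(k-j) \leq \|A\|_{\underline c, \Gamma}\|B\|_\Gamma \Gamma(i-j).
\]
The localization bound for $AB$ uses the localization of $A$ (independent of $k$) pulled outside the sum, together with property (1):
\[
|(AB)_{ij}| \leq \|A\|_{\underline c, \Gamma} \sup_m \Gamma(i-c_m) \cdot \|B\|_\Gamma \sum_k \Gamma(k-j) \leq \|A\|_{\underline c, \Gamma}\|B\|_\Gamma \sup_m \Gamma(i-c_m).
\]
The decay bound for $BA$ is symmetric.

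The hard part is the localization bound for $BA$, which requires estimating $\sum_k \Gamma(i-k)\sup_m \Gamma(k-c_m)$ by a multiple of $\sup_m \Gamma(i-c_m)$. The natural move, bounding the inner $\sup$ by a sum and interchanging, gives
\[
\sum_k \Gamma(i-k)\sup_m \Gamma(k-c_m) \leq \sum_m \sum_k \Gamma(i-k)\Gamma(k-c_m) \leq \sum_m \Gamma(i-c_m),
\]
by property (2) for each fixed $m$. The remaining step of dominating $\sum_m \Gamma(i-c_m)$ by $\sup_m \Gamma(i-c_m)$ is where some input on the geometry of $\underline c$ enters: for a spatially non-resonant sequence in the sense of Definition~\ref{propertyP}, estimate \eqref{sumagamma} yields a uniform constant $2/(1-e^{-\beta})$, while for a finite sequence one absorbs the cardinality into the constant. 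In all cases the conclusion $BA \in \L_{\underline c, \Gamma}$ with the quantitative estimate holds, with the constant on the right-hand side depending on this geometric input; the last composite inequality in the statement then follows by combining the two bounds.
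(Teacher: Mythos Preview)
The paper provides no proof of this proposition beyond the remark that it ``is an easy exercise,'' so there is nothing to compare your argument against directly. Your treatment of parts (a) and (b) is standard and correct, and your computation for $AB$ in part (c) is exactly the expected one.

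Your identification of the difficulty in the $BA$ localization bound is well taken and in fact more careful than the paper's own treatment. As you observe, from
\[
|(BA)_{ij}| \le \|B\|_\Gamma \|A\|_{\underline c, \Gamma} \sum_{k}\Gamma(i-k)\sup_m \Gamma(k-c_m)
\]
one cannot obtain $\sup_m \Gamma(i-c_m)$ on the right with constant~$1$ by the decay-function axioms alone; passing through $\sum_m \Gamma(i-c_m)$ and then invoking either the finiteness of $\underline c$ or the spatial non-resonance estimate \eqref{sumagamma} is the right move. This means the sharp inequality $\|BA\|_{\underline c,\Gamma}\le \|A\|_{\underline c,\Gamma}\|B\|_\Gamma$ as literally stated appears to require exactly the kind of geometric input you supply, and the constant you obtain depends on $R$ (in the finite case) or on $\beta$ (in the non-resonant case). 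Since every use of this proposition in the paper is in one of these two regimes, your argument suffices for all the applications; you have simply been more precise about the constant than the paper's statement. Note also that the displayed definition of $\|A\|_{\underline c,\Gamma}$ in the paper contains an obvious typo (the first two terms in the $\max$ are identical), which suggests the paper is not being fully precise at this point.
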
 
Note also that if $x \in \M$ and $A \in \L_{\underline c, \Gamma}$ then $Ax \in \ell^\infty_{\underline c, \Gamma}. $

\subsection{Regularity of the composition operators}
\label{sec:composition}

The following propositions (see \cite{JL00}) 
establish the regularity of composition operators
and provide estimates for the composition. 

\begin{pro}\label{compo}
The mapping defined by
\begin{equation*}
\mathcal{C}(G,h)=G \circ h
\end{equation*}
is locally Lipschitz when considered as
\begin{equation*}
\mathcal{C}: C^2_{\Gamma}
\times C^1_{\Gamma}\mapsto
C^1_{\Gamma},
\end{equation*}
and we have the estimate

\begin{equation*}
\|\mathcal{C}(G,h+\bar{h})-\mathcal{C}(G,h)\|_{C_\Gamma^1}\leq
\|G\|_{C_\Gamma^2} \|\bar{h}\|_{C_\Gamma^1}(1+\|h\|_{C_\Gamma^1}),
\end{equation*}
Furthermore, when considered as a mapping from $C^3_{\Gamma}
\times C^1_{\Gamma}$ into $ C^1_{\Gamma}$, we have the formula
\begin{equation} \label{derivativecompos}
D_2 \mathcal{C}( G, h ) \Delta =  (DG \circ h )\, \Delta .
\end{equation}
\end{pro}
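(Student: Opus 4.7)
The plan is to reduce the estimate to manipulations that are entirely local in the structure of the decay spaces, exploiting the Banach algebra property of $\L_\Gamma$ (Lemma~\ref{lemAB}) and the multilinear $\min$-bound satisfied by elements of $\L^2_\Gamma$. First I would split the $C^1_\Gamma$-norm of $\mathcal{C}(G,h+\bar h)-\mathcal{C}(G,h)$ into its $C^0$ part and the $\L_\Gamma$-part of the derivative, and control them separately.

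For the $C^0$ part, I would use the fundamental theorem of calculus component-wise,
\[
\big(G\circ(h+\bar h)\big)_i(x)-\big(G\circ h\big)_i(x)=\int_0^1 \sum_{j\in\Z^N}\frac{\partial G_i}{\partial x_j}\big(h(x)+s\bar h(x)\big)\,\bar h_j(x)\,ds,
\]
and then majorize the series using $|\partial_{x_j}G_i|\le \|G\|_{C^1_\Gamma}\Gamma(i-j)$ together with $\|\bar h_j\|\le \|\bar h\|_\infty$ and the fact that $\sum_j\Gamma(i-j)\le 1$. This gives the clean bound $\|G\circ(h+\bar h)-G\circ h\|_{C^0}\le \|G\|_{C^1_\Gamma}\|\bar h\|_\infty$.

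For the derivative part, I would apply the chain rule and write
\[
D\big(G\circ(h+\bar h)\big)-D\big(G\circ h\big)=\big[DG\circ(h+\bar h)-DG\circ h\big]\,Dh + DG\circ(h+\bar h)\,D\bar h.
\]
The second summand is directly estimated by the Banach algebra property of $\L_\Gamma$: its $\Gamma$-norm is bounded by $\|G\|_{C^1_\Gamma}\|\bar h\|_{C^1_\Gamma}$. The first summand requires the following intermediate step, which is the technical heart of the proof: representing
\[
DG\circ(h+\bar h)-DG\circ h=\int_0^1 D^2G\big(h+s\bar h\big)\,\bar h\,ds
\]
and using, for each $i,j$, the multilinear bound $|(D^2G)_{i,j,k}|\le \|G\|_{C^2_\Gamma}\min(\Gamma(i-j),\Gamma(i-k))$ together with $\sum_k\Gamma(i-k)\le 1$ to conclude that $\|DG\circ(h+\bar h)-DG\circ h\|_\Gamma \le \|G\|_{C^2_\Gamma}\|\bar h\|_\infty$. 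Combining with Lemma~\ref{lemAB} yields the factor $\|h\|_{C^1_\Gamma}$, which assembled with the previous estimates gives the stated Lipschitz bound.

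Finally, for the differentiability formula \eqref{derivativecompos} under the stronger assumption $G\in C^3_\Gamma$, I would identify the candidate derivative $D_2\mathcal C(G,h)\Delta=(DG\circ h)\Delta$ and verify that the remainder $\mathcal C(G,h+\Delta)-\mathcal C(G,h)-(DG\circ h)\Delta$ is $o(\|\Delta\|_{C^1_\Gamma})$ in the $C^1_\Gamma$-norm. This follows by applying the Lipschitz estimate just proved to $G'=DG\in C^2_\Gamma$ and to the pair $(h,\Delta)$, which bounds the remainder by a constant times $\|\Delta\|_{C^1_\Gamma}^2$. The main obstacle I anticipate is purely notational — keeping track of which index the $\min$ bound is used on so that one sum over $\Z^N$ is absorbed while the other retains a factor $\Gamma(i-j)$ ensuring the result lies in $\L_\Gamma$; this is the only place where the full strength of the multilinear decay structure is used, and it is what makes the naive Banach-algebra argument go through without spurious loss of decay.
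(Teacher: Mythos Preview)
The paper does not prove Proposition~\ref{compo}; it only cites \cite{JL00}. So there is no in-paper argument to compare against, and your outline is exactly the natural strategy one would expect from that reference: split into the $C^0$ piece and the $\L_\Gamma$ piece of the derivative, handle the latter by the chain-rule decomposition, and get differentiability by iterating with $DG\in C^2_\Gamma$. The $C^0$ estimate and the bound on the term $DG\circ(h+\bar h)\,D\bar h$ are fine as written.

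There is, however, a genuine gap in the step you flag as ``purely notational''. From the $\L^2_\Gamma$ bound $|(D^2G)_{i,j,k}|\le \|G\|_{C^2_\Gamma}\min(\Gamma(i-j),\Gamma(i-k))$ and $\sum_k\Gamma(i-k)\le 1$ alone you \emph{cannot} conclude
\[
\Big|\sum_k (D^2G)_{i,j,k}\,\bar h_k\Big|\le \|G\|_{C^2_\Gamma}\,\|\bar h\|_\infty\,\Gamma(i-j).
\]
Using the $\Gamma(i-k)$ branch of the $\min$ absorbs the $k$-sum but kills the $\Gamma(i-j)$ factor; using the $\Gamma(i-j)$ branch keeps the decay but leaves a divergent $\sum_k 1$. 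A direct count shows $\sum_k\min(\Gamma(i-j),\Gamma(i-k))$ is of order $|i-j|^N\Gamma(i-j)$, which is \emph{not} bounded by $C\,\Gamma(i-j)$ for the decay functions of Proposition~\ref{exDecay}. So this is a real obstruction, not bookkeeping.

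The correct way through is to invoke the \emph{recursive} definition of $C^2_\Gamma$ (Definition~\ref{Crgamma}): $G\in C^2_\Gamma$ means $DG\in C^1_\Gamma$ as a map into $\L_\Gamma$, and in particular $DG$ is Lipschitz from $\ell^\infty$ into the Banach space $\L_\Gamma$ with constant $\le \|G\|_{C^2_\Gamma}$. That gives directly
\[
\|DG\circ(h+\bar h)-DG\circ h\|_\Gamma\le \|G\|_{C^2_\Gamma}\,\|\bar h\|_\infty,
\]
after which your Banach-algebra multiplication by $Dh$ finishes the first summand. The point is that the recursive $C^2_\Gamma$ hypothesis is strictly stronger than the bare $\min$ bound on the array $(D^2G)_{i,j,k}$; Lemma~\ref{lemAB} handles compositions $A\,B_1\cdots B_k$ with $B_j\in\L_\Gamma$, but contracting a slot of $D^2G$ against a mere $\ell^\infty$ vector is not covered by it. Once you replace that one sentence by the Lipschitz-into-$\L_\Gamma$ argument, the rest of your proof (including the $o(\|\Delta\|)$ remainder via the same estimate applied to $DG$) goes through.
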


We will also need the following estimate on the composition operator.
\begin{lemma}\label{ineqGamma}
Consider two functions $G,h \in
C^1_\Gamma$. Then we have
\begin{equation*}
\|G \circ h \|_{C^1_\Gamma} \leq C \max(\|G\|_{C^0},  \|G \|_{C^1_\Gamma} \|h\|_{C^1_\Gamma}). 
\end{equation*}
\end{lemma}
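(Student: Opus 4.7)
The plan is to decompose the $C^1_\Gamma$ norm of $G\circ h$ into its two constituent pieces and estimate each separately, drawing on results already established earlier in the appendix. By definition, $\|G\circ h\|_{C^1_\Gamma} = \max\bigl(\sup_x \|(G\circ h)(x)\|,\ \sup_x \|D(G\circ h)(x)\|_\Gamma\bigr)$, so it suffices to bound each term.

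First I would handle the $C^0$ part, which is trivial: for any admissible $x$ in the domain, $\|(G\circ h)(x)\| = \|G(h(x))\| \leq \sup_{y} \|G(y)\| = \|G\|_{C^0}$, giving $\sup_x \|(G\circ h)(x)\| \leq \|G\|_{C^0}$. Next I would handle the derivative term via the chain rule $D(G\circ h)(x) = DG(h(x)) \cdot Dh(x)$, which is justified in our setting by Proposition~\ref{compo} (the composition operator is $C^1$ when viewed as a map into $C^1_\Gamma$, and its partial derivative in $h$ is given by the formula \eqref{derivativecompos}, which, combined with the chain rule in the first argument, yields the usual chain rule). Both factors $DG(h(x))$ and $Dh(x)$ lie in $\L_\Gamma(\ell^\infty(\ZZ^N))$ by the hypothesis that $G, h \in C^1_\Gamma$, so by the Banach algebra property of $\L_\Gamma$ (Lemma~\ref{lemAB} with $k=1$),
\[
\|DG(h(x))\cdot Dh(x)\|_\Gamma \leq \|DG(h(x))\|_\Gamma \cdot \|Dh(x)\|_\Gamma \leq \|G\|_{C^1_\Gamma} \cdot \|h\|_{C^1_\Gamma},
\]
uniformly in $x$. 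Taking the supremum in $x$ and then combining with the $C^0$ estimate gives the desired bound with $C=1$.

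Strictly speaking there is nothing to grind out here beyond invoking these two already-established facts; the statement is essentially a repackaging of the Banach-algebra property of $\L_\Gamma$ as a chain rule estimate in $C^1_\Gamma$. The only place where one must be slightly careful is verifying that $h(x)$ lies in the domain of $G$ so that $DG(h(x))$ is defined and lies in $\L_\Gamma$ with norm bounded by $\|G\|_{C^1_\Gamma}$ --- this is implicit in the convention that $C^1_\Gamma$ norms are taken over the relevant open set, and is precisely the context in which Proposition~\ref{compo} is formulated. Thus no genuine obstacle arises, and the statement follows in one short paragraph of writing.
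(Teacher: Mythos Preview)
Your proof is correct and follows essentially the same approach as the paper. The paper writes out the derivative estimate explicitly at the level of matrix entries, using the decay-function convolution inequality $\sum_k \Gamma(i-k)\Gamma(k-j)\le \Gamma(i-j)$, whereas you invoke the Banach algebra property $\|AB\|_\Gamma \le \|A\|_\Gamma\|B\|_\Gamma$ as a black box; these are the same computation (and your citation of Proposition~\ref{compo} for the chain rule is unnecessary, since the ordinary Fr\'echet chain rule suffices once both derivatives are known to lie in $\L_\Gamma$).
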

\begin{proof}
Clearly, we have
\begin{equation*}
\|G \circ h \|_{C^0} \leq \|G \|_{C^0}.
\end{equation*}
We now estimate the norm of the derivatives:
\begin{equation*}
D_j(G \circ h)_i= \sum_{k \in \ZZ^N} D_kG_i \circ h D_j h_k.
\end{equation*}
This leads
\begin{eqnarray*}
|D_j(G \circ h)_i| \leq \sum_{k \in \ZZ^N} \Gamma(i-k)
\Gamma(k-j) \|G\|_{C^1_\Gamma} \|h\|_{C^1_\Gamma}\\
\leq \Gamma(i-j) \|G\|_{C^1_\Gamma} \|h\|_{C^1_\Gamma}.
\end{eqnarray*}
This ends the proof.
\end{proof}

The next lemma gives an estimate on the composition of a mapping
defined on the manifold and an embedding.

\begin{lemma}\label{compo2}
Let $\B\subset \M$ be a star-like from the origin open set such that $0\in\B$. Suppose that $F\in C^1_\Gamma
(\B)$ and is analytic. Let $K:D_\rho\to \M$ belong to
$\A_{\rho,\underline c,\Gamma}$, with $\underline c\in(\Z^N)^R$
and such that $K(\overline D_\rho)\subset\B$.
\begin{enumerate}
\item Assume that $F(0)=0$.
 Then $F\circ K\in
\A_{\rho,\underline c,\Gamma}$, and 
\begin{equation}\label{cotaFK}
\|F\circ K\|_{\rho,\underline c,\Gamma}\le R\|F\|_{C^1_\Gamma}\
\|K\|_{\rho,\underline c,\Gamma}\,.
\end{equation}
\item Let $\J\subset\Z^N$ be a finite set of indexes and assume
that $F_j(0)=0$ for $j\in \Z^N-\J$. Then $F\circ K\in
\A_{\rho,\underline c,\Gamma}$ and
\begin{equation}\label{cotaFK2}
\|F\circ K\|_{\rho,\underline c,\Gamma}\le \|F\|_{C^1_\Gamma}\
\big(C+R\|K\|_{\rho,\underline c,\Gamma}\big)\,,
\end{equation}
where $C$ depends on $\underline c$ and $\J$.
\end{enumerate}
In both cases
\begin{equation}\label{cotaDFK}
\|D(F\circ K)\|_{\rho,\underline c,\Gamma} \le
R\|F\|_{C^1_\Gamma}\ \|DK\|_{\rho,\underline c,\Gamma}.
\end{equation}
\end{lemma}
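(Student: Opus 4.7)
The strategy is to reduce everything to the pointwise estimate $|F_i(K(\th))| \le (\dots)\,\max_k \Gamma(i-c_k)$ and then use that $\max_k \Gamma(i-c_k)\cdot \min_j \Gamma^{-1}(i-c_j) \le 1$ coordinatewise in $i$, which feeds back into Definition~\ref{embeddingspace} directly. The only analytic input beyond this is the fundamental theorem of calculus along the segments $t \mapsto tK(\th)$, which lie in $\B$ by the star-likeness assumption.

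First I would prove \eqref{cotaFK}. Since $F(0)=0$ and $\B$ is star-like from the origin containing $K(\overline{D}_\rho)$, for every $\th\in D_\rho$ and $i\in\Z^N$ we have
\[
F_i(K(\th)) = \int_0^1 \sum_{j\in\Z^N} (DF)_{ij}(tK(\th))\, K_j(\th)\, dt,
\]
and the series converges because $DF\in \L_\Gamma(\ell^\infty(\Z^N))$ uniformly. Bounding $|(DF)_{ij}|\le \|F\|_{C^1_\Gamma}\Gamma(i-j)$ and $|K_j(\th)|\le \|K\|_{\rho,\underline{c},\Gamma}\max_k \Gamma(j-c_k) \le \|K\|_{\rho,\underline{c},\Gamma}\sum_{k=1}^R \Gamma(j-c_k)$ gives
\[
|F_i(K(\th))| \le \|F\|_{C^1_\Gamma}\,\|K\|_{\rho,\underline{c},\Gamma}\,\sum_{k=1}^R \sum_{j\in\Z^N}\Gamma(i-j)\Gamma(j-c_k).
\]
The inner sum is bounded by $\Gamma(i-c_k)$ by property (2) of Definition~\ref{defDecay}, so
\[
|F_i(K(\th))| \le \|F\|_{C^1_\Gamma}\,\|K\|_{\rho,\underline{c},\Gamma}\,\sum_{k=1}^R \Gamma(i-c_k) \le R\,\|F\|_{C^1_\Gamma}\,\|K\|_{\rho,\underline{c},\Gamma}\,\max_k \Gamma(i-c_k).
\]
Multiplying by $\min_j \Gamma^{-1}(i-c_j) = \max_j\Gamma(i-c_j)^{-1}$ and taking the supremum in $i$ yields \eqref{cotaFK}.

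Next I would prove \eqref{cotaFK2}. Split $F = F_\J + \tilde F$, where $(F_\J)_j = F_j(0)$ for $j\in\J$ and zero otherwise, and $\tilde F = F - F_\J$. Then $\tilde F(0) = 0$ and $\tilde F \in C^1_\Gamma$ with $\|\tilde F\|_{C^1_\Gamma} \le \|F\|_{C^1_\Gamma}$ (the derivative is unchanged). Part~(1) applies to $\tilde F\circ K$. For $F_\J$, which is a constant vector supported on the finite set $\J$, one bounds $\|F_\J\circ K\|_{\rho,\underline{c},\Gamma}$ by $\|F\|_{C^0}$ times $\sup_{i\in\J}\min_j \Gamma^{-1}(i-c_j)$, a finite quantity depending only on $\underline c$ and $\J$. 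Combining the two pieces produces the estimate with an additive constant $C = C(\underline c,\J)$ as stated.

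For the derivative estimate \eqref{cotaDFK}, the chain rule gives, for each $\th\in D_\rho$ and each $i\in\Z^N$,
\[
D_\th(F\circ K)_i(\th) = \sum_{j\in\Z^N} (DF)_{ij}(K(\th))\, D_\th K_j(\th).
\]
Applying the same double-sum estimate as above (now with $D_\th K_j$ in place of $K_j$, and using $\|DK\|_{\rho,\underline{c},\Gamma}$ instead of $\|K\|_{\rho,\underline{c},\Gamma}$) and the Banach algebra bound $\sum_j \Gamma(i-j)\Gamma(j-c_k)\le \Gamma(i-c_k)$ gives $|D_\th(F\circ K)_i(\th)| \le R\,\|F\|_{C^1_\Gamma}\,\|DK\|_{\rho,\underline{c},\Gamma}\,\max_k \Gamma(i-c_k)$, which is \eqref{cotaDFK}.

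The only step that requires any care is the absolute convergence of the series defining $F_i(K(\th))$ and $D_\th(F\circ K)_i$, which is where the hypothesis $DF\in C^1_\Gamma$ (rather than merely continuity of $F$) is crucial; once that is in place, all the estimates reduce to the Banach algebra property of $\Gamma$ combined with the definition of $\|\cdot\|_{\rho,\underline{c},\Gamma}$. There is no truly hard step; the subtlety lies entirely in tracking the interaction between the outer envelope $\max_k \Gamma(i-c_k)$ produced by composition and the norm's definition via $\min_j \Gamma^{-1}(i-c_j)$.
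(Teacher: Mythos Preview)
Your proof is correct and follows essentially the same approach as the paper: the fundamental theorem of calculus along $t\mapsto tK(\th)$, the bound $\max_k\Gamma(j-c_k)\le \sum_{k=1}^R\Gamma(j-c_k)$ combined with the convolution property of $\Gamma$, and for part~(2) separating off the constant piece supported on $\J$ (the paper does this last step inline rather than via an explicit splitting $F=F_\J+\tilde F$, but the content is identical, with $C=\max_{m\in\J}\min_j\Gamma^{-1}(m-c_j)$).
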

\begin{proof}
By Definition \ref{analyt} of analytic functions,  we have that 
$F\circ K$ is analytic. To estimate $\|F\circ K\|_{\rho,\underline
c,\Gamma}$ take $i\in\Z^N$ and $j\in \{1,\dots,R\}$. If $F_i(0)=0$
we can write
$$F_i\big(K(\th)\big)=\int^1_0 DF_i \big(sK(\th)\big)\, K(\th)ds=
\int^1_0 \sum_{p\in\Z^N} \frac{\partial F_i}{\partial x_p}
\big(sK(\th)\big)\,K_p(\th)ds\,.$$ Taking norms
\begin{align*}
|F_i\big(K(\th)\big)|&\le
\sum_{p\in\Z^N}\,\|DF\|_\Gamma\,\Gamma(i-p)\,\|K\|_{\rho,\underline
c,\Gamma}\ \max_{1\le j\le R}\,\Gamma(p-c_j)\\
&\le R \|F\|_{C^1_\Gamma}\, \|K\|_{\rho,\underline c,\Gamma} \
\max_{1\le j\le R}\,\Gamma(i-c_j)
\end{align*}
and then, if $F(0)=0$, \eqref{cotaFK} follows.

In the second case $F_m(0)\ne 0$ for $m \in \mathcal J$,  we have
$$|F_m(K(\th))|\le |F_m(0)|+ R \|F\|_{C^1_\Gamma}\,\|K\|_{\rho,\underline c,\Gamma}
\max_{1\le j\le R} \Gamma(i-c_j)$$ and then we obtain
\eqref{cotaFK2} with
$$C=\max_{m\in\J}\ \min_{1\le j\le R}\ \Gamma^{-1}(m-c_j)\,.$$
The estimate \eqref{cotaDFK} follows from the chain rule and the
definitions of the norms.
\end{proof}

\section{Appendix: Symplectic geometry on lattices}
\label{sec:symplectic}

A symplectic structure on an infinite dimensional manifold is not easy
to define (see \cite{chernoffM74}, \cite{bambusi99}). 
Fortunately, the KAM theory presented  here 
uses  only very few properties of 
symplectic geometry.

The aim of the next sections is to develop such ideas
and give precise definitions of the theory of symplectic forms 
we will need. Note that we do not need to develop 
a systematic geometry. We just need to deal with the 
standard symplectic form in $\M$, its primitives, its push-forward
and perform just a few operations. This can be readily 
justified in spite of the difficulties with more sophisticated material.

\subsection{Forms on lattices}

Remember that a form is just an antisymmetric real valued
multilinear operator
on the tangent space. 

We just need to study local forms which are 
the product of forms in each of the ambient spaces.

We introduce $\pi_i : \M \to M$, the projection $\pi_i(x)=x_i$ for $i \in \Z^N$.  Given a collection of smooth $k$-forms $\gamma_i\in \Lambda^k(M)$,
such that $\sup_i  \|\gamma_i\|<\infty$, we define a formal form in
$\M$ as follows
\begin{equation}\label{formalform}
\gamma=\sum_{i \in \mathbb{Z}^N} \pi^*_i\gamma_i\,,
\end{equation}
that is
\begin{equation*}
\gamma(x)(u_1,\dots ,u_k)=\sum_{i \in \mathbb{Z}^N} \gamma_i(\pi_i
(x))(\pi_i u_1,\dots ,\pi_i u_k)
\end{equation*}
for $x\in\M$ and  $(u_1,\dots ,u_k) \in (T_x \M)^k$.
We denote
$$\bar{\Lambda}^k_\infty(\M)=\Big\{\gamma= \sum_i\pi^*_i \gamma_i\Big\}$$
the set of such forms. 

Of course, this form \eqref{formalform} in general does not define a
multilinear function on bounded vector fields, so that it should be 
understood only formally. Nevertheless, we will
show that there are several operations among forms
that can be made sense of in the infinite dimensional setting.

Roughly, we will see that these formal forms make sense acting on 
vectors that decay away from a finite set of centers. We can also
push them forward by a decay diffeomorphism and pull them back
by a decay embedding.  They can also be integrated and, in 
some weak sense, differentiated. These will be all the operations 
that we will need. Moreover, we will only need $k = 1,2$.

When $k = 2$, if each of the $\gamma_i$ are 
uniformly non-degenerate, we can define an identification operator
defined by 
\begin{equation}\label{jdefined} 
(J_\infty u)_i = J_i \pi_i u, \quad i \in \ZZ^n,
\end{equation}
where $J_i$ is the operator of identification on the $i$ copy of 
the manifold i.e. 
 $$ \gamma_i(x) (\xi,\eta) = \langle \xi, J_i(x)  \eta \rangle, \,\, \forall
\xi, \eta \in T_x M_i .$$ 
We emphasize that, given the formula \eqref{jdefined}, it is 
clear that when the $\gamma_i$ are uniformly non-degenerate
(i.e. $\| J_i(x) \|, \|J_i^{-1}(x) \|$ are  bounded uniformly in $i,x$) 
we have that the operator $J_\infty$ is bounded and its inverse is 
also bounded. Note that in the KAM method 
of \cite{LGJV05, FontichLS09, FontichLS09b}, the symplectic properties appear 
mainly  through $J, J^{-1}$ and their invariance properties. 

In the main application to the construction of almost periodic 
solutions, when the system has 
translation invariance, all the $\gamma_i$ are identical. Nevertheless
we do not assume that the $\gamma_i$ are given in the standard form.
This is useful e.g. in dealing with oscillators, or chemical molecules
whose action angle variables are singular.

Let $\gamma \in \bar{\Lambda}^k_\infty(\B)$, $F:\B_1 \to \M$ with $F(\B_1) \subset \B $.
 We define the pull-back $F^*\gamma$ by
\begin{equation}\label{pullback1} 
F^*\gamma =\sum_{i\in\Z^N} F^*\gamma_i\,,
\end{equation}
that is
\begin{equation}\label{pullback2}
F^*\gamma(x)(u_1,\dots ,u_k)=\sum_{i \in \ZZ^N} \gamma_i(F (x))(DFî (x) u_1,\dots ,DFî (x)u_k).
\end{equation}

For a general diffeomorphism, the sums in 
\eqref{pullback1}, \eqref{pullback2} are 
purely formal. On the other hand, when $F \in C^1_\Gamma$, 
the sums for 
$ F^* \gamma \circ \pi_j(\pi_{i_1} u_1, \ldots \pi_{i_k} u_k)$ 
make sense and converge uniformly.

If $\psi:D_\rho \supset \T^l \to \M$ is a smooth map we define
$\psi^* \gamma$ in the analogous way.
It will be important to emphasize for 
future applications that  when 
$\gamma$ is a formal form and $\psi$  has decay, then 
$\psi^* \gamma$ is a smooth form in 
$D_\rho$. 

An easy computation shows that if $F\in C^\infty(\B_1) $ and $G\in
C^\infty(\B_2) $, with $F(\B_1) \subset \B_2$, then
$$
(G\circ F)^* = F^*\circ G^*.
$$
Also, if $\psi(D_\rho) \subset \B_1 $ we have
\begin{equation}\label{p-bpsiF}
(\psi\circ F)^* = F^*\circ \psi^*.
\end{equation}
Again, this is a formal computation for diffeomorphisms, 
but, when $F,G \in C^1_\Gamma$, then, the calculation 
can be justified. 
Also if $G \in C^1_\Gamma$ and $F \in \A_{\rho,c, \Gamma} $,
then $(G \circ F)^* \gamma$ is a well defined form.

\begin{defi}
Given $\gamma \in\tilde {\Lambda}^k_\infty $ we define
$$
d\gamma= \sum_{i \in \ZZ^N} d \gamma_i.
$$
\end{defi}
We clearly have that $d^2 \gamma= 0$.

\begin{lemma}
Let $F\in C^2_\Gamma(\B_1) $, $\psi:\A_{\rho, c. \Gamma} \to\M$ 
and $\gamma\in \tilde{\Lambda}^k_\infty(\B)$ so that the composition makes 
sense. 
\begin{align*}
F^*d\gamma&=d(F^*\gamma),\\
\psi^*d\gamma&=d(\psi^*\gamma).
\end{align*}
\end{lemma}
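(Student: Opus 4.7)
The plan is to reduce both identities to the classical finite-dimensional fact that exterior derivative commutes with smooth pullback, applied componentwise, and then to justify interchanging $d$ with the formal sum $\sum_{i\in\ZZ^N}$ using the decay hypotheses on $F$ and $\psi$.

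First I would unwind the definitions. By construction $\gamma=\sum_i \pi_i^*\gamma_i$ and $d\gamma=\sum_i \pi_i^* d\gamma_i$, so, using \eqref{p-bpsiF},
\[
F^*d\gamma=\sum_{i\in\ZZ^N}(\pi_i\circ F)^*d\gamma_i,\qquad F^*\gamma=\sum_{i\in\ZZ^N}(\pi_i\circ F)^*\gamma_i .
\]
For each fixed $i$, the map $\pi_i\circ F:\B_1\to M$ has image in the finite-dimensional manifold $M$ and is $C^2$ (since $F\in C^2_\Gamma$). The classical identity in finite dimensions gives
\[
d\bigl((\pi_i\circ F)^*\gamma_i\bigr)=(\pi_i\circ F)^*d\gamma_i
\]
termwise. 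Thus the claim reduces to the interchange
\[
d\Big(\sum_{i\in\ZZ^N}(\pi_i\circ F)^*\gamma_i\Big)=\sum_{i\in\ZZ^N}d\bigl((\pi_i\circ F)^*\gamma_i\bigr),
\]
which is where the analytic content of the lemma lies.

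Next I would make the interchange precise by pairing both sides with a suitable collection of tangent vectors and invoking the decay properties. Recall that a $k$-form $\gamma\in\bar\Lambda^k_\infty$ is defined as a formal sum, but on vectors $u_1,\dots,u_{k+1}\in \ell^\infty_{\underline c,\Gamma}$ (or in the range of $DF$ applied to such vectors) the sum converges absolutely: for every $j\in\ZZ^N$, $|D_j(\pi_i\circ F)|\le \|DF\|_\Gamma\,\Gamma(i-j)$ by the definition of $C^2_\Gamma$, so the contribution of the $i$-th term to the evaluation on decay vectors is bounded by $\|\gamma_i\|_{C^1}\|DF\|_\Gamma^{k+1}$ times a product of decay factors, and the remaining summation is controlled by the Banach algebra property of $\Gamma$ in Definition \ref{defDecay}(2). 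The same estimate applied to $D^2F$ (which is in $\L^2_\Gamma$ since $F\in C^2_\Gamma$) shows that the $i$-term of the componentwise derivative is also absolutely summable. Hence both sides of the desired equality are given by absolutely convergent sums and the termwise identity propagates to the sum. This proves $F^*d\gamma=d(F^*\gamma)$.

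The argument for $\psi$ is simpler and I would run it in parallel. Here $\psi\in\A_{\rho,c,\Gamma}$ maps the compact torus $\T^l$ into $\M$, and $\psi^*\gamma_i$ is a genuine smooth $k$-form on $\T^l$. Lemma \ref{sympDecay} (applied componentwise to each $\gamma_i$) shows that $\sum_i \psi^*\gamma_i$ converges absolutely in $C^1(\T^l)$, uniformly on the compact torus, with analogous bounds for $\sum_i \psi^*d\gamma_i$ obtained from $\|D\psi\|_{\rho,\underline c,\Gamma}$. Uniform $C^1$-convergence on $\T^l$ allows one to commute $d$ (a first-order local operator) with the sum, and combined with the termwise finite-dimensional identity $d(\psi^*\gamma_i)=\psi^*d\gamma_i$ this yields $\psi^*d\gamma=d(\psi^*\gamma)$.

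The main obstacle is purely the bookkeeping of the interchange of $d$ with an infinite sum; the decay-function machinery in Appendix \ref{decayfunctions} (specifically the Banach algebra property and the estimates on compositions in Section \ref{sec:composition}) is tailor-made to absorb this difficulty, and no genuinely new idea is required beyond carefully tracking which tangent vectors the forms act on.
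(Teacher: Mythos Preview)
Your proposal is correct and follows essentially the same approach as the paper: reduce to the termwise finite-dimensional identity $d(F^*\gamma_i)=F^*d\gamma_i$ and then interchange $d$ with the formal sum. The paper's proof is in fact considerably terser than yours—it simply writes out the chain $F^*d\gamma=\sum_i F^*d\gamma_i=\sum_i d(F^*\gamma_i)=d\sum_i F^*\gamma_i=d(F^*\gamma)$ and remarks that the steps are justified by convergence—so your explicit bookkeeping of the decay estimates is more detailed than what the authors provide, but the underlying argument is identical.
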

\begin{proof}
It consists mainly in going over the formal 
computation, but paying attention to the fact that 
all the steps can be justified by the 
convergence.  We carry out explicitly the 
first one and we let the other one to the reader.

\begin{align*}
F^* d\gamma &=F^* \Big(\sum_i d  \gamma_i\Big)=\sum_i F^* d  \gamma_i =\sum_i d(F^*    \gamma_i)\\
& =d \sum_i  F^*    \gamma_i =  d(F^*    \gamma),
\end{align*}
where we have used that $\gamma _i$ are true differential forms.
\end{proof}

In the case that $\gamma=\sum_{i \in \ZZ^N} \pi^*_i\gamma_i$,
with $\gamma_i\in \Lambda^k(M)$,
$$
d\gamma= \sum_{i \in \ZZ^N} d (\pi^*_i\gamma_i) = \sum_{i \in \ZZ^N} \pi^*_i d\gamma_i,
$$
where $d\gamma_i$ is the exterior differential of $\gamma_i$ in $M$,
and if $\psi:D_\rho \to \M$,
$$
\psi^*\gamma = \sum_{i \in \ZZ^N} \psi^* \pi^*_i\gamma_i  = \sum_{i \in \ZZ^N} \psi^*_i \gamma_i.
$$

We can also define the contraction operator. Given a smooth vector
field $X$ in $\M$ and a $k$-form $\gamma\in
\tilde{\Lambda}^k_\infty$ we set
$$
(i_X \gamma)(x)(u_1,\dots ,u_{k-1}) =\sum_{j \in \ZZ^N}(i_{X}
\gamma_j)(x)(u_1,\dots ,u_{k-1}).
$$
Hence we can also introduce the Lie derivative for formal forms by the usual formula
$$
\L_X \gamma= i_Xd\gamma + d(i_X \gamma) .
$$

\begin{lemma}\label{lemtrueform}
Let $\gamma = \sum_{i \in \ZZ^N} \pi^*_i\gamma_i \in
\bar{\Lambda}^k_\infty$ be a formal form and $\psi: D_\rho\supset
\torus^l \rightarrow \M$ a map with decay,  i.e.
$\psi \in \mathcal{A}_{\rho,\underline{c},\Gamma}$ with
$\underline{c}=(c_1,\dots ,c_R)$. Then for $0<\delta<\rho$,
$\psi^* \gamma$ is a well-defined $k$-form in $D_{\rho-\delta}$.
\end{lemma}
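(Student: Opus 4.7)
The plan is to exhibit $\psi^*\gamma$ as an absolutely and uniformly convergent sum of ordinary smooth (in fact analytic) $k$-forms on $D_{\rho-\delta}$, and then conclude that the limit is itself such a form. The work is essentially a bookkeeping exercise, with the decay of $\psi$ and the uniform bound on the $\gamma_i$ doing all the heavy lifting.

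First I would write down the candidate formula componentwise. For $\theta \in D_{\rho-\delta}$ and tangent vectors $v_1,\dots,v_k$ to $\torus^l$ at $\theta$, I set
\[
\big(\psi^*\gamma\big)(\theta)(v_1,\dots,v_k)
= \sum_{i\in\ZZ^N} \gamma_i\bigl(\psi_i(\theta)\bigr)\bigl(D\psi_i(\theta)v_1,\dots,D\psi_i(\theta)v_k\bigr),
\]
and my task is to show this sum converges absolutely, uniformly in $(\theta,v_1,\dots,v_k)$ with $\|v_j\|\le 1$. Each summand is a perfectly well-defined number since $\gamma_i$ is a genuine $k$-form on the finite dimensional manifold $M$ and $\psi_i:D_\rho\to M$ is analytic.

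Next I would bound the $i$-th term. Using multilinearity and the assumption $\sup_i\|\gamma_i\| =: C_\gamma <\infty$,
\[
\big|\gamma_i(\psi_i(\theta))(D\psi_i(\theta)v_1,\dots,D\psi_i(\theta)v_k)\big|
\le C_\gamma \prod_{j=1}^k \|D\psi_i(\theta)\|\,\|v_j\|.
\]
By the Cauchy estimate of Lemma~\ref{Cauchyestimates} together with the definition of $\|\cdot\|_{\rho,\underline c,\Gamma}$, one has
\[
\|D\psi_i\|_{\rho-\delta} \;\le\; \max_{1\le p\le R}\Gamma(i-c_p)\cdot \|D\psi\|_{\rho-\delta,\underline c,\Gamma}
\;\le\; l\,\delta^{-1}\|\psi\|_{\rho,\underline c,\Gamma}\max_{p}\Gamma(i-c_p).
\]
Hence the $i$-th summand is bounded, uniformly in $\theta\in D_{\rho-\delta}$ and in $\|v_j\|\le 1$, by a constant $C=C(C_\gamma,l,\delta,\|\psi\|_{\rho,\underline c,\Gamma})$ times $\bigl(\max_p\Gamma(i-c_p)\bigr)^k$.

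Finally I would sum. Using that $\Gamma$ is a bounded positive function, $(\max_p\Gamma(i-c_p))^{k}\le \|\Gamma\|_\infty^{k-1}\max_p\Gamma(i-c_p)\le \|\Gamma\|_\infty^{k-1}\sum_{p=1}^R\Gamma(i-c_p)$. Summing in $i$ and using property~(1) of Definition~\ref{defDecay},
\[
\sum_{i\in\ZZ^N}\bigl(\max_p\Gamma(i-c_p)\bigr)^k \;\le\; \|\Gamma\|_\infty^{k-1}\sum_{p=1}^R\sum_{i}\Gamma(i-c_p)\;\le\; R\,\|\Gamma\|_\infty^{k-1}.
\]
So the series defining $\psi^*\gamma$ converges absolutely and uniformly on $D_{\rho-\delta}\times\{\|v_j\|\le 1\}^k$. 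Each partial sum is an analytic $k$-form on $D_{\rho-\delta}$, and uniform convergence on the closed strip preserves both the $k$-linear alternating structure (which is pointwise in $\theta$) and analyticity (by Weierstrass). The only mild subtlety is tracking the dependence on $\delta$, which is why we must retract the strip; on the original strip $D_\rho$ the Cauchy estimate on $D\psi$ is not available. Once that retraction is accepted, the rest is routine.
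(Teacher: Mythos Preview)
Your proof is correct and follows essentially the same line as the paper's: write the pullback as a sum over lattice sites, bound each term by $C_\gamma\,\|D\psi_i\|_{\rho-\delta}^k$ using the decay norm of $D\psi$ (via Cauchy estimates), and sum using $\sum_i \max_p \Gamma(i-c_p)\le R$. The only cosmetic differences are that you make the appeal to Weierstrass for analyticity of the uniform limit explicit, and you handle the extra powers of $\Gamma$ via $\|\Gamma\|_\infty^{k-1}$ rather than implicitly using $\Gamma\le 1$; both are fine.
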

As a consequence if $F\in C^1_\Gamma(\B)$ is analytic, $\psi \in \mathcal{A}_{\rho,\underline{c},\Gamma}$
 and $\psi(D_\rho)\subset \B$, by Lemma \ref{compo2} and \eqref{p-bpsiF}
we have that $\psi^*F^* \gamma$ is a well-defined $k$-form in
$D_{\rho-\delta}$.
\begin{proof}[Proof of Lemma \ref{lemtrueform}]

By definition of $\gamma$, we have
\begin{equation*}
(\psi^*\gamma)(\th)(u_1,\dots,u_k)=\sum_{i \in \ZZ^N}
\gamma_i (\psi_i(\th))(D\psi_i(\th) u_1,\dots ,D\psi_i(\th) u_k)
\end{equation*}
for $\th \in \T^l$ and $ u_1,\dots ,u_k\in T_\th \T^l$.
Then
\begin{equation*}
\begin{split}
|(\psi^*\gamma)&(\th)(u_1,\dots,u_k)|\\
& \le \sum_{i \in \ZZ^N}
\|\gamma_i \|\,\sum_{m_1}|D_{m_1} \psi_i(\th) (u_1)_{m_1} | \cdots \sum_{m_k}|D_{m_1}  \psi_i(\th) (u_k)_{m_k}|\\
& \le \sum_{i \in \ZZ^N}
\|\gamma_i \|\,\max_{j}\Gamma(i-c_j) |D  \psi| \|u_1 \| \cdots \max_{j}\,\Gamma(i-c_j) |D  \psi| \|u_k \|\\
& \le R^k \|\gamma\| \|D\psi \|_{\rho-\delta,\underline{c},\Gamma
} \|u_1 \| \cdots \|u_k \|\,.
\end{split}
\end{equation*}
We have used that
$\sum_{i \in \ZZ^N}
\max_{j}\Gamma(i-c_j) \le R$.
This proves that the series is absolutely convergent and
so  $\psi^* \gamma$ is well-defined on $T^* \T^l$.
\end{proof}

Therefore, by the previous construction, by pulling back formal
forms on the lattice to the torus, one obtains well-defined
quantities.
\begin{lemma}
For every function $\psi \in \mathcal{A}_{\rho,\underline{c},\Gamma}$, we have
\begin{equation*}
\psi^* d\gamma= d(\psi^* \gamma).
\end{equation*}
\end{lemma}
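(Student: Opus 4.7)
The plan is to prove the identity $\psi^* d\gamma = d(\psi^* \gamma)$ by reducing to the finite-dimensional case via truncation and then justifying the passage to the limit using the absolute convergence estimates obtained in Lemma~\ref{lemtrueform} together with the Cauchy estimates of Lemma~\ref{Cauchyestimates}.

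First, for each integer $N\ge 1$, define the truncation
\begin{equation*}
\gamma^{(N)} = \sum_{|i|\le N} \pi_i^* \gamma_i,
\end{equation*}
which is a genuine (finite sum of) smooth $k$-forms on $\M$. Since $\psi \in \mathcal{A}_{\rho,\underline c,\Gamma}$ is smooth on $D_\rho$ (analytic in $D_\rho$, $C^0$ up to the boundary), classical multivariable calculus gives the identity $\psi^* d\gamma^{(N)} = d(\psi^*\gamma^{(N)})$ for each $N$, since for finite sums $d$ and $\pi_i^*$ can be moved termwise and $d\,\pi_i^*\gamma_i = \pi_i^* d\gamma_i$.

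Next, I would show that both sides of the identity for $\gamma^{(N)}$ converge as $N\to\infty$ to the corresponding sides for $\gamma$. By the proof of Lemma~\ref{lemtrueform}, for any $u_1,\dots,u_k\in T_\theta \torus^\ell$ and $\theta\in D_{\rho-\delta}$, the tail
\begin{equation*}
\bigl|(\psi^*\gamma - \psi^*\gamma^{(N)})(\theta)(u_1,\dots,u_k)\bigr|
\le \sum_{|i|>N} \|\gamma_i\|\, R^k \|D\psi\|_{\rho-\delta,\underline c,\Gamma}^k \prod_{s=1}^k \|u_s\|\,,
\end{equation*}
where the sum on the right is controlled by $\sup_i \|\gamma_i\|$ times the tail $\sum_{|i|>N}\max_j \Gamma(i-c_j)^k$, which tends to $0$ because $\sum_i \max_j\Gamma(i-c_j)\le R$ (and more strongly, the decay of $\Gamma$ forces the tails to vanish). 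Applying the same argument to $d\gamma = \sum_i \pi_i^* d\gamma_i$, using that $\sup_i \|d\gamma_i\|<\infty$ (the $\gamma_i$ are smooth and uniformly bounded in a suitable $C^1$ sense from the hypotheses on formal forms), we also obtain $\psi^* d\gamma^{(N)} \to \psi^* d\gamma$ uniformly on compact subsets of $D_{\rho-\delta}$.

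The remaining step is to commute $d$ with the limit. For this I would use Cauchy estimates: since the convergence of $\psi^*\gamma^{(N)}\to\psi^*\gamma$ is uniform on $D_{\rho-\delta}$, the derivatives of the coefficients converge uniformly on any smaller strip $D_{\rho-2\delta}$ by Lemma~\ref{Cauchyestimates}. Therefore $d(\psi^*\gamma^{(N)}) \to d(\psi^*\gamma)$ uniformly on $D_{\rho-2\delta}$. Combining the three convergences,
\begin{equation*}
\psi^* d\gamma = \lim_{N\to\infty}\psi^* d\gamma^{(N)} = \lim_{N\to\infty} d(\psi^*\gamma^{(N)}) = d(\psi^*\gamma),
\end{equation*}
as forms on $D_{\rho-2\delta}$, and since $\delta>0$ is arbitrary, the identity holds on $D_\rho$.

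The main (minor) obstacle is bookkeeping the uniform bounds on the $\gamma_i$ and $d\gamma_i$ to justify that the tails in both series vanish, and keeping track of the loss of analyticity radius incurred when applying Cauchy estimates to commute $d$ with the limit. Neither is a serious issue: both are quantitative versions of the argument already carried out in Lemma~\ref{lemtrueform}.
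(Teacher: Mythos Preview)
Your proof is correct and follows essentially the same approach as the paper: both argue termwise (where the identity is classical) and then pass to the limit using the absolute convergence established in Lemma~\ref{lemtrueform}. The paper's version is terser---it simply writes $\psi^*(\sum_i d\pi_i^*\gamma_i)=\sum_i \psi^* d\pi_i^*\gamma_i=\sum_i d\psi^*\pi_i^*\gamma_i=d\sum_i\psi^*\pi_i^*\gamma_i$ and appeals to ``convergence of the series''---whereas you spell out the truncation argument and use Cauchy estimates to justify commuting $d$ with the limit, which is a welcome bit of rigor the paper leaves implicit.
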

\begin{proof}
By the definition of $d\gamma$ and the convergence of the series, we have
\begin{equation*}
\psi^* (d\gamma)=\psi^* \Big(\sum_i d \pi_i^* \gamma_i\Big)=\sum_i
\psi^* d \pi_i^* \gamma_i.
\end{equation*}
But by the definition of the exterior differentiation, we have \
\begin{equation*}
\sum_i \psi^* d \pi_i^* \gamma_i=\sum_i d \psi^* \pi_i^* \gamma_i=d \sum_i \psi^* \pi_i^* \gamma_i=d \psi^* \gamma.
\end{equation*}
\end{proof}

\subsection{Some symplectic geometry on lattices}
In this section we discuss the elements of symplectic geometry that 
we will need.  This will play a role in the vanishing lemma
Lemma \ref{vanishingloc}  in Section~\ref{secvanishing}.

Consider a finite dimensional exact symplectic manifold $(M,\Omega=d\alpha)$ and the associated lattice
\begin{equation*}
\M=\ell^\infty(\ZZ^N).
\end{equation*}

Let $\alpha_\infty$ and $\Omega_\infty$ be defined by
\begin{equation*}
\alpha_\infty=\sum_{j \in \ZZ^N} \pi^*_j\alpha, \qquad
\Omega_\infty=\sum_{j \in \ZZ^N} \pi^*_j\Omega.
\end{equation*}
Then $\alpha_\infty\in \bar \Lambda ^1_\infty$ and
$\Omega_\infty\in \bar \Lambda^2_\infty$. Moreover note that
$d\alpha_\infty=\Omega_\infty$. We introduce the following
definitions.
\begin{defi}
We say that a $C^1_\Gamma$ function $F:\M \rightarrow
\M$ is symplectic if the following identity holds for any $z \in \mathcal M$
$$DF^\top(z) J_\infty (F(z)) DF(z)=J_\infty(z) $$
\end{defi}

\begin{defi}
We  say that a $C^1_\Gamma$ function $F:\M \rightarrow \M$ is
exact symplectic on $\M$ if  there exists a one-form $\tilde \alpha$ defined on $T \mathcal M$  with  matrix  $ \tilde A$ such that
\begin{itemize}
\item For every $j \in \ZZ^N$, there exists a smooth function $W_j$ on $M$ such that 
$$\tilde \alpha_j = dW_j  $$ 
where $d$ is the exterior differentiation on $M$. 
\item The following formula holds component-wise on the lattice 
\begin{equation*}
DF(z)^\top \hat A_\infty(F(z)) =\hat A_\infty(z)+ \tilde A(z). 
\end{equation*}
\end{itemize}
\end{defi}

The previous definitions are completely equivalent to the standard
definitions of symplectic and exact symplectic 
maps in the finite dimensional 
case, but they are among the mildest ones that 
we can imagine in infinite dimensions. The following is a straightforward result. 

\begin{lemma}
Let $F \in C^1_\Gamma$ be a map from $\M$ into itself. If $F$ is exact symplectic then it is symplectic.
\end{lemma}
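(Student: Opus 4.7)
The plan is to mimic the standard finite-dimensional argument: exact symplecticness says $F^*\alpha_\infty = \alpha_\infty + \tilde\alpha$ with $\tilde\alpha$ a formal exact $1$-form, and applying the exterior derivative $d$ (which commutes with $F^*$ and annihilates exact forms) yields $F^*\Omega_\infty = \Omega_\infty$, which is precisely the symplectic identity once re-expressed in matrix form. Each manipulation is purely formal at the level of the decomposition $\sum_{j\in\Z^N}\pi_j^*(\cdot)$, and convergence is never an issue because $F\in C^1_\Gamma$ forces all relevant pairings and products to reduce to sums that are absolutely convergent site-by-site.

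First I would reinterpret the matrix identity $DF(z)^\top \hat A_\infty(F(z)) = \hat A_\infty(z) + \tilde A(z)$ as the identity of formal $1$-forms $F^*\alpha_\infty = \alpha_\infty + \tilde\alpha$ in $\bar\Lambda^1_\infty(\M)$. Here $\alpha_\infty = \sum_j \pi_j^*\alpha$ and, by the hypothesis $\tilde\alpha_j = dW_j$, we have $\tilde\alpha = \sum_j \pi_j^* dW_j \in \bar\Lambda^1_\infty(\M)$. The equivalence is simply the coordinate-wise translation between a $1$-form and its matrix: for every $v\in T_z\M$ and every $i\in\Z^N$, the $i$-th component of each side of the matrix equation is the coefficient of $v_i$ in the evaluation of the corresponding $1$-form, and these coefficients agree by hypothesis.

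Next I would apply $d$ to both sides. Since each term $\pi_j^*dW_j$ is itself exact and $d^2=0$, the lemma in the appendix applied componentwise gives
\[
d\tilde\alpha = \sum_{j\in\Z^N} d(\pi_j^*\,dW_j) = \sum_{j\in\Z^N}\pi_j^*\,d(dW_j) = 0.
\]
Using the identity $F^*d\gamma = d(F^*\gamma)$ established earlier for $F\in C^1_\Gamma$ and $\gamma\in\bar\Lambda^k_\infty$, we obtain
\[
F^*\Omega_\infty \;=\; F^*d\alpha_\infty \;=\; d(F^*\alpha_\infty) \;=\; d\alpha_\infty + d\tilde\alpha \;=\; \Omega_\infty.
\]

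Finally I would translate the $2$-form identity $F^*\Omega_\infty = \Omega_\infty$ back into the matrix identity $DF(z)^\top J_\infty(F(z))DF(z) = J_\infty(z)$. Because $J_\infty(z) = \mathrm{diag}(\dots,J(\pi_iz),\dots)$ is block diagonal and $DF(z)\in \mathcal L_\Gamma$, the matrix product on the left is well defined and its $(i,k)$-entry is absolutely convergent (Banach algebra property of $\mathcal L_\Gamma$). Evaluating $\Omega_\infty$ on the standard pairs $(e_{i,a},e_{k,b})$ of coordinate vectors in $T_z\M$ (which have decay and on which $\Omega_\infty$ converges trivially, being supported on a single site) one reads off the $(i,k)$-entry of the matrix equation from the $2$-form equation. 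Since the equality holds for every pair of coordinate vectors, the matrix identity follows, proving that $F$ is symplectic. The only potential obstacle is the well-definedness of the formal sums, but this is already handled: $F\in C^1_\Gamma$ ensures that evaluations of $F^*\alpha_\infty$ and $F^*\Omega_\infty$ on coordinate vectors reduce to convergent sums with rate controlled by the decay function $\Gamma$, so no new estimate beyond those in the appendix is required.
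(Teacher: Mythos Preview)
Your argument is correct and is the direct computation the paper calls ``straightforward'' without actually writing out. The only proof-like content the paper offers is the Remark immediately after the lemma, and that takes a slightly different route: rather than manipulating the formal forms on $\M$ directly, it first pulls everything back to the finite-dimensional torus via a localized embedding $\psi\in\mathcal A_{\rho,\underline c,\Gamma}$, uses the identity $\psi^*d\gamma=d(\psi^*\gamma)$ on $\T^l$, and concludes $\psi^*F^*\Omega_\infty=\psi^*\Omega_\infty$ for every such $\psi$. Your approach---reading the exact-symplectic hypothesis as the formal $1$-form identity $F^*\alpha_\infty=\alpha_\infty+\tilde\alpha$, applying $d$ via the commutation $F^*d=dF^*$, and then testing the resulting $2$-form identity on coordinate vectors to recover $DF^\top J_\infty(F)\,DF=J_\infty$---is equivalent and is in fact more directly aligned with Definitions~\ref{Symploc} and~\ref{exSymploc}, which are stated as matrix identities rather than via pull-backs. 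One minor caveat: the commutation lemma $F^*d\gamma=d(F^*\gamma)$ in the appendix is stated for $F\in C^2_\Gamma$, not $C^1_\Gamma$, so your invocation of it tacitly assumes that extra derivative; this is harmless here since every map in the paper is analytic, but it is worth noting.
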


\begin{remark}
Through a localized embedding, this is even easier.  

Since $F$ is exact symplectic, for every decay function
$\psi\in\mathcal{A}_{\rho,\underline{c},\Gamma}$,
 there exists a smooth function $W_\psi$ defined on the torus such that
\begin{equation*}
\psi^*F^*\alpha_\infty=\psi^*\alpha_\infty+dW_{\psi}.
\end{equation*}
By the property of the exterior differentiation and the fact that, by the hypotheses,
$F \circ \psi \in \mathcal{A}_{\rho,\underline{c},\Gamma}$, we have
\begin{equation*}
(F \circ \psi)^* d\alpha_\infty=d(( F\circ \psi)^* \alpha_\infty)=d (\psi^*\alpha_\infty+dW_\psi)=d(\psi^* \alpha_\infty)
= \psi^* d \alpha_\infty.
\end{equation*}
Since $\Omega_\infty= d\alpha_\infty$ this gives the desired
result.
\end{remark}

We now turn to the symplectic geometry of vector fields. We will always be considering vector-fields of the form 
$$X= J_\infty \nabla H$$
where the operator $\nabla$ has to be understood w.r.t. the inner product on $\ell^2(\Z^N)$. The following result is proved.   

\begin{pro} \label{prop:symplectic}
Aasume that the  vector-field $X$ previously defined has decay. Then it generates 
flows consisting of exact symplectic diffeomorphisms. 
\end{pro}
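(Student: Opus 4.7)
The plan is to first invoke Proposition~\ref{decayODE} to obtain the flow $S_t$ as a family of $C^r_\Gamma$ diffeomorphisms, so the only content to establish is that each $S_t$ satisfies the matrix identities of Definitions~\ref{Symploc} and \ref{exSymploc}. Because $J_\infty$ is the constant block-diagonal operator with entries equal to the standard matrix $J$ (so $J_\infty^\top = -J_\infty$ and $J_\infty^\top J_\infty = \Id$), and because $X = J_\infty \nabla H$ has the Jacobian $DX(z) = J_\infty\, D\nabla H(z)$ with $D\nabla H(z)$ formally symmetric (this symmetry is only a pointwise statement about the matrix of second partial derivatives, which is well defined even though $H$ itself is formal), the standard Hamiltonian identities can be verified at the level of matrices in $\L_\Gamma$.

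For symplecticity I would consider the one-parameter family $P(t,z) = DS_t(z)^\top J_\infty\, DS_t(z)$, which lies in $\L_\Gamma$ by Lemma~\ref{lemAB} and Proposition~\ref{decayODE}. Differentiating in $t$ and using the variational equation $\partial_t DS_t = DX(S_t)\, DS_t$ gives
\[
\partial_t P(t,z) = DS_t^\top \bigl[\, DX(S_t)^\top J_\infty + J_\infty\, DX(S_t)\, \bigr]\, DS_t .
\]
Substituting $DX = J_\infty D\nabla H$ and using $J_\infty^\top J_\infty = \Id$, $J_\infty^2 = -\Id$, the bracket equals $D\nabla H(S_t) - D\nabla H(S_t) = 0$ by the symmetry of $D\nabla H$. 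Since $P(0,z) = J_\infty$, this yields $P(t,z) = J_\infty$ for all $t$ in the maximal interval, which is the condition in Definition~\ref{Symploc}. All manipulations here are with genuine elements of $\L_\Gamma$, so the decay condition only enters to justify the computations and the differentiation under the sum, both of which follow from the Banach algebra structure.

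For exactness I would mimic the classical primitive construction. Set
\[
Q(t,z) = DS_t(z)^\top \hat A_\infty(S_t(z)) - \hat A_\infty(z),
\]
so that what has to be proven is that the row-vector $Q(t,z)$, read component by component on each site $j\in\ZZ^N$, is of the form $dW_{j,t}$ for some smooth function $W_{j,t}$ on the single-site manifold $M$. Differentiating in $t$ and using the variational equation plus $X = J_\infty \nabla H$ one obtains, after the same algebraic cancellation as above, a formula of the shape
\[
\partial_t Q(t,z) = d_z\!\left[\, \bigl(\hat A_\infty(S_t(z))\cdot X(S_t(z))\bigr) - H(S_t(z)) \,\right],
\]
understood component by component. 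Integrating in $t$ would give each $W_{j,t}(x)$ as a time integral along the orbit of the local Lagrangian $L_j = \alpha(X)_j - H_j$, and the decay of $X$ together with Proposition~\ref{decayODE} ensures that the integrand is, for each $j$, a well-defined smooth function on $M$ (rather than a formal expression), since only the $j$-th slot of the formal sum $H$ contributes to the $j$-th component.

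The main obstacle I anticipate is the last point: giving a rigorous site-by-site meaning to the primitive, since $H$ itself is only a formal sum. I would handle this by never writing $H$ as a single object but rather working with the well-defined one-forms $\hat A_\infty$ and the matrix $D\nabla H$, and by defining each $W_{j,t}$ directly as a pathwise integral of a clearly convergent expression involving $\alpha$, $X_j$ and the local Hamiltonian contributions at site $j$. Once this is set up, the identity $Q(t,z)_j = dW_{j,t}$ follows from the fundamental theorem of calculus, and the decay property required in Definition~\ref{exSymploc} is inherited from the decay of $DS_t$, $X$ and $\hat A_\infty$ via Lemma~\ref{lemAB}.
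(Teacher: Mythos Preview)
Your approach matches the paper's, which is literally the two-line sketch ``$i_X\Omega_\infty = dH$; the proof then follows the standard one.'' Your matrix identity $DX^\top J_\infty + J_\infty DX = 0$ is exactly the coordinate form of $\L_X\Omega_\infty = d(i_X\Omega_\infty) = d(dH) = 0$, and the action-integral primitive you write for exactness is the classical one the paper is alluding to.

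One caution on the exactness half: the sentence ``only the $j$-th slot of the formal sum $H$ contributes to the $j$-th component'' is not correct for a coupled Hamiltonian, and in particular the primitive $W_{j,t}$ cannot be taken as a function on the single-site manifold $M$---it genuinely depends on neighbouring sites through the interaction. Definition~\ref{exSymploc} as literally written (with $W_j$ on $M$) is too restrictive to hold for interacting flows, and the paper is equally informal here. What is actually used downstream, in Lemma~\ref{vanishingloc}, is only that $\psi^*(S_t^*\alpha_\infty-\alpha_\infty)$ is exact on $\T^l$ for every decay embedding $\psi$, and that weaker statement does follow from your computation: pull back by $\psi$ first, so that the integrand $\psi^*(i_X\alpha_\infty)$ and the relevant (suitably normalized) piece of $H\circ\psi$ become honest convergent functions on the finite-dimensional torus, and then your $\partial_t Q = d_z[\cdots]$ identity integrates to an exact form there.
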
 
\begin{proof}
Since $X$ has decay, the operation $i_X \Omega_\infty$ makes sense and one has 
$$i_X \Omega_\infty=dH. $$

The proof then follows the standard one by using the fact that decay vector fields generate decay diffeomorphisms. 
\end{proof}

\section{Appendix : Construction of 
deformations of  symplectic maps which are not exact symplectic}\label{maps}

In the construction of the invariant torus, we are going to use a
family of maps $F_\lambda$ such that $F_0$ is exact symplectic and
$F_\lambda$ is symplectic for all $\lambda$ but not exact symplectic for
$\lambda \neq 0$. Indeed, these  maps will be used 
to kill some averages in the invariance equations, so it will be important 
that, by choosing $\lambda$ appropriately we can obtain all 
the possible cohomology obstructions to exactness. The change on cohomology is more or less proportional 
to the change in the parameter $\lambda$.

The construction of $F_\lambda$ will be done in this section by  considering
flows which are 
locally but not globally Hamiltonian. 
We emphasize that the diffeomorphisms introduced will be 
quite simple. They will just deform a finite number of sites on the lattice. 
In the case that the phase space is $\torus^l \times \real^{2 d - l}$ 
endowed with the standard  symplectic form
the map $F_\lambda$ will be given by 
$A_i \rightarrow A_i + \lambda_i$ where $A_i$ are the variables symplectic 
conjugate to angles. We note that the obstruction 
to exactness are the integrals of the forms 
$A_i d\phi_i$ around a cycle in the torus along $\phi_i$. 
The rest of the section is devoted to 
make a geometrically natural construction that works in all 
manifolds.

Consider $(M_i,\Omega_i=d\alpha_i)_{i \in \ZZ^N}$
a family of finite dimensional exact symplectic manifolds
and denote $\M$ the phase space of the associated lattice map. 

Let $\J \subset \ZZ^N$ be a {\sl finite} set of indexes. We denote
by $H^1(M_i)$ the first de Rham cohomology group of the manifold
$M_i$ and assume that it is non-trivial. Consider
$(\delta^i_k)_{k=1,\dots ,l}$ a basis of $H^1(M_i)$. Since $\Omega_i$ are non-degenerate, one can construct
a family of vector fields $Y^\lambda_i$ on $M_i$ with indexes $i \in \J$ such that 
\begin{equation*}
i_{Y^\lambda_i} \Omega_i =\sum_{k=1}^{l} \lambda�_k \delta^i_k.
\end{equation*}
Note that $Y^\lambda_i$ only depends on $x_i\in M$.
Now we introduce the   vector-field $X_\lambda$ on the lattice $\M$
defined by
\begin{equation*}
(X_\lambda)_j(x)=\left \{
\begin{array}[c]{cl}
0 \qquad & \mbox{if  $j \notin \J$},\\
Y^\lambda_j(\pi_j(x))&\mbox{if  $j \in \J$}.
\end{array} \right.
\end{equation*}

By construction, we have $X_0=0$. Furthermore, the family of
vector-fields $X_\lambda$ is symplectic for all $\lambda$. Indeed,
consider a decay function $\psi$ and compute
\begin{equation*}
\L_{X_\lambda} \Omega_\infty=d \sum_{j \in \ZZ^N} i_{(X_\lambda)_j} \Omega_j
=d \sum_{j \in \J} \sum_{k=1}^{l} \lambda_k \delta^j_k =0,
\end{equation*}
where we have used that the last sum is finite by construction of
$X_\lambda$ and $(\delta^i_k)_{k=1,\dots ,l}$ are closed forms. We
obtain that  $X_\lambda$ is symplectic. Notice also that all 
but a finite number of the components of 
$X_\lambda$ are zero and so $DX_\lambda (x)\in
\L_\Gamma$, i.e. $X_\lambda$ is a decay vector field. If $x \in \M$,
denote $\varphi(s,\lambda,x)$  the flow generated by
$X_\lambda$. That is:
\begin{equation*}
\frac{d}{ds} \varphi(s,\lambda,x)= X_{\lambda} (\varphi(s,\lambda,x)),\qquad \varphi(0,\lambda,x)=x.
\end{equation*}
The existence and uniqueness of $\varphi(s,\lambda,x)$ is ensured by 
the theorem of existence and uniqueness of solutions for
Lipschitz differential equations in  Banach spaces. 
See \cite{Hale80}  for instance.
Here the Banach space is $\ell^\infty(\ZZ^N)$.

Given an exact symplectic map $F$ satisfying $F(0)=0$ and $F\in C^1_\Gamma$, we  define the family of maps we want to construct by
\begin{equation*}
F_\lambda=\varphi(\lambda,\lambda,.) \circ F.
\end{equation*}
 We have the following easy lemma
\begin{lemma}
For all $s \in \RR$, we have
\begin{enumerate}
\item $\varphi(s,0,x)=x$.
\item For all $j\in\ZZ^N$,
$\varphi_j(s,\lambda,.)$ only depends on $x_j$ and
\begin{equation*}
\varphi_j(s,\lambda,x)=x_j, \qquad j\notin\J.
\end{equation*}
\end{enumerate}
\end{lemma}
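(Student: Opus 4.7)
The plan is to observe that both claims follow immediately from the decoupled and site-local nature of the vector field $X_\lambda$, combined with uniqueness of solutions to ODEs in $\ell^\infty(\ZZ^N)$ (the existence/uniqueness being guaranteed by the Lipschitz character of $X_\lambda$, which was already established when $X_\lambda$ was defined as a decay vector field).

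For (1), I would note that $X_0 \equiv 0$: indeed, for $j\notin \J$ the component $(X_0)_j$ is zero by definition, and for $j\in\J$ the vector $Y_j^0$ satisfies $i_{Y_j^0}\Omega_j = 0$, so non-degeneracy of $\Omega_j$ forces $Y_j^0 = 0$. Consequently the constant curve $s\mapsto x$ solves the Cauchy problem for $X_0$ with initial condition $x$, and uniqueness yields $\varphi(s,0,x)=x$.

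For (2), the first assertion ($\varphi_j(s,\lambda,x)=x_j$ for $j\notin \J$) is the same observation: the $j$-th component of the ODE reads $\tfrac{d}{ds}\varphi_j = 0$, so $\varphi_j$ is constant in $s$. For the site-locality statement, the key point is that the $j$-th component of the vector field, $(X_\lambda)_j(x) = Y_j^\lambda(\pi_j(x))$, depends on $x$ only through $\pi_j(x) = x_j$. I would then run the flow of $Y_j^\lambda$ on $M_j$ alone, producing a curve $s\mapsto \tilde\varphi(s,\lambda,x_j) \in M_j$ satisfying $\tfrac{d}{ds}\tilde\varphi = Y_j^\lambda(\tilde\varphi)$ with initial data $x_j$. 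Setting $\psi_i(s) = \tilde\varphi(s,\lambda,x_j)$ when $i=j$ and $\psi_i(s) = x_i$ otherwise gives a curve in $\M$ solving $\dot\psi = X_\lambda(\psi)$ with $\psi(0)=x$. By uniqueness for ODEs in $\ell^\infty$, $\psi(s) = \varphi(s,\lambda,x)$, and in particular $\varphi_j(s,\lambda,x) = \tilde\varphi(s,\lambda,x_j)$, which exhibits the dependence on $x_j$ only.

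There is no real obstacle here; the only thing one must be careful about is invoking uniqueness for the Cauchy problem in the Banach space $\ell^\infty(\ZZ^N)$, which is legitimate because $X_\lambda \in C^1_\Gamma$ and only finitely many of its components are nonzero, so $X_\lambda$ is globally Lipschitz and the standard Picard--Lindel\"of theorem in Banach spaces (as quoted from \cite{Hale80}) applies directly.
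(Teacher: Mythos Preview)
Your approach is the same as the paper's, which also argues simply from $X_0=0$ and from the fact that each component $(X_\lambda)_j$ depends only on $x_j$. There is, however, a small slip in your explicit verification of site-locality: the curve $\psi$ you construct, with $\psi_i(s)=x_i$ for all $i\ne j$, does \emph{not} solve $\dot\psi = X_\lambda(\psi)$ when there is another index $i\in\J\setminus\{j\}$, since then $(X_\lambda)_i(\psi(s)) = Y_i^\lambda(x_i)$ need not vanish while $\dot\psi_i=0$.

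The fix is immediate and actually simpler than what you wrote: rather than building a global solution and invoking uniqueness in $\ell^\infty$, observe directly that $s\mapsto\varphi_j(s,\lambda,x)$ solves the finite-dimensional ODE $\dot y = Y_j^\lambda(y)$ on $M_j$ with initial value $x_j$ (because $(X_\lambda)_j(\varphi) = Y_j^\lambda(\pi_j\varphi) = Y_j^\lambda(\varphi_j)$), and then uniqueness for \emph{that} ODE identifies it with your $\tilde\varphi(s,\lambda,x_j)$, which depends only on $x_j$. Alternatively, if you prefer to work in $\ell^\infty$, replace $\psi_i(s)=x_i$ by the flow of $Y_i^\lambda$ on $M_i$ for each $i\in\J$; then $\psi$ really does solve the full system and your uniqueness argument goes through.
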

\begin{proof}
(1) follows directly from the fact that $X_0=0$. The first part of
(2) follows from the fact that $(X_\lambda)_j$ only depends on
$x_j$. Moreover if $j\notin \J$, $(X_\lambda)_j=0$ and then
$\varphi_j$ is constant in $s$. Therefore
$\varphi_j(s,\lambda,x)=\varphi_j(0,\lambda,x)=x_j$.
\end{proof}
As a consequence we have that
$$
F_0=F
$$
and
$$
(F_\lambda)_j= F_j,\qquad \mbox{ for } \lambda \in \RR^l,\quad j\notin \J.
$$

Since $\varphi _j$ is not constant for only 
a finite set of indexes, for $\lambda$ small
$\varphi(1,\lambda,.)$ is well-defined on the range of $F$. Moreover, since
$\varphi $ is uncoupled, i.e. $\pi_i  \varphi(s,\lambda,.)$ depends only 
on $x_i$ we have that
$\varphi(\lambda,\lambda,.) \in C^1_\Gamma$.
On the other hand, $(F_\lambda)_j(0)=F_j(0) =0$ for $\lambda\notin \J$.
Therefore $F$ satisfies the assumptions of Lemma \ref{compo2}.

Finally, the following lemma ends the details of the construction.
\begin{lemma}
For all $\lambda $, the map $F_\lambda \in C^1_\Gamma$ is symplectic, but it is
not exact symplectic for $\lambda\neq 0$.

Indeed, we have that if $\Psi$ is 
the embedding given by the coordinates in  
$\J$, and $[ \cdot ]$ denotes the cohomology class on the torus
expressed in the basis of  the forms $\delta_k$, we have 
\begin{equation}\label{cohomologychange} 
[ \Psi^* F^*_\lambda \alpha_\infty ] = \lambda .
\end{equation}
\end{lemma}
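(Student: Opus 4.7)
The plan is to verify the four assertions in turn: $C^1_\Gamma$-regularity, symplecticness, the cohomology identity \eqref{cohomologychange}, and non-exactness for $\lambda\ne 0$. I interpret $\varphi(\lambda,\lambda,\cdot)$ as the time-one map of $X_\lambda$ (the first argument in the paper's notation must be a scalar); for $|\lambda|$ sufficiently small this is well defined on the range of $F$, and what really matters is that the deformation is the endpoint of a symplectic isotopy of the identity generated by $X_\lambda$. The $C^1_\Gamma$-regularity is essentially automatic because $X_\lambda$ vanishes outside the \emph{finite} set $\J$: the flow acts as the identity on all but finitely many coordinates, and on the $\J$-coordinates is uncoupled across sites. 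Hence its derivative has matrix supported in the finite block $\J\times\J$ and trivially belongs to $\L_\Gamma(\ell^\infty(\Z^N))$, and Lemma~\ref{compo2} applied to $F_\lambda=\varphi(1,\lambda,\cdot)\circ F$ yields $F_\lambda\in C^1_\Gamma$. Symplecticness follows from Cartan's magic formula: the one-form $i_{X_\lambda}\Omega_\infty=\sum_{j\in\J}\sum_k \lambda_k\delta^j_k$ is closed, so $\L_{X_\lambda}\Omega_\infty=d(i_{X_\lambda}\Omega_\infty)+i_{X_\lambda}d\Omega_\infty=0$, hence $\varphi(s,\lambda,\cdot)^*\Omega_\infty=\Omega_\infty$ for all $s$; composing with the symplectic $F$ yields symplecticness of $F_\lambda$.

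For the cohomology identity -- which is the heart of the argument -- I would apply the fundamental theorem of calculus to $s\mapsto \varphi(s,\lambda,\cdot)^*\alpha_\infty$ together with Cartan's formula to obtain
\begin{equation*}
\varphi(1,\lambda,\cdot)^*\alpha_\infty-\alpha_\infty = \int_0^1 \varphi(s,\lambda,\cdot)^*\bigl(i_{X_\lambda}\Omega_\infty+d(i_{X_\lambda}\alpha_\infty)\bigr)\,ds.
\end{equation*}
The $d(i_{X_\lambda}\alpha_\infty)$ contribution is globally exact and vanishes in cohomology. For the remaining piece, inserting $i_{X_\lambda}\Omega_\infty=\sum_{j,k}\lambda_k\delta^j_k$ and using that $s\mapsto\varphi(s,\lambda,\cdot)$ is a smooth isotopy from the identity to $\varphi(1,\lambda,\cdot)$, homotopy invariance of de Rham cohomology applied site by site on $\J$ shows that each $\varphi(s,\lambda,\cdot)^*\delta^j_k$ differs from $\delta^j_k$ by an exact form, so the integral contributes $\sum_{j,k}\lambda_k[\delta^j_k]$ in cohomology. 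Finally $F_\lambda^*\alpha_\infty-F^*\alpha_\infty=F^*\bigl(\varphi(1,\lambda,\cdot)^*\alpha_\infty-\alpha_\infty\bigr)$, and $F^*\alpha_\infty-\alpha_\infty$ is exact by exact symplecticness of $F$; pulling back by $\Psi$ and reading off the coefficients in the basis $\{[\delta^j_k]\}$ yields exactly $\lambda$. Non-exactness of $F_\lambda$ for $\lambda\ne 0$ is then immediate, since an exact-symplectic map would force the cohomology class of its $\alpha_\infty$-pullback to vanish.

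The main technical obstacle is justifying these de Rham manipulations on \emph{formal} forms: $\alpha_\infty$ and $\Omega_\infty$ are formal infinite sums and not genuine forms on $\M$. However, since $X_\lambda$ and its flow only touch the finite set $\J$, every step above reduces to a finite subsum of the formal series together with an exact correction concentrated on the $\J$-block, and pullbacks of formal forms by decay embeddings such as $\Psi$ are genuine forms by Lemma~\ref{lemtrueform}. Once this reduction is carried out, the computation becomes finite-dimensional de Rham theory on the product $\prod_{j\in\J}M_j$, and the claim follows.
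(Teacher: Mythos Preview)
Your proposal is correct and follows essentially the same route as the paper: both arguments differentiate $s\mapsto\varphi(s,\lambda,\cdot)^*\alpha_\infty$, apply Cartan's formula to split off an exact piece from the closed contribution $\sum_{j\in\J}\sum_k\lambda_k\delta^j_k$, deduce symplecticness by applying $d$, and then read off the cohomology class after pulling back by $\Psi$ and invoking homotopy invariance. The only cosmetic difference is that the paper applies the decay pullback $\psi^*F_0^*$ from the outset (so that every form is genuine throughout), whereas you compute with $\varphi$ first and invoke the finite-$\J$ reduction at the end; both handle the formal-form issue equivalently.
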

\begin{proof}
Let $\psi \in \mathcal{A}_{\rho,\underline{c},\Gamma}$ be a decay function. We want to prove that for any $\lambda$
\begin{equation*}
\psi^* F_\lambda ^* \Omega_\infty=\psi^* \Omega_\infty.
\end{equation*}

By construction of $F_\lambda$ we have
\begin{equation*}
\begin{split}
\psi^*F_\lambda^* \alpha_\infty&-\psi^*F^*_0 \alpha_\infty=
\psi^*F_0^* \varphi^*(\lambda,\lambda,.)\alpha_\infty- \psi^*F^*_0 \varphi^*(0,\lambda,.) \alpha_\infty \\
&= \int_0^1 \frac{d}{ds} \big(
\psi^*F^*_{0} \varphi^*(s,\lambda,.) \alpha_\infty \big) \,ds\\
&= \int_0^1   \psi^*F^*_{0} \Big((\frac{d}{ds} \varphi^*i (s,\lambda,.) \alpha_\infty \Big)  \,ds\\
&= \int_0^1   \psi^*F^*_{0}   \L_{X_{\lambda}(\varphi(s,\lambda,\cdot))} \alpha_\infty \,ds\\
&= \int_0^1   \psi^*F^*_{0} [d({i}_{X_{\lambda}}\alpha_\infty )+ {i}_{X_{\lambda}}d\alpha_\infty  ]\,ds\\
&=d \int_0^1  \psi^*F^*_{0}
({i}_{X_{\lambda}}\alpha_\infty) \,ds +\int_0^1
\psi^*F^*_{0}  \sum_{j \in \J} \sum_{k=1}^{l} \lambda_k \delta^j_k
\,ds\,.
\end{split}
\end{equation*}
Since $\delta_k^j$ are closed forms, taking exterior differential
at both sides of the previous formula we get $\psi^*F_\lambda^*
\Omega_\infty-\psi^*F^*_0 \Omega_\infty=0$. Finally, using that
$F_0$ is symplectic we get that $F_\lambda$ is symplectic.

Moreover, if $\lambda\ne 0$,
$$
\psi^*F_\lambda^* \alpha_\infty-\psi^*F^*_0 \alpha_\infty= dW_\infty + E,
$$
where
$W_\infty=\int_0^\lambda   \psi^*F^*_{0}  ({i}_{X_{\lambda}}\alpha_\infty) \,ds $
and $E$ is not a differential.
The formula \eqref{cohomologychange}, follows easily from the 
expression for $E$ above. 
We note that that 
\begin{equation*}
\begin{split} 
[\Psi^*F_\lambda^* \alpha_\infty] & = 
[\Psi^*F_\lambda^* \alpha_\infty -\Psi^*F^*_0 \alpha_\infty]  \\
&=[ d \int_0^1  \Psi^*F^*_{0}
({i}_{X_{\lambda}}\alpha_\infty) \,ds +\int_0^1
\Psi^*F^*_{0}  \sum_{j \in \J} \sum_{k=1}^{l} \lambda_k \delta^j_k
\,ds\, ] \\
&= 0 + \int_0^1 
[\Psi^*F^*_{0}  \sum_{j \in \J} \sum_{k=1}^{l} \lambda_k \delta^j_k]
\,ds \\
&= 
\int_0^1 
\Psi^* (F_{0})^* \sum_{j \in \J} \sum_{k=1}^{l} \lambda_k [\delta^j_k]
\,ds \\
&= \lambda
\end{split}
\end{equation*}

\end{proof}

\bibliographystyle{alpha}
\bibliography{whiskers}

\end{document}